\documentclass[12pt,a4paper]{article}
\usepackage[margin=1.05in,includeheadfoot,headheight=15pt]{geometry}
\usepackage{microtype}
\usepackage{latexsym}
\usepackage{needspace}
\begin{document}
\pagestyle{headings}

\begin{titlepage}

\vspace*{.2cm}
{\large\bf
\begin{center}
UNIVERSITATEA DE STAT DIN MOLDOVA\\
State University of Moldova\\
\end{center}
}
\vspace*{.85cm}  
{\large
\begin{center}
Doctoral Thesis
\end{center}
\begin{flushright}
                            As a manuscript\hbox{\hspace{2.0cm}}\\
                                 CZU  519.2 \hbox{\hspace{2.0cm}}
\end{flushright}
}
  \vspace*{0.6cm}  
{
\large
\begin{center}
Hlibiciuc Alexandru Anatolie
							 \\[0.6cm] 
%\end{center}

\Large \bf
%\begin{center}
The Characteristics of Finite Projections\\
of Certain Stochastic Processes and Queues\\
\end{center}
}
\vspace{.85 cm}
{\large
\noindent
  01.01.05 - ``Probability Theory \hfil\newline
  \hbox{\hspace{1.5cm}} and Mathematical Statistics''

\vspace{.85 cm}
\noindent
Doctoral thesis in physical and mathematical sciences

\vspace{0.7 cm}
  \begin{flushright} Academic supervisor, 
  \hbox{\hspace{2.0cm}}\\  Associate Professor Dr A. Leahu \hbox{\hspace{2.0cm}}
 \end{flushright} 
}

{\large
\vfill
\begin{center}
CHI\c{S}IN\u{A}U, 1998\\[0.3cm]
Corrected English edition, 2026.
\end{center}
}
\end{titlepage}

\newpage
\setcounter{page}{2}
\section*{Preface to the corrected English edition}
\addcontentsline{toc}{section}{Preface to the corrected English edition}
\thispagestyle{myheadings}
\markright{}

This volume is a corrected English translation of my doctoral thesis, completed in Romanian at the State University of Moldova in 1998 and defended on 15 January 1999. Its purpose is to make the work accessible to a wider readership while preserving its original structure, notation and mathematical approach as far as possible.

The preparation of this edition included comparison with the Romanian original, revision of the English text and mathematical checking. Errors identified in statements, proofs and formulas have been corrected. Some corrections require additional hypotheses or restrictions on the original claims. The guiding principle has been to make the smallest changes necessary for mathematical correctness and readability.

The thesis is presented in its historical setting. Corrections and later additions belong to the present edition and should not be attributed retrospectively to the 1998 text. The original Romanian version remains the reference for what was stated and proved at that time.

In retrospect, I consider ``recurrent projection'' a more descriptive term for the construction called ``finite projection'' in this thesis. The construction conditions each regeneration cycle on having finite duration, producing a process that is recurrent with respect to the chosen regeneration structure. Here, ``finite'' refers to the duration of each cycle, not to the state space or the mean cycle length. I have retained the original terminology in this edition for continuity with the thesis and the earlier publications.

I remain deeply grateful to my supervisor, Professor Alexei Leahu, for giving me the opportunity to prepare this thesis and for his support in organizing its defence. He and the late Academician Petru Soltan, who chaired the examining committee, were also my professors at university. I remember their teaching with gratitude and value the part they played in my mathematical education. Professor Victor Chepoi also had a lasting influence on my mathematical development, for which I am sincerely grateful.

I remember with particular respect and gratitude the late Marius Iosifescu and Ioan Cuculescu, members of the Romanian Academy and distinguished figures in Romanian probability. I thank Marius Iosifescu and Vladimir Dragalin for their favourable reviews, and Ioan Cuculescu and Arcadie Barbarosie for serving as the official examiners of my thesis. Their support at this important stage of my life meant a great deal to me.

Returning to this thesis to prepare its corrected English edition has brought back memories of my teachers and of those who helped me bring this work to completion and defence. Looking back after so many years, I feel even more deeply how much I owe them.

\begin{flushright}
Alexandru Hlibiciuc\\
2026
\end{flushright}

\newpage
\thispagestyle{myheadings}  
\markright{}
\tableofcontents
\newpage
%%%%%%%%%%%%%%%%%%%%%%%%%%%%%%%%%%%%%%%%%%%%%%%%%%%%%%%%%%%%%%%%%%%%%%%%
%%%%%%%%%%%%%%%%%%%%%%%%%%%%%%%%%%%%%%%%%%%%%%%%%%%%%%%%%%%%%%%%%%%%%%%%
%definitii 
%
\newtheorem{def1}{ Definition }[section]
\newtheorem{prop}{Proposition}[section]
\newtheorem{rem}{Note}[section]
\newtheorem{lema}{Lemma}[section]
\newtheorem{teor}{ Theorem }[section]
  \section*{ Introduction }  \setcounter{equation}{0}
  \addcontentsline{toc}{section}{ Introduction }
\thispagestyle{myheadings}  
\markright{\hspace{1cm} \it Introduction}

% RO-INTRO-01
Probabilistic methods, especially those of the theory of stochastic processes, are successfully used in various fields of practical activity for constructing mathematical models of phenomena of a random nature. In the applied domains of probability theory, such as queueing theory, risk theory, etc., methods from renewal theory and the theory of regenerative processes are widely used \cite{part1_1},\cite{risc_1}.

The present work contains a study of a conditioning method for regenerative processes, referred to by its authors as the {\it finite projection} \cite{part1_4}. The aim of this method is to simplify the study of transient regenerative processes and unstable queueing systems by replacing them with recurrent models and, when the projected mean regeneration cycle is finite, stable models with equilibrium distributions. The notion of finite projection is based on a natural method of conditioning regenerative processes on certain events of probability zero through a limiting construction.

Thus, two main aspects emerge, which pertain to the research topics presented in this work: the first concerns the approximation of regenerative processes, particularly queueing processes, and the second addresses the conditioning of stochastic processes, particularly regenerative processes and queueing systems.

The finite projection method compares certain transient processes with recurrent counterparts. The finite-time bounds below specify the regimes and observations for which this comparison is effective; the long-time behavior of the two laws can differ.

% RO-INTRO-02
The problems of approximating some stochastic processes (systems) with others are well known in the literature and encompass several fundamental research directions. Among those closely related to the topics presented in this work are studies on the behavior of queueing systems under heavy traffic conditions \cite{Borovkov_2}, \cite{Asmussen_1}, \cite{Asmussen_2}, \cite{Asmussen_3}.

It is precisely under heavy traffic conditions that the approximation of queueing systems described by regenerative processes by their finite projections makes sense. Unlike most cited works, in the present study, we do not focus on the limiting distributions of the random characteristics of queueing systems under heavy traffic conditions. Instead, the main objective is the determination and study of the finite projections of certain classes of queueing processes, with a particular interest in the quality of approximating queueing systems by finite projections under heavy traffic conditions.

% RO-INTRO-03
Another aspect of the vast domain of research concerning the approximation of models, adjacent to the topic of this work, pertains to the continuity of stochastic systems, which has aroused great interest among researchers in recent decades. The main reason is the practical importance of the continuity of random models \cite{Kalashnikov_1}, due to the property of continuous systems to maintain their qualitative properties, and with small errors, their quantitative properties, under small parameter changes. The basic results concerning the continuity of stochastic systems are presented in the monographs \cite{Kalashnikov_1}, \cite{Rachev}. A key step in examining continuity problems of systems is the choice of a metric defined on the space of examined stochastic models.

% RO-INTRO-04
Building on the idea of system continuity, the first part of the work undertakes a study of the continuity of the finite projection of regenerative processes. Our main objective is to highlight the optimality of the finite projection in the sense of the introduced metrics, which essentially represent modifications of the total variation distance and the Hellinger distance between probabilities. With this, the quality of finite projections in approximating initial processes over finite and infinite time intervals is examined. While the continuity of processes and stochastic systems over both finite and infinite time intervals \cite{Kalashnikov_1}, \cite{Karr} has been studied in the works of many researchers, the specificity of the problems examined in this work allows, in addition to demonstrating the continuity of finite projection, the determination of asymptotically sharp finite-time errors under the stated observability and parameter assumptions.

% RO-INTRO-05: conditioning and the scope of the thesis
Another aspect of this research is the conditioning of stochastic processes,
a subject with a long history. Kolmogorov's measure-theoretic formulation of
conditional probability is based on the Radon--Nikodym theorem
\cite{Kolmogorov_base}. This rigorous definition does not, by itself,
provide a universal constructive rule for conditioning on events of
probability zero. Different limiting procedures that appear natural can
lead to different conditional laws; see, for example,
\cite{Bertoin_Doney}, \cite{Bertoin_Doney2}, \cite{Bertoin_1}.
Similar questions arise for finite projections of regenerative processes.
For a transient irreducible time-homogeneous Markov chain, choosing a
regeneration state gives a finite projection under which that state is
recurrent. On an infinite state space this gives a family indexed by the
states, although different states need not give distinct projections; see
Section~\ref{proiect_arb}. R\'enyi proposed an alternative axiomatic
framework for conditional probability in
\cite{Renyi_1}, \cite{Renyi_2}, \cite{Renyi_3}. This framework extends
the scope of the usual probability-space formulation. The relation between
the two frameworks is not examined here: we treat conditioning as the
application of a projection operator that transforms one probability
measure into another.

% RO-INTRO-06: historical literature review, retained in its 1998 setting
A short introduction cannot cover the entire literature on conditioning
stochastic processes, so the references given here are selective and may
not be representative. A principal concern of these studies is to identify
properties and relationships shared by the original and conditioned
processes. Conditioning Brownian motion has received particularly extensive
attention. We mention the work of Doob \cite{Doob_1}, McKean
\cite{McKean_1}, and Williams \cite{Williams_1}, together with the later
works \cite{Banuelos_1}, \cite{Bass_1}, \cite{Cranston_1},
\cite{Cranston_2}, \cite{Day_1}, \cite{Griffin_1}, \cite{Liao_1},
\cite{Pinsky_1}, and \cite{Zhao_1}. These results also motivated analogous
studies of random walks; see, for example,
\cite{Bertoin_Doney}, \cite{Bertoin_Doney2}, \cite{Chaumont_1},
\cite{Doney_1}, \cite{Keener_1}, \cite{Poleshchuk_1}, and
\cite{Poleshchuk_2}. During the decade preceding this thesis, research on
the existence and properties of quasi-stationary distributions of Markov
chains became increasingly active, although the general problem remained
unresolved. This line of research was developed in the work of Seneta and
Vere-Jones \cite{Vere-Jones_1}, \cite{Seneta_1}. Among the later
publications available at the time, we mention
\cite{Kesten_1}, \cite{Pollett_1}, and \cite{Kesten_2}. These works
establish connections between quasi-stationarity, the lifetime of a Markov
chain, and conditioning.

% RO-INTRO-07: h-transforms, duality, and Martin boundaries
In a number of cases, conditioning a Markov process produces an
$h$-transform. Conditioning can also simplify the determination of Martin
boundaries. It has revealed relationships between dual processes and
queueing systems; see, for example, Asmussen \cite{Asmussen_4}, Fakinos
\cite{Fakinos_1}, Leahu \cite{l_pnm7}, and Tavar\'e \cite{Tavare_1}.
Since finite projections of regenerative processes are obtained by
conditioning, it is natural to investigate their connections with duality,
Martin boundaries, reversibility, and, for Markov processes, the existence
of invariant measures. In particular, finite projections of Markov chains
are related to $h$-transforms through limits along suitable sequences of
regeneration states. In
Section~\ref{spat_proiect}, these are transforms associated with Martin
boundary kernels, with possible killing; the assertion does not concern
all arbitrary $h$-transforms. The results of the thesis extend or develop
earlier work in this direction, including
\cite{Fakinos_1}, \cite{l_pnm7}, \cite{part1_4}, and \cite{Veech_1}.

% RO-INTRO-08: the general definition
The first chapter gives a new formulation of finite projection for
regenerative processes in Section~\ref{par_1_2}. It retains the substance
of the definition in \cite{part1_4} and permits a general study of the
properties of finite projections.

% RO-INTRO-09: scope and results of independent interest
The remaining chapters describe finite projections for specific classes
of stochastic processes: birth-and-death processes, Markov chains, and
semi-Markov processes, including skip-free processes. Several results are
also of independent interest. These include the determination of a
birth-and-death process from prescribed distributions of the times spent
in the integer sets $[0,n]$ and $[n+1,\infty)$, and sharp bounds for the
relative error in approximating continued fractions by their convergents.

% RO-INTRO-10: regenerative cycles, including an infinite final cycle
The exposition begins with a definition of a regenerative process that
allows infinite regeneration cycles, as in \cite{part1_7} and
\cite{part1_4}. This definition is useful because models of queues,
inventories, and system reliability are often described by regenerative
processes \cite{part1_1}, \cite{l_frc1}. Such a process is constructed
from independent, identically distributed cycle data, apart from a
possibly different initial cycle. The cycles describe the system over
successive time intervals and are concatenated until an infinite cycle
occurs. The regeneration times $S_n$ that
separate these intervals form an embedded renewal process
$S=\{S_n\}$ \cite{c2}. In the classical definition, these times are
finite \cite{part1_3}. A wider range of models, including supercritical
queues with traffic intensity greater than one under the assumptions
considered here, may regenerate only finitely many times and then have an
infinite final cycle. Definitions allowing this possibility were given in
\cite{part1_7} and \cite{part1_4}. Section~\ref{par_1_1} provides
examples of such processes, which we call transient regenerative
processes. Finite projection is of interest for these processes. We call
regenerative processes recurrent when their regeneration cycles, including
the initial delay when present, are almost surely finite.

% RO-INTRO-11: purpose of Section 1.2
Section~\ref{par_1_2} defines the finite projection of a regenerative
process. The procedure replaces a transient regenerative process by a
recurrent one, its finite projection, with the aim of approximating the
original process.

% RO-INTRO-12: approximation, renewal characteristics, and optimality
Section~\ref{par_1_3} gives a detailed analysis of this approximation.
It studies distances between the original and projected probability
measures and asymptotically sharp convergence rates in the specified
regimes. It also compares characteristics of the embedded renewal
processes (Theorem~\ref{t_car_dif}). The section develops the
best-approximation interpretation of finite projection relative to the
distances and comparison classes considered there
(Propositions~\ref{p1_r6} and~\ref{p1_r3}), including classes
larger than the probability laws of recurrent regenerative processes.

% RO-INTRO-13: queueing applications
Queueing processes often admit a regenerative description, so finite
projection applies to them. Section~\ref{par_1_4} gives a general
definition for $GI/G/n/\infty$ queues, describes the structure of the
finite projection of an $M/G/1$ queue, and proves its stability.

% RO-INTRO-14: all three sections and both queueing examples of Chapter 2
The second chapter consists of three sections on finite projections of
birth-and-death processes. Section~\ref{par_2_1} establishes auxiliary
results on the time spent in a specified set of states and represents the
Laplace--Stieltjes transforms of its distribution by continued fractions.
These results are then used to determine the structure of finite
projections. A recurrent finite projection need not be positive recurrent:
Section~\ref{par_2_2} gives a null-recurrent example using continued-fraction
expansions of Laplace--Stieltjes transforms. The section also examines
sufficient conditions for positive recurrence and gives necessary and
sufficient conditions for a finite projection to coincide with the
$0$-dual of the original process. The $0$-dual interchanges the birth and
death rates at positive states and retains the birth rate at state zero,
where no death rate is defined. Fakinos \cite{Fakinos_1} introduced a
related notion of duality for queues; for birth-and-death queueing models
the same rate interchange applies away from zero, with the boundary rate
retained in the $0$-dual. Section~\ref{par_2_3} studies finite
projections of queues described by birth-and-death processes and their
characteristics, including positive recurrence and the corresponding
stability conditions. For the finite projection of the $M/M/1$ queue,
it obtains the equilibrium distribution, Laplace transforms of the
state probabilities, and the distribution of the virtual waiting time
under the FIFO discipline. A method of inverting the resulting Laplace
transforms is illustrated. For the $M/M/n$ queue, the chapter describes
the finite projection, its equilibrium distribution, and the distribution
of the first hitting time of the state set $[0,n-1]$.

% RO-INTRO-15: the first two sections of Chapter 3
The third chapter studies finite projections of time-homogeneous Markov
chains on a discrete state space. Its first section describes how finite
projection transforms a transient Markov chain, whose transition matrix
may be substochastic, into a Markov chain with a single recurrent class $E_0^*$
and possibly transient states in $E^*\setminus E_0^*$. The construction
requires a regeneration state with a positive return probability.
The next section examines relations between the characteristics of the
original and projected chains. It derives generating functions of the
projected transition probabilities and, when the original chain is
irreducible, invariant measures of the projected chain.

% RO-INTRO-16: coincidence, invariant measures, and the Martin boundary
For a transient irreducible Markov chain, a finite projection can be
associated with each state. Section~\ref{proiect_arb} gives examples
using different regeneration states and a criterion for two such finite
projections to coincide. It also presents a criterion, expressed in terms
of finite projections, for the existence of invariant measures of the
original transient irreducible chain. This criterion resembles Harris's
criterion in form, although its underlying ideas are different. The
necessity of Harris's condition was proved by Veech \cite{Veech_1}.
Section~\ref{spat_proiect} continues the study of the space of finite
projections and its relation to the Martin boundary of the original
chain.

% RO-INTRO-17: equivalence and reverse-time conditional laws
The next section introduces $\tau$-equivalence of Markov probability
measures relative to a stopping time $\tau$. This generalizes the notion
of equivalence of Markov chains used in \cite{Kerov_O} and reduces to
that notion when $\tau=\infty$. Up to the specified stopping times,
finite projections can be viewed as $\tau$-equivalent Markov chains.
This connects conditioning by finite projection with equivalence of the
conditional laws of the past, given the present state, in reverse time.

% RO-INTRO-18: embedded queueing chains and semi-Markov processes
The final two sections of the third chapter examine duality relations
for the $M/G_E/1$ and $GI/M/1$ queues and their finite projections,
and finite projections of semi-Markov processes. Determining a semi-Markov finite projection reduces to determining the finite projection of its embedded Markov chain; the holding-time laws conditional on successive states remain unchanged.

% RO-INTRO-19: skip-free semi-Markov processes and relative errors
The final chapter is devoted to skip-free semi-Markov processes. It
extends several results for birth-and-death processes, particularly those
concerning the time spent in a given set of states. It also gives a
probabilistic interpretation of continued-fraction convergents that makes
it possible to bound the relative error of approximation by these
convergents.

% RO-INTRO-20: dissemination of the historical results
The results presented in the thesis were reported and discussed at
national and international scientific meetings. Some were published in
the works cited here; see, for example, \cite{l_2:k1} and
\cite{part1_5}.

%se  indic\u{a} formulele pentru transformatele Laplace ale
%  probabilit\u{a}\c{t}ilor de aflare \^{i}n starea dat\u{a} \^{i}n
%  momentul fixat de timp  folosind 

\newpage
  \section{ Regenerative processes and finite projections}
\markright{1\quad Regenerative processes and finite projections}
 \setcounter{equation}{0}
  \subsection{Regenerative processes}\label{par_1_1}
%%
%
% RO-1.1-01: historical opening
Since the publication of the work \cite{reg_r1} by W.L. Smith,
regenerative processes have played an important role in the theory and
applications of stochastic processes. A later survey of the fundamental
results and their generalizations is given by K. Sigman and R.W. Wolff
\cite{reg_r2}.

% RO-1.1-02: classical cycles and the queueing example
In the classical definition, a regenerative process consists of independent,
identically distributed cycles \cite{part1_3}, apart from a possibly
different initial cycle. A standard example is the $GI/G/1$ queue with
traffic intensity $\rho<1$. Each regeneration cycle consists of a busy
period followed by the adjacent idle period; successive cycles begin when
customers arrive to find the system empty. When $\rho>1$, a busy period
can be infinite with positive probability. After an infinite cycle, no
further cycle exists. Thus, when finite and infinite cycles both have
positive probability, the existence of the next observed cycle depends
on whether the preceding cycle is finite; the successive observed cycles
are not independent. Regeneration means that, whenever a cycle ends, the
next cycle starts afresh with the same cycle law. To include infinite
cycles, \cite{part1_4} introduced the
notion of a stopped regenerative process. A similar definition in discrete
time was used by V.V. Kalashnikov \cite{part1_7}. Following the terminology
for Markov chains, we use the term \emph{transient regenerative process}
in place of stopped regenerative process.

% RO-1.1-03: probability space
We work on a canonical measurable space $(\Omega,{\cal F})$ carrying the
regenerative path and its regeneration times, with its uncompleted cylinder
$\sigma$-algebra when a new law is constructed. The initial probability
measure is $P$; either law may be completed separately after construction.

% RO-1.1-04: renewal epochs, delay, nondegeneracy and transience
Recall that, according to \cite{c2}, a sequence of random variables
$S=\{S_n\}_{n\ge0}$ is called a \emph{renewal process} if
$S_0=Y_0$ and $S_n=S_{n-1}+Y_n$, $n\ge1$. The random variables
$Y_0,Y_1,Y_2,\ldots$ take values in $[0,\infty]$ and are independent;
$Y_1,Y_2,\ldots$ are identically distributed, whereas $Y_0$ may have a
different distribution. We assume
$P\{Y_i\in(0,\infty)\}>0$, $i\ge1$. The process is called
\emph{ordinary} if $Y_0=0$ almost surely (a.s.) under $P$, and
\emph{delayed} otherwise. It is called transient (or stopped) if
$P\{S_n<\infty\ \hbox{for all }n\ge0\}<1$.

% RO-1.1-05: regeneration, including the full cycle data
Let $X=\{X(t):t\in T\}$ be a stochastic process on
$(\Omega,{\cal F},P)$, where $T=\{0,1,2,\ldots\}$ or $T=[0,\infty)$.
We call $X$ a \emph{regenerative process} if there exists a possibly
delayed renewal process
$S=\{S_n\}_{n\ge0}=\{Y_0+\cdots+Y_n\}_{n\ge0}$, with values in
$T\cup\{\infty\}$, having the following regeneration property.
Conditional on $S_n<\infty$, the future process
$\{X(S_n+t):t\in T\}$ is independent of the history before $S_n$,
including $S_n$, and has the same distribution as
$\{X(S_0+t):t\in T\}$ conditional on $S_0<\infty$.
This property is understood jointly with the subsequent regeneration
epochs measured from $S_n$: the cycle paths and their durations regenerate
together. We assume $P\{S_0<\infty\}>0$, so the conditional laws are
defined; the renewal assumptions then give $P\{S_n<\infty\}>0$ for
every finite $n$. In particular,
$$
P\{X(S_0+t_1)\in B_1,\ldots,X(S_0+t_k)\in B_k\mid S_0<\infty\}=
$$
$$
=P\{X(S_n+t_1)\in B_1,\ldots,X(S_n+t_k)\in B_k\mid S_n<\infty\}.
\label{p1_r1}
$$
Here $0\le t_1<\cdots<t_k<\infty$ are in $T$, and $B_i$,
$i=1,\ldots,k$, are Borel sets. The times $S_n$, $n\ge0$, are called
\emph{regeneration times}, and $S$ is the renewal process embedded in $X$.

% RO-1.1-06: the working sigma-algebra and defective interval laws
Henceforth $X$ is regenerative and
${\cal F}=\sigma(X(t),S_n:t\in T,n\ge0)$. When the regeneration times
are path functionals, this is simply the path $\sigma$-algebra. We denote
the distribution functions of the renewal intervals by $F_0$ and $F$:
$$
F_0(t)=P\{Y_0\le t\}=P\{S_0\le t\},\qquad
F(t)=P\{Y_n\le t\},\quad n\ge1.
$$
Here $F_0(\infty)$ and $F(\infty)$ denote the limits as finite
$t\to\infty$, namely $P\{Y_0<\infty\}$ and $P\{Y_1<\infty\}$.
We call $X$ transient if its embedded renewal process is transient,
that is, if $F_0(\infty)<1$ or $F(\infty)<1$, and recurrent otherwise.
These terms refer to the chosen embedded renewal process.

Here are two examples of transient regenerative processes.
\begin{description}
% RO-1.1-07: Markov-chain example and its absorbing-state extension
\item[(a)] Let $X=\{X_n:n\ge0\}$ be a time-homogeneous Markov chain
with countable state space $E=\{0,1,\ldots\}$. If $X$ is transient and
irreducible, its regeneration times can be taken to be the successive
visits to a fixed state $l\in E$:
$$
S_0=\inf\{k>0:X_k=l\},\qquad
S_n=\inf\{k>S_{n-1}:X_k=l\},\quad n\ge1,
$$
where $\inf\emptyset=\infty$. For every finite subset $A\subset E$,
$$
\lim_{n\to\infty}P\{X_n\in A\}=0.
$$
The same construction applies when the state space consists of absorbing
states and a transient communicating class containing $l$, provided $l$
can be reached from the initial distribution and has a positive
probability of return. An infinite final regeneration cycle can then
arise either through escape from every finite subset of the transient
class or through absorption in an absorbing state. Here and below we
follow the terminology of \cite{part1_1}.
% RO-1.1-08: reflected random walk and its queueing interpretation
\item[(b)] Let $Z_1,Z_2,\ldots$ be independent and identically
distributed (i.i.d.) random variables, and let $W_0$ be an independent,
finite nonnegative random variable. Define
$$
W_n=\max(0,W_{n-1}+Z_n),\qquad n\ge1.
$$
The process $\{W_n:n\ge0\}$ is a \emph{reflected random walk}.
In a FIFO single-server queue, $W_n$ is the waiting time of the $n$th
customer before service when
$$
Z_n=Y_{n-1}-(T_n-T_{n-1}),
$$
where $Y_n$ is the service time and $T_n$ the arrival time of the $n$th
customer. The service times $Y_0,Y_1,\ldots$ are i.i.d., and the
interarrival times $T_1-T_0,T_2-T_1,\ldots$ are i.i.d. and independent
of the entire service-time sequence. If the mean ${\bf E}Z_1$ is well
defined and positive and $P\{Z_1<0\}>0$, then $\{W_n:n\ge0\}$ is
a transient regenerative process, with regeneration at successive visits
to zero.
\end{description}

% RO-1.1-09: the measurable-transformation proposition
The preservation of the regenerative property under measurable
transformations also holds for transient regenerative processes.
\begin{prop}
Let $X=\{X(t):t\ge0\}$ be a transient regenerative process with
embedded renewal process $S$, taking values in $(E,{\cal E})$.
If $\varphi:E\to E_0$ is measurable from $(E,{\cal E})$ to
$(E_0,{\cal E}_0)$, then $\{\varphi(X(t)):t\ge0\}$ is regenerative
with the same embedded renewal process $S$, and is transient relative
to $S$.
\end{prop}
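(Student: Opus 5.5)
We verify the definition of a regenerative process for $\varphi(X)$ with the same $S$. The renewal-process part needs nothing new: $S$ and its distribution functions $F_0,F$ are unchanged. Transience is also automatic, since by definition it depends only on whether $F_0(\infty)<1$ or $F(\infty)<1$, and these quantities are the same as for $X$. It therefore remains to check the regeneration property: conditional on $S_n<\infty$, the shifted future $\{\varphi(X(S_n+t))\}$, taken jointly with the subsequent regeneration epochs measured from $S_n$, is independent of the pre-$S_n$ history of the new process (including $S_n$) and has the same law as the corresponding object at $S_0$ given $S_0<\infty$.

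\textbf{Step 1 (reduction to a map on path space).} Set $\Phi(x)(t)=\varphi(x(t))$. This map is measurable from the product $\sigma$-algebra ${\cal E}^T$ to ${\cal E}_0^T$, because each coordinate $x\mapsto\varphi(x(t))$ is measurable; it suffices to check this on generating cylinders $\{y:y(t)\in B\}$, whose preimages are $\{x:x(t)\in\varphi^{-1}(B)\}$. The post-$S_n$ object $(\varphi(X(S_n+\cdot)),(S_{n+k}-S_n)_{k\ge0})$ is then the image of $(X(S_n+\cdot),(S_{n+k}-S_n)_{k\ge0})$ under $\Phi\times\mathrm{id}$. Equality of laws is preserved under a measurable map, so the identity-in-law part of the regeneration property transfers at once. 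Concretely, in the displayed identity (\ref{p1_r1}) we replace $B_i$ by $\varphi^{-1}(B_i)$.

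\textbf{Step 2 (independence).} The pre-$S_n$ history of $\varphi(X)$, namely the $\sigma$-algebra generated by $S_0,\dots,S_n$ and $\varphi(X(t))$ for $t<S_n$, is contained in the pre-$S_n$ history of $X$. The post-$S_n$ $\sigma$-algebra for $\varphi(X)$ is contained in that for $X$, since it is generated by measurable functions of the latter. Independence under $P(\cdot\mid S_n<\infty)$ of two $\sigma$-algebras passes to their sub-$\sigma$-algebras, which gives the claim. The conditioning is well defined because $P\{S_n<\infty\}>0$ holds by the standing assumptions.

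\textbf{Main obstacle.} The only delicate point is the interpretation of ``the history before $S_n$'' when $S_n$ is random and possibly infinite, so that the two $\sigma$-algebras are defined on the event $\{S_n<\infty\}$. I would fix this as the $\sigma$-algebra generated by the random variables $X(t)1\{t<S_n\}$ (with a cemetery point), $t\in T$, together with $S_0,\dots,S_n$. Under $\varphi$, which we extend by mapping the cemetery to a cemetery point of $E_0$, the corresponding variables are measurable functions of these, so the inclusion in Step 2 holds.

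Finally, in continuous time we do not need joint measurability of $(t,\omega)$. The finite-dimensional formulation in (\ref{p1_r1}) together with cylinder $\sigma$-algebras suffices, because the $\pi$-$\lambda$ theorem extends equality on cylinders to the full $\sigma$-algebra.
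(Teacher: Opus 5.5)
Your proposal is correct and is the paper's argument: the paper's one-line proof applies the regeneration property of $X$ to inverse images under $\varphi$, which is your Step 1. Your Step 2, restricting the independence to sub-$\sigma$-algebras, together with the observation that transience depends only on the unchanged $S$, spells out what that sentence leaves implicit.
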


% RO-1.1-10: proof and the constant-transformation qualification
The result follows by applying the regeneration property to inverse
images under $\varphi$. The transformed process can also be recurrent
with respect to a different embedded renewal process. For example, a
constant transformation ($|E_0|=1$) gives a constant process, which
admits deterministic recurrent regeneration times.

%  \begin{rem} \label{p1_r2}
%  Ciclurile de regenerare ale procesului de regenerare $X(t)$ sunt
%  2-dependente, adic\u{a} v.a. $\{X(t): S_{k}\le t< S_{k+1} \}$ pot
%  depinde numai de v.a. $\{X(t): S_{k-1}\le t< S_k \}$ \c{s}i nu
%  depind de v.a. $\{X(t): S_{m-1}\le t< S_{m} \}$ pentru $m<k$.
%  \end{rem}
%
%#######################################################################  
%
  %
  %
  \subsection{Finite projections of regenerative processes}
\label{par_1_2}

Example (a) shows that a transient regenerative process need not have a
limiting equilibrium distribution. In the transient irreducible case of that example,
the limiting probabilities of individual states are all zero and hence
do not form a probability distribution. This possible loss of mass
motivated the introduction of finite projections of transient
regenerative processes~\cite{part1_4}.

The idea is to construct a new probability measure $P^*$ under which
$X$ is a recurrent regenerative process, with the same embedded renewal
structure $S$, by giving each cycle its original law conditional on
its length being finite. The initial cycle may have a different law
from the subsequent cycles.

For $n\ge1$, put
$$
r_0=P\{S_0<\infty\}=P\{Y_0<\infty\}>0,\;
r=P\{S_n<\infty\mid S_{n-1}<\infty\}=P\{Y_n<\infty\}>0.
$$
Write $p_n=P\{S_n<\infty\}=r_0r^n$, $n\ge0$.

We use the canonical measurable space of Section~\ref{par_1_1},
with measurable concatenation of cycle paths.
The Ionescu--Tulcea theorem~\cite{part1_6} gives the following result.
\begin{prop}\label{p1_r4}
There is a unique probability measure $P^*$ on $(\Omega,{\cal F})$
under which $X$ is regenerative with recurrent embedded renewal
process $S$, and the initial and subsequent cycles have their original
distributions conditional on their lengths being finite.
Moreover, $S_n\to\infty$, $P^*$-almost surely.
For every $n\ge0$, $0\le t_1<\cdots<t_k<\infty$ in $T$ and Borel sets
$B_1,\ldots,B_k$,
$$
P^*\{X(t_1)\in B_1,\ldots,X(t_k)\in B_k,
       S_{n-1}\le t_1<\cdots<t_k<S_n\}=
$$
$$
=P\{X(t_1)\in B_1,\ldots,X(t_k)\in B_k,
       S_{n-1}\le t_1<\cdots<t_k<S_n\mid S_n<\infty\},
$$
where $S_{-1}=0$.
\end{prop}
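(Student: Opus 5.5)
We construct $P^*$ cycle by cycle and then check the displayed identity. Let $Q_0$ be the law under $P$ of the initial cycle datum $C_0=(\{X(t):0\le t<S_0\},Y_0)$, and let $Q$ be the law of a generic subsequent cycle $C_n=(\{X(S_{n-1}+t):0\le t<Y_n\},Y_n)$, $n\ge1$. By the regeneration property, $Q$ does not depend on $n$. Define the conditioned laws
$$
Q_0^*(\cdot)=Q_0(\cdot\mid Y_0<\infty),\qquad Q^*(\cdot)=Q(\cdot\mid Y_1<\infty).
$$
These are well defined because $r_0>0$ and $r>0$. We apply the Ionescu--Tulcea theorem~\cite{part1_6} with the constant kernels $Q_0^*,Q^*,Q^*,\ldots$; in the independent case this is simply the product measure $Q_0^*\otimes Q^*\otimes Q^*\otimes\cdots$ on the space of cycle sequences. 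We then push it forward by the measurable concatenation map to $(\Omega,{\cal F})$. Under the resulting $P^*$ the cycle lengths $Y_n$ are independent, all a.s.\ finite, with $Y_1,Y_2,\ldots$ identically distributed. Hence $S$ is a recurrent renewal process.

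The regeneration property holds under $P^*$ because the cycles after $S_n$ are independent of the first $n+1$ cycles and follow the same product law as the cycles after $S_0$. Uniqueness follows because ${\cal F}$ is generated by the cycle data. The requirements (independent cycles with the prescribed conditioned marginals) fix the law of every finite block of cycles, hence fix $P^*$ on a $\pi$-system generating ${\cal F}$.

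For $S_n\to\infty$ $P^*$-a.s.: the $Y_n$, $n\ge1$, are i.i.d.\ under $P^*$, and $P^*\{Y_1>0\}>0$. The latter holds because $P\{Y_1\in(0,\infty)\}>0$ and
$$
P^*\{Y_1>0\}=P\{Y_1\in(0,\infty)\}/r .
$$
By the strong law of large numbers (or Borel--Cantelli applied to $\{Y_n>\varepsilon\}$ for small $\varepsilon$), $\sum Y_n=\infty$ a.s.

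For the displayed formula, the event
$$
A=\{X(t_i)\in B_i\ \forall i,\ S_{n-1}\le t_1<\cdots<t_k<S_n\}
$$
is, on $\{S_n<\infty\}$, a measurable function of the first $n+1$ cycle data $C_0,\ldots,C_n$. Write $A=\{(C_0,\ldots,C_n)\in G\}$ for a measurable set $G$ which forces $Y_0,\ldots,Y_n<\infty$. Then
$$
P^*(A)=(Q_0^*\otimes Q^{*\otimes n})(G)=\frac{(Q_0\otimes Q^{\otimes n})(G)}{r_0r^n}.
$$
Under $P$, the regeneration property gives
$$
P\{(C_0,\ldots,C_n)\in G\}=(Q_0\otimes Q^{\otimes n})(G)
$$
for such $G$, since the cycles are conditionally independent given finiteness of the previous ones. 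Dividing by $P\{S_n<\infty\}=p_n=r_0r^n$ gives the claim.

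The main obstacle is this last step. One must show rigorously that under $P$ the joint law of $(C_0,\ldots,C_n)$ restricted to $\{S_n<\infty\}$ is $Q_0\otimes Q^{\otimes n}$ restricted to finite lengths. This requires induction on $n$ using the regeneration property at $S_{n-1}$, conditional on $S_{n-1}<\infty$. That property is understood jointly with the cycle durations, which gives independence of $C_n$ from $(C_0,\ldots,C_{n-1})$ and the law $Q$ for $C_n$. One also needs to handle the case $t_k$ near $S_n$ carefully: the event $A$ involves only the path strictly before $S_n$, but it also requires $S_n>t_k$, which is part of $C_n$'s duration. The measurability of the concatenation map, assumed in the text, settles the remaining formal points.
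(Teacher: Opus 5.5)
Your proposal is correct and follows essentially the same route as the paper: conditioned cycle laws, Ionescu--Tulcea (here just a product measure), concatenation, and the inductive use of joint regeneration to show that the first $n+1$ cycles have law $Q_0^*\otimes Q^{*\otimes n}$ given $S_n<\infty$. Two small precisions: $Q$ should be the law of $C_n$ conditional on $S_{n-1}<\infty$, since $C_n$ is undefined otherwise; and under $P$ the event $A$ coincides with $\{(C_0,\ldots,C_n)\in G\}$ only on $\{S_n<\infty\}$, which is exactly the part that the conditioning in the displayed identity uses.
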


{\bf Proof.} A cycle consists of its duration and the path on its
half-open interval: $[0,S_0)$ for the initial cycle and
$[S_{i-1},S_i)$ for cycle $i\ge1$. The value at a finite right endpoint is excluded from that cycle. A zero-length cycle has an empty path.
Denote these random cycles, with their paths measured from their
starting times, by $C_0,C_1,\ldots$. Let $Q_0$ be the conditional
distribution of $C_0$ given $Y_0<\infty$, and let $Q$ be the common
distribution of a subsequent cycle conditional on its start and its
length being finite.

The joint regeneration property implies that, conditional on
$S_n<\infty$, the first $n+1$ cycles have distribution
$Q_0\otimes Q^{\otimes n}$. By the Ionescu--Tulcea theorem, these finite-cycle distributions extend
uniquely to a probability measure on the infinite sequence of cycles.
Concatenating the cycles gives the measure $P^*$ on $(\Omega,{\cal F})$.
The subsequent durations are i.i.d. with distribution $F(t)/r$.
Since this distribution assigns positive
probability to $(0,\infty)$, $S_n\to\infty$ a.s., so the path is
defined at every finite time.

For every event $A$ determined by the first $n+1$ cycles, including
their durations, the construction gives
$$
P^*(A)=P(A\mid S_n<\infty).
$$
The event displayed in the proposition is such an event.
Conversely, regeneration and the specified conditional cycle
distributions determine the law of every finite sequence of cycles.
These cycles determine the path and all regeneration times, and hence
the law on ${\cal F}$. This proves uniqueness.\ $\Box$

\begin{def1}\label{p1_r5}
The probability measure $P^*$ is called the \emph{finite projection}
of $P$ with respect to the renewal process $S$.
\end{def1}

Random variables and stochastic processes on $(\Omega,{\cal F})$ are
called \emph{initial} when considered under $P$, and \emph{projected}
when considered under $P^*$. We use unstarred notation for the initial
variables and their characteristics (e.g., $X,S_n,X(t),F(t)$), and a
superscript asterisk for their projected counterparts (e.g.,
$X^*,S_n^*,X^*(t),F^*(t)$). The random variables themselves are the
same measurable functions; their probability laws differ.

Conditioning the cycle lengths on finiteness gives, for finite $t\ge0$,
$$
F_0^*(t)=F_0(t)/r_0,\qquad F^*(t)=F(t)/r,\qquad
p_nP^*\{S_n\le t\}=P\{S_n\le t\},\quad n\ge0.
$$

For each fixed finite $t\in T$, let
${\cal F}_t=\sigma(X(u):u\in T,0\le u\le t)$ and put
$P_n\{\cdot\}=P\{\cdot\mid S_n<\infty\}$, $n\ge0$. Then
\begin{equation}\label{lim}
P^*\{A\}=\lim_{n\to\infty}P\{A\mid S_n<\infty\},
\qquad A\in{\cal F}_t.
\end{equation}

To prove this, first note that the conditional identity in the proof of
Proposition~\ref{p1_r4} gives, for $A\in{\cal F}_t$,
$$
P_n\{A\cap\{S_n>t\}\}=P^*\{A\cap\{S_n>t\}\}.
$$
On $\{S_n>t\}$ the path through time $t$ is wholly contained in the
first $n+1$ cycles. The equality therefore holds first for
cylinder events and then for every $A\in{\cal F}_t$ by the monotone
class theorem. Splitting $A$ according to $S_n>t$ or $S_n\le t$ yields
\begin{equation}\label{p1_r14}
\varepsilon_n=|P_n\{A\}-P^*\{A\}|
=|P_n\{A\cap\{S_n\le t\}\}
 -P^*\{A\cap\{S_n\le t\}\}|.
\end{equation}
Since $P_n\{S_n\le t\}=P^*\{S_n\le t\}$, both terms inside the last
absolute value lie between zero and $P^*\{S_n\le t\}$. Hence, since $S_n\to\infty$, $P^*$-almost surely, as established in
Proposition~\ref{p1_r4},
$$
\varepsilon_n\le P^*\{S_n\le t\}\longrightarrow0,
$$
which proves~(\ref{lim}).

\subsection{Finite projections as approximations of the original probabilities}
\label{par_1_3}

Finite projections make it possible to compare transient stochastic
models with recurrent ones. In this section, we study the accuracy of
this approximation by comparing the original and projected probabilities
and the basic characteristics of their embedded renewal processes.

\subsubsection{\sloppy The distance between probabilities $P$ and $P^*$ on the $\sigma$-algebra ${\cal F}_t$ \break I. Preliminary Results}

To begin, we will estimate the difference between the finite projection $P^*$ and the initial measure $P$ on the $\sigma$-algebra ${\cal F}_t,\ \ t>0$, using equality (\ref{p1_r14}).

\begin{prop} \label{p1_r8}
Let $P^*$ be the finite projection of the probability $P$ with respect to the renewal process $\{S_n\}_{n\ge 0}$. Then, for every $t>0$,
$$
\sup_{A\in {\cal F}_t} \vert P\{A\} -P^*\{A\} \vert
\le \inf_{n\ge 0}\left ( 1-p_n + P^*\{S_n \le t\} \right ). 
$$
\end{prop}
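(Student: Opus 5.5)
Fix $t>0$, $n\ge0$ and $A\in{\cal F}_t$. We bound $|P\{A\}-P^*\{A\}|$ by $1-p_n+P^*\{S_n\le t\}$ uniformly in $A$, then take the supremum over $A$ and the infimum over $n$. The plan is to pass through the conditional measure $P_n\{\cdot\}=P\{\cdot\mid S_n<\infty\}$ with the triangle inequality
$$
|P\{A\}-P^*\{A\}|\le |P\{A\}-P_n\{A\}|+|P_n\{A\}-P^*\{A\}|.
$$
The second term is exactly $\varepsilon_n$ in (\ref{p1_r14}). The argument given after (\ref{lim}) already shows $\varepsilon_n\le P^*\{S_n\le t\}$. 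It uses the identity $P_n\{A\cap\{S_n>t\}\}=P^*\{A\cap\{S_n>t\}\}$, which extends to all of ${\cal F}_t$ by the monotone class theorem, together with $P_n\{S_n\le t\}=P^*\{S_n\le t\}$. So that term needs no new work.

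The first term is the distance between a probability and its conditioning on an event $B=\{S_n<\infty\}$ with $P(B)=p_n$. Write $P\{A\}=P\{A\cap B\}+P\{A\cap B^c\}$ and $P_n\{A\}=P\{A\cap B\}/p_n$. Then
$$
P\{A\}-P_n\{A\}=P\{A\cap B^c\}-(1-p_n)P_n\{A\}.
$$
This is a difference of two numbers that both lie in $[0,1-p_n]$, so its absolute value is at most $1-p_n$. Adding the two bounds gives $|P\{A\}-P^*\{A\}|\le 1-p_n+P^*\{S_n\le t\}$ for every $A\in{\cal F}_t$ and every $n\ge0$, which is the claim.

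There is no real obstacle; the only point that needs care is that (\ref{p1_r14}) is available for an arbitrary $A\in{\cal F}_t$, not just for cylinder events. That extension was already established before (\ref{lim}) via the monotone class argument, so I would simply cite it. One might hope to avoid the crude $1-p_n$ term by sharper splitting, but for the stated inequality the triangle inequality suffices.
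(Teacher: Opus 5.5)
Your proposal is correct and is the same argument as the paper's. You insert $P_n=P(\cdot\mid S_n<\infty)$ via the triangle inequality, bound $|P_n\{A\}-P^*\{A\}|\le P^*\{S_n\le t\}$ using (\ref{p1_r14}), and bound $|P\{A\}-P_n\{A\}|\le 1-p_n$ by writing the difference as $P\{A\cap\{S_n=\infty\}\}-(1-p_n)P_n\{A\}$ with both terms in $[0,1-p_n]$, before taking the supremum over $A$ and the infimum over $n$.
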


{\bf Proof.} Let $A\in {\cal F}_t$ and $n\ge 0$. Clearly,
\begin{equation}
\vert P\{A\} -P^*\{A\} \vert \le \vert P\{A\} -P_n\{A\} \vert +
\vert P_n\{A\} -P^*\{A\} \vert \label{p1_r7}
\end{equation}
By (\ref{p1_r14}),
$$
\vert P_n\{A\}-P^*\{A\} \vert \le \max \left ( P_n\{A\cap
\{S_n\le t\}\}; P^*\{A\cap \{S_n\le t\}\} \right ) \le P^*\{S_n\le t\},
$$
Also,
$$
\vert P\{A\}-P_n\{A\}\vert
=\left \vert P\{A \cap \{S_n<\infty\}\} 
+ P\{A \cap \{S_n=\infty\}\} - \frac{ P\{A \cap
\{S_n<\infty\}\}} {p_n} \right \vert =
$$
$$
=\vert P\{A \cap \{S_n<\infty\}\}(1-p_n^{-1})+
P\{A \cap \{S_n=\infty\}\} \vert \le
$$
$$
\le \max \left (P\{A \cap \{S_n<\infty\}\}(p_n^{-1}-1); P\{A
\cap \{S_n=\infty\}\} \right ) \le 1-p_n
$$
Combining these estimates with (\ref{p1_r7}), taking the supremum over $A\in {\cal F}_t$, and then taking the infimum over integers $n\ge 0$ proves the assertion. $\Box$ \bigskip

The approximation error of the probability $P$ by its finite projection $P^*$ on the $\sigma$-algebra 
${\cal F}_t$ depends on the values of $r_0,\ r,\ t$, as well as on the properties of the renewal process $\{S_n\}$.

{\bf Remark.} This is a preliminary bound, but its proof suggests a
method for comparing the original and projected probabilities through
an identity such as~(\ref{p1_r14}). For fixed projected renewal laws,
central limit estimates applied to Proposition~\ref{p1_r8} give
convergence-rate bounds as the finite-cycle probabilities $r_0,r$
approach one.

We now present some consequences of Proposition \ref{p1_r8} for ordinary (non-delayed) renewal processes, i.e., renewal processes for which $P\{S_0=0\}=1$. Here $p_n=r^n$. We use results concerning convergence rates in the Central Limit Theorem from \cite{part1_8} and \cite{part1_9}.

\begin{prop} \label{c_r1}
Let $E Y^*_1=\mu$,
$\sigma^2=E (Y^*_1-\mu)^2>0$, and
$\alpha_j=E\vert Y^*_1-\mu\vert^j$ for $j=3,k$, with
$\alpha_k<\infty$ for some $k\ge 3$. Then, for every $t>0$,
$$
\sup_{A\in {\cal F}_t} \vert P\{A\} -P^*\{A\} \vert
\le
$$
$$
\le \inf_{n\ge 1}\left ( 1-p_n + 
\Phi \left (x(n)\right )+ C(k)
\left (1+\left \vert x(n)\right\vert \right )^{-k} 
\left (\frac{\alpha_3}{\sigma^3 \sqrt{n}}+ 
\frac{\alpha_k}{\sigma^k n^{(k-2)/2}}\right ) \right ),
$$
where $x(n)=(t-\mu n)/(\sigma\sqrt{n})$, $C(k)$ is a positive constant
depending only on $k$, and $\Phi$ is the standard normal distribution function.
\end{prop}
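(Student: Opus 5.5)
The plan is to start from Proposition~\ref{p1_r8}, restrict the infimum to $n\ge1$ (which can only increase it), and bound each term $P^*\{S_n\le t\}$ by a non-uniform Berry--Esseen type estimate for sums of i.i.d.\ variables. Since the process is ordinary, $S_0=0$ a.s.\ under $P$, and also under $P^*$ because $r_0=1$. Hence $S_n=Y_1+\cdots+Y_n$, where under $P^*$ the $Y_i$ are i.i.d.\ with distribution $F^*=F/r$ by Proposition~\ref{p1_r4}. The moments $\mu,\sigma^2,\alpha_3,\alpha_k$ in the statement are those of $Y_1^*$, so they are exactly the moments entering a CLT for $S_n$ under $P^*$.

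Step one: standardize. For $n\ge1$,
$$
P^*\{S_n\le t\}=P^*\Bigl\{\frac{S_n-n\mu}{\sigma\sqrt n}\le x(n)\Bigr\}=G_n(x(n)),
$$
where $G_n$ is the distribution function of the normalized sum under $P^*$.

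Step two: apply the non-uniform estimate from \cite{part1_8}, \cite{part1_9}, in the form valid for $k\ge3$ finite absolute moments without a lattice or smoothness assumption. For all real $x$,
$$
|G_n(x)-\Phi(x)|\le C(k)(1+|x|)^{-k}\Bigl(\frac{\alpha_3}{\sigma^3\sqrt n}+\frac{\alpha_k}{\sigma^k n^{(k-2)/2}}\Bigr).
$$
Evaluating at $x=x(n)$ gives
$$
P^*\{S_n\le t\}\le\Phi(x(n))+C(k)(1+|x(n)|)^{-k}\Bigl(\frac{\alpha_3}{\sigma^3\sqrt n}+\frac{\alpha_k}{\sigma^k n^{(k-2)/2}}\Bigr).
$$

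Step three: insert this into $1-p_n+P^*\{S_n\le t\}$ and take the infimum over $n\ge1$. This yields the claim, since it bounds the infimum in Proposition~\ref{p1_r8} from above.

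The main obstacle is step two: quoting the non-uniform CLT rate precisely. I would use the Bikelis/Petrov-type bound $|G_n(x)-\Phi(x)|\le C(1+|x|)^{-3}\alpha_3/(\sigma^3\sqrt n)$ for $k=3$. For general $k$, I would use the Petrov inequality, which adds a term of order $\alpha_k\sigma^{-k}n^{-(k-2)/2}(1+|x|)^{-k}$; the $\alpha_3$ term then carries the factor $(1+|x|)^{-k}$ only after accounting for the truncation argument in the source. I would check that the cited references indeed state the bound with the common factor $(1+|x|)^{-k}$ on both terms. If they give the $\alpha_3$ term only with $(1+|x|)^{-3}$, the statement should be read with $C(k)$ absorbing that difference: when $|x(n)|$ is large, the bound is trivial or else dominated by the $\alpha_k$ term via Chebyshev-type tail estimates $P^*\{|S_n-n\mu|\ge\sigma\sqrt n|x|\}\le C\alpha_k\sigma^{-k}n^{-(k-2)/2}|x|^{-k}$ together with the Gaussian tail of $\Phi$. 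Everything else is the routine standardization and an appeal to Proposition~\ref{p1_r8}.
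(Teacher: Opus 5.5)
Your main argument is the paper's proof: apply Proposition~\ref{p1_r8} with $r_0=1$, bound $P^*\{S_n\le t\}$ by Petrov's non-uniform estimate from \cite{part1_8}, which already carries the common factor $(1+|x|)^{-k}$ on both terms, and take the infimum over $n\ge1$. Drop your fallback branch, since it is not needed and is wrong as written: the bound $P^*\{|S_n-n\mu|\ge\sigma\sqrt n\,|x|\}\le C\alpha_k\sigma^{-k}n^{-(k-2)/2}|x|^{-k}$ fails in general, because for fixed $x$ the left side tends to $2(1-\Phi(|x|))>0$ by the central limit theorem while the right side tends to zero, and a $(1+|x|)^{-3}$ factor cannot be absorbed into $C(k)$.
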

{\bf Proof.} Under the conditions of the proposition, the following inequality holds:
$$
\left|P^*\{S_n\le t\}-\Phi(x(n))\right|
\le C(k)(1+|x(n)|)^{-k}
\left(\frac{\alpha_3}{\sigma^3\sqrt n}
+\frac{\alpha_k}{\sigma^k n^{(k-2)/2}}\right),
$$
for every integer $n\ge 1$ and $t>0$, where $C(k)$ is a positive
constant depending only on $k$; see~\cite{part1_8}. Applying
Proposition~\ref{p1_r8} with $r_0=1$ and taking the infimum over
integers $n\ge 1$ proves the assertion.~$\Box$

For $0<r<1$, whenever the expression under the infimum is at most $1$
for some integer $n\ge 1$, the infimum is attained and is therefore a minimum.

\noindent{\bf Corollary.} Under the conditions of
Proposition~\ref{c_r1} with $k=3$,
$$
\sup_{A\in{\cal F}_t}|P\{A\}-P^*\{A\}|
\le\inf_{n\ge1}\left(1-p_n+\Phi(x(n))+
\frac{A\alpha_3}{\sigma^3\sqrt n}(1+|x(n)|)^{-3}\right),
$$
where $A$ is an absolute positive constant. This follows by taking
$A=2C(3)$ in the proposition.

The normal-tail asymptotic
$$
1-\Phi(x)\sim\frac{e^{-x^2/2}}{x\sqrt{2\pi}},\qquad x\to\infty,
$$
also gives, after increasing $C(k)$,
$$
\sup_{A\in{\cal F}_t}|P\{A\}-P^*\{A\}|
\le\inf_{n\ge m(t)}\left(1-p_n+
C(k)(1+|x(n)|)^{-k}
\left(\frac{\alpha_3}{\sigma^3\sqrt n}+
\frac{\alpha_k}{\sigma^k n^{(k-2)/2}}\right)\right).
$$
Here $m(t)$ is a sufficiently large positive integer, which may depend
on the fixed renewal law and on $k$. Indeed, $x(n)\sim-\mu\sqrt n/\sigma$,
so the normal tail decays exponentially in $n$ and is eventually bounded
by the displayed polynomial remainder. A continuous relaxation may
help select candidate indices, whose integer values must then be checked.

We next use the Chernoff bound to obtain a preliminary estimate when
the projected interval law has an exponential moment.
\begin{prop}\label{cramer_bound}
Assume $P\{S_0=0\}=1$ and that $Y_1^*$ satisfies Cram\'er's condition
\begin{equation}\label{cram}
\varphi(\lambda)={\bf E}^*e^{\lambda Y_1^*}<\infty
\quad\hbox{for some }\lambda>0.
\end{equation}
Write $\mu={\bf E}^*Y_1^*>0$ and
$$
h(a)=\sup_{\lambda:\,\varphi(\lambda)<\infty}
       \{a\lambda-\ln\varphi(\lambda)\}.
$$
For fixed $t>0$, put $n_0=\lfloor t/\mu\rfloor+1$ and choose a finite
constant $C=C(t)$ with $0<C\le h(t/n_0)$. Then
$$
\sup_{A\in{\cal F}_t}|P\{A\}-P^*\{A\}|
\le\inf_{n\ge n_0}\{1-p_n+e^{-Cn}\}.
$$
If $q=\ln(1/r)$ satisfies $0<q<C$, the infimum is attained at one of
the integers
$$
\max\{n_0,\lfloor x_0\rfloor\},\qquad
\max\{n_0,\lceil x_0\rceil\},\qquad
x_0=\frac{\ln(C/q)}{C-q}.
$$
In particular, if also $q<1$ and
$n_q=\lceil2C^{-1}\ln(1/q)\rceil\ge n_0$, then
$$
\sup_{A\in{\cal F}_t}|P\{A\}-P^*\{A\}|
\le 1-\exp\!\left[-q\left(\frac{2}{C}\ln\frac1q+1\right)\right]+q^2.
$$
For a fixed projected law and fixed $t$, this is
$O((1-r)|\ln(1-r)|)$ as $r\uparrow1$.
\end{prop}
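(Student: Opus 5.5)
We apply Proposition~\ref{p1_r8} with $r_0=1$, so that $p_n=r^n=e^{-qn}$, and bound $P^*\{S_n\le t\}$ by a Chernoff estimate for the lower tail. Restricting the infimum in Proposition~\ref{p1_r8} to $n\ge n_0$ only enlarges it. So the first assertion reduces to showing that
$$
P^*\{S_n\le t\}\le e^{-Cn}\quad\hbox{for } n\ge n_0.
$$
Since $P\{S_0=0\}=1$, $S_n$ is under $P^*$ a sum of $n$ i.i.d. copies of $Y_1^*$.

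\emph{Chernoff step.} For $\lambda\le 0$ we have $\varphi(\lambda)\le1<\infty$, because $Y_1^*\ge0$. Markov's inequality applied to $e^{\lambda S_n}$ gives, for $a=t/n$,
$$
P^*\{S_n\le na\}\le \varphi(\lambda)^n e^{-\lambda na}
=\exp\bigl(-n(a\lambda-\ln\varphi(\lambda))\bigr).
$$
Next we check that the supremum over $\lambda\le0$ equals $h(a)$ when $a<\mu$. For $\lambda>0$ with $\varphi(\lambda)<\infty$, Jensen's inequality gives $\ln\varphi(\lambda)\ge\lambda\mu$. Hence $a\lambda-\ln\varphi(\lambda)\le(a-\mu)\lambda<0$, which is below the value $0$ obtained at $\lambda=0$. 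Therefore $P^*\{S_n\le t\}\le e^{-nh(t/n)}$.

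\emph{Monotonicity of $h$.} The function $h$ is a supremum of affine functions of $a$, and on $(-\infty,\mu]$ it equals the supremum over $\lambda\le0$. Each of these affine functions is nonincreasing in $a$, so $h$ is nonincreasing there. For $n\ge n_0$ we have $t/n\le t/n_0<\mu$, because $n_0>t/\mu$. Hence $h(t/n)\ge h(t/n_0)\ge C$, which gives the claim. This step, namely handling the negative $\lambda$ and the monotonicity of $h$ with the given sign conventions, is where care is needed; everything else is calculus.

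\emph{Location of the infimum.} Consider the continuous relaxation $g(x)=1-e^{-qx}+e^{-Cx}$. Its derivative is
$$
g'(x)=qe^{-qx}-Ce^{-Cx}=e^{-qx}\bigl(q-Ce^{-(C-q)x}\bigr).
$$
Since $C>q$, the bracket is increasing in $x$ and vanishes exactly at $x_0=\ln(C/q)/(C-q)>0$. Thus $g$ decreases on $[0,x_0]$ and increases on $[x_0,\infty)$. On the integers $n\ge n_0$ the minimum is therefore attained at $\max\{n_0,\lfloor x_0\rfloor\}$ or at $\max\{n_0,\lceil x_0\rceil\}$. In particular, the infimum is a minimum.

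\emph{Explicit bound.} Take $n=n_q\ge n_0$. Since $n_q\ge 2C^{-1}\ln(1/q)$, we get $e^{-Cn_q}\le e^{-2\ln(1/q)}=q^2$. Since $n_q\le 2C^{-1}\ln(1/q)+1$, we get $1-e^{-qn_q}\le 1-\exp[-q(2C^{-1}\ln(1/q)+1)]$. Adding the two estimates gives the displayed inequality. The condition $q<1$ ensures $\ln(1/q)>0$.

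\emph{Asymptotics.} As $r\uparrow1$, $q=\ln(1/r)\sim 1-r$. Using $1-e^{-y}\le y$, the bound is at most
$$
q\bigl(2C^{-1}\ln(1/q)+1\bigr)+q^2=O\bigl((1-r)|\ln(1-r)|\bigr),
$$
with $C$ fixed by the projected law and by $t$.
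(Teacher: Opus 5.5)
Your proposal is correct and follows the paper's proof essentially step for step. It uses the same ingredients in the same order: Proposition~\ref{p1_r8} with $r_0=1$; the lower-tail Chernoff bound from Markov's inequality with $\lambda<0$, where Jensen's inequality $\ln\varphi(\lambda)\ge\lambda\mu$ rules out $\lambda>0$; monotonicity of $h$ below $\mu$ to pass from $t/n$ to $t/n_0$; the sign analysis of the derivative of $1-e^{-qx}+e^{-Cx}$; and evaluation at $n_q$ followed by $1-e^{-u}\le u$.
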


{\bf Proof.} The Chernoff inequality (see~\cite{part1_6},
\cite{part1_14}) gives, for $a<\mu$,
$$
P^*\{S_n\le na\}\le e^{-nh(a)}.
$$
It follows directly by applying Markov's inequality to
$e^{\lambda S_n}$ for $\lambda<0$ and optimizing. For $a<\mu$,
positive $\lambda$ cannot improve the exponent, since
$\ln\varphi(\lambda)\ge\lambda\mu$. The rate function is positive
on $[0,\mu)$ and nonincreasing there, with its minimum zero at $\mu$.
Thus, for $n\ge n_0$,
$$
h(t/n)\ge h(t/n_0)\ge C,\qquad
P^*\{S_n\le t\}\le e^{-Cn}.
$$
If $h(t/n_0)=\infty$, any finite positive $C$ may be used.
Proposition~\ref{p1_r8} now gives the first bound.

For $0<q<C$, consider $f(x)=1-e^{-qx}+e^{-Cx}$, $x\ge0$.
Its derivative is $f'(x)=qe^{-qx}-Ce^{-Cx}$, which is negative
before $x_0=\ln(C/q)/(C-q)$ and positive after $x_0$.
The minimum over the allowed integer indices is therefore attained
at one of these two integers.

For the last bound, use the admissible integer $n_q$. Since
$n_q<2C^{-1}\ln(1/q)+1$ and $e^{-Cn_q}\le q^2$, we obtain
$$
1-p_{n_q}+e^{-Cn_q}
\le1-\exp\!\left[-q\left(\frac2C\ln\frac1q+1\right)\right]+q^2.
$$
Finally $q\sim1-r$, and $1-e^{-u}\le u$ for $u\ge0$, proving
the stated order.\ $\Box$

For fixed $t$ and a fixed projected inter-renewal law, Proposition~\ref{c_r1}
gives, as $r\uparrow1$,
$$
\sup_{A\in{\cal F}_t}|P\{A\}-P^*\{A\}|
=O\big((1-r)^{(k+1)/(k+3)}\big).
$$
Indeed, $x(n)\sim-\mu\sqrt n/\sigma$, so the normal term decays
exponentially and the remainder is $O(n^{-(k+1)/2})$.
Since $1-p_n=1-r^n\le n(1-r)$, taking
$n=\lceil(1-r)^{-2/(k+3)}\rceil$ gives the stated bound.
Proposition~\ref{cramer_bound}, under its exponential-moment assumption,
gives the smaller upper bound $O((1-r)|\ln(1-r)|)$.
Both bounds tend to zero; here $r_0=1$, since the renewal process is ordinary.

The following subsections give stronger approximation estimates under
weaker assumptions than these preliminary bounds.

  \subsubsection{The difference between the characteristics of the embedded renewal processes over the time interval $[0,t]$}

In this section, we are interested in the difference between the renewal process $\{S_n\}$ embedded in the initial regenerative process $X$ and the renewal process embedded in the projected process $X^*$.

Let $G(t)$ be a distribution function concentrated on $[0,\infty]$,
with $G\{0\}<1$. We denote by $G^{(\star n)}(t)$ the $n$-fold
convolution of the function $G(t)$ with itself. The primary object
of study in renewal theory is the renewal equation:
\begin{equation}
Z(t)=z(t)+\int_0^tZ(t-y)G\{dy\},\qquad t\ge 0, 
\label{tir1}
\end{equation}
and the asymptotic behavior of its solution $Z$. The interval of integration is assumed to be closed, and the functions $Z(t)$ and $z(t)$ are zero on the negative half-axis.

The probabilistic interpretation and general theory of renewal equations (\ref{tir1}) can be found, for example, in \cite{part1_1}, \cite{c2}. Recall that for $z(t)=1,\ t\ge 0$, the solution of the equation (\ref{tir1}) is the renewal function:
$$
U(t)=\sum_{n=0}^\infty G^{(\star n)}(t),
$$
whose value at point $t$ equals the mean number of renewals that occur by time $t$ in an ordinary renewal process, where zero is considered a renewal moment (this result can be found in \cite{c2}).

\begin{lema} \label{eq_ren} {\normalsize \cite{c2}}. \ $U(t)<\infty$ for every $t\ge 0$, and
$$
U(t)=1+\int_0^t U(t-y)\,G\{dy\},\qquad t\ge 0.
$$
If $z$ is measurable and bounded on every finite interval, then
the function $Z$ defined by
$$
Z(t)=\int_0^t z(t-y)\, U\{dy\}, \qquad t\ge 0,
$$
is the unique measurable solution of equation (\ref{tir1}) that is
bounded on every finite interval. In particular,
$Z(0)=z(0)/(1-G\{0\})$.
\end{lema}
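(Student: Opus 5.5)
The proof splits into three parts: finiteness of $U(t)$, the renewal equation for $U$, and existence and uniqueness for a general bounded $z$. The key fact behind all three is a geometric bound on the convolution powers $G^{(\star n)}(t)$.

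\textbf{Finiteness of $U$.} Since $G\{0\}<1$, we can choose $\delta>0$ with $G[0,\delta]=\beta<1$. Let $\xi_1,\xi_2,\ldots$ be i.i.d. with law $G$; if $G$ is defective, allow the value $\infty$. Then $G^{(\star n)}(t)=P\{\xi_1+\cdots+\xi_n\le t\}$.

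If the sum is at most $t$, at most $m=\lfloor t/\delta\rfloor$ of the $\xi_i$ can exceed $\delta$. Splitting $n$ into blocks of length $m+1$, each block must contain some $\xi_i\le\delta$. This gives
$$
G^{(\star n)}(t)\le\bigl(1-(1-\beta)^{m+1}\bigr)^{\lfloor n/(m+1)\rfloor}.
$$
A simpler alternative uses a Chernoff-type bound: for any $s>0$,
$$
G^{(\star n)}(t)\le e^{st}\,\hat G(s)^n,\qquad \hat G(s)=\int_{[0,\infty)}e^{-sy}G\{dy\}.
$$
Here $\hat G(s)\le G\{0\}+\int_{(0,\infty)}e^{-sy}G\{dy\}<1$ for large $s$, by dominated convergence, because $G\{0\}<1$. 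Either way the terms decay geometrically, so $U(t)<\infty$ and $U$ is bounded on finite intervals.

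\textbf{Renewal equation for $U$.} Split off the $n=0$ term and write $G^{(\star n)}=G^{(\star(n-1))}\star G$. Summing, and using monotone convergence to interchange sum and integral, gives
$$
U(t)=1+\int_0^t U(t-y)\,G\{dy\}.
$$

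\textbf{Existence.} Define $Z=z\star U=\sum_{n\ge0} z\star G^{(\star n)}$. This is finite, since $|Z(t)|\le \sup_{[0,t]}|z|\cdot U(t)$, so $Z$ is also bounded on finite intervals. It is measurable by Fubini. The same term-by-term computation as for $U$ shows $Z=z+Z\star G$. Splitting $z=z^+-z^-$ justifies the interchanges by monotone convergence.

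\textbf{Uniqueness.} Let $D$ be the difference of two solutions that are bounded on finite intervals. Then $D=D\star G$, and iterating gives $D=D\star G^{(\star n)}$. Hence
$$
|D(t)|\le \sup_{[0,t]}|D|\;G^{(\star n)}(t)\to0,
$$
so $D\equiv0$. The value at zero follows from the equation itself: at $t=0$ it reads $Z(0)=z(0)+Z(0)G\{0\}$, so $Z(0)=z(0)/(1-G\{0\})$. Equivalently, $U\{0\}=\sum_n G\{0\}^n=1/(1-G\{0\})$.

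The main obstacle is the finiteness of $U(t)$ when $G$ has an atom at zero. This is exactly where $G\{0\}<1$ is needed, and the transform bound $\hat G(s)<1$ handles it cleanly. Once $U$ is finite, the remaining steps are routine applications of Fubini and monotone convergence.
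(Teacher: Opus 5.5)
Your proof is correct and is essentially the standard argument in Feller's Vol.~II \cite{c2}, which the paper cites instead of giving its own proof. The steps match: geometric decay of $G^{(\star n)}(t)$ by the blocking argument or the transform bound $G^{(\star n)}(t)\le e^{st}\hat G(s)^n$, term-by-term summation with monotone convergence for the renewal equations, and uniqueness from $D=D\star G^{(\star n)}$ with $|D(t)|\le\sup_{[0,t]}|D|\,G^{(\star n)}(t)\to0$.
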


Using the symbol $\star$ for the convolution operation, we rewrite (\ref{tir1}) in a shortened form:
$$
Z=z+G\star Z,
$$
and for the equalities in the lemma, we have respectively:
$$
U=1+G\star U, \qquad Z=U\star z.
$$

Let $H(t)$ and $H^*(t)$ denote the renewal functions corresponding
to $F$ and $F^*$, respectively:
$$
H(t)=\sum_{n=0}^\infty F^{(\star n)}(t),\qquad
H^*(t)=\sum_{n=0}^\infty F^{*\, (\star n)}(t).
$$
Let $Z$ and $Z^*$ be the solutions of (\ref{tir1}) with the same
locally bounded measurable function $z$ and with $G=F$ and $G=F^*$,
respectively.

\begin{prop}\label{tir2}
$$
Z^*(t)-Z(t)=(1-r)(H^*\star F^*\star H\star z)(t).
$$
\end{prop}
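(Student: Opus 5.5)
Since $F^*=F/r$ on finite $t$, we have $F=rF^*$, and the identity should follow from algebra in the convolution ring of locally finite measures on $[0,\infty)$. By Lemma~\ref{eq_ren}, $Z=H\star z$ and $Z^*=H^*\star z$, with $H=1+F\star H$ and $H^*=1+F^*\star H^*$. Here $F$ is defective, but $F\{0\}\le F^*\{0\}<1$, so the lemma applies to both. The plan is to first prove the renewal-function identity
$$
H^*-H=(1-r)\,H^*\star F^*\star H,
$$
and then convolve both sides with $z$.

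To get this identity, I would compute $H^*-H$ through the renewal equations. Write $H^*-H=F^*\star H^*-F\star H$. Then substitute $F=rF^*$ and add and subtract $F^*\star H$:
$$
H^*-H=F^*\star(H^*-H)+(1-r)F^*\star H.
$$
So $D=H^*-H$ solves the renewal equation (\ref{tir1}) with $G=F^*$ and forcing term $d=(1-r)F^*\star H$. This forcing term is measurable, nonnegative, and bounded on finite intervals, since $F^*\star H\le H(t)<\infty$. The function $D$ is also bounded on finite intervals, because both $H$ and $H^*$ are finite by Lemma~\ref{eq_ren}. Uniqueness in Lemma~\ref{eq_ren} then gives $D=H^*\star d$, which is the identity above.

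Finally, $Z^*-Z=(H^*-H)\star z=(1-r)H^*\star F^*\star H\star z$. This step uses associativity and commutativity of Stieltjes convolution for measures on $[0,\infty)$ that are finite on compacts, applied to a locally bounded measurable $z$. Equivalently, one can rerun the same argument with $Z,Z^*$ in place of $H,H^*$: $Z^*-Z$ solves the $F^*$-renewal equation with forcing term $(1-r)F^*\star Z$, and $Z=H\star z$.

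The only delicate point is the justification of these convolution manipulations, namely Fubini/Tonelli for the iterated integrals with possibly signed $z$. I would handle it by splitting $z=z^+-z^-$, or by noting that everything is absolutely convergent on $[0,t]$ because all measures involved are finite there. The closed integration interval convention matches the one used in the lemma, so atoms at $0$ cause no trouble.
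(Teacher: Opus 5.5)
Your proof is correct and follows the paper's argument. You subtract the two renewal equations, use $F=rF^*$ to obtain a renewal equation for the difference with forcing term $(1-r)F^*\star(\cdot)$, and invoke the uniqueness part of Lemma~\ref{eq_ren}. The only difference is that you first do this for $H,H^*$ and then convolve with $z$. The paper applies the argument directly to $Z^*-Z$, which is your stated alternative; this avoids converting the renewal-function identity into a measure identity but is otherwise the same.
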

{\bf Proof.} By definition,
$$
Z^*=z+F^*\star Z^*,\qquad Z=z+F\star Z.
$$
Subtracting and using $F=rF^*$ gives
$$
Z^*-Z=(1-r)F^*\star Z+F^*\star(Z^*-Z).
$$
Thus $Z^*-Z$ satisfies the renewal equation with forcing function
$(1-r)F^*\star Z$, which is bounded on every finite interval.
Lemma~\ref{eq_ren} gives
$Z^*-Z=(1-r)H^*\star F^*\star Z$.
Since $Z=H\star z$, the asserted identity follows. $\Box$

From Proposition~\ref{tir2} we obtain the following properties:
\begin{description}
\item[(a)] $Z^*(t)\geq Z(t)$ if $z\geq0$.
\item[(b)] For a general $z$,
\begin{eqnarray*}
|Z^*(t)-Z(t)|
&=&(1-r)\big|(H^*\star F^*\star H\star z)(t)\big|\\
&\leq&(1-r)(H^*\star F^*\star H^*\star |z|)(t),
\end{eqnarray*}
since $H\{dy\}\leq H^*\{dy\}$.
\item[(c)] Taking $z\equiv1$ gives
$$
H^*(t)-H(t)=(1-r)(H^*\star F^*\star H)(t)
\leq(1-r)(H^*\star F^*\star H^*)(t).
$$
\end{description}

For fixed $F^*$ and $z$, the factors multiplying $1-r$ in these
upper bounds depend only on $t$ and not on $r$.

Let $N(t)=\inf\{n\geq0:S_n>t\}$ be the renewal counting process.
It counts the renewal epochs in $[0,t]$; in an ordinary process
($S_0=0$ a.s.) the renewal at zero is included.
For an ordinary renewal process,
$$
H(t)={\bf E}N(t),\qquad H^*(t)={\bf E}N^*(t),\qquad t\geq0.
$$

For a delayed renewal process, the mean functions
$H_0(t)={\bf E}N(t)$ and $H_0^*(t)={\bf E}N^*(t)$ satisfy
\cite{c2}
$$
H_0=F_0+F\star H_0,\qquad H_0^*=F_0^*+F^*\star H_0^*.
$$
As in the proof of Proposition~\ref{tir2}, subtracting and using
$F=rF^*$ and $F_0=r_0F_0^*$ gives
$$
H_0^*-H_0=(1-r_0)F_0^*+(1-r)F^*\star H_0
+F^*\star(H_0^*-H_0).
$$
Applying Lemma~\ref{eq_ren}, we obtain
\begin{equation}\label{dif_1}
H_0^*(t)-H_0(t)=(1-r_0)(H^*\star F_0^*)(t)
+(1-r)(H^*\star F^*\star H_0)(t).
\end{equation}
Consequently,
$$
0\leq H_0^*(t)-H_0(t)\leq\max(1-r_0,1-r)C(t),
$$
where, for fixed $F^*$ and $F_0^*$, one may take
$$
C(t)=H_0^*(t)+(H^*\star F^*\star H_0^*)(t).
$$
Here we used $H^*\star F_0^*=H_0^*$ and $H_0\leq H_0^*$.

We now examine the difference between the variances. As in
\cite{part1_3}, put $\psi(t)={\bf E}[N(t)(N(t)+1)]$. Then
$$
\mathop{\rm Var}N(t)=\psi(t)-H_0(t)-H_0(t)^2.
$$
Using Laplace--Stieltjes transforms as in~\cite{part1_3} gives
$\psi(t)=2(F_0\star H\star H)(t)$. Hence
$$
\mathop{\rm Var}N(t)=2(F_0\star H\star H)(t)-H_0(t)-H_0(t)^2,
$$
$$
\mathop{\rm Var}N^*(t)=2(F_0^*\star H^*\star H^*)(t)
-H_0^*(t)-H_0^*(t)^2.
$$
Subtracting and using $F_0=r_0F_0^*$, we have
$$
\mathop{\rm Var}N^*(t)-\mathop{\rm Var}N(t)
=2\big(F_0^*\star[H^*\star H^*-r_0H\star H]\big)(t)
$$
$$
\hfill{}-[H_0^*(t)-H_0(t)][1+H_0^*(t)+H_0(t)].
$$
Since $H^*\star H^*-H\star H=(H^*-H)\star(H^*+H)$,
this yields
\begin{eqnarray}\label{dif_2}
\mathop{\rm Var}N^*(t)-\mathop{\rm Var}N(t)
&=&2\big(F_0^*\star(H^*-H)\star(H^*+H)\big)(t)\nonumber\\
&&+2(1-r_0)(F_0^*\star H\star H)(t)\nonumber\\
&&-[H_0^*(t)-H_0(t)][1+H_0^*(t)+H_0(t)].
\end{eqnarray}

We use the following averaging lemma. Its discrete counterpart is
related to regular N\"orlund means; see~\cite{hardy} for that theory.
\begin{lema}\label{norl}
Let $V$ be a nonzero locally finite positive measure on $[0,\infty)$,
$V([0,t])>0$ for large $t$, and assume, for every fixed $a>0$,
\begin{equation}\label{norl1}
\frac{V([0,t-a])}{V([0,t])}\longrightarrow1\qquad(t\to\infty).
\end{equation}
If $f$ is measurable, bounded on finite intervals, and tends to $L$
(finite or infinite), then
$(V*f)(t)/V([0,t])\longrightarrow L$.
\end{lema}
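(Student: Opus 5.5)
This is a Toeplitz-type averaging argument: write $(V*f)(t)=\int_{[0,t]} f(t-y)\,V\{dy\}$. The plan is to split the integral at $y=t-a$ for a large fixed $a$. Near $y=t$ the argument of $f$ is small, and that piece carries negligible relative mass by (\ref{norl1}). On the rest, $t-y\ge a$, so $f$ is already close to $L$.

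\emph{Finite $L$.} Given $\varepsilon>0$, choose $a$ with $|f(s)-L|<\varepsilon$ for $s>a$, and let $M=\sup_{[0,a]}|f|<\infty$, which exists by local boundedness. Decompose
\[
(V*f)(t)-L\,V([0,t])=\int_{[0,t]}(f(t-y)-L)\,V\{dy\}.
\]
On $y<t-a$ we have $t-y>a$, so this part is at most $\varepsilon V([0,t])$ in absolute value. On $y\in[t-a,t]$ the integrand is bounded by $M+|L|$, so this part is at most $(M+|L|)\bigl(V([0,t])-V([0,t-a)])\bigr)$.

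Dividing by $V([0,t])$, the second contribution tends to zero provided $V([0,t-a))/V([0,t])\to1$. Condition (\ref{norl1}) is stated with the closed interval $[0,t-a]$. To pass to the half-open interval, I would use $V([0,t-a))\ge V([0,t-2a])$ and apply (\ref{norl1}) with $2a$ in place of $a$. This gives $\limsup\le\varepsilon$, and letting $\varepsilon\to0$ finishes the finite case.

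\emph{$L=+\infty$.} Given $K$, choose $a$ with $f>K$ on $(a,\infty)$, and note $f\ge -M$ on $[0,a]$. Then
\[
(V*f)(t)\ge K\,V([0,t-a))-M\bigl(V([0,t])-V([0,t-a))\bigr).
\]
Dividing by $V([0,t])$ and applying the same ratio limit gives $\liminf\ge K$, so the quotient tends to $+\infty$. The case $L=-\infty$ follows by symmetry, replacing $f$ by $-f$.

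Two routine checks remain. First, measurability and finiteness of $V*f$ hold because $V$ is locally finite and $f$ is locally bounded. Second, the ratio is eventually well defined since $V([0,t])>0$ for large $t$.

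The only delicate point is the boundary convention, namely closed versus half-open intervals and whether the mass at the endpoint $y=t-a$ is counted. Enlarging $a$ as above removes this issue. The argument needs neither a monotonicity assumption on $f$ nor any assumption that $V([0,t])\to\infty$; if $V$ were finite, (\ref{norl1}) would still be consistent with the proof.
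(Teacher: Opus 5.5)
Your proposal is correct and follows the paper's proof essentially step for step. You split the convolution at $t-a$, bound the far part by $\varepsilon$ and the near part by a local bound times $1-V([0,t-a])/V([0,t])$, and handle $L=\pm\infty$ through a lower bound and symmetry. The only difference is cosmetic: the paper avoids your closed-versus-half-open detour by choosing $|f-L|\le\varepsilon$ on the closed half-line $[a,\infty)$. The point $y=t-a$ then falls in the far part, and condition (\ref{norl1}) applies directly without replacing $a$ by $2a$.
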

{\bf Proof.} For finite $L$, choose $A$ so that $|f-L|\leq\epsilon$
on $[A,\infty)$ and bound $|f-L|$ on $[0,A]$ by $C_A$. Split the
integral at $t-A$: the normalized error is at most
$\epsilon V([0,t-A])/V([0,t])+C_A(1-V([0,t-A])/V([0,t]))$.
Let $t\to\infty$ and $\epsilon\downarrow0$. For $L=+\infty$ use an
arbitrary lower bound for $f$ on $[A,\infty)$ and its finite lower
bound on $[0,A]$; apply this to $-f$ for $L=-\infty$. $\Box$

The condition (\ref{norl1}) is necessary here: the cumulative measure
$V([0,t])=e^t$ is increasing but has
$(V([0,t+1])-V([0,t]))/V([0,t])=e-1$. 

Using the preceding lemma, properties (b) and (c) after
Proposition~\ref{tir2}, identities (\ref{dif_1})--(\ref{dif_2}),
and the renewal theorems (see~\cite{c2}), we obtain the following result.

\begin{teor}\label{t_car_dif}
Assume $0<\mu={\bf E}Y_1^*<\infty$. When $r$ and $r_0$ vary, the
conditional distributions $F^*$ and $F_0^*$, and the function $z$,
are kept fixed.
\begin{description}
\item[a)]
$$
0\leq H^*(t)-H(t)\leq(1-r)(F^*\star H^*\star H^*)(t),
$$
where $(F^*\star H^*\star H^*)(t)\sim t^2/(2\mu^2)$.
\item[b)]
\begin{eqnarray*}
0\leq H_0^*(t)-H_0(t)
&\leq&(1-r_0)H_0^*(t)\\
&&{}+(1-r)(F_0^*\star F^*\star H^*\star H^*)(t),
\end{eqnarray*}
where
$$
H_0^*(t)\sim\frac{t}{\mu},\qquad
(F_0^*\star F^*\star H^*\star H^*)(t)
\sim\frac{t^2}{2\mu^2}.
$$
\item[c)] If $F^*$ is non-arithmetic and $z\geq0$ is directly
Riemann integrable, then
$$
0\leq Z^*(t)-Z(t)\leq(1-r)(F^*\star H^*\star Z^*)(t),
$$
where
\begin{equation}\label{renew_forcing_bound}
(F^*\star H^*\star Z^*)(t)
=\frac{t}{\mu^2}\int_0^\infty z(y)\,dy+o(t).
\end{equation}
\item[d)] If $(1-r_0)/(1-r)\to c\in[0,\infty)$ as $r\uparrow1$,
then, for every fixed $t$, the limit
$$
L(t)=\lim_{r\uparrow1}
\frac{\mathop{\rm Var}N^*(t)-\mathop{\rm Var}N(t)}{1-r}
$$
exists, and $L(t)\sim-t^3/(3\mu^3)$ as $t\to\infty$.
\end{description}
The inequalities hold for every $t\geq0$; all asymptotic relations
concern $t\to\infty$.
\end{teor}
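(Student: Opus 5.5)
The plan is to read every part off the exact identities already derived: Proposition~\ref{tir2}, properties (a)--(c) after it, and (\ref{dif_1})--(\ref{dif_2}). The asymptotics then follow from the elementary renewal theorem $H^*(t)\sim t/\mu$, the key renewal theorem, and Lemma~\ref{norl}. A basic tool used throughout is that $V=H^*$ satisfies (\ref{norl1}), since $H^*(t-a)/H^*(t)\to1$ by the elementary renewal theorem.

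\textbf{Parts a)--c): the inequalities.} In a), property (c) gives the inequality directly, because $H\{dy\}\le H^*\{dy\}$ and convolution commutes. In b), I start from (\ref{dif_1}), use $H^*\star F_0^*=H_0^*$ for the first term, and bound $H_0\le H_0^*=F_0^*\star H^*$ inside the second term. In c), property (a) gives $Z^*\ge Z$. Proposition~\ref{tir2} together with $H\star z=Z\le Z^*$ (valid since $z\ge0$) gives the upper bound.

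\textbf{Parts a)--c): the asymptotics.} Since $F^*\star H^*=H^*-1$, I rewrite
$$
(F^*\star H^*\star H^*)(t)=\int_0^t g(t-y)\,H^*\{dy\},\qquad g=H^*-1\sim t/\mu .
$$
Given $\epsilon>0$, I choose $A$ with $|g(s)-s/\mu|\le\epsilon s$ for $s\ge A$. The contribution of $y\in(t-A,t]$ is $O(H^*(t)-H^*(t-A))=o(t)$. The remaining part equals $\int_0^{t}(t-y)\,H^*\{dy\}/\mu+O(\epsilon t^2)$. An integration by parts turns this into $\mu^{-1}\int_0^tH^*(s)\,ds\sim t^2/(2\mu^2)$.

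For b), convolving with the proper distribution $F_0^*$ preserves asymptotics of the regularly varying functions $t/\mu$ and $t^2/(2\mu^2)$. This follows by dominated convergence, since $g(t-y)/g(t)\le1$ for monotone $g$ and the ratio tends to $1$ pointwise in $y$.

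For c), I write $F^*\star H^*\star Z^*=H^*\star Z^*-Z^*$. The key renewal theorem, which needs $F^*$ non-arithmetic and $z$ directly Riemann integrable, gives $Z^*(t)\to L=\mu^{-1}\int_0^\infty z$, so $Z^*$ is bounded. Lemma~\ref{norl} with $V=H^*$ then gives $(H^*\star Z^*)(t)=LH^*(t)+o(t)=tL/\mu+o(t)$, which is (\ref{renew_forcing_bound}).

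\textbf{Part d): existence of the limit.} For fixed $t$ I let $r\uparrow1$ in (\ref{dif_2}) after dividing by $1-r$. Since $H=\sum_n r^nF^{*(\star n)}\uparrow H^*$ monotonically, monotone convergence gives $H\to H^*$ and $H_0\to H_0^*$ on $[0,t]$, with all quantities finite. Proposition~\ref{tir2} then gives $(H^*-H)/(1-r)\to H^*\star F^*\star H^*$. From (\ref{dif_1}) and $(1-r_0)/(1-r)\to c$,
$$
\frac{H_0^*-H_0}{1-r}\to cH_0^*+H^*\star F^*\star H_0^* .
$$
Hence
$$
L=4F_0^*\star H^*\star F^*\star H^*\star H^*+2c\,F_0^*\star H^*\star H^*-\bigl(cH_0^*+H^*\star F^*\star H_0^*\bigr)(1+2H_0^*).
$$

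\textbf{Part d): asymptotics of $L$.} I iterate the argument of a) once more to get that a triple convolution of $H^*$-measures is $\sim t^3/(6\mu^3)$, with the proper factors $F^*,F_0^*$ not affecting this. The first term is then $\sim 2t^3/(3\mu^3)$. The middle term is $O(t^2)$. The last term is $\sim (t^2/(2\mu^2))(2t/\mu)=t^3/\mu^3$. Together these give $L(t)\sim-t^3/(3\mu^3)$.

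The main obstacle is the uniform control of these iterated convolutions: turning pointwise asymptotics $f(s)\sim s^k/(k!\mu^k)$ into $(H^*\star f)(t)\sim t^{k+1}/((k+1)!\mu^{k+1})$. I would isolate this as one lemma, proved by the $\epsilon$--$A$ splitting and integration by parts used in a), and then apply it repeatedly.
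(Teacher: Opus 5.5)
Your proposal is correct and follows essentially the same route as the paper: the inequalities are read off Proposition~\ref{tir2}, (\ref{dif_1}) and (\ref{dif_2}) using $H\{dy\}\le H^*\{dy\}$, and the asymptotics come from the elementary and key renewal theorems, Lemma~\ref{norl}, and the invariance of asymptotics under convolution with proper laws, which produces the same formula for $L(t)$. The differences are cosmetic. You use $F^*\star H^*=H^*-1$, and you take $V=H^*$ in Lemma~\ref{norl} and subtract the bounded $Z^*$, where the paper convolves with the proper law $F^*$ and takes $V=F^*\star H^*$. You also use an $\epsilon$--$A$ splitting where the paper integrates by parts with a uniform $o(t)$ error term.
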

{\bf Proof.} {\bf a)} Property (c) after Proposition~\ref{tir2}
gives the inequality. By the elementary renewal theorem,
$H^*(t)\sim t/\mu$. Writing $H^*(u)=u/\mu+e(u)$ gives
$\sup_{0\leq u\leq t}|e(u)|=o(t)$; Stieltjes integration by parts,
first for two factors and then for three, yields
\begin{equation}\label{renew_power}
(H^*\star H^*)(t)\sim\frac{t^2}{2\mu^2},\qquad
(H^*\star H^*\star H^*)(t)\sim\frac{t^3}{6\mu^3}.
\end{equation}
Convolution with a fixed proper distribution preserves these
asymptotics: restrict its integration to $[0,A]$ for a lower bound,
use its total mass one for an upper bound, and then let $A\to\infty$.
This proves a).

{\bf b)} Identity (\ref{dif_1}), $H\{dy\}\leq H^*\{dy\}$ and
$H_0^*=F_0^*\star H^*$ give the inequality. The asserted asymptotics
follow from a) and convolution with the proper delay law $F_0^*$.

{\bf c)} Proposition~\ref{tir2} and $H\{dy\}\leq H^*\{dy\}$
give the inequality. By the key renewal theorem
(see~\cite{part1_1}, \cite{c2}),
$Z^*(t)\to\mu^{-1}\int_0^\infty z(y)\,dy$.
The cumulative function $(F^*\star H^*)(t)\sim t/\mu$
satisfies (\ref{norl1}); Lemma~\ref{norl} therefore gives
(\ref{renew_forcing_bound}).

{\bf d)} Keep the conditional laws fixed and let $r\uparrow1$.
The renewal series are dominated on every finite interval by the
corresponding projected renewal series. Using property (c) after
Proposition~\ref{tir2} and identity~(\ref{dif_1}), we obtain,
for each fixed $t$,
$$
\frac{H^*(t)-H(t)}{1-r}
\longrightarrow(H^*\star F^*\star H^*)(t),
$$
$$
\frac{H_0^*(t)-H_0(t)}{1-r}
\longrightarrow cH_0^*(t)
+(F_0^*\star F^*\star H^*\star H^*)(t).
$$
Substitution in~(\ref{dif_2}) gives
\begin{eqnarray}\label{variance_derivative}
L(t)&=&4(F_0^*\star F^*\star H^*\star H^*\star H^*)(t)\nonumber\\
&&{}+2c(F_0^*\star H^*\star H^*)(t)\nonumber\\
&&{}-\big[cH_0^*(t)+(F_0^*\star F^*\star H^*\star H^*)(t)\big]
[1+2H_0^*(t)].
\end{eqnarray}
By~(\ref{renew_power}), convolution with proper distributions,
and $H_0^*(t)\sim t/\mu$, the terms containing $c$ are $O(t^2)$,
while the cubic terms are
$$
\frac{2t^3}{3\mu^3}-\frac{t^3}{\mu^3}
=-\frac{t^3}{3\mu^3}.
$$
Thus $L(t)\sim-t^3/(3\mu^3)$, proving d). $\Box$

The ordering of the mean renewal counts does not imply the same
ordering of their variances. Part d) concerns successive limits:
first $r\uparrow1$ for fixed $t$, and then $t\to\infty$.

\noindent{\bf Example.} Consider the single-server instant-service
model in which arrival epochs form the projected renewal process,
with initial delay distribution $F_0^*$ and interval distribution $F^*$.
Each arrival independently causes permanent failure with probability
$1-r_0$ at the first arrival and $1-r$ thereafter. An arrival is
served instantly if it does not cause failure; the failing arrival
and all later arrivals are unserved. The served count has the
defective renewal distribution, so the mean unserved count by time
$t$ is $H_0^*(t)-H_0(t)$. Thus, part b) of
Theorem~\ref{t_car_dif} bounds the expected number of unserved
arrivals by time $t$.
For $r<1$, the first failure time $T_f$ has distribution
\begin{equation}\label{failure_delay}
P\{T_f\leq t\}=(1-r_0)F_0^*(t)+(1-r)(F^*\star H_0)(t).
\end{equation}
This is a proper distribution: failure occurs at the first arrival
or at an arrival immediately following a successfully served one.
The first unserved arrival occurs at $T_f$; subsequent interarrival
times retain distribution $F^*$. Hence the unserved arrival epochs
form a delayed renewal process with initial delay distribution
(\ref{failure_delay}).

\subsubsection{Distance between the probabilities $P$ and $P^*$ on the $\sigma$-algebra ${\cal F}_t$ \break  II. \ Exact Estimates}

The bounds for $H_0^*(t)-H_0(t)$ and $Z^*(t)-Z(t)$ suggest that,
on a fixed observation interval, the difference between the original
and projected probabilities is $O(\max(1-r,1-r_0))$ as $r,r_0\to1$.
A similar suggestion comes from convergence-rate results for the time
to a rare event in a regenerative process (R\'enyi's theorem), as the
cycle probability $1-r$ of that event tends to zero; see~\cite{kalas_2}.
Renewal theorems and an argument similar to the proof of
Proposition~\ref{p1_r8} give the following result:

\begin{teor}\label{t_prob_dif}
Let $P^*$ be the finite projection of $P$ with respect to the renewal
process $\{S_n\}$, with $r_0$ and $r$ as defined above. Then, for
every $t\geq0$,
\begin{equation}\label{est_baz}\label{tv_sharp_bound}
\sup_{A\in{\cal F}_t}|P^*\{A\}-P\{A\}|
\leq 1-r_0+(1-r)H_0(t)
\leq 1-r_0+(1-r)H_0^*(t).
\end{equation}
The following consequences use the upper bound with $H_0^*$.
\begin{description}
\item[a)] If $0<\mu={\bf E}Y_1^*<\infty$, then
$$
\sup_{A\in{\cal F}_t}|P^*\{A\}-P\{A\}|
\leq 1-r_0+(1-r)(1+\varepsilon(t))\frac{t}{\mu}.
$$
\item[b)] If, in addition, $F^*$ is non-arithmetic and
$\mu_2={\bf E}(Y_1^*)^2<\infty$, then
$$
\sup_{A\in{\cal F}_t}|P^*\{A\}-P\{A\}|
\leq 1-r_0+(1-r)\left[\frac{t}{\mu}
-\frac1\mu\int_0^t(1-F_0^*(y))\,dy
+\frac{\mu_2}{2\mu^2}+\varepsilon(t)\right].
$$
\item[c)] If $1-F^*(t)=t^{-\alpha}L(t)$, where $0<\alpha<1$
and $L$ is slowly varying, then
$$
\sup_{A\in{\cal F}_t}|P^*\{A\}-P\{A\}|
\leq 1-r_0+(1-r)\left[\frac{\sin(\pi\alpha)}{\pi\alpha}
+\varepsilon(t)\right]\frac{t^\alpha}{L(t)}.
$$
\end{description}
The bounds in a)--c) hold for all sufficiently large $t$. In each
part, $\varepsilon(t)$ denotes a possibly different nonnegative,
nonincreasing function tending to zero as $t\to\infty$. These
functions depend on $F^*,F_0^*$, but not on $r,r_0$.
\end{teor}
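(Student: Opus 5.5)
The plan is to write the difference $P-P^*$ on ${\cal F}_t$ as a difference of two nonnegative measures of equal total mass, and to compute that common mass exactly as $1-r_0+(1-r)H_0(t)$. This refines the argument of Proposition~\ref{p1_r8}: instead of conditioning on a single event $\{S_n<\infty\}$, I condition cycle by cycle.

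\textbf{Decomposition by cycle index.} Fix $t$ and partition $\Omega$ by the index of the cycle that covers time $t$. Put
$$
D_m=\{S_{m-1}\le t<S_m\},\qquad m\ge0,\qquad S_{-1}=0 .
$$
Under $P^*$ the events $D_m$ partition $\Omega$ almost surely, because $S_n\to\infty$ $P^*$-a.s. by Proposition~\ref{p1_r4}. Under $P$, $S_m=\infty$ is allowed, so they also partition $\Omega$ up to a null set. For $A\in{\cal F}_t$, the event $A\cap D_m$ is determined by the first $m$ completed cycles together with the path of cycle $m$ on $[S_{m-1},t]$. Cycle $m$ may itself be finite or infinite.

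\textbf{Conditioning on the current cycle.} Condition on the first $m$ cycles being finite. This has probability $p_{m-1}$, with $p_{-1}=1$, and by Proposition~\ref{p1_r4} their law is then the $P^*$-law. By the regeneration property, cycle $m$ under $P$ is a mixture: it is finite with law $Q$ (or $Q_0$ when $m=0$) with weight $r$ (resp.\ $r_0$), and infinite with the complementary weight. Consequently
$$
P\{A\cap D_m\}=p_m P^*\{A\cap D_m\}+(p_{m-1}-p_m)\,b_m(A),
$$
where $b_m(A)$ is the probability of $A\cap D_m$ given that the first $m$ cycles are finite and cycle $m$ is infinite. Summing over $m$ gives
$$
P\{A\}-P^*\{A\}=\alpha(A)-\beta(A),
$$
with
$$
\alpha(A)=\sum_m(p_{m-1}-p_m)\,b_m(A),\qquad
\beta(A)=\sum_m(1-p_m)\,P^*\{A\cap D_m\}.
$$
Both $\alpha$ and $\beta$ are nonnegative measures on ${\cal F}_t$.

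\textbf{Total variation bound.} Since $P$ and $P^*$ are both probabilities, $\alpha(\Omega)=\beta(\Omega)$. Therefore
$$
\sup_{A\in{\cal F}_t}|P\{A\}-P^*\{A\}|\le\alpha(\Omega).
$$
To evaluate $\alpha(\Omega)$, note that $b_m(\Omega)=P^*\{S_{m-1}\le t\}$, since an infinite cycle $m$ forces $S_m>t$. Also $p_{m-1}-p_m=(1-r)p_{m-1}$ for $m\ge1$, while the $m=0$ term equals $1-r_0$. Using $p_{m-1}P^*\{S_{m-1}\le t\}=P\{S_{m-1}\le t\}$, we get
$$
\alpha(\Omega)=1-r_0+(1-r)\sum_{n\ge0}P\{S_n\le t\}=1-r_0+(1-r)H_0(t).
$$
The second inequality in (\ref{tv_sharp_bound}) then follows from $P\{S_n\le t\}\le P^*\{S_n\le t\}$ termwise, which gives $H_0\le H_0^*$.

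\textbf{The main obstacle.} The delicate step is justifying the mixture identity for $P\{A\cap D_m\}$. It requires the joint regeneration property: the path of the current cycle up to $t$ must be handled together with the event that its length is finite or infinite. Concretely, one must check that conditioning cycle $m$ on finiteness reproduces exactly the $P^*$-cycle law $Q$, as in the proof of Proposition~\ref{p1_r4}. One must also check that $D_m$ and $A$ are measurable with respect to the first $m$ cycles plus the initial segment of cycle $m$. I would first verify the identity on cylinder sets and then extend it by the monotone class theorem, exactly as was done for (\ref{lim}).

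\textbf{Consequences a)--c).} These are renewal asymptotics for $H_0^*=F_0^*\star H^*$.
\begin{itemize}
\item[a)] The elementary renewal theorem gives $H_0^*(t)\sim t/\mu$.
\item[b)] The second-order renewal expansion applies: apply the key renewal theorem to $H^*(t)-t/\mu$, whose limit is $\mu_2/(2\mu^2)$, and convolve with the proper law $F_0^*$. This yields
$$
H_0^*(t)=\frac t\mu-\frac1\mu\int_0^t(1-F_0^*(y))\,dy+\frac{\mu_2}{2\mu^2}+o(1).
$$
\item[c)] The Feller--Erickson renewal theorem for infinite mean gives
$$
H^*(t)\sim\frac{t^\alpha}{\Gamma(1+\alpha)\Gamma(1-\alpha)L(t)}=\frac{\sin(\pi\alpha)}{\pi\alpha}\,\frac{t^\alpha}{L(t)} .
$$
Convolution with a proper delay law preserves this, by the truncation argument used in Theorem~\ref{t_car_dif}.
\end{itemize}
In each case, replacing the error term by its supremum over $[t,\infty)$ (or by $0$ where the error is negative) makes $\varepsilon(t)$ nonnegative and nonincreasing. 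It depends only on $F^*$ and $F_0^*$, not on $r$ or $r_0$.
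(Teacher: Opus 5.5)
Your proposal is correct and follows essentially the same route as the paper. Your $D_m$ is the paper's event $\{N(t)=m\}$, and splitting $P\{A\cap D_m\}$ into the part with $S_m<\infty$ (equal to $p_mP^*\{A\cap D_m\}$) and the part with $S_m=\infty$ produces exactly the paper's two nonnegative sums with common total mass $1-r_0+(1-r)H_0(t)$. Consequences a)--c) are then obtained, as in the paper, from the elementary, second-order and infinite-mean renewal theorems applied to $H_0^*=F_0^*\star H^*$, with $\varepsilon(t)$ taken as a supremum of the error over $[t,\infty)$.
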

For arithmetic $F^*$, the corresponding version of part b) uses the
lattice renewal expansion; see~\cite{c3}, relation (12.2).

{\bf Proof.} Let $A\in{\cal F}_t$. On $\{N(t)=n,S_n<\infty\}$,
the path through time $t$ is determined by the first $n+1$ finite
cycles. The definition of finite projection therefore gives
$$
P\{A\cap\{N(t)=n,S_n<\infty\}\}
=p_nP^*\{A\cap\{N(t)=n\}\}.
$$
Decomposing according to $N(t)$ and the finiteness of $S_n$, we obtain
\begin{eqnarray*}
P^*\{A\}-P\{A\}
&=&\sum_{n=0}^\infty(1-p_n)P^*\{A\cap\{N(t)=n\}\}\\
&&{}-\sum_{n=0}^\infty P\{A\cap\{N(t)=n,S_n=\infty\}\}.
\end{eqnarray*}
For $A=\Omega$, the two sums are equal, since $P^*(\Omega)=P(\Omega)=1$.
For any $A$, each sum lies between zero and this common value.
Hence the absolute value of their difference is at most that value, namely
$$
\sum_{n=0}^\infty P\{N(t)=n,S_n=\infty\}
=1-r_0+(1-r)\sum_{n=1}^\infty P\{S_{n-1}\leq t\}
=1-r_0+(1-r)H_0(t).
$$
Consequently
$$
|P^*\{A\}-P\{A\}|\leq1-r_0+(1-r)H_0(t)
\leq1-r_0+(1-r)H_0^*(t),
$$
which proves (\ref{est_baz}).

{\bf a)} The elementary renewal theorem gives
$H_0^*(t)\sim t/\mu$; see~\cite{c3}, \cite{c2}. Thus we may take
$$
\varepsilon(t)=\sup_{y\geq t}\left|\frac{\mu H_0^*(y)}{y}-1\right|,
$$
which is finite for all sufficiently large $t$ and decreases to zero.
Substitution in (\ref{est_baz}) proves a).

{\bf b)} The non-arithmetic renewal expansion gives
$H^*(t)=t/\mu+\mu_2/(2\mu^2)+e(t)$, where $e$ is bounded and
tends to zero; see~\cite{c2}. Convolving with $F_0^*$, dominated
convergence and integration by parts yield
\begin{equation}\label{delayed_second_order}
H_0^*(t)=\frac1\mu\int_0^t F_0^*(y)\,dy
+\frac{\mu_2}{2\mu^2}+o(1).
\end{equation}
No moment assumption on the delay is needed. Take
$$
\varepsilon(t)=\sup_{y\geq t}\left|H_0^*(y)
-\frac1\mu\int_0^y F_0^*(u)\,du-\frac{\mu_2}{2\mu^2}\right|.
$$
Using $\int_0^t F_0^*(y)\,dy=t-\int_0^t(1-F_0^*(y))\,dy$
in (\ref{est_baz}) proves b).

{\bf c)} The infinite-mean renewal theorem~\cite{c2} gives
$(1-F^*(t))H^*(t)\to\sin(\pi\alpha)/(\pi\alpha)$.
Convolution with the proper delay law preserves this asymptotic,
by a fixed-shift comparison. Hence
\begin{equation}\label{heavy_tail_renewal}
(1-F^*(t))H_0^*(t)\longrightarrow
\frac{\sin(\pi\alpha)}{\pi\alpha}.
\end{equation}
Taking
$$
\varepsilon(t)=\sup_{y\geq t}\left|(1-F^*(y))H_0^*(y)
-\frac{\sin(\pi\alpha)}{\pi\alpha}\right|
$$
and applying (\ref{est_baz}) proves c). $\Box$

The sharper bound in (\ref{est_baz}) involves the defective renewal
function $H_0$. Its behaviour near $r=1$ is described by the following
form of R\'enyi's exponential limit.

\begin{teor}\label{defective_renewal_theorem}
For fixed $0<r<1$,
$$
H_0(t)\longrightarrow\frac{r_0}{1-r},\qquad t\to\infty.
$$
Consequently, $1-r_0+(1-r)H_0(t)$ increases to one and never
exceeds one.

Keep the proper laws $F_0^*,F^*$ fixed, with
$0<\mu={\bf E}Y_1^*<\infty$ and $r_0>0$. Then
\begin{equation}\label{defective_renewal_exp}
\varepsilon(r):=\sup_{t\ge0}\left|
\frac{(1-r)H_0(t)}{r_0}
-\left(1-e^{-(1-r)t/\mu}\right)\right|
\longrightarrow0,\qquad r\uparrow1.
\end{equation}
Thus, uniformly for $t\ge0$,
\begin{equation}\label{tv_exp_approx}
\sup_{A\in{\cal F}_t}|P^*\{A\}-P\{A\}|
\le1-r_0e^{-(1-r)t/\mu}+r_0\varepsilon(r).
\end{equation}
In particular, for every fixed $x>0$,
$$
H_0\!\left(\frac{\mu x}{1-r}\right)
\sim\frac{r_0}{1-r}(1-e^{-x}),\qquad r\uparrow1.
$$
No moment condition on $F_0^*$ or non-arithmetic assumption is required.
\end{teor}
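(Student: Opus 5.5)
We express the defective renewal function through the projected laws and then use a probabilistic (geometric-sum) reading of the result. Since $F_0=r_0F_0^*$ and $F=rF^*$,
$$
H_0(t)=\sum_{n\ge0}P\{S_n\le t\}=\sum_{n\ge0}r_0r^n\,(F_0^*\star F^{*(\star n)})(t)=r_0\sum_{n\ge0}r^nP^*\{S_n\le t\}.
$$
For fixed $r<1$, each $P^*\{S_n\le t\}$ increases to $1$ as $t\to\infty$. Monotone convergence then gives $H_0(t)\uparrow r_0/(1-r)$. Hence $1-r_0+(1-r)H_0(t)$ increases to $1-r_0+r_0=1$, which settles the first assertion.

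For (\ref{defective_renewal_exp}) we use the identity
$$
(1-r)\sum_{n\ge0}r^nP^*\{S_n\le t\}=P^*\{S_\nu\le t\},
$$
where $\nu$ is geometric, $P\{\nu=n\}=(1-r)r^n$, and independent of the projected renewal sequence. Then
$$
\frac{(1-r)H_0(t)}{r_0}=P^*\{Y_0^*+Y_1^*+\cdots+Y_\nu^*\le t\},
$$
and it suffices to show that $(1-r)(Y_0^*+\sum_{i=1}^\nu Y_i^*)/\mu$ converges in distribution to a standard exponential as $r\uparrow1$. This is R\'enyi's theorem.

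We prove it through Laplace transforms. Write $\hat F^*(s)=\int e^{-sy}F^*\{dy\}$, and similarly $\hat F_0^*$. The transform at $s=(1-r)\theta/\mu$ equals
$$
\hat F_0^*(s)\,\frac{1-r}{1-r\hat F^*(s)}.
$$
The factor $\hat F_0^*(s)\to1$ because $F_0^*$ is proper, so no moment of the delay is needed. Since $\mu<\infty$, we have $1-\hat F^*(s)=\mu s+o(s)$. Therefore
$$
1-r\hat F^*(s)=(1-r)+r\bigl(\mu s+o(s)\bigr)=(1-r)(1+\theta+o(1)),
$$
and the transform tends to $1/(1+\theta)$, the transform of the standard exponential. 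The continuity theorem gives convergence in distribution. No lattice assumption enters.

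The main obstacle is the uniformity in $t$ required by (\ref{defective_renewal_exp}). Here we invoke P\'olya's theorem: if distribution functions converge pointwise to a continuous distribution function, the convergence is uniform on the line. The limit $1-e^{-x}$ is continuous, so uniform convergence in $x=(1-r)t/\mu$, equivalently in $t\ge0$, follows and gives $\varepsilon(r)\to0$.

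Then (\ref{tv_exp_approx}) follows by inserting
$$
(1-r)H_0(t)\le r_0\bigl(1-e^{-(1-r)t/\mu}\bigr)+r_0\varepsilon(r)
$$
into the sharper bound of Theorem~\ref{t_prob_dif}, which gives
$$
1-r_0+(1-r)H_0(t)\le 1-r_0e^{-(1-r)t/\mu}+r_0\varepsilon(r).
$$
For the last claim, fix $x>0$ and set $t=\mu x/(1-r)$. Then $(1-r)H_0(t)/r_0=1-e^{-x}+o(1)$, and since $1-e^{-x}>0$ this is the asserted asymptotic equivalence.
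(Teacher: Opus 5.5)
Your proof is correct and has the same overall structure as the paper's. You use the representation $H_0(t)=r_0\sum_{n\ge0}r^nP^*\{S_n\le t\}$ and take the fixed-$r$ limit by monotone convergence (the paper says dominated convergence). You identify $(1-r)H_0(t)/r_0$ with the distribution function of the geometric compound $Y_0^*+\sum_{j\le\nu}Y_j^*$, obtain uniformity from P\'olya's theorem, and substitute into (\ref{est_baz}).

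The genuine difference is how you prove R\'enyi's exponential limit. The paper argues probabilistically. It uses that $(1-r)K$ converges in distribution to $\mathrm{Exp}(1)$ and that $K\to\infty$ in probability. The law of large numbers along the independent index then gives $K^{-1}\sum_{j\le K}Y_j^*\to\mu$, and Slutsky's lemma together with $(1-r)Y_0^*\to0$ concludes. You instead compute the compound-geometric transform $\hat F_0^*(s)(1-r)/(1-r\hat F^*(s))$ in closed form. You then need only $1-\hat F^*(s)=\mu s+o(s)$ and the continuity theorem for Laplace--Stieltjes transforms.

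Your route is self-contained and avoids both the random-index law of large numbers and the Slutsky step. It also shows directly that the delay's moments are irrelevant, since the delay enters only through a factor tending to one. The same computation extends to the regularly varying tails of part c) of Theorem~\ref{t_prob_dif}, where no law of large numbers is available. There $1-\hat F^*(s)$ is of order $s^\alpha L(1/s)$, and with the appropriate normalization the limit transform is $1/(1+\theta^\alpha)$. The paper's route is shorter given standard limit theorems, and it explains the normalization by $\mu$ directly.
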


{\bf Proof.} The representation
$$
H_0(t)=\sum_{n=0}^\infty p_n
(F_0^*\star F^{*\,(\star n)})(t)
$$
gives the fixed-$r$ limit by dominated convergence. Let $K$ be
independent of the projected renewal process, with
$P^*\{K=n\}=(1-r)r^n$, $n\ge0$. Then
$$
\frac{(1-r)H_0(t)}{r_0}
=P^*\left\{Y_0^*+\sum_{j=1}^K Y_j^*\le t\right\}.
$$
As $r\uparrow1$, $(1-r)K$ converges in distribution to
$\mathrm{Exp}(1)$ and $K\to\infty$ in probability. The law of large
numbers and $(1-r)Y_0^*\to0$ in probability therefore give
$$
\frac{1-r}{\mu}\left(Y_0^*+\sum_{j=1}^K Y_j^*\right)
\ \mathop{\longrightarrow}^{\rm d}\ \mathrm{Exp}(1).
$$
Continuity of the limiting distribution implies uniform convergence
of the distribution functions, proving (\ref{defective_renewal_exp}).
The remaining assertions follow from (\ref{est_baz}) and
(\ref{defective_renewal_exp}). $\Box$

The approximation has uniform absolute error; no rate or
relative-error bound is asserted. For an ordinary process with
exponential projected intervals of mean $\mu$, $r_0=1$ and
$$
H_0^*(t)=1+\frac{t}{\mu},\qquad
H_0(t)=\frac{1-r e^{-(1-r)t/\mu}}{1-r}.
$$
The sharper bound in (\ref{est_baz}) is then exactly
$1-r e^{-(1-r)t/\mu}$, which is at most one for every $t\ge0$.

The next proposition shows that the upper bound in (\ref{est_baz})
is asymptotically sharp as $r\uparrow1$, followed by $t\to\infty$.
\begin{prop}\label{p_prob_lim}
Assume that the renewal times are observable, that $F^*,F_0^*$ are
fixed, and that $0<\mu<\infty$. Let $r_0\to1$ as $r\uparrow1$.
\begin{description}
\item[1)] Suppose that $F_0^*(T_0)=F^*(T_0)=1$ for some $T_0>0$.
\begin{description}
\item[a)] If $F^*$ is non-arithmetic, then
$$
-\frac{T_0}{\mu}\le
\liminf_{t\to\infty}\liminf_{r\uparrow1}
\left(\frac{\sup_{A\in{\cal F}_t}|P^*\{A\}-P\{A\}|-(1-r_0)}{1-r}
-H_0^*(t)\right)\le0.
$$
The same assertion holds for arithmetic $F^*$.
\item[b)] In either case,
$$
\lim_{t\to\infty}\limsup_{r\uparrow1}
\left|
\frac{\sup_{A\in{\cal F}_t}|P^*\{A\}-P\{A\}|-(1-r_0)}{(1-r)H_0^*(t)}
-1\right|=0.
$$
\end{description}
\item[2)] Without the boundedness assumption in part 1, if
$1-r_0=O(1-r)$, then
\begin{equation}\label{tv_sharpness}
\lim_{t\to\infty}\limsup_{r\uparrow1}
\left|
\frac{\sup_{A\in{\cal F}_t}|P^*\{A\}-P\{A\}|}{(1-r)H_0^*(t)}
-1\right|=0.
\end{equation}
\end{description}
\end{prop}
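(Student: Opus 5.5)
The plan is to squeeze $\sup_{A\in{\cal F}_t}|P^*\{A\}-P\{A\}|$ between the upper bound (\ref{est_baz}) of Theorem~\ref{t_prob_dif} and a lower bound obtained from an explicit event in ${\cal F}_t$. That event exploits the observability of the renewal times: under $P$ the cycle straddling $t$ may be infinite, so its current age can be atypically large.

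\textbf{Upper side.} By (\ref{est_baz}), $\sup_A|P^*\{A\}-P\{A\}|-(1-r_0)\le(1-r)H_0(t)$. For fixed $t$ we have $H_0(t)=\sum_n p_n(F_0^*\star F^{*(\star n)})(t)\uparrow H_0^*(t)$ as $r,r_0\uparrow1$, by monotone convergence. This gives the bound ``$\le0$'' in 1a) and the upper half of 1b) and 2).

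\textbf{Lower side in case 1.} Let $a(t)=t-S_{N(t)-1}$ denote the age at $t$ of the current cycle, with $S_{-1}=0$. Because the renewal epochs are observable, $A=\{a(t)\le T_0\}\in{\cal F}_t$. Under $P^*$ every cycle is at most $T_0$ long, so $P^*\{A\}=1$. Hence
$$
P^*\{A\}-P\{A\}=P\{a(t)>T_0\}\ge P\{S_{N(t)-1}\le t-T_0,\ \hbox{the cycle begun at }S_{N(t)-1}\hbox{ is infinite}\}.
$$
Decomposing over the index of the infinite cycle, exactly as in the proof of Theorem~\ref{t_prob_dif}, this probability equals $1-r_0+(1-r)H_0(t-T_0)$ for $t>T_0$. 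Dividing by $1-r$ and letting $r\uparrow1$ gives a liminf of at least $H_0^*(t-T_0)-H_0^*(t)$.

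In the non-arithmetic case, Blackwell's theorem applied to the delayed function $H_0^*=F_0^*\star H^*$ gives $H_0^*(t)-H_0^*(t-T_0)\to T_0/\mu$. This yields the bound $-T_0/\mu$. In the arithmetic case the lattice version of Blackwell's theorem gives an increment of at most $\lceil T_0/d\rceil d/\mu$. The liminf over $t$ is then taken along $t$ chosen so that $(t-T_0,t]$ contains only $\lfloor T_0/d\rfloor$ lattice points, which keeps the increment at most $T_0/\mu$. Part 1b) follows by dividing by $H_0^*(t)\to\infty$.

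\textbf{Case 2 (main obstacle).} A fixed truncation level fails here: $P^*\{a(t)>T\}$ does not vanish as $r\uparrow1$, whereas the target quantity is $O(1-r)$. I would instead compute the likelihood ratio $dP/dP^*$ on ${\cal F}_t$ and use
$$
\sup_A|P^*\{A\}-P\{A\}|={\bf E}^*\big(dP/dP^*-1\big)^+ .
$$
On the history with $N(t)=n$ and current age $a$, this ratio equals
$$
p_{n-1}\,\frac{1-r+r(1-F^*(a))}{1-F^*(a)},
$$
with the obvious modification for the delay. To first order in $1-r$ (using $1-r_0=O(1-r)$), $(dP/dP^*-1)/(1-r)\to D_t$, where
$$
D_t=\frac{F^*(a)}{1-F^*(a)}-(N(t)-1)+O(1),
$$
the $O(1)$ term coming from $c=\lim(1-r_0)/(1-r)$. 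Dominated convergence on the finite interval should then give $\liminf_{r\uparrow1}\sup_A|\cdot|/(1-r)\ge{\bf E}^*D_t^+$.

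It remains to show ${\bf E}^*D_t^+\ge(1-o(1))H_0^*(t)$. Let $G=F^*(a)/(1-F^*(a))$. By the renewal representation of the age density, roughly $\mu^{-1}(1-F^*(a))\,da$, one has ${\bf E}^*[G;\,a\le b]\approx b/\mu$ and ${\bf E}^*G\approx t/\mu$. Now choose $b_t=o(t)$ with $1-F^*(b_t)=1/(Ct)$; such a choice is possible because $x(1-F^*(x))\to0$ when $\mu<\infty$. On $\{a>b_t\}$ we have $G>N(t)$, so $D_t^+\ge G-N(t)$ there. This gives
$$
{\bf E}^*D_t^+\ge \frac{t-b_t}{\mu}-{\bf E}^*[N(t);\,a>b_t]-O(1).
$$
The middle term is $o(t)$ because $P^*\{a>b_t\}\to0$ and $N(t)$ is uniformly integrable after normalization by $t$. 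Since $H_0^*(t)\sim t/\mu$, this proves (\ref{tv_sharpness}).

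Making this interchange of limits and the uniform-integrability step rigorous is where I expect the real work to lie.
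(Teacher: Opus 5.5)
\textbf{The gap: arithmetic case of 1a).} Your upper bound, your event $\{a(t)\le T_0\}$ and the non-arithmetic half of 1a) follow the paper; $\{a(t)\le T_0\}$ is the complement of the paper's event ``no renewal in $(t-T_0,t]$''. The arithmetic half of 1a), however, has a genuine gap.

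The assertion $-T_0/\mu\le\liminf_{t\to\infty}(\cdots)$ is a lower bound for a lower limit over \emph{all} $t\to\infty$. Choosing favourable $t$ controls only the lower limit along a subsequence. That bounds $\liminf_{t\to\infty}$ from above, not from below.

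The loss is real. Take an ordinary process with $F^*$ concentrated at $1$ and $T_0=3/2$. At every integer $t$ your bound gives only $H_0^*(t-T_0)-H_0^*(t)=-2<-T_0/\mu$. With a non-lattice delay the renewals lie on random translates $x+d{\bf Z}$, so no choice of $t$ fixes the lattice count. There is also a boundary slip: $a(t)>T_0$ means $S_{N(t)-1}<t-T_0$, which yields $H_0((t-T_0)-)$ rather than $H_0(t-T_0)$. Take $A=\{a(t)<T_0\}$ instead; it still has $P^*$-probability one for $t>T_0$.

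The missing step, which the paper uses, is to shrink the window. The subsequent projected intervals lie in $d{\bf Z}\cap(0,T_0]$, so they do not exceed $b=d\lfloor T_0/d\rfloor$. Hence for $t>T_0$ every projected path has a renewal in $(t-b,t]$. The same decomposition then gives $\sup_{A\in{\cal F}_t}|P^*\{A\}-P\{A\}|\ge1-r_0+(1-r)H_0(t-b)$. Because $b\in d{\bf Z}$, the window $(t-b,t]$ contains exactly $b/d$ points of every translate of the lattice. The arithmetic renewal theorem, integrated over $F_0^*$, therefore gives $H_0^*(t)-H_0^*(t-b)\to b/\mu\le T_0/\mu$ along all $t\to\infty$. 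Part 1b) is unaffected, since an $O(1)$ window loss disappears after division by $H_0^*(t)$.

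\textbf{Part 2: a different route.} Your likelihood-ratio argument differs genuinely from the paper's and is sound in outline after three repairs.
\begin{description}
\item[(i)] Your displayed ratio is the density on the sub-$\sigma$-algebra generated by the observable renewal epochs up to $t$, not on ${\cal F}_t$. On ${\cal F}_t$ it depends on the path in the current cycle, and $P$ need not be absolutely continuous. Since only a lower bound is needed, $\sup_{A\in{\cal F}_t}|P^*\{A\}-P\{A\}|\ge{\bf E}^*(f-1)^+$ for that density $f$ suffices.
\item[(ii)] With only $1-r_0=O(1-r)$, replace $c$ by $\limsup(1-r_0)/(1-r)$. The interchange of limits you worried about is then harmless. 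Indeed, $(f-1)^+/(1-r)$ is dominated by $G$, with the analogous delay term on $\{N(t)=0\}$, and ${\bf E}^*[G;N(t)\ge1]\le(F^*\star H_0^*)(t)<\infty$.
\item[(iii)] The claim $G>N(t)$ on $\{a>b_t\}$ is not true pointwise, since $N(t)$ is unbounded when $F^*$ charges every neighbourhood of zero. It is not needed, because $D_t^+\ge D_t$.
\end{description}

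The paper avoids the density altogether. Your diagnosis that a fixed truncation level fails is right, but the remedy is to let the level grow with $t$. For the event of no renewal in $(t-T,t]$, the exact identity (\ref{renew_no_recent}) holds. With $T=\eta t$, the correction ${\bf E}^*[N(t-T){\bf 1}_{\{N(t)=N(t-T)\}}]\le[1-F^*(T)](F_0^*\star H^*\star H^*)(t-T)=o(t)$ gives the lower limit $1-\eta$, and then one lets $\eta\downarrow0$. Taking $T=b_t$ in that single event, together with your uniform-integrability step, gives the constant $1$ directly. Your computation yields the exact first-order distance on the renewal $\sigma$-algebra, which is more than the sharpness statement requires.
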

{\bf Proof.} For $0<T<t$, let $A(t,T)$ be the observable event that
there is no renewal in $(t-T,t]$. Conditioning on the number of
renewals gives
\begin{equation}\label{renew_no_recent}
P(A(t,T))-P^*(A(t,T))=
{\bf E}^*[(1-p_{N(t-T)}){\bf1}_{\{N(t)>N(t-T)\}}],
\end{equation}
where ${\bf E}^*$ denotes expectation under $P^*$. Indeed, on a
projected path with a renewal in $(t-T,t]$, the original count has
no renewal in that interval precisely when it stops before the
first such renewal. The probability of this is $1-p_{N(t-T)}$.

{\bf 1)} Under the bounded-cycle assumption, every projected path
has a renewal in $(t-T_0,t]$ for $t>T_0$. Taking $T=T_0$ in
(\ref{renew_no_recent}) therefore gives
$$
\sup_{A\in{\cal F}_t}|P^*\{A\}-P\{A\}|
\ge1-r_0+(1-r)H_0(t-T_0).
$$
For each fixed $u\ge0$,
$$
H_0(u)=\sum_{n=0}^\infty p_nP^*\{S_n\le u\}
\longrightarrow H_0^*(u).
$$
Together with (\ref{est_baz}), this yields, for $t>T_0$,
\begin{eqnarray}\label{tv_compact}
H_0^*(t-T_0)&\leq&\liminf_{r\uparrow1}
\frac{\sup_{A\in{\cal F}_t}|P^*\{A\}-P\{A\}|-(1-r_0)}{1-r}\nonumber\\
&\leq&\limsup_{r\uparrow1}
\frac{\sup_{A\in{\cal F}_t}|P^*\{A\}-P\{A\}|-(1-r_0)}{1-r}
\leq H_0^*(t).
\end{eqnarray}
For non-arithmetic $F^*$, Blackwell's renewal theorem gives
$H_0^*(t)-H_0^*(t-T_0)\to T_0/\mu$, proving a).
For arithmetic $F^*$ with span $h$, put $b=h\lfloor T_0/h\rfloor$.
Every subsequent interval is at most $b$, so for $t>T_0$ the lower
bound in (\ref{tv_compact}) also holds with $T_0$ replaced by $b$.
The arithmetic renewal theorem, integrated over the bounded initial
delay, gives $H_0^*(t)-H_0^*(t-b)\to b/\mu\le T_0/\mu$,
which proves a) in this case as well. Dividing (\ref{tv_compact}) by $H_0^*(t)$
and using $H_0^*(t)\sim t/\mu$ shows that the absolute error in b) tends to zero.

{\bf 2)} To estimate the contribution with no renewal in $(t-T,t]$,
we sum over the last renewal before or at $t-T$:
\begin{eqnarray*}
{\bf E}^*[N(t-T){\bf1}_{\{N(t)=N(t-T)\}}]
&=&\sum_{n=0}^\infty(n+1)P^*\{S_n\le t-T,S_{n+1}>t\}\\
&\leq&[1-F^*(T)]\sum_{n=0}^\infty(n+1)P^*\{S_n\le t-T\}\\
&=&[1-F^*(T)](F_0^*\star H^*\star H^*)(t-T).
\end{eqnarray*}
Here the last equality follows by expanding $H^*\star H^*$:
the coefficient of $F^{*\,(\star n)}$ is $n+1$.
Since $(1-r^n)/(1-r)\to n$ and is bounded by $n$, dominated
convergence in (\ref{renew_no_recent}), with
${\bf E}^*N(t-T)=H_0^*(t-T)<\infty$, gives
$$
\liminf_{r\uparrow1}
\frac{\sup_{A\in{\cal F}_t}|P^*\{A\}-P\{A\}|}{1-r}
\geq H_0^*(t-T)-[1-F^*(T)](F_0^*\star H^*\star H^*)(t-T).
$$
The nonnegative contribution from $1-r_0$ has been discarded in
this lower bound. Put $T=\eta t$ with $0<\eta<1$.
Finite mean gives $t[1-F^*(\eta t)]\to0$; the renewal-power
asymptotic gives $(F_0^*\star H^*\star H^*)(t)=O(t^2)$ and
$H_0^*(t)\sim t/\mu$. The normalized lower limit is at least
$1-\eta$, while (\ref{est_baz}) and $1-r_0=O(1-r)$ give the
upper limit one. Letting $\eta\downarrow0$ shows that the absolute error in part 2 tends to zero. $\Box$

Renewal times are observable, for example, when they are
return-to-state or queue-empty times. Otherwise, the lower bounds
above concern observation of both the path and its renewal count;
the upper bound (\ref{est_baz}) remains valid for observation of
the path alone.

Theorem~\ref{t_prob_dif} and Proposition~\ref{p_prob_lim} establish
the rate of convergence of $P$ to its finite projection as
$r_0,r\to1$, under the stated assumptions.

\subsubsection{The distance between $P$ and $P^*$ on
the $\sigma$-algebra ${\cal F}$}

The preceding subsection compared the laws on a fixed observation
interval. We now compare them on the full canonical $\sigma$-algebra.
Their long-time behaviour can differ: under the usual regularity and
non-arithmetic (or discrete-time aperiodicity) conditions, a finite
positive mean projected cycle length gives an equilibrium limit
\cite{part1_1}, which the transient process need not share.

Put $E_n=\{S_n<\infty\}$ and
$C=\bigcap_{n\ge0}E_n$. Under $P^*$, $C$ has probability one.
If $r<1$, then $P(C)=0$, so $P$ and $P^*$ are mutually singular and
$$
\|P-P^*\|_{\rm TV}
=2\sup_{A\in{\cal F}}|P(A)-P^*(A)|=2.
$$
If $r=1$, then $P(C)=r_0$ and $P^*=P(\cdot\mid C)$; hence
$$
\|P-P^*\|_{\rm TV}=2(1-r_0).
$$
Thus a defect only in the initial delay does not in general give
mutual singularity.

To compare the laws through successive cycles, let
${\cal F}(n)=\sigma(C_0,\ldots,C_n)$, where the cycle records are
those of Section~\ref{par_1_2}. After the first infinite cycle, all
subsequent cycle records are assigned a fixed auxiliary symbol.
These records generate ${\cal F}$, so
$$
{\cal F}=\sigma\left(\bigcup_{n\ge0}{\cal F}(n)\right).
$$
The proof of Proposition~\ref{p1_r4} gives
$$
P^*|_{{\cal F}(n)}=P(\cdot\mid E_n)|_{{\cal F}(n)}.
$$

Let ${\cal P}$ be the probability measures on $(\Omega,{\cal F})$.
Define
$$
d_S(P_1,P_2)=\sum_{n=0}^\infty\frac1{n!}
\sup_{A\in{\cal F}(n)}|P_1(A)-P_2(A)|.
$$
This is a metric: each term is a pseudometric, the sum is bounded
by $e$, and equality of all restrictions determines the full law
because the increasing $\sigma$-algebras generate ${\cal F}$.
Unlike full total variation, it weights comparisons through a finite
number of cycles.

Write
$$
{\cal PR}=\{Q\in{\cal P}:Q(C)=1\}.
$$
These laws give each $S_n$ a proper distribution; in particular,
the set contains the laws under which $S$ is a recurrent renewal process.
\begin{prop}\label{p1_r3}
The finite projection $P^*$ is a nearest element of ${\cal PR}$ to
$P$ in the metric $d_S$, and
$$
d_S(P,{\cal PR})=d_S(P,P^*)=e-r_0e^r.
$$
\end{prop}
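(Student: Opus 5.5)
We compute $d_S(P,P^*)$ term by term and then prove a matching lower bound valid for every $Q\in{\cal PR}$.

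\textbf{Upper bound.} By the proof of Proposition~\ref{p1_r4}, $P^*|_{{\cal F}(n)}=P(\cdot\mid E_n)|_{{\cal F}(n)}$, and $E_n\in{\cal F}(n)$, since $E_n$ is determined by the durations of $C_0,\dots,C_n$. For $A\in{\cal F}(n)$,
$$
P(A)-P^*(A)=P(A\cap E_n)(1-p_n^{-1})+P(A\cap E_n^c).
$$
The first term lies in $[-(1-p_n),0]$ and the second in $[0,1-p_n]$. Both extremes are attained: $A=E_n^c$ gives $1-p_n$, and $A=E_n$ gives $-(1-p_n)$. Hence
$$
\sup_{A\in{\cal F}(n)}|P(A)-P^*(A)|=1-p_n=1-r_0r^n,
$$
and
$$
d_S(P,P^*)=\sum_{n\ge0}\frac{1-r_0r^n}{n!}=e-r_0e^r.
$$

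\textbf{Lower bound.} Fix $Q\in{\cal PR}$. Since $C\subset E_n$ and $Q(C)=1$, we have $Q(E_n)=1$ for every $n$. The event $E_n^c$ belongs to ${\cal F}(n)$, so
$$
\sup_{A\in{\cal F}(n)}|P(A)-Q(A)|\ge |P(E_n^c)-Q(E_n^c)|=1-p_n.
$$
Summing with weights $1/n!$ gives $d_S(P,Q)\ge e-r_0e^r=d_S(P,P^*)$. Because $P^*\in{\cal PR}$ (Proposition~\ref{p1_r4} gives $P^*(C)=1$), the infimum is attained at $P^*$. This proves both the nearest-element claim and the formula $d_S(P,{\cal PR})=d_S(P,P^*)=e-r_0e^r$.

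\textbf{Main obstacle.} The only delicate point is measurability: we must confirm that $E_n\in{\cal F}(n)$, and that the identity $P^*|_{{\cal F}(n)}=P(\cdot\mid E_n)$ remains valid with the convention that cycle records after the first infinite cycle are replaced by an auxiliary symbol. For the first, the cycle records include their durations, so $E_n=\{Y_0<\infty,\dots,Y_n<\infty\}$ is ${\cal F}(n)$-measurable. For the second, on $E_n$ no auxiliary symbol occurs among $C_0,\dots,C_n$, so the conditional identity from the proof of Proposition~\ref{p1_r4} applies directly. Everything else is the elementary two-sided estimate above.
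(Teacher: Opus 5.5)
Your proof is correct and follows essentially the same route as the paper. The coordinatewise lower bound $1-p_n$ comes from $Q(E_n^c)=0$ for every $Q\in{\cal PR}$, and the conditional-law identity $P^*|_{{\cal F}(n)}=P(\cdot\mid E_n)|_{{\cal F}(n)}$ writes $P(A)-P^*(A)$ as a term in $[-(1-p_n),0]$ plus a term in $[0,1-p_n]$, so $P^*$ attains the bound at every $n$; the only cosmetic difference is that you exhibit the extremal event $E_n^c$ directly, whereas the paper gets equality by applying the general lower bound to $P^*\in{\cal PR}$.
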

{\bf Proof.} Every $Q\in{\cal PR}$ assigns probability zero to
$E_n^c\in{\cal F}(n)$. Consequently
$$
\sup_{A\in{\cal F}(n)}|P(A)-Q(A)|\ge1-p_n.
$$
For $P^*$ the conditional-law identity gives, for $A\in{\cal F}(n)$,
$$
P(A)-P^*(A)=P(A\cap E_n^c)-(1-p_n)P^*(A).
$$
Both terms on the right lie in $[0,1-p_n]$, so its absolute value
is at most $1-p_n$, including when $p_n=1$. The lower bound is
therefore attained at every coordinate. Summing yields
$$
d_S(P,P^*)=\sum_{n=0}^\infty\frac{1-p_n}{n!}=e-r_0e^r.
\qquad\Box
$$

\noindent{\bf Corollary.} $d_S(P,P^*)\to0$ as $r_0,r\to1$.

Replacing $n!$ by $c^n$, $c>1$, gives the analogous best-approximation
result with distance
$$
\frac1{1-c^{-1}}-\frac{r_0}{1-r/c}.
$$
The nearest probability measure in ${\cal PR}$ need not be unique.
We now introduce a strictly convex norm and restrict the comparison
class to obtain uniqueness.

Let ${\cal L}={\cal L}(\Omega,\{{\cal F}(n)\},P)$ be the real finite
signed measures locally absolutely continuous with respect to $P$:
$Q|_{{\cal F}(n)}\ll P|_{{\cal F}(n)}$ for every $n$.
It is a linear space, with
$$
(Q+R)(A)=Q(A)+R(A),\qquad (\lambda Q)(A)=\lambda Q(A).
$$
Each $Q\in{\cal L}$ determines, up to $P$-a.s. equality, the
martingale of its local Radon--Nikodym derivatives
$$
f_n=\frac{d(Q|_{{\cal F}(n)})}{d(P|_{{\cal F}(n)})},\qquad
{\bf E}(f_{n+1}\mid{\cal F}(n))=f_n,\qquad {\bf E}|f_n|<\infty.
$$
This correspondence is injective. Its image consists of the
compatible local densities that extend to finite signed measures
on ${\cal F}$; not every integrable martingale has such an extension.
In particular, a finite signed measure necessarily satisfies
$\sup_n{\bf E}|f_n|\le |Q|(\Omega)<\infty$.

For $1<q<\infty$, let ${\cal L}^q$ consist of those $Q\in{\cal L}$
for which
\begin{equation}\label{p1_r10}
\|Q\|=\left(\sum_{n=0}^\infty\frac1{n!}
                 \int_\Omega|f_n|^q\,dP\right)^{1/q}<\infty.
\end{equation}
This is a norm.

\begin{teor}\label{part1_BSN}
The space ${\cal L}^q$, equipped with the norm~(\ref{p1_r10}), is
strictly convex.
\end{teor}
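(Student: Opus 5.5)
We use the standard criterion: a normed space is strictly convex if $\|Q\|=\|R\|=1$ and $\|Q+R\|=2$ imply $Q=R$. Equivalently, equality in the triangle inequality $\|Q+R\|=\|Q\|+\|R\|$ with $Q,R\neq0$ forces $R=\lambda Q$ for some $\lambda>0$. The plan is to embed ${\cal L}^q$ isometrically into a weighted $\ell^q$-sum of $L^q$ spaces. There strict convexity follows from the equality case of Minkowski's inequality, and injectivity of the density correspondence transfers the conclusion back to measures.

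\textbf{Step 1: the isometric embedding.} For $Q\in{\cal L}^q$ with densities $(f_n)$, we set
\[
\Phi(Q)=\bigl((n!)^{-1/q}f_n\bigr)_{n\ge0}\in\bigoplus_{n}L^q(\Omega,{\cal F}(n),P)
\]
and give the target the norm $\bigl(\sum_n\|g_n\|_q^q\bigr)^{1/q}$. This target is the space $L^q$ of the product measure on $\mathbb{N}\times\Omega$ (counting measure times $P$), so it is a genuine $L^q$ space. We will check that $\Phi$ is linear. Local densities add and scale, since $d((Q+R)|_{{\cal F}(n)})/dP=f_n+g_n$ $P$-a.s. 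By (\ref{p1_r10}), $\Phi$ is an isometry. As the text notes, $\Phi$ is injective: if all $f_n=0$ a.s., then $Q$ vanishes on every ${\cal F}(n)$. Since $\bigcup_n{\cal F}(n)$ is a $\pi$-system generating ${\cal F}$, $Q=0$ by uniqueness of extension for finite signed measures, applied to the positive and negative parts through a $\pi$–$\lambda$ argument.

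\textbf{Step 2: strict convexity of $L^q$, $1<q<\infty$.} Suppose $\|u+v\|_q=\|u\|_q+\|v\|_q$ with $u,v\ne0$. We trace through the proof of Minkowski's inequality: $\int|u+v|^q\le\int|u||u+v|^{q-1}+\int|v||u+v|^{q-1}$, followed by Hölder's inequality. Equality in the first step requires $u$ and $v$ to have the same sign a.e. Equality in Hölder requires $|u|^q$ and $|v|^q$ to be proportional to $|u+v|^q$. Together these give $v=\lambda u$ a.e. with $\lambda>0$. This is classical, and we would simply cite it (Clarkson, or any functional-analysis text). Applied on $\mathbb{N}\times\Omega$, it gives $\Phi(R)=\lambda\Phi(Q)$. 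Linearity and injectivity of $\Phi$ then give $R=\lambda Q$.

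\textbf{Main obstacle.} There is a subtlety in verifying that the norm is well defined and that ${\cal L}^q$ is closed under addition. Membership requires $\|Q\|<\infty$, and Minkowski's inequality in the weighted sequence space shows that $\|Q+R\|\le\|Q\|+\|R\|<\infty$. The sum must also stay a finite signed measure, which is immediate. The other subtlety is uniqueness of the extension from the local restrictions to ${\cal F}$ for signed measures. We would handle it by applying the monotone class argument to $Q-R$: the set where $Q-R$ vanishes is a $\lambda$-system containing the algebra $\bigcup_n{\cal F}(n)$.

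Completeness is not needed for strict convexity, so we do not address it.
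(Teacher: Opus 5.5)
Your proof is correct and is essentially the paper's argument: the local densities, scaled by $(n!)^{-1/q}$, embed ${\cal L}^q$ linearly, injectively and isometrically into the weighted $\ell^q$-sum of the spaces $L^q(P,{\cal F}(n))$. That sum is an $L^q$ space and hence strictly convex, and the equality case pulls back through injectivity. You also supply details that the paper cites or takes from the preceding text, namely the Minkowski equality case and the $\pi$--$\lambda$ proof of injectivity. The only cosmetic point is that the target is the $L^q$ space of the disjoint union with slice $\sigma$-algebras ${\cal F}(n)$, i.e.\ a subspace of $L^q$ on the index set times $\Omega$; this is harmless, since strict convexity passes to subspaces.
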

{\bf Proof.} The weighted direct sum of the spaces
$L^q(P,{\cal F}(n))$, with weights $1/n!$, is an $L^q$ space on the
disjoint union of the coordinate spaces and is strictly convex;
see~\cite{part1_12}, \cite{part1_13}. The map $Q\mapsto(f_n)$
embeds ${\cal L}^q$ isometrically in this space. Hence ${\cal L}^q$
is strictly convex.
In particular,
\begin{equation}\label{part1_r11}
\|F+G\|=\|F\|+\|G\|
\end{equation}
holds, for nonzero $F,G$, exactly when $F=\lambda G$ with
$\lambda>0$. Equality also holds if either measure is zero.
The criterion applies to the single weighted $L^q$ space, so the
proportionality constant is the same in every coordinate.\ $\Box$

The space ${\cal L}^q$ need not be complete; completeness is not
required for the following proposition.

Let
$$
{\cal PR}_q={\cal PR}\cap{\cal L}^q
=\{Q\in{\cal L}^q:Q\ge0,\ Q(\Omega)=Q(C)=1\}.
$$
This is a convex set of probability measures.
\begin{prop}\label{p1_r6}
The unique nearest element of ${\cal PR}_q$ to $P$ for the norm
(\ref{p1_r10}) is the finite projection $P^*$ with respect to $S$.
\end{prop}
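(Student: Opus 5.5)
The plan is to compute $\|P-Q\|$ explicitly for an arbitrary $Q\in{\cal PR}_q$ and to minimise it separately in each coordinate $n$. Since the coordinates decouple, a measure that minimises every coordinate at once is the global minimiser, and strict convexity gives uniqueness.

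First I record the local densities involved. The density of $P$ with respect to itself is $f_n\equiv1$. Let $Q\in{\cal PR}_q$ have local densities $g_n=d(Q|_{{\cal F}(n)})/d(P|_{{\cal F}(n)})$. The event $E_n=\{S_n<\infty\}$ is ${\cal F}(n)$-measurable, and $Q(C)=1$ forces $Q(E_n^c)=0$. Hence $g_n=0$ $P$-a.s. on $E_n^c$, and $\int_{E_n}g_n\,dP=Q(E_n)=1$. The signed measure $P-Q$ has local densities $1-g_n$, so
$$
\|P-Q\|^q=\sum_{n=0}^\infty\frac1{n!}\left((1-p_n)+\int_{E_n}|1-g_n|^q\,dP\right).
$$

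Next I identify $P^*$ as an admissible competitor. By the conditional-law identity $P^*|_{{\cal F}(n)}=P(\cdot\mid E_n)|_{{\cal F}(n)}$, its local densities are $f^*_n={\bf 1}_{E_n}/p_n$, where $p_n=r_0r^n>0$. Therefore $P^*\in{\cal L}$. Moreover $\int|f_n^*|^q\,dP=p_n^{1-q}=r_0^{1-q}(r^{1-q})^n$, and the series $\sum_n p_n^{1-q}/n!$ converges, so $P^*\in{\cal L}^q$. Since also $P^*(C)=1$, we have $P^*\in{\cal PR}_q$. The term $P$ itself lies in ${\cal L}^q$ with norm $e^{1/q}$, so all the differences considered are in ${\cal L}^q$.

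The key step is the coordinatewise minimisation. Fix $n$. On the probability space $E_n$ with normalised measure $P(\cdot\mid E_n)$, the function $g_n$ has mean $1/p_n$. The map $x\mapsto|1-x|^q$ is strictly convex for $q>1$, so Jensen's inequality gives
$$
\int_{E_n}|1-g_n|^q\,dP\ \ge\ p_n\,|1-1/p_n|^q,
$$
with equality if and only if $g_n=1/p_n$ $P$-a.s. on $E_n$. This bound is attained by $f_n^*$ for every $n$ simultaneously. Summing over $n$ gives $\|P-Q\|\ge\|P-P^*\|$, so $P^*$ is a nearest element.

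For uniqueness, suppose $\|P-Q\|=\|P-P^*\|$. Each coordinate inequality above holds, and the sum of the nonnegative gaps is zero, so equality must hold in every coordinate. By the equality case of Jensen's inequality, $g_n=f_n^*$ $P$-a.s. for all $n$, that is, $Q=P^*$ on each ${\cal F}(n)$. Since $\bigcup_n{\cal F}(n)$ generates ${\cal F}$, it follows that $Q=P^*$.

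An alternative route to uniqueness is Theorem~\ref{part1_BSN}: by convexity of ${\cal PR}_q$, two distinct minimisers would give a strictly better midpoint. The direct Jensen argument is cleaner, however. The only delicate point, which I expect to be the main obstacle, is checking carefully that $E_n\in{\cal F}(n)$ and that the support constraint $g_n=0$ on $E_n^c$ really follows from $Q(C)=1$ and local absolute continuity. This is what turns the problem into the constrained mean-fixed minimisation above.
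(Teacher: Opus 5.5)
Your proposal is correct and follows essentially the paper's proof: you use the same coordinatewise expansion of $\|Q-P\|^q$ via the support constraint $g_n=0$ on $E_n^c$, the same per-coordinate lower bound $p_n((1-p_n)/p_n)^q$ attained by $f_n^*=p_n^{-1}{\bf 1}_{E_n}$, and the same equality-case argument for uniqueness. The only difference is cosmetic: you obtain the lower bound directly from Jensen's inequality, whereas the paper derives it from Minkowski plus H\"older and invokes strict convexity (equivalently, the Jensen equality case) only for uniqueness.
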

{\bf Proof.} Its local densities are
$$
f_n^*=p_n^{-1}{\bf1}_{E_n},\qquad
\|P^*\|^q=\sum_{n=0}^\infty\frac{p_n^{1-q}}{n!}
=r_0^{1-q}\exp(r^{1-q})<\infty.
$$
Thus $P^*\in{\cal PR}_q$. For $Q\in{\cal PR}_q$ its local density
$f_n$ is nonnegative, vanishes on $E_n^c$, and
$\int_{E_n}f_n\,dP=1$. Consequently
\begin{equation}\label{p1_r12}
\|Q-P\|^q=\sum_{n=0}^\infty\frac1{n!}
 \left(\int_{E_n}|1-f_n|^q\,dP+1-p_n\right).
\end{equation}
Minkowski's and H\"older's inequalities~\cite{part1_13} give
$$
\left(\int_{E_n}|1-f_n|^q\,dP\right)^{1/q}
\ge\left(\int_{E_n}|f_n|^q\,dP\right)^{1/q}-p_n^{1/q}
\ge p_n^{(1-q)/q}-p_n^{1/q}.
$$
Raising to the $q$th power yields
\begin{equation}\label{p1_r13}
\int_{E_n}|1-f_n|^q\,dP
\ge p_n\left(\frac{1-p_n}{p_n}\right)^q.
\end{equation}
The density $f_n^*$ attains this bound for every $n$, proving
optimality after summation in~(\ref{p1_r12}). By strict convexity
(equivalently, the equality case in Jensen's inequality), equality
at a coordinate requires $f_n=p_n^{-1}$ a.s. on $E_n$.
Thus any minimizer has all the local densities of $P^*$.
Since the filtration generates ${\cal F}$, it is $P^*$ itself.\ $\Box$

Proposition~\ref{p1_r6} justifies the term \emph{projection} and gives
an equivalent characterization of the conditional-cycle construction.
\begin{def1}
The unique nearest probability measure to $P$ in ${\cal PR}_q$,
with the norm~(\ref{p1_r10}), is called the finite projection of
$P$ with respect to $S$.
\end{def1}

\subsection{\sloppy Finite projections of queueing systems.
Finite projection of the $M/G/1$ queue}
\label{par_1_4}

A queueing model is specified on a probability space
$(\Omega,{\cal F},P)$ by its input sequences (the terminology of
\cite{part1_2}): arrival times, service requirements, batch sizes when
customers arrive or are served in groups, and any other random inputs.
The service discipline and the rules determining when service begins
must also be specified.

As in Sections~\ref{par_1_1}--\ref{par_1_2}, we use the uncompleted
$\sigma$-algebra generated by the input sequences and cycle records.
The system's characteristics are measurable functions on this space,
with laws determined by $P$ and the service rule.

To construct a finite projection, choose a transient regenerative
structure in the model. Different choices can give different
projections. For example, the queue-length process of an unstable
$M/G/1$ queue regenerates when a customer arrives to an empty system.
Under FIFO service it also regenerates at successive departures
leaving a fixed number $n$ of customers, whenever the corresponding
return structure satisfies the conditions of Section~\ref{par_1_1}.
For Markovian queueing models, returns to a fixed state give further
examples. These include $M/M/1$, $M/M/n$, $E_n/M/k$ and $M/E_m/k$;
in the Erlang cases the Markov state includes the relevant arrival
or service phases.

For a $GI/G/n/\infty$ queue, consider arrivals to an empty system as
the starts of busy periods. When these epochs define a regenerative
structure, a cycle consists of a busy period and the following idle
period. We assume the positive finite-cycle probabilities required
in Section~\ref{par_1_1}.
\begin{def1}
Let ${\cal S}$ be a $GI/G/n/\infty$ queueing system with a transient
regenerative structure $S=\{S_k\}$ given by these busy-period starts.
Its \emph{finite projection} ${\cal S}^*$ has the same input and
performance variables as measurable functions on $(\Omega,{\cal F})$,
with their laws determined by the finite projection $P^*$ of $P$
with respect to $S$.
\end{def1}

Thus determining the finite projection means determining the joint
law of the input sequences under $P^*$. All other characteristics
follow from the same measurable queueing rules. In cycle terms,
the original cycle law is replaced by its law conditional on
finiteness, as in Section~\ref{par_1_2}.

Consider an $M/G/1$ queue with Poisson arrivals of rate $a>0$ and
i.i.d. nonnegative service requirements with distribution function
$B$, independent of the arrival process. Interarrival times are
therefore i.i.d. exponential variables of rate $a$. There is one
server and unlimited waiting space. A customer finding the system
empty starts service immediately; otherwise it waits according to
the service discipline, and leaves after completing service. The
discipline is work-conserving and nonanticipating, with the same rule
restarted at each regeneration and no state carried from earlier cycles.
We describe a
regeneration cycle starting with an arrival to an empty system;
an initial delay, if present, is treated as in Section~\ref{par_1_2}.

Assume
$$
b_1=\int_{[0,\infty)}y\,dB(y)<\infty,\qquad ab_1>1.
$$
The queue is then unstable, and its busy periods can be infinite.

\begin{teor}\label{p1_r9}
Each projected regeneration cycle consists of an $M/G/1$ busy
period, generated by Poisson arrivals of rate $a^*=ar$ and an
independent i.i.d. service sequence with distribution $B^*$, followed
by an independent exponential idle period of rate $a$, where
$$
B^*(t)=r^{-1}\int_{[0,t]}e^{-a(1-r)y}\,dB(y),\qquad t\ge0,
$$
and $r$ is the unique root in $(0,1)$ of
$$
r=\beta(a-ar),\qquad
\beta(s)=\int_{[0,\infty)}e^{-sy}\,dB(y).
$$
The projected queue is regenerative with finite mean cycle length
and has an equilibrium distribution.
\end{teor}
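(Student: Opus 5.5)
The finite projection replaces the law of a cycle by its law conditional on the busy period being finite, while the idle period is unaffected. So the plan is to compute the conditional law of one busy period given that it is finite. Two facts about a regeneration cycle drive this. First, the idle period that follows a finite busy period is an independent $\mathrm{Exp}(a)$ variable, always finite, and independent of the busy period. Second, a cycle is finite exactly when its busy period $\Theta$ is finite. Hence, by Proposition~\ref{p1_r4}, the projected cycle is the busy period conditioned on $\{\Theta<\infty\}$, followed by an independent $\mathrm{Exp}(a)$ idle period.

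The first step is to identify $r=P\{\Theta<\infty\}$. Because the discipline is work-conserving, the busy period does not depend on the discipline. I would therefore use the branching-process description: each customer's offspring are the arrivals during its service. The number of offspring has generating function $g(z)=\beta(a-az)$, and its mean is $ab_1>1$. The extinction probability is the smallest root of $z=g(z)$ in $[0,1]$. Since $g$ is convex with $g(0)>0$ (when $B\{0\}<1$) and $g'(1)>1$, this root is the unique root in $(0,1)$. Finiteness of $\Theta$ is equivalent to extinction: $b_1<\infty$ and finitely many customers give a finite total work. This gives $r=\beta(a-ar)$.

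The second step, which I expect to be the main obstacle, is to show that the law of the input sequences on the event $\{\Theta<\infty\}$ coincides with the $M/G/1$ law with rates $(ar,B^*)$, scaled by $r$. I would do this by computing the joint density of a complete finite busy-period configuration. Such a configuration consists of the service times $y_1,\dots,y_m$ and the arrival epochs, all of which fall inside the busy period $[0,\Theta)$, where $\Theta=\sum y_i$.

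Under rate $a$ this density is $a^{m-1}e^{-a\Theta}\prod dB(y_i)\,\prod dt_j$. Under $(ar,B^*)$ it is
$$
(ar)^{m-1}e^{-ar\Theta}\prod r^{-1}e^{-a(1-r)y_i}\,dB(y_i)\,\prod dt_j
=r^{-1}a^{m-1}e^{-a\Theta}\prod dB(y_i)\,\prod dt_j .
$$
This uses $\sum y_i=\Theta$. So the projected density equals $P(\cdot)/r$ restricted to finite busy periods. One must also check that under $(ar,B^*)$ the busy period is a.s. finite, since then these densities integrate to one. This holds because $ar\int y\,dB^*(y)=-ar\beta'(a-ar)/r=g'(r)<1$, which is the slope of $g$ at its smaller fixed point.

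A subtle point is that the interarrival time straddling the end of the busy period belongs to the idle period. Its memorylessness gives the $\mathrm{Exp}(a)$ idle time independently of the busy period. The nonanticipating discipline, restarted at each cycle, acts as the same measurable rule under both laws, so it transfers directly.

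Finally, the projected busy period has finite mean $\int y\,dB^*(y)/(1-g'(r))$, and the idle period has mean $1/a$. So the mean projected cycle length is finite, with a non-arithmetic exponential component. The regenerative limit theorem of \cite{part1_1}, applied to the projected process, then gives the equilibrium distribution.
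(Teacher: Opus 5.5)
Your proof is correct, and its central step takes a different route from the paper's. The paper identifies the conditional busy-period law recursively. It runs the queue under nonpreemptive LIFO and calls a customer good when its family is finite. It shows that conditioning a customer to be good changes its service law to $B^*$ and makes its children a Poisson stream of rate $ar$ with uniform positions. It then repeats this inside each child family, using the independence of potential families specified in advance.

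You instead compare the $(ar,B^*)$ input law with the $(a,B)$ law on a complete finite busy-period configuration. Because $\sum y_i=\Theta$, the likelihood ratio is the constant $r^{-1}$. This is Wald's likelihood-ratio identity at the stopping time $\Theta$, in the spirit of the exponential tilting behind Asmussen's associated random walk. It yields the whole conditional law at once. Independence from the discipline is immediate, since both the configuration and the event $\{\Theta<\infty\}$ are determined by the workload.

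Each route has its advantage. Yours avoids the more delicate recursion over conditioned subfamilies, which the paper handles somewhat informally. The paper's route derives $r=\beta(a-ar)$ and $B^*$ from one good/bad mechanism and explains them as a thinning of the Poisson input. In your argument, $B^*$ and the rate $ar$ are posited and then verified, and $r$ must be identified separately by the branching argument.

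Your separate check that the $(ar,B^*)$ busy period is a.s. finite is redundant. Integrating the density identity over finite configurations already gives $r^{-1}P\{\Theta<\infty\}=1$ for the projected input law; the inequality $g'(r)<1$ is needed only for the finite mean. Two minor points. First, $g(0)=\beta(a)>0$ holds with no condition on $B\{0\}$. Second, $b_1<\infty$ is not what makes finitely many customers have finite total work. The converse, that infinite progeny forces $\Theta=\infty$ a.s., holds because a Poisson stream cannot place infinitely many arrivals in a bounded interval.
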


{\bf Proof.} By regeneration, it suffices to identify one complete
cycle. We may first use nonpreemptive LIFO service: after each
completion, the last customer to have joined the queue is served.
For a fixed arrival and service record, the busy-period duration
depends only on the total workload, not on the work-conserving
service discipline.

Call the arrivals during a customer's service its children, and
include their descendants in its family. Call a customer good if
its family is finite, and bad otherwise. The busy period is finite
exactly when its initial customer is good. The Poisson input and
independent service requirements give independent potential families
for distinct children. These may be specified in advance, including
families not reached if an earlier family is infinite.

Let $r$ be the probability that a customer is good. Conditional on
its service requirement $Y=y$, the number $N$ of its children is
Poisson with mean $ay$. The customer is good exactly when all its
children are good. Hence
$$
P\{\hbox{good}\mid Y=y\}
=\sum_{n=0}^{\infty}e^{-ay}\frac{(ay)^n}{n!}r^n
=e^{-a(1-r)y},
$$
and integration gives $r=\beta(a-ar)$. Extinction by successive
generations shows that $r$ is the smallest fixed point of
$f(z)=\beta(a-az)$. Since $f(0)>0$, $f(1)=1$, $f'(1)=ab_1>1$
and $f$ is strictly convex, this is the unique root in $(0,1)$.

Conditioning the customer to be good now gives
$$
P\{Y\in dy\mid\hbox{good}\}
=r^{-1}e^{-a(1-r)y}\,dB(y)=dB^*(y).
$$
Moreover,
$$
P\{N=n\mid Y=y,\hbox{good}\}
=\frac{e^{-ay}(ay)^n r^n/n!}{e^{-a(1-r)y}}
=e^{-ary}\frac{(ary)^n}{n!}.
$$
Given $Y=y$ and $N=n$, the factor $r^n$ does not depend on the
arrival times. They therefore retain the distribution of the ordered
points of $n$ independent uniform variables on $(0,y)$. Thus the
arrivals during this service have the Poisson law with rate $ar$.
For $y=0$, there are no children; the formulas include an atom of
$B$ at zero.

Given these arrivals, conditioning all the child families to be
finite leaves them independent, each with the original family law
conditioned on finiteness. Repeating the same argument in each
family identifies the complete busy-period record with that of an
$M/G/1$ queue driven by independent Poisson arrivals of rate $ar$
and i.i.d. service requirements with distribution $B^*$.
This justifies the good/bad-customer thinning argument without
assuming independence between the labels of ancestors and descendants.
The same conditional input law applies under the original service
discipline, since the event of a finite busy period depends only on
the workload record.

After an original finite busy period, the strong Markov property
of the Poisson input gives an independent idle period with
distribution $\mathrm{Exp}(a)$. Conditioning the regeneration
cycle on finiteness therefore leaves this idle law unchanged.
The projected cycles are independent copies of the conditioned busy
period followed by this original idle period.

Finally,
$$
\beta^*(s)=r^{-1}\beta(s+a-ar),\qquad
b_1^*=-\beta^{*\prime}(0)=-r^{-1}\beta'(a-ar).
$$
Strict convexity gives
$$
ar b_1^*=f'(r)<\frac{f(1)-f(r)}{1-r}=1.
$$
The projected busy period consequently has mean
$b_1^*/(1-ar b_1^*)$; see~\cite{part1_1}. The mean cycle length is
$$
\frac{b_1^*}{1-ar b_1^*}+\frac1a<\infty.
$$
The independent exponential idle component makes the cycle law
non-arithmetic. The regenerative limit theorem therefore gives
the equilibrium distribution.\ $\Box$

Taking $B(t)=1-e^{-bt}$, $b>0$, gives $r=b/a$ for $a>b$ and the
following special case.

\noindent{\bf Corollary.} If the original $M/M/1$ queue has arrival
rate $a$ and service rate $b$, with $a>b$, its finite projection has
service rate $b^*=a$, arrival rate $a^*=b$ while busy, and independent
idle periods with distribution $1-e^{-at}$.

The projection is therefore the dual queue with the original idle
periods. Its property of best approximation is understood with respect
to the cycle filtration, metrics and comparison classes of
Section~\ref{par_1_3}.

The projected $(M/M/1)^*$ queue is examined further in the chapter
on birth-and-death processes.

\bigskip
\centerline{\large\bf Conclusions and observations}

The finite projection of a transient regenerative process is recurrent.
On finite observation intervals, Theorem~\ref{t_prob_dif} gives
approximation bounds; when renewal times are observed,
Proposition~\ref{p_prob_lim} gives asymptotically sharp bounds.
It is also a nearest probability law in the comparison classes and
metrics of Propositions~\ref{p1_r3} and~\ref{p1_r6}.
Under the stated regeneration conditions,
finite projection also applies to queueing systems with traffic
intensity greater than one.

The class of transient regenerative processes is very broad, so
explicit descriptions of their finite projections are not always
available. This also applies to queueing systems. For narrower
classes, such as the $M/G/1$ model in Theorem~\ref{p1_r9}, the
structure of the finite projection can be described explicitly.

\clearpage

\section{\sloppy Finite Projections of Birth and Death Processes} \label{par_2_}
\setcounter{equation}{0}

This section is dedicated to the finite projection of birth and death processes and can be treated as an application of some ideas presented in the first section and in \cite{part1_4}, \cite{l_pnm7}. The primary objective is to determine the finite projection of birth and death processes (BDP) and study some of its properties. In particular, we are interested in the connection between the finite projection and the dual of the BDP.

Let $X = \{X(t):0\le t<\zeta\}$ be the minimal BDP with birth rates
$\{\lambda_n\}_{n\ge0}$ and death rates $\{\mu_n\}_{n\ge1}$,
all finite and strictly positive, on ${E}=\{0,1,2,\ldots\}$.
Its lifetime $\zeta$ may be finite; no continuation after explosion
is included. All hitting and return times below refer to visits before
$\zeta$ and equal infinity if the required visit does not occur.
The law $P$ satisfies $P\{X(0)=0\}=1$.

First, we will need some auxiliary results concerning the time spent by the BDP $X$ in a given set of states.

\subsection{Time Spent by the BDP in a Given Set of States} \label{par_2_1}

Set $T_0=0$. Let $T_n$ be the random variable (r.v.) representing the hitting time of state $n$ for the BDP, i.e., $T_n = \inf\{0<t<\zeta; X(t)=n\}$, $n=1,2,\ldots$. The r.v. $W_{n}=\inf\{T_n<t<\zeta; X(t)=n-1\}-T_n$ ($W_{n}=\infty$ if such $t$ does not exist) will be called the {\sl time spent} by the BDP $X$ in the set of states $E_{n}=\{n,n+1,n+2,\ldots \}$, $n\geq 1$. It measures the passage from $n$ to $n-1$; explosion before that passage is completed gives $W_n=\infty$.

We note that $W_{1}$ coincides with the busy period of the corresponding queueing system.

Let ${w}_{n}(s)$ denote the Laplace-Stieltjes Transform (LST) of the distribution function (d.f.) of the r.v. $W_{n}$, i.e., ${w}_{n}(s) = \int^{+\infty}_{0}e^{-st}\,d{P}(W_{n}<t)$, $n=1,2,3,\ldots$.

\begin{prop}\label{mm1_r3}
The LST of the d.f. of the r.v. $W_{n}$ $(n=1,2,\ldots)$ has the following representation in the form of a continued fraction:
\begin{equation}
{w}_{n}(s) = \frac{\mu_{n}}{s+\mu_{n}+\lambda_{n}-}\ \frac{\lambda_{n} \mu_{n+1}}{s+\mu_{n+1}+\lambda_{n+1}-}\ \frac{\lambda_{n+1}\mu_{n+2}}{s+\mu_{n+2}+\lambda_{n+2}- \ldots }
\label{pnm1}
\end{equation}
where the continued fraction (\ref{pnm1}) converges to a holomorphic function in the domain ${\cal D} = \{ s \in {C} \mid \mathrm{Re}(s)>0 \}$, and the convergence is uniform on any compact subset of ${\cal D}$.
\end{prop}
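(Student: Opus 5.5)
The plan is to derive a one-step recursion for $w_n$ by a first-step analysis at the hitting time $T_n$, iterate it to get the continued fraction, and then justify convergence via a probabilistic interpretation of the convergents.

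\textbf{Step 1 (the recursion).} Starting from $n$ at time $T_n$, the process holds in $n$ for an exponential time of rate $\lambda_n+\mu_n$, with LST $(\lambda_n+\mu_n)/(s+\lambda_n+\mu_n)$. It then jumps to $n-1$ with probability $\mu_n/(\lambda_n+\mu_n)$, which ends $W_n$. Otherwise it jumps to $n+1$ with probability $\lambda_n/(\lambda_n+\mu_n)$. In the second case, by the strong Markov property, the remaining time is the sum of two independent pieces: an independent copy of $W_{n+1}$ (passage from $n+1$ to $n$), followed by a fresh copy of $W_n$. Defective variables ($W=\infty$) are harmless, since $e^{-s\cdot\infty}=0$ for $\mathrm{Re}\,s>0$. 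This gives
$$
w_n(s)=\frac{\mu_n+\lambda_n w_{n+1}(s)w_n(s)}{s+\lambda_n+\mu_n},\qquad\hbox{hence}\qquad
w_n(s)=\frac{\mu_n}{s+\lambda_n+\mu_n-\lambda_n w_{n+1}(s)}.
$$
The denominator is nonzero because $|\lambda_nw_{n+1}(s)|\le\lambda_n<|s+\lambda_n+\mu_n|$ when $\mathrm{Re}\,s>0$. Iterating the recursion formally produces (\ref{pnm1}).

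\textbf{Step 2 (identifying the convergents).} For $m\ge n$, let $w_n^{(m)}(s)$ be the LST of the passage time from $n$ to $n-1$ for the modified process in which state $m$ is reflecting, i.e.\ births at $m$ are suppressed: $\lambda_m:=0$ and the holding rate at $m$ becomes $\mu_m$. This is a finite-state chain, so the same first-step analysis terminates at level $m$ with $w_m^{(m)}=\mu_m/(s+\mu_m)$. Thus $w_n^{(m)}$ is exactly the $(m-n+1)$-th convergent of (\ref{pnm1}). (Alternatively one could use the tail $-\lambda_m\cdot 0$; either truncation works, and I will use the one that gives a clean coupling.)

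\textbf{Step 3 (convergence).} I expect this to be the main obstacle. The original and truncated processes can be coupled so that they agree until the first hitting of level $m+1$ (or of $m$ followed by an upward jump). On the event $\{T_{m+1}>\,$the passage time$\}$ the two passage times coincide. Hence
$$
|w_n(s)-w_n^{(m)}(s)|\le 2P\{\hbox{the process started at } n \hbox{ reaches } m+1 \hbox{ before } n-1\ \hbox{and before } \zeta\}\cdot(\hbox{discount}).
$$
More carefully, the bound is $2{\bf E}[e^{-\mathrm{Re}(s)\tau_{m+1}};\tau_{m+1}<\infty]$, where $\tau_{m+1}$ is the hitting time of $m+1$ before $n-1$. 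The first bound alone does not tend to zero in the transient case, so the discount is essential. Reaching $m+1$ requires at least $m+1-n$ exponential holding times, and rates may be unbounded, so a cruder argument is needed: $\tau_{m+1}\to\infty$ a.s. on the event of interest, because $\tau_{m+1}\uparrow\zeta'$, the explosion/escape time. If explosion is possible this fails, but then $W_n=\infty$ on that event anyway. So I will instead bound the difference by ${\bf E}[e^{-\mathrm{Re}(s)\tau_{m+1}}\wedge 1;\ \tau_{m+1}<W_n]$, and use that on $\{W_n<\infty\}$ the path visits only finitely many levels, so $\tau_{m+1}=\infty$ eventually. On $\{W_n=\infty\}$ the truncated passage time's contribution is controlled by the same event with $\tau_{m+1}\to\infty$ or $\to\zeta$. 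Dominated convergence then gives pointwise convergence for each $s\in{\cal D}$.

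\textbf{Step 4 (holomorphy and uniformity).} All $w_n^{(m)}$ and $w_n$ are LSTs of (possibly defective) distributions, so they are holomorphic on ${\cal D}$ and bounded by $1$. Vitali's theorem (or Montel's theorem plus pointwise convergence) upgrades pointwise convergence to locally uniform convergence on ${\cal D}$, with a holomorphic limit. This completes the proposition.
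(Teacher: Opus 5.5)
Your Step~1 matches the paper's recursion. The truncation you choose in Steps~2--3, however, leaves a genuine gap.

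Suppressing births at $m$ gives last partial denominator $s+\mu_m=s+\mu_m+\lambda_m-\lambda_m\cdot 1$. That is the approximant with tail $w_{m+1}:=1$. The $(m-n+1)$-th convergent of (\ref{pnm1}) instead has tail $w_{m+1}:=0$ and last denominator $s+\mu_m+\lambda_m$. So $w_n^{(m)}$ is not a convergent. Even where your Step~3 works (non-explosive chains, where $\tau_{m+1}\to\infty$), it proves convergence of modified approximants, not of the continued fraction.

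Worse, for explosive chains these modified approximants need not converge to $w_n(s)$ at all. Take $\lambda_k=2\cdot4^k$, $\mu_k=4^k$ for $k\geq1$.
\begin{itemize}
\item The embedded walk steps up with probability $2/3$. So with probability $1-\rho_n=1/2$ the minimal process explodes without reaching $n-1$.
\item With $\pi_i=\prod_{k\le i}\lambda_{k-1}/\mu_k\asymp2^{-i}$, the reflected chain's mean passage time from $j$ to $j-1$ is $(\mu_j\pi_j)^{-1}\sum_{i=j}^m\pi_i\leq2\cdot4^{-j}$.
\item Hence on $\{W_n=\infty\}$ your coupling gives $W_n^{(m)}\leq\tau_{m+1}+R_m\leq\zeta+R_m$. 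Here $R_m$, the reflected descent from $m$, is independent with mean at most $\frac{8}{3}4^{-n}$, uniformly in $m$.
\item Therefore ${\bf E}[e^{-xW_n^{(m)}};W_n=\infty]\geq{\bf E}[e^{-x\zeta};W_n=\infty]\,e^{-\frac{8}{3}x4^{-n}}$, which is bounded away from zero. Yet $w_n$ receives nothing from that event.
\end{itemize}
This is exactly where ``controlled by \dots\ $\tau_{m+1}\to\zeta$'' breaks: $e^{-x\tau_{m+1}}\to e^{-x\zeta}>0$. The reflected approximants track a process that can come back from infinity, not the minimal one.

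The repair is the truncation you set aside, and it is what the paper does: kill the chain when it reaches $m+1$. The recursion then ends with $\mu_m/(s+\mu_m+\lambda_m)$, so these are the true convergents. Starting from $n$, the killed transform is ${\bf E}_n[e^{-sW_n};W_n<T_{m+1}]$, so
$$
w_n(s)-w_n^{(m)}(s)={\bf E}_n\bigl[e^{-sW_n};\,T_{m+1}<W_n<\infty\bigr].
$$
Its modulus is at most $P_n\{T_{m+1}<W_n<\infty\}$. This tends to $0$ because a finite passage visits only finitely many levels. The result is uniform convergence on the whole closed half-plane $\mathrm{Re}\,s\geq0$, with no discounting, no analysis of explosion, and no appeal to Vitali's theorem.
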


{\bf Proof.} Assume that ``catastrophes'' occur independently of the behavior of the process $X$, forming a Poisson stream of events with parameter $s>0$. Then, the LST ${w}_{n}(s)$ can be interpreted as the probability that no ``catastrophes'' occur during the time interval $W_{n}$ (see \cite{l_pnm5} for the ``catastrophe method'').

Let $t_{0}$ be an arbitrary moment when $X(t_{0})=n$, and $\tau$ be the time spent by the BDP $X$ in state $n$ starting from $t_{0}$. As we know for BDPs:
$$
{P}(X(t_{0}+\tau)=j,\tau<t \mid X(t_{0})=n ) = \left\{
\begin{array} {ll}
        \frac{\lambda_{n}}{\mu_{n}+\lambda_{n}}\left( 1-e^{-(\lambda_{n}+
  \mu_{n})t} \right), & \hbox{$j=n+1$} \\
	\frac{\mu_{n}}{\mu_{n}+\lambda_{n}}\left( 1-e^{-(\lambda_{n}+
  \mu_{n})t} \right), & \hbox{$j=n-1$} \\
  0 , & \hbox{$\vert j-n\vert \not=1$}.
\end{array}  \right.
$$

Conditioning on the first jump and using the strong Markov property gives
$$
w_n(s)=\frac{\mu_n+\lambda_n w_{n+1}(s)w_n(s)}
{s+\lambda_n+\mu_n},\qquad s>0.
$$

Thus,
\begin{equation}
{w}_{n}(s) = \frac{\mu_{n}}{\mu_{n}+\lambda_{n}+s-\lambda_{n} {w}_{n+1}(s)},
\label{pnm2}
\end{equation}
and by iterating,
$$
{w}_{n}(s) = \frac{\mu_{n}}{\mu_{n}+\lambda_{n}+s-\frac{\lambda_{n}\mu_{n+1}}{\mu_{n+1}+\lambda_{n+1}+s-\lambda_{n+1}{w}_{n+2}(s)}} = \cdots =
$$
$$
={\frac{\mu_n}{s+\mu_n+\lambda_n-}}\
 {\frac{\lambda_{n}\mu_{n+1}}{s+\mu_{n+1}+\lambda_{n+1}-}}\
 {\frac{\lambda_{n+1}\mu_{n+2}}{s+\mu_{n+2}+\lambda_{n+2}- \ldots}}\  ,
$$
Stop the process upon reaching level $N+1$. The transform of hitting
$n-1$ before $N+1$, starting from $n$, is the fraction truncated at $N$.
Its difference from $w_n(s)$ is bounded, uniformly for
$\mathrm{Re}(s)\geq0$, by the probability of reaching $N+1$ before
eventually hitting $n-1$. This probability tends to zero, since every
finite passage visits finitely many states. At $s=0$, the transforms
count finite passages only.
Under the stated rate assumptions, (\ref{pnm1}) is a continued fraction that converges, by Stieltjes' theorem \cite{l_pnm3}, to a holomorphic function ${w}_{n}(s)$ in the domain ${\cal D} = \{s \in {C} \mid \mathrm{Re}(s)>0 \}$, and this convergence is uniform on any compact subset of ${\cal D}$. $\Box$

As we know, if $X(t)$ is a transient BDP, then $\rho_{n} = {P}(W_{n}< +\infty) = {w}_{n}(0) < 1$. Thus, from (\ref{pnm1}), we have:

{\bf Corollary.}
{\it If the BDP $X(t)$ described by the parameters
  $\{\lambda _{n}\}_{n=0}^{+\infty}$,
 $\{\mu_{n}\}_{n=1}^{+\infty}$ is transient, then
$$\rho_{n}={P}(W_{n}<+\infty)={w}_{n}(0)=$$
\begin{equation}
      = {\frac{\mu_n}{\mu_n+\lambda_n-}}\
       {\frac{\lambda_{n}\mu_{n+1}}{\mu_{n+1}+\lambda_{n+1}-}}\
       {\frac{\lambda_{n+1}\mu_{n+2}}{\mu_{n+2}+\lambda_{n+2}- \ldots}}
\label{pnm3}
\end{equation}
 $n=1,2,\ldots .$
}
  
\begin{samepage}
From equality (\ref{pnm2}) and the properties of Laplace-Stieltjes Transforms, we have for $n \geq 1$
\begin{description}
\item[(a)] \qquad $\lim\limits_{s\to\infty} s w_n(s)=\mu_n$, 
\item[(b)] \qquad $\lim\limits_{s\to\infty} \left ( 
  \mu_n-\mu_n w_n(s)-s w_n(s)\right )/ w_n(s)=\lambda_n$.
\end{description}
\par
\end{samepage}

Point (a) is evident, and (b) follows from (\ref{pnm2}) and the equality $\lim\limits_{s\to\infty} w_{n+1}(s)=0$.

From (a) and (b), it follows in particular that the representation (\ref{pnm1}) of the LST $w_n(s)$ is unique.

Let $W_{\{0,\ldots,n\}}=\inf\{t> T_n: X(t)=n+1\}-T_n$ be the time spent by the BDP $X$ in the set of states $\{0,\ldots,n\}$. From equality (\ref{pnm2}) and relations (a)-(b), it follows that:
\begin{prop}
The Markov probability $P$ is uniquely determined by the distributions of the r.v.'s $W_{\{0,\ldots,n\}}$ and $W_{n+1}$, for any fixed $n \geq 0$.
\end{prop}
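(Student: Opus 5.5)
The plan is to recover all rates $\lambda_k$ ($k\ge0$) and $\mu_k$ ($k\ge1$) from the two laws. Since the minimal process started at $0$ is determined by its rates, this determines $P$. The law of $W_{n+1}$ will give the rates at levels $\ge n+1$, and the law of $W_{\{0,\ldots,n\}}$ will give the rates at levels $\le n$, together with $\lambda_n$.

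\emph{Upper part.} From the law of $W_{n+1}$ we know $w_{n+1}(s)$ for $s>0$. Properties (a) and (b) give
$$\mu_{n+1}=\lim_{s\to\infty}s\,w_{n+1}(s),\qquad
\lambda_{n+1}=\lim_{s\to\infty}\bigl(\mu_{n+1}-\mu_{n+1}w_{n+1}(s)-s\,w_{n+1}(s)\bigr)/w_{n+1}(s).$$
Equality (\ref{pnm2}), solved for $w_{n+2}$, then yields
$$w_{n+2}(s)=\bigl(\mu_{n+1}+\lambda_{n+1}+s-\mu_{n+1}/w_{n+1}(s)\bigr)/\lambda_{n+1}.$$
Here $\lambda_{n+1}>0$ and $w_{n+1}(s)>0$ for $s>0$, since $\mu_{n+1}>0$ makes a finite passage possible. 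Induction then gives $w_k$, $\mu_k$ and $\lambda_k$ for all $k\ge n+1$.

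\emph{Lower part.} Let $v_k(s)$ be the LST of the passage time from $k$ to $k+1$. The process starts at $0$, so $T_0=0$ and $W_{\{0,\ldots,n\}}$ is the passage time from $n$ to $n+1$, with LST $v_n$. Conditioning on the first jump, as in the proof of Proposition~\ref{mm1_r3}, gives the mirror recursion
$$v_k(s)=\frac{\lambda_k}{s+\lambda_k+\mu_k-\mu_k v_{k-1}(s)},\quad 1\le k\le n,\qquad v_0(s)=\frac{\lambda_0}{s+\lambda_0}.$$
This is a finite continued fraction, because there is no boundary issue below. Passages within $\{0,\ldots,n+1\}$ cannot explode, so every such passage is finite and these recursions are exact.

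The analogues of (a) and (b) follow as before, now using $v_{k-1}(s)\to0$:
$$\lambda_k=\lim_{s\to\infty}s\,v_k(s),\qquad \mu_k=\lim_{s\to\infty}\bigl(\lambda_k-\lambda_k v_k(s)-s\,v_k(s)\bigr)/v_k(s).$$
Starting from $k=n$, we read off $\lambda_n$ and $\mu_n$ (for $n\ge1$). Solving the recursion for $v_{k-1}$ gives
$$v_{k-1}(s)=\bigl(s+\lambda_k+\mu_k-\lambda_k/v_k(s)\bigr)/\mu_k,$$
and descending from $k=n$ to $k=1$ recovers all $\lambda_k,\mu_k$ for $k\le n$. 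At $k=0$ we get $\lambda_0=\lim_{s\to\infty}s\,v_0(s)$. When $n=0$, only this last step is needed.

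\emph{Main obstacle.} The difficulty is justifying the lower recursion with the right boundary conventions. The passage from $n$ to $n+1$ never leaves $\{0,\ldots,n+1\}$, so explosion is excluded and each $v_k$ is a finite rational function of $s$. Also, $v_k(s)\to0$ as $s\to\infty$, because $v_k(s)\le\lambda_k/(s+\lambda_k)$. Once this is checked, both limits are routine. The rates then determine the minimal BDP, and hence $P$.
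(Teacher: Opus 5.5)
Your proposal is correct and is essentially the paper's proof: the law of $W_{n+1}$ gives the rates at $n+1,n+2,\ldots$ through (a), (b) and (\ref{pnm2}), and the law of $W_{\{0,\ldots,n\}}$ gives the rates at $n,n-1,\ldots,0$ through the mirrored upward-passage recursion, ending with the exponential passage from $0$ with parameter $\lambda_0$. Your extra checks make explicit what the paper leaves implicit: no explosion below level $n+1$, the bound $v_k(s)\le\lambda_k/(s+\lambda_k)$, and positivity of the transforms before dividing.
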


{\bf Proof.} The transform of $W_{n+1}$ determines successively the
rates at $n+1,n+2,\ldots$ by (a), (b) and~(\ref{pnm2}). The analogous
first-step recursion and rate-recovery limits for upward passages,
with birth and death rates interchanged, determine the rates at
$n,n-1,\ldots,0$ from the distribution of $W_{\{0,\ldots,n\}}$.
At state $0$, the upward passage time is exponential with parameter
$\lambda_0$. The rates and the specified initial state determine
the law of the minimal process. $\Box$

Recall that the initial state of the process $X$ is zero. For an arbitrary initial distribution, the same two passage laws, starting from $n$ and $n+1$ respectively, together with the distribution of $X(0)$, determine $P$.

For $j<i$, let $W_{i,j}$ be the length of the time interval starting at the first moment when $X$ reaches state $i$ and ending at the first moment when the process $X$ enters state $j$. More precisely:
$W_{i,j}= \inf\{t>T_i; X(t)=j\}-T_i$ ($W_{i,j}=\infty$ if such $t$ does not exist).
Evidently, $W_{n}=W_{n,n-1}$.
\begin{prop} \label{mmn_r3}
The LST of the d.f. of the r.v. $W_{i,j}$ is given by:
\begin{equation}
{w}_{i,j}(s)=\prod_{k=j+1}^{i}{w}_{k}(s), \quad i=1,2,\ldots,\; j<i,
\label{pnm4}
\end{equation}
where ${w}_{k}(s)$ is determined in (\ref{pnm1}) for any $k=1,2,\ldots$.
\end{prop}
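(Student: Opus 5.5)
The plan is to decompose the passage from $i$ down to $j$ into successive one-step downward passages. Because the process is skip-free, it must pass through each intermediate level. Define the stopping times $\tau_i=T_i$ and, for $k=i-1,i-2,\ldots,j$,
$$
\tau_k=\inf\{t>\tau_{k+1}:X(t)=k\},
$$
with $\tau_k=\infty$ if $\tau_{k+1}=\infty$ or no such $t<\zeta$ exists. A birth-and-death path moves only by unit jumps, so starting from $i$ it cannot enter $j$ without first entering $i-1,i-2,\ldots,j+1$ in that order. Hence
$$
W_{i,j}=\sum_{k=j+1}^{i}(\tau_{k-1}-\tau_k),
$$
with the convention that the sum is $\infty$ as soon as one increment is infinite. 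This also covers explosion before the passage is completed.

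The second step uses the strong Markov property of the minimal process at $\tau_k$ on $\{\tau_k<\infty\}$. The post-$\tau_k$ process starts at $k$ and is independent of ${\cal F}_{\tau_k}$. Moreover, $\tau_{k-1}-\tau_k$ is the first-passage time from $k$ to $k-1$ of that post-process, so its defective law is the law of $W_k$. To avoid conditioning on events of probability less than one, I would work directly with the catastrophe interpretation from the proof of Proposition~\ref{mm1_r3}. For real $s>0$, let catastrophes form an independent Poisson stream of rate $s$. Then $w_{i,j}(s)$ is the probability that no catastrophe occurs during $[T_i,T_i+W_{i,j})$ with $W_{i,j}<\infty$. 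That event is the intersection, over $k$, of the events that the $k$-th passage is finite and catastrophe-free. Applying the strong Markov property of the pair (process, Poisson stream) at $\tau_{i},\tau_{i-1},\ldots$ successively, and using memorylessness of the Poisson stream, makes these probabilities multiply, giving $\prod_{k=j+1}^i w_k(s)$. Equivalently, one can argue by induction on $i-j$: the identity ${\bf E}[e^{-sW_{i,j}}]={\bf E}[e^{-sW_{i,j+1}}]\,w_{j+1}(s)$ follows by conditioning on ${\cal F}_{\tau_{j+1}}$, with $e^{-s\cdot\infty}=0$.

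Finally, I would extend the identity from real $s>0$ to $\mathrm{Re}(s)>0$ (and to $s=0$ by monotone convergence) by analyticity. Both sides are holomorphic on ${\cal D}$ by Proposition~\ref{mm1_r3}, and they agree on the positive axis. The main point needing care is the handling of infinite passage times: both the strong Markov property and the factorization are applied only on $\{\tau_k<\infty\}$, and the value $0$ is assigned on the complement. The correct bookkeeping is on these defective events rather than on genuine conditional independence. Once that is done, the product formula is routine.
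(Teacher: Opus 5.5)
Your proposal is correct and follows the paper's proof. The skip-free property forces the passage from $i$ to $j$ through $i-1,\ldots,j+1$, and the strong Markov property at these successive hitting times factorizes the defective transform into $\prod_{k=j+1}^{i} w_k(s)$; your bookkeeping on $\{\tau_k<\infty\}$ simply makes this precise. The catastrophe interpretation and the analytic-continuation step are unnecessary, since the same conditioning argument applies directly to the bounded functional $e^{-sW_{i,j}}\mathbf{1}_{\{W_{i,j}<\infty\}}$ for every $s$ with $\mathrm{Re}(s)\ge 0$.
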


{\bf Proof.} To reach $j<i$, the process must successively reach $i-1,i-2,\ldots,j$. Applying the strong Markov property at these hitting times gives $w_{i,j}=w_iw_{i-1}\cdots w_{j+1}$. $\Box$

Let the r.v. $B_n$ represent the total time spent by $X$ in state $n$
from $T_n$ until the downward passage or explosion, whichever occurs first.

\begin{prop} The d.f. of the r.v. $B_{n}$ is exponential with parameter $\mu_{n}+(1-\rho _{n+1})\lambda _{n}$, where $\rho _{n+1}={ P}( W _{n+1}<+\infty )={w} _{n+1}(0)$ is calculated from (\ref{pnm3}), $n=1,2,\ldots$.
\end{prop}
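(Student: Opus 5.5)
We must show that $B_n$, the total time spent in state $n$ from $T_n$ until the first passage to $n-1$ or explosion, is exponential with rate $\mu_n+(1-\rho_{n+1})\lambda_n$. The plan is a geometric-sum argument over the successive sojourns in $n$, using the strong Markov property at the successive entrances to $n$.

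Starting at $T_n$, each visit to $n$ lasts an $\mathrm{Exp}(\lambda_n+\mu_n)$ holding time. The visit then ends in one of two ways:
\begin{itemize}
\item a jump to $n-1$, with probability $\mu_n/(\lambda_n+\mu_n)$, after which the count stops;
\item a jump to $n+1$, with probability $\lambda_n/(\lambda_n+\mu_n)$.
\end{itemize}
In the second case the process returns to $n$ exactly when $W_{n+1}<\infty$. This has probability $\rho_{n+1}$, since by skip-freeness the return to $n$ is the passage from $n+1$ to $n$. If $W_{n+1}=\infty$, the process never returns to $n$ before $\zeta$: it either explodes or drifts to infinity. In either case the contribution to $B_n$ stops.

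By the strong Markov property at the successive returns to $n$ (each occurring before $\zeta$), the holding times $\tau_1,\tau_2,\ldots$ in $n$ are i.i.d. $\mathrm{Exp}(\lambda_n+\mu_n)$. The event "continue after visit $k$" has probability $p=\lambda_n\rho_{n+1}/(\lambda_n+\mu_n)$.

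The independence of $\tau_k$ from the decision to continue after visit $k$ needs care. The next jump direction is independent of the holding time for a continuous-time chain. The excursion above $n$ depends only on the post-jump process, which by the strong Markov property at the jump time is independent of $\tau_k$. Hence $B_n=\sum_{k=1}^{K}\tau_k$, where $K$ is geometric, $P(K=k)=p^{k-1}(1-p)$, independent of the $\tau$'s.

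A geometric sum of i.i.d. exponentials is exponential. The LST is
$$\sum_{k\ge1}(1-p)p^{k-1}\Big(\frac{\lambda_n+\mu_n}{\lambda_n+\mu_n+s}\Big)^k=\frac{(1-p)(\lambda_n+\mu_n)}{(1-p)(\lambda_n+\mu_n)+s},$$
which is the $\mathrm{Exp}$ LST with rate $(1-p)(\lambda_n+\mu_n)=\lambda_n+\mu_n-\lambda_n\rho_{n+1}=\mu_n+(1-\rho_{n+1})\lambda_n$.

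The main obstacle is the careful bookkeeping on the event $\{W_{n+1}=\infty\}$ when explosion is possible. One must check that the count genuinely stops there, with no later time in $n$ counted, since the process is minimal and is not continued after $\zeta$. One must also check that $\rho_{n+1}$ as given by (\ref{pnm3}) is exactly the return probability, including in the recurrent case where it equals one. Both follow from the definitions of $W_{n+1}$ and $\zeta$ in Section~\ref{par_2_1}.
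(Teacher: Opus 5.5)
Your proposal is correct and follows essentially the paper's argument. The paper conditions on the first departure from $n$: a downward jump, an upward excursion that never returns, or an upward excursion that returns, the last with probability $\lambda_n\rho_{n+1}/(\lambda_n+\mu_n)$. It then solves the first-step equation $b_n(s)=[\mu_n+\lambda_n(1-\rho_{n+1})+\lambda_n\rho_{n+1}b_n(s)]/(s+\lambda_n+\mu_n)$, which is the fixed-point form of your geometric compound of i.i.d.\ $\mathrm{Exp}(\lambda_n+\mu_n)$ holding times. Your independence bookkeeping justifies that recursion, and $p<1$ holds because $\mu_n>0$.
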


{\bf Proof.} Conditioning on the first departure from $n$, the LST of
$B_n$ satisfies
$$
b_n(s)=\frac{\mu_n+\lambda_n(1-\rho_{n+1})+
\lambda_n\rho_{n+1}b_n(s)}{s+\lambda_n+\mu_n}.
$$
The three terms correspond to a downward departure, an upward
excursion that never returns, and an upward excursion that returns.
Only time spent at $n$ contributes to $B_n$.

Thus,
$$
{b}_{n}(s)=\frac{\mu_{n}+(1-\rho_{n+1})\lambda_{n}}
  {s+\mu_{n}+(1-\rho_{n+1})\lambda_{n}},\qquad n=1,2,\ldots
$$
which corresponds to the LST of an exponentially distributed r.v. with parameter $\mu_{n}+(1-\rho_{n+1})\lambda_{n}$, where $\rho_{n+1}={ P}(W_{n+1}<+\infty )$ is determined by (\ref{pnm3}). $~\Box$

%%%%%%%%%%%%%%%%%%%%%%%%%%%%%%%%%%%%%%%%%%%%%%%%%%%%%%%%%%%%%%%%%%%%%%
%%%%%%%%%%%%%%%%%%%%%%%%%%%%%%%%%%%%%%%%%%%%%%%%%%%%%%%%%%%%%%%%%%%%%%

\subsection{Finite Projection and the Dual}  \label{par_2_2}

The successive returns of $X$ to state $0$ before its lifetime form
a possibly stopped renewal process $S=\{S_n\}$. The finite projection
is constructed from the conditional laws of its finite cycles, on the
joint space of the embedded chain and holding times as in
Section~\ref{semi_proj}. It is of interest for transient BDPs, where
$0<P(W_1<+\infty)<1$.

\begin{teor} \label{p2_r1}
The finite projection $X^{*}$ with respect to the renewal process $S$ for a transient BDP $X$ determined by the parameters $\{\lambda _{n}\}_{n=0}^{+\infty}$, $\{\mu_{n}\}_{n=1}^{+\infty}$ is a nonexplosive BDP $X^{*}$ defined by the parameters $\{\lambda^{*}_{n}\}_{n=0}^{+\infty}$, $\{\mu^{*}_{n}\}_{n=1}^{+\infty}$, where
$$
\begin{array}{ll}
\lambda_0^*=\lambda_0, &\\[1mm]
\lambda_n^*=\lambda_n\rho_{n+1},\qquad
\mu_n^*=\displaystyle\frac{\mu_n}{\rho_n}, & n\ge1.
\end{array}
$$
and $\rho_n$ are determined from (\ref{pnm3}).
\end{teor}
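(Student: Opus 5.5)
We describe one projected cycle explicitly. A cycle starts at state $0$ (regeneration at returns to $0$). It consists of the holding time at $0$, which is $\mathrm{Exp}(\lambda_0)$, followed by a jump to $1$ and the excursion $W_1$ in $\{1,2,\ldots\}$ back to $0$. The cycle is finite exactly when $W_1<\infty$, an event of probability $\rho_1$, so $r=\rho_1$. The holding time at $0$ is independent of the excursion, so conditioning on finiteness leaves it $\mathrm{Exp}(\lambda_0)$; this gives $\lambda_0^*=\lambda_0$. The whole task therefore reduces to identifying the law of the excursion from $1$ conditioned on $\{W_1<\infty\}$. The plan is to show that this conditioned law is a Doob $h$-transform of the minimal BDP killed on hitting $0$. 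Here $h(n)=P_n\{\hbox{hit }0\}=\prod_{k=1}^n\rho_k$ by Proposition~\ref{mmn_r3} at $s=0$, with $h(0)=1$.

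\textbf{Step 1: the conditioned embedded chain.} Work on the joint space of embedded jump chain and holding times, as the statement's reference to Section~\ref{semi_proj} indicates. The event $\{W_1<\infty\}$ is determined by the path. By the Markov property at each jump, the conditional law is again Markov, with jump probabilities $\tilde p(n,m)=p(n,m)h(m)/h(n)$. For the upward jump $n\to n+1$ this gives $\frac{\lambda_n}{\lambda_n+\mu_n}\rho_{n+1}$. For the downward jump $n\to n-1$ it gives $\frac{\mu_n}{\lambda_n+\mu_n}\rho_n^{-1}$. These two sum to one because of the recursion (\ref{pnm2}) at $s=0$, namely $\rho_n(\lambda_n+\mu_n)=\mu_n+\lambda_n\rho_{n+1}\rho_n$.

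\textbf{Step 2: holding times.} We must show that, conditionally on the embedded path, the holding times are unchanged, i.e.\ independent $\mathrm{Exp}(\lambda_n+\mu_n)$. This is the delicate point. Finiteness of $W_1$ is not purely a property of the jump chain, because explosion ($\zeta<\infty$) can also prevent the return. For the minimal process, however, explosion corresponds to the jump chain escaping to infinity within a finite sum of holding times. On the event that the jump chain hits $0$, only finitely many holding times are involved, so the event $\{W_1<\infty\}$ equals the jump-chain event $\{\hbox{chain hits }0\}$ up to a null set. Given the embedded chain, the holding times are independent exponentials. Conditioning on an event measurable with respect to the chain therefore leaves them unchanged.

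\textbf{Step 3: assembling the rates.} The conditioned excursion is a continuous-time chain with total rate $\lambda_n+\mu_n$ and the jump probabilities of Step 1. Its rates are therefore $\lambda_n\rho_{n+1}=\lambda_n^*$ and $\mu_n/\rho_n=\mu_n^*$. Concatenating the i.i.d.\ conditioned cycles, by Proposition~\ref{p1_r4}, yields the BDP with these rates, started at $0$. The total rate at $n$ changes, but the holding time in the projected process is read off from the path law. To check consistency, note that $\lambda_n^*+\mu_n^*=\lambda_n+\mu_n$ follows from (\ref{pnm2}) at $s=0$.

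\textbf{Step 4: nonexplosion and uniqueness.} Every projected cycle is finite almost surely, and Proposition~\ref{p1_r4} gives $S_n\to\infty$ $P^*$-a.s. Within a finite cycle only finitely many jumps occur. Hence the projected process makes finitely many jumps in bounded time and does not explode. The BDP with these rates is therefore the unique (minimal $=$ honest) one, and $X^*$ is that BDP.

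The main obstacle is Step 2: establishing rigorously that conditioning on a finite cycle coincides with conditioning the jump chain to hit $0$ when explosion is possible. I would handle it by truncating at level $N$, as in the proof of Proposition~\ref{mm1_r3}, and letting $N\to\infty$.
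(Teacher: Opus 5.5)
Your proposal is correct and follows essentially the paper's proof. Conditioning the excursion on eventual return to $0$ multiplies each step's law by $\rho_{j,0}/\rho_{i,0}$, where $\rho_{n,0}=\prod_{k=1}^n\rho_k$. The identity $\lambda_n\rho_{n+1}+\mu_n/\rho_n=\lambda_n+\mu_n$ from (\ref{pnm2}) at $s=0$ keeps the holding-time parameter, the $\mathrm{Exp}(\lambda_0)$ idle period is unaffected, and nonexplosion follows from the divergent sum of idle periods. The only difference is that you split the paper's joint conditioning of the pair (holding time, next state) into a jump-chain part and a holding-time part.

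The obstacle you flag in Step 2 is already settled by your own remark that a return to $0$ before $\zeta$ occurs at a finite jump index. So $\{W_1<\infty\}$ coincides almost surely with the jump-chain hitting event, and no truncation is needed. In Step 3, the sentence saying that the total rate at $n$ changes is a slip: it does not change, as the identity in your next sentence shows.
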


{\bf Proof.} Write $P_i$ for the law starting from state $i$, and
let $\rho_{i,0}$ be the probability of ever hitting $0$ under $P_i$,
for $i\geq1$. Set $\rho_{0,0}=1$. Starting from $i\geq1$, let
$\tau$ be the first holding time and $J$ the next state. By the strong
Markov property,
\begin{equation}
P_i\{\tau\leq t,J=j\mid\hbox{eventual return to }0\}
=P_i\{\tau\leq t,J=j\}\frac{\rho_{j,0}}{\rho_{i,0}},
\qquad j=i\pm1.
\label{pnm5}
\end{equation}
From (\ref{pnm4}) and (\ref{pnm3}),
\begin{equation}
\rho_{n,0}=\prod_{k=1}^{n}\rho_k,\qquad n\geq1.
\label{pnm6}
\end{equation}
Thus the conditional jump rates at $i\geq1$ are
$\lambda_i\rho_{i+1}$ and $\mu_i/\rho_i$. Equation~(\ref{pnm2}) at
$s=0$ gives
$$
\lambda_i\rho_{i+1}+\frac{\mu_i}{\rho_i}=\lambda_i+\mu_i,
\qquad i\geq1,
$$
so the exponential holding-time parameter is unchanged. At $0$, the
idle time is exponential with parameter $\lambda_0$ and independent
of the following excursion. Conditioning that excursion to return
therefore leaves the idle law unchanged.

Concatenating independent cycles with these conditioned laws gives
the finite projection defined in Section~\ref{par_1_2}. Each conditioned
return occurs before explosion and uses finitely many jumps. The successive
independent exponential idle periods have parameter $\lambda_0$ and
an almost surely divergent sum, preventing accumulation of cycles in finite time.
Its infinitesimal transition probabilities are consequently
$$
P\{X^*(t+h)=j\mid X^*(t)=i\}=
\left\{\begin{array}{ll}
\lambda_0h+o(h),&i=0,\ j=1,\\
1-\lambda_0h+o(h),&i=j=0,\\
\lambda_i\rho_{i+1}h+o(h),&i\geq1,\ j=i+1,\\
\mu_i h/\rho_i+o(h),&i\geq1,\ j=i-1,\\
1-(\lambda_i\rho_{i+1}+\mu_i/\rho_i)h+o(h),&i=j\geq1,\\
o(h),&|j-i|\geq2.
\end{array}\right.
$$
These are the stated birth and death rates. $\Box$

Let $W^{*}_{i}$ and $W^{*}_{i,j}$ represent random variables defined for the BDP $X^{*}$ in the same way as $W_{i}$, $W_{i,j}$ are for the BDP $X$, $0\leq j<i$. It is expected that the random variables $W^{*}_{i}$, $W^{*}_{i,j}$, $i=1,2,\ldots$, $0\leq j<i$ coincide (in distribution) with the respective finite projections (in the sense of the definition in \cite{part1_4}) of the random variables $W_{i}, W_{i,j}$.
%%%%%%%%%%%%%%%%%%%%%%%%%%%%%%%%%%%%%%%%%%%%%%%%%%%%%%%%%%%%%%%%%%%%%%
\begin{prop}\label{mmn_r4}
\begin{description}
\item[a)] LST of the d.f. for the r.v. $W^{*}_{i}$
\begin{equation}
{w}^{*}_{i}(s)={w}_{i}(s)\rho_{i}^{-1}, i=1,2,\ldots;
\label{pnm7}
\end{equation}
\item[b)] LST of the d.f. for the r.v. $W^{*}_{i,j}$
\begin{equation}
{w}^{*}_{i,j}(s)=\prod_{k=j+1}^i\frac{{w}_{k}(s)}{\rho_k},
\qquad i=1,2,\ldots,\quad i>j.
\label{pnm8}
\end{equation}
\end{description}
\end{prop}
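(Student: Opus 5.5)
We prove a) by the first-step analysis used for (\ref{pnm2}), now for the projected process $X^*$ of Theorem~\ref{p2_r1}, and then obtain b) from a) exactly as Proposition~\ref{mmn_r3} was obtained.

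For a), we first note that $X^*$ is a birth-and-death process with strictly positive finite rates $\lambda_n^*=\lambda_n\rho_{n+1}$, $\mu_n^*=\mu_n/\rho_n$. Therefore Proposition~\ref{mm1_r3} and recursion (\ref{pnm2}) apply to it verbatim, and
$$
w_n^*(s)=\frac{\mu_n^*}{\mu_n^*+\lambda_n^*+s-\lambda_n^*w_{n+1}^*(s)},\qquad n\ge1,
$$
with $w_n^*$ the continued fraction (\ref{pnm1}) built on the starred rates. We then check that $v_n(s)=w_n(s)/\rho_n$ satisfies the same recursion. By Theorem~\ref{p2_r1}, $\lambda_n^*+\mu_n^*=\lambda_n+\mu_n$ and $\lambda_n^*v_{n+1}=\lambda_n w_{n+1}$. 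Hence the right-hand side with $v$ equals
$$
\frac{\mu_n/\rho_n}{\mu_n+\lambda_n+s-\lambda_nw_{n+1}(s)}=\frac{w_n(s)}{\rho_n}
$$
by (\ref{pnm2}). Equivalently, the continued fraction for $w_n^*$ is an equivalence transformation of the one for $w_n$: the partial numerators satisfy $\lambda_k^*\mu_{k+1}^*=\lambda_k\mu_{k+1}$, the partial denominators are unchanged, and the first numerator is $\mu_n/\rho_n$. Its convergents are therefore exactly $\rho_n^{-1}$ times the convergents of (\ref{pnm1}). Taking limits, which exist by Proposition~\ref{mm1_r3} for $\mathrm{Re}(s)>0$, gives $w_n^*=w_n/\rho_n$.

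The main obstacle is this last identification. Satisfying a recursion alone does not force equality, because the recursion has other solutions. The argument must go through the continued fraction, that is, through equality of the convergents, or equivalently through the truncation argument in the proof of Proposition~\ref{mm1_r3}, which shows that the passage transform is the limit of the truncated fractions. One must also use that $X^*$ is nonexplosive (Theorem~\ref{p2_r1}), so that $W_n^*$ is well defined as a genuine passage time. The case $s=0$ follows by continuity of the LSTs as $s\downarrow0$ by monotone convergence, giving $w_n^*(0)=1$, as expected for the recurrent projection.

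An alternative probabilistic check is available. Conditioning an excursion from $i$ to return to $0$ is the $h$-transform with $h(i)=\rho_{i,0}$ from (\ref{pnm5})--(\ref{pnm6}). Thus $E^*_ie^{-sW_i}=E_i[e^{-sW_i}\mathbf 1\{W_i<\infty\}]\rho_{i-1,0}/\rho_{i,0}=w_i(s)/\rho_i$, by the strong Markov property at the hitting time of $i-1$. This could replace the continued-fraction argument.

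For b), we apply Proposition~\ref{mmn_r3} to $X^*$, whose hypotheses hold, and obtain $w_{i,j}^*=\prod_{k=j+1}^iw_k^*$. Substituting (\ref{pnm7}) then yields (\ref{pnm8}).
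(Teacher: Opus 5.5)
Your argument is correct, but you handle the key identification differently from the paper.

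The paper argues by induction on $i$. The base case $w_1^*=w_1/\rho_1$ comes directly from the definition of the projection: a return cycle is finite exactly when $W_1<\infty$, so $W_1^*$ has the law of $W_1$ given $W_1<\infty$. The inductive step substitutes $w_k^*=w_k/\rho_k$ and $\lambda_k^*+\mu_k^*=\lambda_k+\mu_k$ into (\ref{pnm2}) written for $X^*$, compares the result with (\ref{pnm2}) for $X$, and obtains $\rho_{k+1}w_{k+1}^*=w_{k+1}$. The paper thus runs the recursion upward, where it is uniquely solvable: one cancels $w_k(s)>0$ for real $s>0$. The non-uniqueness of the downward recursion that you rightly flag therefore never arises, and no convergence result for the projected fraction is needed.

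Your continued-fraction route handles all levels at once. Since $\lambda_k^*\mu_{k+1}^*=\lambda_k\mu_{k+1}$ and the partial denominators are unchanged, the two fractions differ only in the leading numerator, so every convergent is multiplied by $\rho_n^{-1}$. This relies on Proposition~\ref{mm1_r3}, through its truncation argument, for both $X$ and $X^*$. That is legitimate because $X^*$ has finite positive rates. The route also shows clearly that projection changes (\ref{pnm1}) only in its first numerator.

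Your $h$-transform alternative is the most economical of the three. It needs no recursion and no division, only the path-level form of (\ref{pnm5}), and it holds on the closed half-plane. The same likelihood ratio, taken at the hitting time of $j$, gives $w^*_{i,j}=w_{i,j}\rho_{j,0}/\rho_{i,0}$. By (\ref{pnm4}) and (\ref{pnm6}) this is (\ref{pnm8}), so b) follows without applying Proposition~\ref{mmn_r3} to $X^*$.
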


{\bf Proof.} We will prove part (a) using mathematical induction. For $i=1$, from
$$
{P}(W^{*}_{1}<t) =  P(W_{1}<t | \hbox{the respective renewal interval is finite})=
$$
$$
={P}(W_{1}<t| W_{1}<+\infty)=\frac{{P}(W_{1}<t)}{{P}(W_{1}<+\infty)},
$$
we have ${w}^{*}_{1}(s)={w}_{1}(s)/\rho_{1}$.

Assume that formula (\ref{pnm7}) holds for $i=k$. Since $X^{*}$ is also a BDP and formula (\ref{pnm2}) holds for any $n=1,2,\ldots$, we have
$$
{w}^{*}_{n}(s)=\frac{\mu^{*}_{n}}{\lambda^{*}_{n}+\mu^{*}_{n}+s-\lambda^{*}_{n}{w}^{*}_{n+1}(s)}.
$$
Thus, by induction, from Theorem \ref{p2_r1}
\begin{equation}
{w}^{*}_{k}(s)={w}_{k}(s)/\rho_{k}=\frac{\mu_{k}/\rho_{k}}{\lambda_{k}\rho_{k+1}+\mu_{k}/\rho_{k}+s-\lambda_{k}\rho_{k+1}{w}^{*}_{k+1}(s)}.
\label{pnm9}
\end{equation}
From (\ref{pnm2}), we have
$$
\lambda_{k}\rho_{k+1}+ \mu_{k}/\rho_{k}=\lambda _{k}+\mu_{k}.
$$
Then (\ref{pnm9}) implies
\begin{equation}
{w}_{k}(s) =\frac{\mu_{k}}{\lambda_{k}+\mu_{k}+s-\lambda_{k}\rho_{k+1}{w}^{*}_{k+1}(s)}.
\label{pnm10}
\end{equation}
Comparing (\ref{pnm10}) with (\ref{pnm2}), we obtain
$$
{w}^{*}_{k+1}(s)={w}_{k+1}(s)/\rho_{k+1},
$$
thus concluding the proof of part (a).

Evidently, part (b) follows from part (a) and Proposition \ref{mmn_r3}.~$\Box$

It is interesting to clarify whether the finite projection of any BDP is ergodic or not. The answer to this question is negative.

{\bf Example.} Let $X_{k}=\{X_{k}(\omega ,t),t\in  [0,+\infty )\}$ 
be a BDP described by the birth rates $\lambda_{n}=k+n, n=0,1,2,\ldots $,
and death rates $\mu_{n}=n, n=1,2,\ldots$,
where the number $k$ is an integer, $k\ge 1$. Then, for this process
\begin{equation}
{w} _{1}(s)=\frac{k+s}{k}-\frac{1}{k e^s I_{k}(s)},
\label{pnm11}
\end{equation}
where $s \in \{ s\in  C| Re(s)>0 \}$ and 
$$
I_{k}(s) = \int^{+\infty }_{1}\frac{e^{-st}}{t^{k}}\,dt.
$$

{\bf Proof.} From (\ref{pnm1}) we have
$$
{w} _{1}(s)={\frac{1}{(k+1)+1+s-}}\ {\frac{(k+1)2}{(k+2)+2+s-}}\
	    {\frac{(k+2)3}{(k+3)+3+s- \ldots}} .
$$

Introduce the continued fraction
$$
f_{k}(s)=\frac{1}{k+s-k{w}_{1}(s)}=
$$
\begin{equation}
={\frac{1}{k+s-}}\ {\frac{k}{(k+1)+1+s-}}\ {\frac{(k+1)2}{(k+2)+2+s-}}\
 {\frac{(k+2)3}{(k+3)+3+s- \ldots  }}
\label{pnm12}
\end{equation}

It is easy to verify that the continued fraction (\ref{pnm12}) is
the even part of the continued fraction
\begin{equation}
f(s) = {\frac{1}{s+}}\ {\frac{k}{1+}}\ {\frac{1}{s+}}\ {\frac{k+1}{1+}}\
	{\frac{2}{s+}}\ {\frac{k+2}{1+ \ldots}}.
\label{pnm13}
\end{equation}
$$
\hbox{Since the series}\ \ \ \sum\limits^{+\infty}_{n=1}
\prod\limits_{i=1}^{n}\frac{\lambda_{i-1}}{\mu_{i}}=
\sum\limits^{+\infty }_{n=1}\prod\limits_{i=1}^{n} 
\frac{(i+k-1)}{i}\ \ \ \hbox{diverges for}\ \ k=1,2,\ldots,
$$
according to the Stieltjes theorem \cite{l_pnm3},
the fraction (\ref{pnm13}) converges, and the limiting functions
of (\ref{pnm12}) and (\ref{pnm13}) coincide,
i.e., $f_{k}(s)\equiv f(s) $.

Thus,
$$
\frac{1}{k+s-k{w}_{1}(s)}= f(s) .
$$

As we know from \cite{l_pnm3}
$$
e^{-s}f(s) = e^{-s}\left({\frac{1}{s+}}\ {\frac{k}{1+}}\
	 {\frac{1}{s+}}\ {\frac{k+1}{1+}}\ {\frac{2}{s+}}\
         {\frac{k+2}{1+ \ldots}} \right) =
 \int^{+\infty}_{1} \frac{e^{-st}}{t^{k}}\, dt = I_{k}(s),
$$
for $k=0,1,2,\ldots$ and $s\in \{s\in { C}| Re(s)>0\}$. This
means that
$$
\frac{1}{k+s-k{w}_1(s)}= e^{s}I_{k}(s),
$$
i.e.,
$$
{w}_{1}(s) = \frac{k+s}{k}-\frac{1}{ke^{s}I_{k}(s)},
$$
where $s \in  \{ s\in { C}| Re(s)>0\}, k=1,2,\ldots$.~$\Box$

It is easy to verify that, in this example,
$$
\rho _{1}= { P}(W_{1}<+\infty )= {w}_{1}(0)=k^{-1}.
$$

Thus, BDP $X_{k}$ is transient for $k\ge 2.$

For the projected BDP, a stationary distribution exists if and only if
the mean return cycle is finite \cite{part1_4}. Since the idle mean is
$1/\lambda_0$, this is equivalent to ${E}W_1^*<\infty$.
For
the finite projection $X^{*}_{k}$ of the BDP $X_{k}$ we have
$$
{ E} W^{*}_{1}=\left.-\frac{\partial{w}^{*}_{1}(s)}{\partial s}\right|_{s=0}
 = - k\left({w}_{1}(s)\right)'_{s=0},
$$
where
$$
\left({w}_{1}(s)\right)'=\left(\frac{k+s}{k}-\frac{1}{ke^{s}I_{k}(s)}\right)'
= k^{-1}\left(1+\frac{I_{k}(s)+I_{k}'(s)}{e^{s}I^{2}_{k}(s)}\right) =
$$
$$
= k^{-1}\left(1+\frac{I_{k}(s)-I_{k-1}(s)}{e^{s}I^{2}_{k}(s)}\right).
$$

%%%%%%%%%%%%%%%%%%%%%%%%%%%%%%%%%%%%%%%%%%%%%%%%%%%%%%%%%%%%%%%%%%%%%%%
%%%%%%%%%%%%%%%%%%%%%%%%%%%%%%%%%%%%%%%%%%%%%%%%%%%%%%%%%%%%%%%%%%%%%%%
%%%%%%%%%%%%%%%%%%%%%%%%%%%%%%%%%%%%%%%%%%%%%%%%%%%%%%%%%%%%%%%%%%%%%%%
%%%%%%%%%%%%%%%%%%%%%%%%%%%%%%%%%%%%%%%%%%%%%%%%%%%%%%%%%%%%%%%%%%%%%%%
Since
$$
I_{k}(0)=\left\{\begin{array}{ll}
  +\infty & \mbox{for $k=0,1$} \\
  \frac{1}{k-1} & \mbox{for $k=2,3,\ldots$}
 \end{array} \right.,
$$
we have
$$
{ E} W^{*}_{1} = - k\left({w}_{1}(s)\right)'_{s=0}=
  \left\{\begin{array}{ll}
  +\infty & \mbox{for $k=1,2$} \\
  \frac{1}{k-2} & \mbox{for $k=3,4,\ldots$}
  \end{array} \right. .
$$

The BDP $X_{2}$ provides an example of a transient BDP whose finite projection $X^{*}_{2}$ is null recurrent since ${ P}(W^{*}_{1}<+\infty)=1$, while ${ E} W^{*}_{1}=+\infty$. Thus, $X^{*}_{2}$ is not ergodic. At the same time, the finite projections $X^{*}_{k}$ of the transient BDPs $X_{k}$ are ergodic for $k \geq 3$. Hence, we must look for conditions under which the finite projection $X^{*}$ of a transient BDP $X$ is ergodic.

\begin{prop}\label{mmn_r7}
The finite projection with respect to the renewal process $S$ of the transient BDP $X$ described by the birth rates $\{\lambda_{n}\}_{n=0}^{+\infty}$ and the death rates $\{\mu_{n}\}_{n=1}^{+\infty}$ is ergodic if one of the following conditions is satisfied:
\begin{description}
\item[a)]
 $ \sup_{n} \rho_{n}\neq 1 $ and $\ \inf_{n} \lambda_{n}\neq 0 $;
\item[b)]
 $ \sup_{n} \rho_{n}\neq 1 $ and $\ \inf_{n} \mu_{n}\neq 0 $;
\item[c)]
 $\sum\limits^{+\infty}_{n=1}(\lambda_{n}-\lambda_{n}\rho_{n+1})^{-1}<+\infty $;
\item[d)]
 $\sum\limits^{+\infty }_{n=2}(\mu_{n}-\mu_{n}\rho_{n})^{-1} <+\infty $.
\end{description}
\end{prop}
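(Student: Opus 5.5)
The plan is to reduce ergodicity of $X^*$ to the classical criterion for positive recurrence of a birth-and-death process, and then to rewrite everything through the scale function of the original transient process $X$.

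\emph{Reduction.} By Theorem~\ref{p2_r1}, $X^*$ is a nonexplosive BDP with rates $\lambda_0^*=\lambda_0$, $\lambda_n^*=\lambda_n\rho_{n+1}$ and $\mu_n^*=\mu_n/\rho_n$. It is recurrent, because every projected cycle is finite. Hence $X^*$ is ergodic if and only if
$$
\sum_{n\ge0}\pi_n^*<\infty,\qquad
\pi_n^*=\prod_{i=1}^n\frac{\lambda_{i-1}^*}{\mu_i^*}.
$$
Equivalently, ${E}W_1^*<\infty$.

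\emph{Scale function.} Set $\pi_0=1$ and $\pi_n=\prod_{i=1}^n\lambda_{i-1}/\mu_i$ for the original process, and put
$$
R_n=\sum_{m\ge n}\frac1{\lambda_m\pi_m}.
$$
Transience means $R_0<\infty$. Hitting probabilities depend only on the embedded jump chain, so explosion plays no role. The usual gambler's-ruin argument then gives $\rho_{n,0}=R_n/R_0$. By (\ref{pnm6}),
$$
\rho_n=\frac{R_n}{R_{n-1}},\qquad
1-\rho_{n+1}=\frac{R_n-R_{n+1}}{R_n}=\frac1{\lambda_n\pi_nR_n}.
$$
Substituting into $\pi_n^*$, the products telescope. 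Since $\lambda_0^*$ carries no factor $\rho$, we get
$$
\pi_n^*=\pi_n\,\rho_1\prod_{k=2}^n\rho_k^2=\frac{\pi_nR_n^2}{R_0R_1}.
$$
This is the familiar $h^2$-weighting of an $h$-transform with $h_n=\rho_{n,0}$. So the whole proof reduces to showing
$$
\sum_n\pi_nR_n^2<\infty.
$$

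\emph{The four conditions.} Each will be checked using $R_n\le R_0$ and the identity $\lambda_{n-1}\pi_{n-1}=\mu_n\pi_n$.

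\textbf{c)} From the formula above, $\lambda_n(1-\rho_{n+1})=1/(\pi_nR_n)$. Condition c) is therefore exactly $\sum\pi_nR_n<\infty$, and $\pi_nR_n^2\le R_0\,\pi_nR_n$.

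\textbf{d)} Here
$$
\mu_n(1-\rho_n)=\frac{\mu_n}{\lambda_{n-1}\pi_{n-1}R_{n-1}}=\frac1{\pi_nR_{n-1}}.
$$
So d) gives $\sum\pi_nR_{n-1}<\infty$. Then $\pi_nR_n^2\le R_0\,\pi_nR_{n-1}$, since $R_n\le R_{n-1}$.

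\textbf{a)} Let $\bar\rho=\sup\rho_n<1$ and $\lambda=\inf\lambda_n>0$. Then $R_n\le R_0\bar\rho^{\,n}$, and
$$
\frac1{\lambda_n\pi_n}=R_n-R_{n+1}\ge(1-\bar\rho)R_n,
$$
so $\pi_n\le 1/\bigl(\lambda(1-\bar\rho)R_n\bigr)$. Consequently $\pi_nR_n^2\le R_n/\bigl(\lambda(1-\bar\rho)\bigr)$, which is geometrically summable. (Alternatively, a) implies c) directly.)

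\textbf{b)} Similarly, $\mu_n\pi_n=\lambda_{n-1}\pi_{n-1}=1/(R_{n-1}-R_n)$ gives
$$
\pi_n\le\frac1{\mu(1-\bar\rho)R_{n-1}},
$$
with $\mu=\inf\mu_n>0$. Hence $\pi_nR_n^2\le R_n/\bigl(\mu(1-\bar\rho)\bigr)$, again summable.

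\emph{Main obstacle.} The key step is the identification $\rho_{n,0}=R_n/R_0$ and the telescoping of $\pi_n^*$. I need to check carefully the boundary factor at $0$ (because $\lambda_0^*=\lambda_0$) and that the minimal-process and explosion conventions do not affect hitting probabilities. Once the criterion $\sum\pi_nR_n^2<\infty$ is established, conditions a)--d) are short estimates.
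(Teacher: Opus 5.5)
Your proof is correct and follows the paper's route. Both arguments reduce ergodicity to convergence of $\sum_n\pi_n^*$ and rewrite its terms: your $\pi_nR_n^2/(R_0R_1)$ is exactly the paper's term $\frac{\lambda_0(1-\rho_1)}{\rho_1}\frac{\rho_{n,0}}{\lambda_n(1-\rho_{n+1})}$ in (\ref{pnm16}). Both then bound these terms under each condition using $\rho_{n,0}\le1$ or $\rho_{n,0}\le(\sup_k\rho_k)^n$. The only difference is cosmetic: you get the telescoping from the explicit scale function $\rho_{n,0}=R_n/R_0$, while the paper derives it from the first-step identity (\ref{pnm15}).
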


{\bf Proof.} According to Theorem \ref{p2_r1}, the finite projection $X^{*}(t)$ is a BDP described by the parameters
$$
\begin{array}{ll}
\lambda_0^*=\lambda_0, &\\[1mm]
\lambda_n^*=\lambda_n\rho_{n+1},\qquad
\mu_n^*=\displaystyle\frac{\mu_n}{\rho_n}, & n\ge1.
\end{array}
$$
where $\rho_{n}, n=1,2,\ldots$ are determined by (\ref{pnm3}).

For $X^{*}$ to be ergodic, it is sufficient (see \cite{part1_1}) that the series
\begin{equation}
S =\sum^{+\infty }_{n=0} \prod_{i=0}^{n}\frac{\lambda^{*}_{i}}{\mu^{*}_{i+1}}=
\rho_{1}^{-1}\sum^{+\infty}_{n=0} \prod_{i=0}^{n}\frac{\lambda_{i}}{\mu_{i+1}}\rho^{2}_{i+1}
=\rho_{1}^{-1}\sum^{+\infty}_{n=0} \prod_{i=0}^{n}\frac{\lambda_{i}}{\lambda_{i+1}}
\frac{\lambda_{i+1}}{\mu_{i+1}}\rho^{2}_{i+1}
\label{pnm14}
\end{equation}
converges. From (\ref{pnm2}), considering $\rho_{n}={w}_{n}(0)$, we have
\begin{equation}
\frac{\lambda_{i}}{\mu_{i}}=\frac{1-\rho_{i}}{\rho_{i}(1-\rho_{i+1})} ,
\quad i=1,2,\ldots.
\label{pnm15}
\end{equation}

Using this identity in~(\ref{pnm14}) and telescoping gives
\begin{equation}
\begin{array}{rl}
S&\displaystyle=\frac{\lambda_0(1-\rho_1)}{\rho_1}
\sum_{n=1}^{\infty}\frac{\rho_{n,0}}{\lambda_n(1-\rho_{n+1})}\\[3mm]
&\displaystyle=\frac{\lambda_0(1-\rho_1)}{\rho_1}
\sum_{n=1}^{\infty}\frac{\rho_n\rho_{n,0}}{\mu_n(1-\rho_n)}.
\end{array}
\label{pnm16}
\end{equation}
Under (a), $\rho_{n,0}\leq a^n$ with $a=\sup_n\rho_n<1$, and the
first series is bounded by a constant times $\sum_{n\geq1}a^n$.
Under (b), the same argument applies to the second series.
Condition (c) implies convergence of the first series because
$\rho_{n,0}\leq1$. Condition (d) implies convergence of the second
because $\rho_n\rho_{n,0}\leq1$ and its initial term is finite.
Thus each condition gives $S<\infty$ and hence ergodicity. $\Box$

\begin{def1}
The BDP $\overline X$ described by the parameters
$$
\overline\lambda_{n}=\left\{\begin{array}{ll}
    \lambda_{0} & \mbox{for $n=0$} \\
    \mu_{n} & \mbox{for  $n=1,2,\ldots$}
		\end{array}
    \right.,
$$
$$
\overline\mu_{n}=\lambda_{n} , n=1,2,\ldots,
$$
is called the 0-dual BDP of the BDP $X$ described by the parameters $\{\lambda_{n}\}_{n=0}^{+\infty}$, $\{\mu_{n}\}_{n=1}^{+\infty}$.
\end{def1}

The following theorem establishes the relationship between the finite projection and the 0-dual of a transient BDP.

\begin{teor}
The finite projection $X^{*}$ of a transient BDP $X$, described by parameters $\{\lambda_{n}\}_{n=0}^{+\infty}$, $\{\mu_{n}\}_{n=1}^{+\infty}$, and the 0-dual $\overline X$ of $X$ are governed by the same probability distribution if and only if $\mu_{i}/\lambda_{i}=c < 1,$ $i=1,2,\ldots $.
\end{teor}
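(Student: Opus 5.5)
The plan is to reduce equality of laws to equality of rates and then compare the rates coordinatewise. Both $X^*$ and $\overline X$ are birth-and-death processes started at $0$. By Theorem~\ref{p2_r1}, $X^*$ is the nonexplosive BDP with rates $\lambda_0^*=\lambda_0$, $\lambda_n^*=\lambda_n\rho_{n+1}$ and $\mu_n^*=\mu_n/\rho_n$ for $n\ge1$. A minimal BDP started at $0$ has its law determined by its rates. Conversely, the rates are recovered from the law through the exponential holding-time parameters $\lambda_n+\mu_n$ and the jump probabilities $\lambda_n/(\lambda_n+\mu_n)$. Both are functionals of the path before the first explosion, and every state is reached with positive probability since all rates are positive. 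Hence the two laws coincide if and only if $\lambda_0^*=\overline\lambda_0$ and, for every $n\ge1$,
$$
\lambda_n\rho_{n+1}=\mu_n,\qquad \frac{\mu_n}{\rho_n}=\lambda_n .
$$
The condition at $0$ holds automatically.

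For necessity, the second relation gives $\rho_n=\mu_n/\lambda_n$ for all $n\ge1$. Substituting $\rho_{n+1}=\mu_{n+1}/\lambda_{n+1}$ into the first relation gives $\mu_{n+1}/\lambda_{n+1}=\mu_n/\lambda_n$. Thus the ratio $\mu_i/\lambda_i$ equals a constant $c$ for all $i\ge1$. Since $X$ is transient, $c=\rho_1=P(W_1<\infty)<1$. Note that in this direction the two displayed relations are consistent with identity (\ref{pnm15}); we only use them as they stand.

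For sufficiency, assume $\mu_i/\lambda_i=c<1$ for $i\ge1$. It suffices to show $\rho_n=c$ for every $n\ge1$, since then both displayed relations hold. The main obstacle is that the recursion (\ref{pnm2}) at $s=0$ does not identify $\rho_n$ by itself. Indeed, $\rho\equiv c$ satisfies $\rho_n=\mu_n/(\mu_n+\lambda_n-\lambda_n\rho_{n+1})$, because $\mu_n/(\lambda_n)=c$, but so does $\rho\equiv1$. So I would identify $\rho_n$ probabilistically instead.

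A finite passage from $n$ to $n-1$ uses only finitely many jumps and occurs before explosion. Its probability is therefore the probability that the embedded jump chain ever hits $n-1$ from $n$, and the holding times play no role. Under the assumption, the jump chain on $\{1,2,\ldots\}$ is a simple random walk with up-probability $1/(1+c)$ and down-probability $c/(1+c)$. It has positive drift because $c<1$. The gambler's-ruin formula then gives the probability of ever descending one level as $c$, so $\rho_n=c$ for all $n$. This also confirms that $X$ is transient, which is needed for the finite projection to be the object of interest. Substituting $\rho_n=c$ gives $\lambda_n\rho_{n+1}=\lambda_n c=\mu_n$ and $\mu_n/\rho_n=\lambda_n$, so $X^*$ has exactly the rates of $\overline X$. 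Therefore the two processes have the same law.
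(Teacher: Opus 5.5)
Your proof is correct and follows the paper's argument. You compare the rates from Theorem~\ref{p2_r1} with those of the 0-dual, obtain the constant ratio $\mu_i/\lambda_i=c=\rho_1<1$ for necessity, and for sufficiency show $\rho_n=c$. The only differences are that you derive $\rho_n=c$ by gambler's ruin on the jump chain, correctly noting that the $s=0$ recursion also admits $\rho\equiv1$, whereas the paper reads it off the continued fraction (\ref{pnm3}), whose convergents select the minimal root $c$; you also make explicit the step that the law of a minimal BDP started at $0$ determines its rates and conversely, which the paper takes for granted.
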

{\bf Proof. } Assume that the finite projection 
$X^{*}$ of the transient BDP $X$ coincides (in distribution) with the 0-dual 
$\overline X$ of $X$, that is, according to the definition of the 0-dual $\overline X$, the BDP $X^{*}$ is described by the parameters
$$
\lambda ^{*}_{n}=\left\{\begin{array}{ll}
    \lambda_{0} & \mbox{for $n=0$} \\
    \mu_{n} & \mbox{for  $n=1,2,\ldots$}
		\end{array}
    \right.
$$
$$
    \mu^{*}_{n}=\lambda_{n} ,\; n=1,2,\ldots,
$$
On the other hand, according to Theorem~\ref{p2_r1}, $X^{*}$ is 
described by the parameters $ \lambda ^{*}_{0}= \lambda_{0},
  \lambda^{*}_{n}= \lambda _{n}\rho _{n+1}, n=1,2,\ldots ,$
  $\mu^{*}_{n}=\mu _{n}/\rho_{n}, n=1,2,\ldots $. 
Thus, $\rho_{n+1}= \lambda^{*}_{n}/\lambda_{n}= \mu_{n}/\lambda_{n},\,
			n=1,2, \ldots$. Furthermore, $\rho _{n+1}=\mu _{n+1}/\mu^{*}_{n+1}=
			\mu_{n+1}/\lambda _{n+1},\; n=0,1,2,\ldots $.
This implies that
$\mu_{n}/\lambda_{n}= c, \; n=1,2,\ldots $, where $c=\rho_{i}<1,
\; i=1,2,\ldots $ since $X$ is transient, thus proving the necessity.

Conversely, assume that $\mu _{n}/\lambda _{n}= c < 1, n=1,2,\ldots$.
Then, from (\ref{pnm3}) and (\ref{pnm4}), it follows that
$\rho_{n}= c =\mu _{n}/\lambda_{n}, n=1,2,\ldots $.
Thus, $\lambda ^{*}_{n} = \lambda _{n}\rho _{n+1}=
 \lambda _{n}\rho _{n}=\lambda
_{n}\mu _{n}/\lambda _{n}= \mu _{n}, n=1,2,\ldots,$ $\lambda_{0}^{*}=
\lambda _{0},$ $\mu ^{*}_{n}= \mu _{n}/\rho _{n}= \lambda _{n}, n=1,2,\ldots $.
Hence, the sufficiency is also proven.~$\Box$

\subsection{\sloppy Finite Projections of Queueing Systems Described by Birth-and-Death Processes} \label{par_2_3}

\subsubsection{The $(M/M/1)^*$ System}

Let $\cal S$ be an $M/M/1$ queueing system with arrival rate $\lambda$ and service rate $\mu$. We assume that the system $\cal S$ is unstable, i.e., $\lambda/\mu > 1$. In this case, according to the corollary of Theorem \ref{p1_r9}, the finite projection $(M/M/1)^*$ of the $M/M/1$ system is a stable queueing system where services are independent and exponentially distributed with parameter $\lambda$, arrivals into the occupied system occur at independent intervals, exponentially distributed with parameter $\mu$, and idle periods are exponentially distributed with parameter $\lambda$. This result also follows from Theorem~\ref{p2_r1}. The number of requests $L^*(t)$ in the system at time $t$ in such a system is a birth-and-death process with birth parameters $\lambda^*_0 = \lambda$, $\lambda^*_k = \mu$ for $k \ge 1$, and death parameters $\mu^*_k = \lambda$ for $k \ge 1$. The equilibrium distribution of this process is 
$$
\pi^*_k = \frac{\lambda - \mu}{2\lambda - \mu} \left(\frac{\mu}{\lambda} \right)^{k-1}, \ k \ge 1, \quad
\pi^*_0 = \frac{\lambda - \mu}{2\lambda - \mu}.
$$

Since, in the finite projection $(M/M/1)^*$, the service rate for requests is equal to the arrival rate in the original system, and the arrival rate during the busy periods coincides with the service rate in the original system, the characteristics of the $(M/M/1)^*$ system that depend only on the behavior during busy periods, such as, for example, the lengths of busy periods starting with $k$ requests in the queue ($k \ge 1$), coincide in distribution with the respective characteristics in the dual system of the original queueing system.

Next, we will determine the state probabilities and the virtual waiting time in the projected system $(M/M/1)^*$. We assume that at the initial moment, the system is in state $m \in \{0,1,\ldots\}$, i.e., a.s. $L^*(0) = m$. Let 
$$
P_k(t) = P^*\{L(t) = k\}, \qquad p_k(s) = \int_0^\infty e^{-st} P_k(t) dt.
$$
Consider the generating function $p(z, s) = \sum_{k=0}^\infty z^k p_k(s)$. To determine $p(z, s)$, we use the method applied by N.T.J. Bailey in studying the $M/M/1$ system; see \cite{part1_m1}, \cite{part1_11}, or \cite{part1_m2}.

\begin{lema}\label{mm1_r5}
If the initial state $m = 0$, then
$$
p(z, s) = \frac{\gamma_1 \gamma_2 \mu + z \gamma_1 (\lambda - \mu)}{(1 - \gamma_1)(\lambda + \lambda \gamma_1 - \mu \gamma_1)(\gamma_2 - z)\mu},
$$
if the initial state $m = 1$, then
$$
p(z, s) = \frac{\gamma_1^2 \gamma_2 \mu + z \gamma_1 (s + \lambda - \gamma_1 \mu)}{(1 - \gamma_1)(\lambda + \lambda \gamma_1 - \mu \gamma_1)(\gamma_2 - z)\mu},
$$
if the initial state $m \ge 2$, then
$$
p(z, s) = \frac{(s + \lambda - \lambda z) \sum_{i=0}^{m-2} z^{i+1} \gamma_1^{m-i} + z^{m} \gamma_1 (s + \lambda) + \gamma_1^{m+1} \mu (\gamma_2 - z)}{(1 - \gamma_1)(\lambda + \lambda \gamma_1 - \mu \gamma_1)(\gamma_2 - z)\mu},
$$
$$
\mbox{where} \quad \gamma_1 = \gamma_1(s) = \frac{\mu + \lambda + s - \sqrt{(\mu + \lambda + s)^2 - 4\lambda\mu}}{2\mu}, \quad |\gamma_1(s)| < 1 \quad \mbox{for } Re(s) > 0,
$$
and
$$
\gamma_2 = \gamma_2(s) = \frac{\mu + \lambda + s + \sqrt{(\mu + \lambda + s)^2 - 4\lambda\mu}}{2\mu}.
$$
\end{lema}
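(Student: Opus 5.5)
The plan follows Bailey's generating-function method. Write the forward Kolmogorov equations for the projected birth-and-death process $L^*$, with $\lambda_0^*=\lambda$, $\lambda_k^*=\mu$ for $k\ge1$, and $\mu_k^*=\lambda$ for $k\ge1$:
$$
P_0'=-\lambda P_0+\lambda P_1,\qquad
P_1'=\lambda P_0-(\mu+\lambda)P_1+\lambda P_2,
$$
$$
P_k'=\mu P_{k-1}-(\mu+\lambda)P_k+\lambda P_{k+1},\qquad k\ge2,
$$
with $P_k(0)=\delta_{km}$. Take Laplace transforms to get $sp_k(s)-\delta_{km}$ on the left. Multiply the $k$th equation by $z^k$ and sum. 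The boundary rows differ from the bulk ones because the birth rate at $0$ is $\lambda$, not $\mu$. I would write the bulk operator for all $k$ and correct for this with a single term in $p_0$. This gives a linear relation of the form
$$
\bigl[s+\lambda+\mu-\mu z-\lambda z^{-1}\bigr]p(z,s)
= z^m + (\text{terms in } p_0(s)),
$$
where the correction involves $p_0$ and the factor $(\lambda-\mu)(1-z)$. Multiplying by $z$, the kernel becomes $\mu z^2-(\lambda+\mu+s)z+\lambda=\mu(z-\gamma_1)(z-\gamma_2)$, with $\gamma_1,\gamma_2$ exactly as in the statement and $\gamma_1\gamma_2=\lambda/\mu$.

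Next I determine $p_0(s)$. For $\mathrm{Re}(s)>0$, $p(z,s)$ is analytic in $|z|<1$, since $\sum_k|p_k(s)|\le 1/\mathrm{Re}(s)$. The root $\gamma_1$ lies in the unit disc, so the numerator must vanish at $z=\gamma_1$. This yields $p_0(s)$ as a rational expression in $\gamma_1$ and $z^m$ evaluated at $\gamma_1$. After substitution, I cancel the common factor $(z-\gamma_1)$ from numerator and denominator. What remains is the factor $(\gamma_2-z)\mu$ in the denominator, with a numerator polynomial in $z$. The cancellation of $(z^m-\gamma_1^m)/(z-\gamma_1)$ produces the finite sum $\sum_{i=0}^{m-2} z^{i+1}\gamma_1^{m-i}$ seen in the case $m\ge2$.

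The cases $m=0$, $m=1$ and $m\ge2$ are handled separately because the boundary rows at $k=0,1$ interact with $\delta_{km}$ differently. For $m=0$ the initial mass sits in the special row. For $m=1$ it sits in the row with birth rate $\mu$ but a lower neighbour of rate $\lambda$. For $m\ge2$ it sits in the bulk. Throughout I would use $\mu\gamma_1^2-(\lambda+\mu+s)\gamma_1+\lambda=0$ to simplify. This identity is what produces the common denominator factor $(1-\gamma_1)(\lambda+\lambda\gamma_1-\mu\gamma_1)$: it comes from solving for $p_0$, whose coefficient after evaluation at $\gamma_1$ reduces to this product.

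The main obstacle is the algebra in matching the stated numerators exactly, in particular reducing the coefficient of $p_0$ to $(1-\gamma_1)(\lambda+\lambda\gamma_1-\mu\gamma_1)$ via the quadratic relation. As a check, I would verify $p(1,s)=1/s$ in each case, which is total mass conservation. I would also check that $s\,p(z,s)$ tends, as $s\downarrow0$, to the generating function of the stated $\pi^*_k$, using $\gamma_1(0)=1$ and $\gamma_2(0)=\lambda/\mu$ when $\lambda>\mu$.
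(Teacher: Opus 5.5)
Your plan is correct and is essentially the paper's proof. You use the same generating-function identity $p(z,s)=[z^{m+1}-(1-z)(\lambda+(\lambda-\mu)z)p_0(s)]/[sz-(1-z)(\lambda-\mu z)]$, the numerator vanishes at the root $\gamma_1$ inside the unit disc, and you cancel $z-\gamma_1$ using $\gamma_1\gamma_2=\lambda/\mu$ and the quadratic relation; you assert $|\gamma_1|<1$ without justification, whereas the paper justifies it by identifying $\gamma_1$ as the busy-period transform of $(M/M/1)^*$, so you should cite that too.

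\emph{Three small corrections.} First, $z[s+\lambda+\mu-\mu z-\lambda z^{-1}]=\mu(z-\gamma_1)(\gamma_2-z)$, which is the negative of the polynomial you wrote. Second, the factor $(1-\gamma_1)(\lambda+\lambda\gamma_1-\mu\gamma_1)$ is, up to sign, just the $p_0$-coefficient $(1-z)(\lambda+(\lambda-\mu)z)$ evaluated at $z=\gamma_1$, so it needs no quadratic identity. Third, the identity is the same for every $m$, so the case split is purely algebraic rather than caused by the boundary rows: the $m\ge2$ formula with an empty sum already gives $m=1$, and only $m=0$ needs a separate simplification.
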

{\bf Proof.} From the Kolmogorov system of differential equations, we have:
\begin{eqnarray}\label{mm1_r1}
& & sp_0(s)-\delta_{0,m}=-\lambda p_0(s)+\lambda p_1(s) \nonumber\\ 
& & sp_1(s)-\delta_{1,m}=\lambda p_0(s)-(\lambda+\mu)p_1(s)+\lambda p_2(s) \\ 
& & sp_k(s)-\delta_{k,m}=\mu p_{k-1}(s)-(\lambda+\mu)p_k(s)+\lambda p_{k+1}(s), \quad k \geq 2. \nonumber
\end{eqnarray}
From the system (\ref{mm1_r1}), we obtain:
$$
sp(z,s)-z^m=\mu zp(z,s)+(\lambda-\mu)zp_0(s)-(\lambda+\mu)p(z,s)
+\mu p_0(s)-\frac{\lambda}{z}p_0(s)+\frac{\lambda}{z}p(z,s).
$$
From this, we can express:
\begin{equation}\label{mm1_r2}
p(z,s)=\frac{z^{m+1}-(1-z)(\lambda +(\lambda-\mu)z)p_0(s)}
{sz-(1-z)(\lambda-\mu z)}.
\end{equation}
The function $p(z,s)$ is bounded for $|z|\leq 1$, $Re(s)>0$. However, the denominator in (\ref{mm1_r2}) vanishes for:
$$
z=\gamma_1(s)=\frac{\mu+\lambda+s-\sqrt{(\mu+\lambda+s)^2-4\lambda\mu}}{2\mu},
$$
and $|\gamma_1(s)|<1$ for $Re(s)>0$, since $\gamma_1(s)$ is the Laplace-Stieltjes transform of the time spent by the process $L(t)$ in the set $\{1,2,\ldots\}$, i.e., the Laplace-Stieltjes transform of the busy-period distribution starting with one customer in the $(M/M/1)^*$ system; see \cite{part1_10} or \cite{part1_11}. This also follows from Proposition~\ref{mm1_r3}. Consequently, the numerator in (\ref{mm1_r2}) also vanishes for $z=\gamma_1(s)$, i.e.,
$$
(\gamma_1(s))^{m+1}-(1-\gamma_1(s))(\lambda+(\lambda-\mu)\gamma_1(s)) p_0(s)=0,
$$
or equivalently:
$$
p_0(s)=\frac{\gamma_1^{m+1}(s)}{(1-\gamma_1(s))(\lambda+(\lambda-\mu)\gamma_1(s))}.
$$
Substituting this expression into (\ref{mm1_r2}) and bringing it to a common denominator, we obtain:
\begin{eqnarray}
p(z,s)&=&\frac{z^{m+1}(1-\gamma_1)(\lambda+(\lambda-\mu)\gamma_1)-
(1-z)(\lambda +(\lambda-\mu)z)\gamma_1^{m+1}}
{(1-\gamma_1)(\lambda+(\lambda-\mu)\gamma_1)(sz-(1-z)(\lambda-\mu z))} =\nonumber\\
&=&\frac{z^{m+1}\left((1-\gamma_1)(\lambda-\mu\gamma_1)-s\gamma_1\right)+z^{m+1}(\lambda\gamma_1(1-\gamma_1)+s\gamma_1)}
{(1-\gamma_1)(\lambda+(\lambda-\mu)\gamma_1)(sz-(1-z)(\lambda-\mu z))}-\nonumber\\
& &-\frac{(1-z)(\lambda -\mu z)\gamma_1^{m+1}+\gamma_1^{m+1}
\lambda z(1-z)} 
{(1-\gamma_1)(\lambda+(\lambda-\mu)\gamma_1)(sz-(1-z)(\lambda-\mu z))} =\nonumber
\end{eqnarray}
(Since $z=\gamma_1$ cancels the denominator of the fraction in (\ref{mm1_r2}), we have):
\begin{equation}
p(z,s)=\frac{
\begin{array}{l}
z^{m+1}(\lambda\gamma_1(1-\gamma_1)+s\gamma_1)
-\gamma_1^{m+1}z(s+\lambda-\lambda z)\\[1mm]
{}+\gamma_1^{m+1}(z-\gamma_1)(\gamma_2-z)\mu
\end{array}}
{(1-\gamma_1)(\lambda+(\lambda-\mu)\gamma_1)(z-\gamma_1)(\gamma_2-z)\mu},
\label{mm1_r4}
\end{equation}
where:
$$
\gamma_2(s)=\frac{\mu+\lambda+s+\sqrt{(\mu+\lambda+s)^2-4\lambda\mu}}{2\mu}.
$$
Equality (\ref{mm1_r4}) holds because $\gamma_1(s)$ and $\gamma_2(s)$ are the roots of the quadratic polynomial in the denominator of the fraction in (\ref{mm1_r2}). From (\ref{mm1_r4}), we derive the expressions for $p(z,s)$ when $m=0,1$. 

For $m\geq 2$, from (\ref{mm1_r4}), we have:
\begingroup\arraycolsep=3pt
\begin{eqnarray}
p(z,s)=\frac{\gamma_1z\left((s+\lambda)(z^m-\gamma_1^m)-
\lambda z\gamma_1(z^{m-1}-\gamma_1^{m-1})\right) + \gamma_1^{m+1}(z-\gamma_1)(\gamma_2-z)\mu}
{(1-\gamma_1)(\lambda+\lambda\gamma_1-\mu\gamma_1)(z-\gamma_1)
(\gamma_2-z)\mu}=\nonumber\\
=\frac{\gamma_1z(z-\gamma_1)\left((s+\lambda)\sum\limits_{i=0}^{m-1}
z^i\gamma_1^{m-1-i}-\lambda z\gamma_1\sum\limits_{i=0}^{m-2}
z^i\gamma_1^{m-2-i}\right)+\gamma_1^{m+1}(z-\gamma_1)(\gamma_2-z)\mu}
{(1-\gamma_1)(\lambda+\lambda\gamma_1-\mu\gamma_1)(z-\gamma_1)
(\gamma_2-z)\mu}.\nonumber
\end{eqnarray}
\endgroup
Simplifying by $z-\gamma_1$ and grouping terms, we obtain the lemma's statement for $m\geq 2$.~$\Box$

Now, we can determine the Laplace transforms $p_i(s)$, $i\geq 1$, by expanding $(\gamma_2-z)^{-1}$ into a power series and grouping terms of the same power of $z$ in the expressions for $p(z,s)$.
\begin{prop} For the Laplace transforms $p_n(s)$ of the probabilities $P_n(t)$, the following equalities hold:
$$
p_0(s)=\frac{\gamma_1^{m+1}(s)}{(1-\gamma_1(s))(\lambda+(\lambda-\mu)\gamma_1(s))}.
$$
\begin{description}
\item[a)]
If the initial state is $m=0$, then:
$$
p_n(s)=p_0(s)\left(\frac{\mu}{\lambda}\right)^{n-1}\gamma_1^{n},
\quad n\ge 1,
$$
\item[b)]
If the initial state is $m=1$, then:
$$
p_n(s)=p_0(s)\left(\frac{\mu}{\lambda}\right)^{n}\gamma_1^{n-1}\mu^{-1}(s+\lambda),
\quad n\ge 1,
$$
\item[c)]
If the initial state is $m=2$, then:
$$
p_1(s)=p_0(s)\lambda^{-1}(s+\lambda),
$$
$$
p_n(s)=p_0(s)\left(\frac{\mu}{\lambda}\right)^{n}\left(\gamma_1^{n-1}(s+\lambda)+\gamma_1^{n-2}\gamma_2(s+\lambda-\lambda\gamma_1)\right)\mu^{-1},
\quad n\ge 2,
$$
\item[d)]
If the initial state is $m\ge 3$, then:
\end{description}\vspace{-0.15 cm}
\begin{eqnarray}
p_1(s)&=&p_0(s)\frac{s+\lambda}{\lambda}, \nonumber\\
p_n(s)&=&p_0(s)\frac{\gamma_1^{-1}\gamma_2^{1-n}(s+\lambda)+(s+\lambda-\lambda\gamma_1)\sum\limits_{i=2}^n \gamma_1^{-i}\gamma_2^{i-n}}
{\mu\gamma_2}, 
\quad 2\le n\le m-1, \nonumber\\
p_n(s)&=&p_0(s)\frac{\gamma_1^{-1}\gamma_2^{1-n}(s+\lambda)+(s+\lambda-\lambda\gamma_1)\sum\limits_{i=2}^{m}\gamma_1^{-i}\gamma_2^{i-n}}
{\mu\gamma_2},
\quad n\ge m. \nonumber
\end{eqnarray}
Where $\gamma_1$ and $\gamma_2$ are defined as in Lemma \ref{mm1_r5}.
\end{prop}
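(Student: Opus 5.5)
The plan is to extract the coefficients of $z^n$ directly from the closed forms of $p(z,s)$ in Lemma~\ref{mm1_r5}. The formula for $p_0(s)$ has already been obtained in the proof of that lemma, from the requirement that the numerator of (\ref{mm1_r2}) vanish at $z=\gamma_1(s)$; it is restated here and used as a normalizing factor. Write
$$
D(s)=(1-\gamma_1)(\lambda+\lambda\gamma_1-\mu\gamma_1)\mu .
$$
Then $p_0=\mu\gamma_1^{m+1}/D$, so that in every case
$$
p(z,s)=p_0(s)\,\frac{N_m(z)}{\mu\gamma_1^{m+1}(\gamma_2-z)},
$$
where $N_m$ is the polynomial numerator in Lemma~\ref{mm1_r5}. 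Since $\gamma_1,\gamma_2$ are the roots of $\mu z^2-(\lambda+\mu+s)z+\lambda$, Vieta gives $\gamma_1\gamma_2=\lambda/\mu$. Hence $|\gamma_2|>1$ whenever $|\gamma_1|<1$ and $\lambda>\mu$, so for $|z|\le1$ we may expand
$$
(\gamma_2-z)^{-1}=\sum_{k\ge0}z^k\gamma_2^{-k-1}.
$$
Throughout I will replace $\gamma_2^{-1}$ by $\mu\gamma_1/\lambda$, which converts powers of $\gamma_2$ into powers of $\gamma_1$ and $\mu/\lambda$. As a consistency check, the constant coefficient in each case should reproduce $p_0$ itself. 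For $m=0$ this is immediate: the constant term of $N_0/(\gamma_2-z)$ is $\gamma_1\gamma_2\mu/\gamma_2=\gamma_1\mu$.

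\emph{Cases $m=0,1$.} For $m=0$, the coefficient of $z^n$, $n\ge1$, in $N_0(z)/(\gamma_2-z)$ is
$$
\gamma_1\gamma_2\mu\,\gamma_2^{-n-1}+\gamma_1(\lambda-\mu)\gamma_2^{-n}=\gamma_1\lambda\gamma_2^{-n}.
$$
Dividing by $\mu\gamma_1$ and using $\gamma_2^{-n}=(\mu\gamma_1/\lambda)^n$ gives $(\mu/\lambda)^{n-1}\gamma_1^n$, which is a). For $m=1$ the same computation applies with $N_1$. The two contributions at $z^n$ combine as
$$
\gamma_1^2\mu\gamma_2^{-n}+\gamma_1(s+\lambda-\gamma_1\mu)\gamma_2^{-n}=\gamma_1(s+\lambda)\gamma_2^{-n}.
$$
Dividing by $\mu\gamma_1^2$ and converting $\gamma_2^{-n}$ yields b).

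\emph{Cases $m\ge2$.} Here the numerator is
$$
(s+\lambda-\lambda z)\sum_{i=0}^{m-2}z^{i+1}\gamma_1^{m-i}+z^m\gamma_1(s+\lambda)+\gamma_1^{m+1}\mu(\gamma_2-z).
$$
The last term cancels the factor $\gamma_2-z$ and contributes only to the constant coefficient, which gives exactly $p_0$. The remaining part is a polynomial of degree $m$ in $z$. I will rewrite it as $\sum_{j=1}^{m}c_jz^j$. For $2\le j\le m-1$ the coefficient is $c_j=(s+\lambda)\gamma_1^{m-j+1}-\lambda\gamma_1^{m-j+2}$, with the special boundary terms $c_1=(s+\lambda)\gamma_1^{m}$ and $c_m$ coming from both $z^m\gamma_1(s+\lambda)$ and the top term of the sum. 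Multiplying by the geometric series, the coefficient of $z^n$ is the truncated convolution $\sum_{j\le\min(n,m)}c_j\gamma_2^{j-n-1}$. This truncation is what produces the two regimes $n\le m-1$ and $n\ge m$ in d). After dividing by $\mu\gamma_1^{m+1}$, each $c_j$ contributes a term of the form $\gamma_1^{-j}(\cdots)$, which reorganizes into the stated sums $\sum_i\gamma_1^{-i}\gamma_2^{i-n}$ over $\mu\gamma_2$. Case c) is the specialization $m=2$, where $\gamma_2^{-1}$ is again converted via Vieta to reach the displayed form. For $p_1$ only $c_1$ contributes:
$$
\frac{(s+\lambda)\gamma_1^m\gamma_2^{-2}}{\mu\gamma_1^{m+1}}=\frac{s+\lambda}{\lambda},
$$
using $\gamma_1\gamma_2^{2}=\lambda\gamma_2/\mu$ once more.

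The main obstacle is the bookkeeping in d). The coefficients $c_j$ combine $s+\lambda$ and $s+\lambda-\lambda\gamma_1$ in slightly different ways at the boundary indices $j=1$ and $j=m$. I will first check the cases $n=2$ and $n=m$ by hand against the claimed formula. Only then will I perform the general index shift $i=j$ and confirm that the upper limit of the sum becomes $n$ when $n\le m-1$ and becomes $m$ when $n\ge m$.
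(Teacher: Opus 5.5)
Your approach is the same as the paper's: expand $(\gamma_2-z)^{-1}=\sum_{k\ge0}z^k\gamma_2^{-k-1}$, collect powers of $z$, and convert using $\gamma_1\gamma_2=\lambda/\mu$. Your coefficient checks for $m=0,1$ are right, and so is the truncated convolution $\sum_{j\le\min(n,m)}c_j\gamma_2^{j-n-1}$ with $c_1=(s+\lambda)\gamma_1^m$ and $c_j=\gamma_1^{m-j+1}(s+\lambda-\lambda\gamma_1)$ for $2\le j\le m$; $c_m$ is not exceptional, so c) and d) follow directly with $i=j$. One slip: in your $p_1$ display the factor must be $\gamma_2^{-1}$ (your own convolution with $j=n=1$), not $\gamma_2^{-2}$, which would give $(s+\lambda)/(\lambda\gamma_2)$; with $\gamma_2^{-1}$, the identity $\mu\gamma_1\gamma_2=\lambda$ yields $(s+\lambda)/\lambda$ as claimed.
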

{\bf Proof.} The Laplace transform $p_0(s)$ was determined in the proof of Lemma~\ref{mm1_r5}.

\noindent{\bf a)} From Lemma~\ref{mm1_r5} for $m=0$, we have:
$$
p(z,s)=\frac{\mu\gamma_1\gamma_2+z\gamma_1(\lambda-\mu)} 
{(1-\gamma_1)(\lambda+\lambda\gamma_1-\mu\gamma_1)\mu\gamma_2}
\sum_{n=0}^\infty\left(\frac{z}{\gamma_2}\right)^n.
$$
From here, after grouping terms by powers of $z$ and considering the equality $\gamma_1\gamma_2=\lambda/\mu$, we obtain:
\begin{eqnarray}
p_1(s)&=&\frac{\mu\gamma_1+\gamma_1(\lambda-\mu)}
{(1-\gamma_1)(\lambda+\lambda\gamma_1-\mu\gamma_1)\mu\gamma_2}=
\frac{\gamma_1^{2}}{(1-\gamma_1)(\lambda+\lambda\gamma_1-\mu\gamma_1)},
\nonumber\\
p_n(s)&=&\frac{\mu\gamma_1\gamma_2^{-n+1}+\gamma_1(\lambda-\mu)
\gamma_2^{-n+1}}{(1-\gamma_1)(\lambda+\lambda\gamma_1-\mu\gamma_1)\mu\gamma_2}=
\frac{\lambda\gamma_1\gamma_2^{-n}}
{(1-\gamma_1)(\lambda+\lambda\gamma_1-\mu\gamma_1)\mu}= \nonumber\\
&=&\left(\frac{\mu}{\lambda}\right)^{n-1}
\frac{\gamma_1^{n+1}}{(1-\gamma_1)(\lambda+\lambda\gamma_1-\mu\gamma_1)},
\quad \mbox{where} \ \ n\ge 2.\nonumber
\end{eqnarray}

\noindent{\bf b)} From Lemma~\ref{mm1_r5} for $m=1$, we have:
$$
p(z,s)=\frac{\mu\gamma_1^2\gamma_2+\sum\limits_{n=1}^\infty z^n
\gamma_1\gamma_2^{-n+1}(s+\lambda)}
{(1-\gamma_1)(\lambda+\lambda\gamma_1-\mu\gamma_1)\mu\gamma_2}.
$$
Therefore:
$$
p_n(s)=\frac{\gamma_1\gamma_2^{-n+1}(s+\lambda)}
{(1-\gamma_1)(\lambda+\lambda\gamma_1-\mu\gamma_1)\mu\gamma_2}=
\left(\frac{\mu}{\lambda}\right)^{n}
\frac{\gamma_1^{n+1}(s+\lambda)}{\mu(1-\gamma_1)(\lambda+\lambda
\gamma_1-\mu\gamma_1)}, \ n\ge 1.
$$  

\noindent{\bf c)} From Lemma~\ref{mm1_r5} for $m=2$, we have:
\begin{eqnarray}
p(z,s)=& &\frac{\mu\gamma_1^3\gamma_2+z\gamma_1\big((s+\lambda) 
\gamma_1-\gamma_1^2\mu\big)+z^2\gamma_1(s+\lambda-\lambda\gamma_1)} 
{(1-\gamma_1)(\lambda+\lambda\gamma_1-\mu\gamma_1)\mu\gamma_2}
\sum_{n=0}^\infty\left(\frac{z}{\gamma_2}\right)^n=\nonumber\\
=& &\frac{\mu\gamma_1^3\gamma_2+z\Big(\mu\gamma_1^3\gamma_2/
\gamma_2+\gamma_1\big((s+\lambda)\gamma_1-\gamma_1^2\mu\big)\Big)}
{(1-\gamma_1)(\lambda+\lambda\gamma_1-\mu\gamma_1)\mu\gamma_2}+\nonumber\\
&+&\frac{\sum\limits_{n=2}^\infty z^n\Big(\mu\gamma_1^3\gamma_2^{1-n}+
\gamma_1\big((s+\lambda)\gamma_1-\gamma_1^2\mu\big)\gamma_2^{1-n}+
\gamma_1(s+\lambda-\lambda\gamma_1)\gamma_2^{2-n} \Big)}
{(1-\gamma_1)(\lambda+\lambda\gamma_1-\mu\gamma_1)\mu\gamma_2}.
\nonumber
\end{eqnarray}
By performing elementary calculations and using the equality $\gamma_1\gamma_2=\lambda/\mu$, we obtain the statement of the proposition for $m=2$.

\noindent{\bf d)} From Lemma~\ref{mm1_r5} for $m\ge 3$, we have:
\begin{eqnarray}
p(z,s)=& p_0(s)&+\frac{(s+\lambda-\lambda z) 
\sum\limits_{i=0}^{m-2}z^{i+1}\gamma_1^{m-i}+z^{m}\gamma_1(s+\lambda)}
{(1-\gamma_1)(\lambda+\lambda\gamma_1-\mu\gamma_1)(\gamma_2-z)\mu}
=\nonumber
\end{eqnarray}
\begin{eqnarray}
=& p_0(s)&+\frac{z\gamma_1^m(s+\lambda)+\sum\limits_{i=2}^{m-1}z^i
\gamma_1^{m-i+1}\left(s+\lambda-\lambda\gamma_1\right)+
z^m\gamma_1\left(s+\lambda-\lambda\gamma_1\right)}
{(1-\gamma_1)(\lambda+\lambda\gamma_1-\mu\gamma_1)\mu\gamma_2}
\sum\limits_{n=0}^\infty\frac{z^n}{\gamma_2^n}
\nonumber\\
=& p_0(s)&+\frac{z\gamma_1^m(s+\lambda)+
\sum\limits_{n=1}^{\infty}z^{n+1}\gamma_1^m\gamma_2^{-n}(s+\lambda)}
{(1-\gamma_1)(\lambda+\lambda\gamma_1-\mu\gamma_1)\mu\gamma_2}+\nonumber\\
&+ &\frac{\left(s+\lambda-\lambda\gamma_1\right)
\sum\limits_{n=2}^{\infty}z^n \sum\limits_{i=2}^{\min\{m-1,n\}}
\gamma_1^{m-i+1}\gamma_2^{i-n}+\left(s+\lambda-\lambda\gamma_1\right)
\sum\limits_{n=m}^{\infty} z^{n}\gamma_1\gamma_2^{m-n}}
{(1-\gamma_1)(\lambda+\lambda\gamma_1-\mu\gamma_1)\mu\gamma_2}.
\nonumber
\end{eqnarray}
Grouping the terms around $z^n$, we obtain the expressions for the Laplace transforms $p_n(s)$:
\begin{eqnarray}
p_1(s)&=&\frac{\gamma_1^{m}(s+\lambda)}{(1-\gamma_1)(\lambda+
\lambda\gamma_1-\mu\gamma_1)\mu\gamma_2}
\nonumber\\
p_n(s)&=&\frac{\gamma_1^{m}\gamma_2^{1-n}(s+\lambda)+
(s+\lambda-\lambda\gamma_1)\sum\limits_{i=2}^n \gamma_1^{m-i+1}
\gamma_2^{i-n}}
{(1-\gamma_1)(\lambda+\lambda\gamma_1-\mu\gamma_1)\mu\gamma_2}, 
\quad \ 2\le n\le m-1, \nonumber\\
p_n(s)&=&\frac{\gamma_1^{m}\gamma_2^{1-n}(s+\lambda)+
(s+\lambda-\lambda\gamma_1)\sum\limits_{i=2}^{m}\gamma_1^{m-i+1}
\gamma_2^{i-n}}
{(1-\gamma_1)(\lambda+\lambda\gamma_1-\mu\gamma_1)\mu\gamma_2},
\quad \  n\ge m, \nonumber
\end{eqnarray}
highlighting the corresponding expressions for $p_0(s)$, we obtain the statement of the proposition. $\Box$
\bigskip

The Laplace transforms $p_n(s)$ can be inverted, resulting in probabilities $P_n(t)$ expressed in terms of modified Bessel functions of the first kind, similar to $M/M/1$ systems. To this end, $p_0(s)$ is represented as a sum of simple fractions. These fractions are then written as power series in $\gamma_2^{-1}$. The obtained expressions for $p_n(s)$, after substituting $\gamma_1$ with $\lambda/(\mu\gamma_2)$, can be inverted easily. We will illustrate this procedure for the case $m=0$, as the resulting formulas are simpler in this case.

\begin{prop} 
If the initial state of the process $L(t)$ is zero, $L(0)=0$ a.s., then, for $t>0$,
\begin{eqnarray}
P_0(t)&=&\frac{\exp\left(-(\lambda+\mu)t\right)}{(2\lambda-\mu)t}
\sum_{k=1}^\infty\frac{\lambda^{k/2}-(\mu-\lambda)^k\lambda^{-k/2}}
{\mu^{k/2}}kI_k\left(2t\sqrt{\lambda\mu}\right)\nonumber\\
P_n(t)&=&\frac{\lambda\exp\left(-(\lambda+\mu)t\right)}{\mu(2\lambda-\mu)t}
\sum_{k=n+1}^\infty\frac{\lambda^{k/2-n}-(\mu-\lambda)^{k-n}\lambda^{-k/2}}
{\mu^{k/2-n}}kI_k\left(2t\sqrt{\lambda\mu}\right),\quad n\ge 1,\nonumber
\end{eqnarray}
At $t=0$, the probabilities are $P_0(0)=1$ and $P_n(0)=0$ for $n\geq1$, obtained by continuity. Here $I_k(z)$ is the modified Bessel function of the first kind:
$$
I_k(z)=\sum_{m=0}^\infty\frac{(z/2)^{2m+k}}{m!\Gamma(m+k+1)}.
$$
\end{prop}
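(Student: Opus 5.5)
The plan is to invert the case $m=0$ of the preceding proposition term by term, following the procedure described just before the statement. Those formulas give
$$
p_0(s)=\frac{\gamma_1}{(1-\gamma_1)(\lambda+(\lambda-\mu)\gamma_1)},\qquad
p_n(s)=p_0(s)\left(\frac{\mu}{\lambda}\right)^{n-1}\gamma_1^{n},\quad n\ge1 .
$$

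First I decompose $p_0$ into simple fractions in the variable $\gamma_1$. Evaluating at $\gamma_1=1$ and at $\gamma_1=-\lambda/(\lambda-\mu)$ gives
$$
p_0(s)=\frac{1}{2\lambda-\mu}\left(\frac{1}{1-\gamma_1}-\frac{1}{1-c\gamma_1}\right),
\qquad c=\frac{\mu-\lambda}{\lambda}.
$$
Since $\lambda>\mu$, we have $|c|<1$. Also $|\gamma_1(s)|<1$ for ${\rm Re}(s)>0$ by Lemma~\ref{mm1_r5}. Both fractions therefore expand as geometric series, and the $k=0$ terms cancel:
$$
p_0(s)=\frac{1}{2\lambda-\mu}\sum_{k=1}^\infty (1-c^k)\gamma_1^k(s).
$$
Multiplying by $(\mu/\lambda)^{n-1}\gamma_1^n$ and reindexing $k\mapsto k-n$ gives
$$
p_n(s)=\frac{(\mu/\lambda)^{n-1}}{2\lambda-\mu}\sum_{k=n+1}^\infty\bigl(1-c^{k-n}\bigr)\gamma_1^k(s).
$$

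Next I invert each power $\gamma_1^k$. Write $\gamma_1=\lambda/(\mu\gamma_2)$ and $\beta=2\sqrt{\lambda\mu}$. Then
$$
\gamma_1^k=\left(\frac{\lambda}{\mu}\right)^{k/2}\left(\frac{(s+\lambda+\mu)-\sqrt{(s+\lambda+\mu)^2-\beta^2}}{\beta}\right)^k .
$$
I use the classical Laplace pair: $\bigl((s-\sqrt{s^2-\beta^2})/\beta\bigr)^k$ is the transform of $kI_k(\beta t)/t$. Combined with the shift $s\mapsto s+\lambda+\mu$, this gives
$$
\gamma_1^k(s)=\int_0^\infty e^{-st}\left(\frac{\lambda}{\mu}\right)^{k/2}\frac{k\,e^{-(\lambda+\mu)t}}{t}I_k\bigl(2t\sqrt{\lambda\mu}\bigr)\,dt .
$$
Substituting this into the series reproduces the coefficients in the statement. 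For $P_0$, the factor $c^k(\lambda/\mu)^{k/2}$ becomes $(\mu-\lambda)^k\lambda^{-k/2}\mu^{-k/2}$. For $P_n$, the factor $(\mu/\lambda)^{n-1}(\lambda/\mu)^{k/2}$ becomes $(\lambda/\mu)\,\lambda^{k/2-n}\mu^{n-k/2}$, and similarly for the $c^{k-n}$ term.

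The main obstacle is justifying the interchange of summation and inversion. The weights $(\lambda/\mu)^{k/2}$ grow geometrically, so domination cannot come from a bound on the coefficients alone. I plan to use $0\le I_k(z)\le (z/2)^k e^{z}/k!$ for $z\ge0$. Then $\sum_k x^k k I_k(\beta t)/t$ converges absolutely and locally uniformly in $t>0$ for every $x$. The resulting majorant is bounded by $C e^{Kt}$, so the term-by-term transform is justified by Fubini for ${\rm Re}(s)>K$. Uniqueness of the Laplace transform then identifies $P_n(t)$ as the continuous function given by the series, and analytic continuation covers all of ${\rm Re}(s)>0$. Finally, since $I_k(z)\sim (z/2)^k/k!$ as $z\to0$, each series has a limit as $t\downarrow0$. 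Only the $k=1$ term of $P_0$ survives in that limit, and it yields $P_0(0)=1$ and $P_n(0)=0$ for $n\ge1$.
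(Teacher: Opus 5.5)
Your proposal is correct and follows the paper's route: the same partial-fraction split of $p_0(s)$, the geometric expansion in $\gamma_1$ with cancelling $k=0$ terms, and termwise inversion through the Laplace pair for $kI_k(2t\sqrt{\lambda\mu})/t$. The paper rewrites the powers as $\gamma_2^{-k}$ and you invert $\gamma_1^k$ directly, which is the same identity. Your exponential majorant, Fubini/uniqueness argument and the check of the values at $t=0$ supply details that the paper leaves implicit.
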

{\bf Proof.} 
It is easy to verify that 
$$
p_0(s)=\frac{\gamma_1}{(1-\gamma_1)(\lambda+\lambda\gamma_1-\mu\gamma_1)}
=\frac{\gamma_1}{2\lambda-\mu}\left(\frac{1}{1-\gamma_1}+\frac{\lambda-
\mu}{\lambda+\lambda\gamma_1-\mu\gamma_1}\right).
$$
Thus,
$$
p_0(s)=\frac{\gamma_1}{2\lambda-\mu}\sum_{k=0}^\infty\gamma_1^k
\left(1-\left(\frac{\mu-\lambda}{\lambda}\right)^{k+1}\right).
$$
Since $\gamma_1=\lambda/(\mu\gamma_2)$, we have 
\begin{equation}\label{mm1_r6}
p_0(s)=\frac{1}{2\lambda-\mu}\sum_{k=1}^\infty
\frac{\lambda^k-(\mu-\lambda)^k}{\mu^k}\gamma_2^{-k}.
\end{equation}
From the previous proposition and the equality \
$\lambda/\mu=\gamma_1\gamma_2$, using the equality
(\ref{mm1_r6}), we have
$$
p_n(s)=p_0(s)\lambda\gamma_2^{-n}\mu^{-1}=
\frac{\lambda}{\mu(2\lambda-\mu)}\sum_{k=1}^\infty
\frac{\lambda^k-(\mu-\lambda)^k}{\mu^k}\gamma_2^{-(k+n)}.
$$
For $\mathop{\rm Re}s>0$, $\gamma_2(s)^{-k}$ is the Laplace transform of
$$
\frac{k}{t}(\mu/\lambda)^{k/2}e^{-(\lambda+\mu)t}
I_k\left(2t\sqrt{\lambda\mu}\right).
$$
Using this identity in the last equality and in (\ref{mm1_r6}) gives the proposition's assertion. $\Box$
\bigskip

Let us note that in the case $m\ge 1$, when inverting the Laplace transforms $p_n(s)$, in addition to modified Bessel functions of the first kind, their derivatives also appear, which themselves are expressed through Bessel functions of higher orders. This leads to more complex formulas for $P_n(t)$ in this case.

To determine the distribution function $W(x,t)$ of the virtual waiting time
$V(t)$, i.e., the waiting time duration for a request arriving at the system at time $t$ until its service begins, we assume first-come first-served service and observe that 
$$
V(t)=\left\{\begin{array}{ll}
    {0} & \mbox{if $L(t)=0$,} \\
    \sum_{i=1}^k \xi_i & \mbox{if $L(t)=k$,} 
\end{array}
\right.
$$
where $\xi_1$ is the remaining service time of the customer in service at time $t$, and $\xi_2,\ldots,\xi_k$ are the service times of the customers waiting ahead.
Thus, 
\begin{equation}\label{mm1_r7}
W(x,t)=P^*\{V(t)\le x\}=P_0(t)+\sum_{k=1}^\infty P_k(t)P^*\left\{
\sum_{i=1}^k \xi_i\le x\right\},
\end{equation}
where the random variables $\xi_i$, \ $i=1,\ldots,k$, are i.i.d. with the distribution function
$1-\exp(-\lambda x)$, for $x\ge 0$.

Let \ \ $w(x,s)=\int_0^\infty e^{-st}W(x,t)\,dt$. \ Then,
from (\ref{mm1_r7}), we have
$$
w(x,s)=p_0(s)+\sum_{k=1}^\infty p_k(s)P^*\left\{
\sum_{i=1}^k \xi_i\le x\right\}.
$$
For $m=0$,
$$
P^*\left\{\sum_{i=1}^k \xi_i\le x\right\}=\int_0^{\lambda x}\frac{u^{k-1}}
{(k-1)!} e^{-u}\,du, \qquad p_n(s)=p_0(s)\lambda\gamma_2^{-n}\mu^{-1}.
$$
Thus,
\begin{eqnarray}
w(x,s)&=& p_0(s)\left(1+\frac{\lambda}{\mu}\sum_{k=1}^\infty
\gamma_2^{-k}\int_0^{\lambda x}\frac{u^{k-1}}{(k-1)!} e^{-u}\,du
\right)=\nonumber\\
&=&p_0(s)\left(1+\frac{\lambda}{\mu(\gamma_2-1)}
\left[1-\exp\left\{{-\lambda
x\left(\frac{\gamma_2-1}{\gamma_2}\right)}\right\}\right]
\right).\nonumber 
\end{eqnarray}

Since the $(M/M/1)^*$ system is stable, from (\ref{mm1_r7}), we see that 
$\lim\limits_{t\to\infty} W(x,t)=\overline{W}(x)$
and 
$$
\overline{W}(x)=\pi_0^*+\sum_{k=1}^\infty \pi_0^*\left(\frac{\mu}{\lambda}
\right)^{k-1}\int_0^{\lambda x}\frac{u^{k-1}}{(k-1)!}
e^{-u}\,du. 
$$
Performing the calculations, we obtain $\overline{W}(x)=1-\frac{\lambda}{2\lambda-\mu}\exp[-(\lambda-\mu)x]$, $x\geq0$. Note that the distribution
$\overline{W}(x)$ of the virtual waiting time does not depend
on the initial distribution of the process $L^*(t)$.

\subsubsection{The $(M/M/n)^*$ System}

Let us consider a queuing system of type $M/M/n$ with arrival rate $\lambda$ and service rate $\mu$ for each of the $n$ servers in the system. We assume that the system is unstable, i.e., $\lambda/(n\mu) > 1$. The number of requests $L(t)$ in this system follows a birth-and-death process with birth rates $\lambda_i = \lambda$, $\ (\forall)\ i\ge 0$, and death rates
$$
\mu_i=\left \{ \begin{array}{ll}
i\mu, & \mbox{if} \ 1\le i\le n,\\
n\mu, & \mbox{if} \ i\ge n.
\end{array} \right.
$$
According to Theorem \ref{p2_r1}, the finite projection $L^*(t)$ of the Markov process $L(t)$ is also a Markov process, characterized by the rates $\lambda_0^*=\lambda$, $\lambda_i^*=\lambda\rho_{i+1}$ for $i\geq1$, and $\mu_i^*=\mu_i/\rho_i$ for $i\geq1$. From equality (\ref{pnm2}), it follows that $\rho_k = n\mu/\lambda$ for $k \ge n$, since in this case $\rho_k = \rho_{k+1}$. Otherwise, for $1 \le k < n$, the probabilities $\rho_k$ can be computed using the recurrence relation (\ref{pnm2}). To explicitly determine the probabilities $\rho_k$, the representation (\ref{pnm3}) and known results for continued fractions can be used (see \cite{l_pnm3}). These probabilities can also be more easily obtained from the difference equations examined by Karlin S. and McGregor J. (see, for example, \cite{l_frc1}). According to these results,
$$
\rho_k = \eta_k(1+\eta_k)^{-1}, \quad \mbox{where} \quad \eta_k=\sum_{i=k}^\infty\prod_{j=k}^i\frac{\mu_j}{\lambda_j}.
$$
Thus, for the $M/M/n$ system, when $k < n$, we have
$$
\eta_k = \sum_{i=k}^{n-1}\prod_{j=k}^i\frac{j\mu}{\lambda} + \sum_{i=n}^\infty\prod_{j=k}^n\frac{j\mu}{\lambda}\prod_{j=n+1}^i\frac{n\mu}{\lambda} = \sum_{i=k}^{n-1}\frac{i!}{(k-1)!}\left(\frac{\mu}{\lambda}\right)^{i-k+1} + \frac{n!}{(k-1)!}\frac{\left(\mu/\lambda\right)^{n-k+1}}{1-n\mu/\lambda}.
$$
From which, for $k < n$,
\begin{equation}\label{mmn_r2}
\rho_k = \eta_k(1+\eta_k)^{-1}, \ \ \mbox{where} \ \ 
\eta_k = \sum_{i=k}^{n-1}\frac{i!}{(k-1)!}\left(\frac{\mu}{\lambda}\right)^{i-k+1} + \frac{n!}{(k-1)!}\frac{\left(\mu/\lambda\right)^{n-k+1}}{1-n\mu/\lambda}.
\end{equation}
Thus, with $q=n\mu/\lambda<1$, the projected process $L^*(t)$
has the rates
\begin{equation}\label{queue_projected_rates}
\lambda_0^*=\lambda,\qquad
\lambda_k^*=\lambda\rho_{k+1}\ (k\geq1),\qquad
\mu_k^*=\frac{\min(k,n)\mu}{\rho_k}\ (k\geq1),
\end{equation}
where $\rho_k=q$ for $k\geq n$ and
$\rho_k=k\mu/(\lambda+k\mu-\lambda\rho_{k+1})$ for $1\leq k<n$.
In particular, $\lambda_k^*=n\mu$ for $k\geq n-1$ only when
$k\geq1$; the boundary rate $\lambda_0^*=\lambda$ applies also
when $n=1$.

We observe that from the equality $\lambda^*_k+\mu^*_k = \lambda_k+\mu_k$, $k\ge 1$, it follows that the distribution of the time the system spends in state $k$ is the same in both the initial system and its finite projection and is exponential with parameter $\lambda_k+\mu_k$ for $k\ge 1$ and $\lambda$ for $k=0$. This equality, used together with the equality $\lambda_k^*\mu_{k+1}^* = \lambda_k\mu_{k+1}$, $1 \le k$, simplifies the calculation of the transition rates $\lambda_k^*$ and $\mu_{k+1}^*$ for $k\le n-1$.

In the finite projection $(M/M/n)^*$, the arrival rate depends on the number of customers in the system: it equals $n\mu$ for $k\geq\max(1,n-1)$ and $\lambda$ at $k=0$. The service rate of each busy server also depends on the number of customers in the system. Since the servers are identical, the service rate per busy server is $\mu^*_k/k$ for $1\leq k<n$, and $\lambda/n$ for $k\geq n$.

From Proposition \ref{mmn_r7}, the projection is positive recurrent. For its stationary weights (see \cite{part1_1}), write $\omega_0=1$ and
\begin{equation}\label{queue_weights}
\omega_k=\frac{\lambda^k}{\mu^k k!}\rho_1
\prod_{j=2}^k\rho_j^2\quad(1\leq k\leq n),\qquad
\omega_k=\omega_n q^{k-n}\quad(k\geq n).
\end{equation}
The empty product equals one. The balance equation
$\omega_k/\omega_{k-1}=\lambda_{k-1}^*/\mu_k^*$ gives
$\omega_1=\lambda\rho_1/\mu$: the first numerator is
$\lambda_0^*=\lambda$, rather than $\lambda\rho_1$. Subsequent
ratios contribute $\rho_k^2$. Hence
\begin{equation}\label{queue_stationary}
Z=\sum_{k=0}^{n-1}\omega_k+\frac{\omega_n}{1-q},
\qquad \pi_k^*=\frac{\omega_k}{Z}.
\end{equation}

There are two waiting quantities here. First, let $W$ be the
time until the stationary process enters $\{0,\ldots,n-1\}$,
while arrivals continue, with $W=0$ if the initial state is below $n$. Write $W(x)=P(W\leq x)$. Put $c=n\mu$,
$A=\sum_{k=0}^{n-1}\pi_k^*$, and
\begin{equation}\label{mmn_r5}
\theta(s)=\frac{\lambda+c+s-
\sqrt{(\lambda+c+s)^2-4\lambda c}}{2c}
=\frac{2\lambda}{\lambda+c+s+
\sqrt{(\lambda+c+s)^2-4\lambda c}}.
\end{equation}
The nonnegative real square root is used for real $s\geq0$;
on $\mathop{\rm Re}s>0$ use the analytic branch selected by
$\theta(s)\to0$ as positive real $s\to\infty$. This is the
transform of one downward passage while the queue remains at or
above $n$, and $\theta(0)=1$. Therefore
\begin{equation}\label{mmn_r6}
E[e^{-sW}]=A+\sum_{j\geq1}\pi_n^*q^{j-1}\theta(s)^j
=A+\frac{\pi_n^*\theta(s)}{1-q\theta(s)}.
\end{equation}
At $s=0$ its value is $A+\pi_n^*/(1-q)=1$. For $x>0$,
\begin{equation}\label{queue_idle_density}
W'(x)=\frac{\pi_n^*}{q}\frac{e^{-(\lambda+c)x}}{x}
\sum_{j\geq1}j q^{j/2}I_j(2x\sqrt{\lambda c}),
\qquad P(W=0)=A.
\end{equation}
Indeed, $\theta(s)^j$ has density
$\frac jx(\lambda/c)^{j/2}e^{-(\lambda+c)x}
I_j(2x\sqrt{\lambda c})$; multiplying by the stationary
weights and summing nonnegative terms proves the formula and its
mass $1-A$. Here $I_j$ is the modified Bessel function defined above.

Second, under first-come first-served service, the stationary
virtual waiting time $V$ for a customer is zero in states below $n$.
When $k\geq n$ customers are ahead, $k-n+1$ completions are
needed; the aggregate rate throughout that wait is $\lambda$,
and later arrivals join behind the customer. Thus
\begin{equation}\label{queue_virtual_wait}
P(V=0)=A,\quad E[e^{-sV}]=A+
\frac{\pi_n^*\lambda}{s+\lambda-c},\quad
P(V\leq x)=1-\frac{\pi_n^*}{1-q}e^{-(\lambda-c)x},\quad x\geq0.
\end{equation}
For $n=1$ this agrees with the formula above.
The density (\ref{queue_idle_density}) concerns $W$, because future
arrivals can delay an idle server, whereas those arrivals do not pass
the tagged customer in the FCFS queue.

This concludes the section dedicated to the study of finite projections of Markov processes.

\bigskip

\centerline{\large \bf Conclusions and Observations}

The finite projection procedure does not alter certain local characteristics of the initial process. In particular, the time spent in a given state before first leaving it is the same in both the original Markov process and its finite projection. This is a special case of the semi-Markov invariance property discussed in Sections \ref{invar_pr} and \ref{semi_proj}: here the holding time is independent of the next state.

As shown in Section 2.1, the two passage-time distributions in the complementary sets $\{0,\ldots,n\}$ and $\{n+1,n+2,\ldots\}$, starting from $n$ and $n+1$ respectively, together with the initial distribution, uniquely determine the law of the minimal BDP considered here.

From the analysis of finite projections of $M/M/n$ queuing systems conducted in this section, we conclude that with some modifications, the methods developed for the initial $M/M/n$ systems are applicable to the study of the projected $(M/M/n)^*$ systems. For multi-server systems, the projected rates lead to the normalized stationary weights (\ref{queue_stationary}); the stationary time to an idle server and the FCFS virtual waiting time have distinct transforms (\ref{mmn_r6}) and (\ref{queue_virtual_wait}).
  
\newpage  
%%%%%%%%%%%%%%%%%%%%%%%%%%%%%%%%%%%%%%%%%%%%%%%%%%%%%%%%%%%%%%%%%%%%%%%%
%%%%%%%%%%%%%%%%%%%%%%%%%%%%%%%%%%%%%%%%%%%%%%%%%%%%%%%%%%%%%%%%%%%%%%%%%%%%%%
 \section{Finite Projections of Markov Chains} 
\label{par_3_} 
\setcounter{equation}{0}

Let $X=\{X_n\}_{n\ge 0}$ be a Markov chain taking values in the set $E=\{0,1,2,\ldots\}$, with one-step transition probabilities $(p_{i,j},\ i,j\in E)$ and initial distribution $\gamma=(\gamma(i),\ i\in E)$. Assume that the chain $X$ is defined on the measurable  space $(\Omega,{\cal F})$, where 
$$
\Omega =E^{\times \infty}, \qquad X_n(\omega)=\omega_n, \ \
(\forall)\ \omega=(\omega_0,\omega_1,\ldots,\omega_i,\ldots) \in
\Omega, \ n=0,1,\ldots \, 
$$
and the $\sigma$-algebra $\cal F$ is generated by the sets $[a_0,a_1,\ldots,a_n]=\{\omega:X_0=a_0, X_1=a_1,\ldots,
X_n=a_n\}$, where $a_0,a_1,\ldots,a_n\in E$, $n\ge 0$. Denote ${\cal F}_n =\sigma (X_0,\ldots,X_n)$, the $\sigma$-algebra generated by the random variables $X_0,\ldots,X_n$, for $n\ge 0$. It is clear that ${\cal F}=\sigma (X_0,X_1,\ldots)$.

If the transition matrix on $E$ is substochastic, we adjoin an
absorbing state outside $E$ to account for the missing row mass
and use the corresponding enlarged canonical path space.

In this chapter, we assume that the Markov probability $P$ defined on $(\Omega,{\cal F})$ by the one-step transition probabilities $(p_{i,j},\ i,j\in E)$ and the initial distribution $\gamma=(\gamma(i),\ i\in E)$ corresponds to a transient Markov chain, meaning that all recurrent states of $X$ are absorbing (the concept of transient Markov chains in this form is used in \cite{lm_kem}). We use the following standard notations \cite{c3}, \cite{lm_1}:
\begin{eqnarray}
  p_{i,j}^{(n)}&=& P\{X_n=j|X_{0}=i\}, p_{i,j}^{(0)}=
  \delta(i,j)\ (\delta(i,j)\ \mbox{is the Kronecker symbol});\nonumber\\ 
  f_{i,j}^{(n)}&=&P\{X_n=j,X_m\not =j,\
  m=\overline{1,n-1},\ |X_{0}=i\},\ f_{i,j}^{(0)}=0;\nonumber\\
  {}_{k}p_{i,j}^{(n)}&=& P\{X_n=j,X_m\not =k,\
  m=\overline{1,n-1},\ |X_{0}=i\},\
  ({}_{k}p_{i,j}^{(0)}=\delta(i,j)\ \mbox{for}\ k\not =j);\nonumber\\ 
  {}_{C}p_{i,j}^{(n)}&=& P\{X_n=j,X_m\not\in C,\
  m=\overline{1,n-1},\ |X_{0}=i\},\
  ({}_{C}p_{i,j}^{(0)}=\delta(i,j)\ \mbox{for}\ j\not\in C);\nonumber\\ 
  f_{i,j}&=&\sum_{n=1}^\infty f_{i,j}^{(n)},\
  {}_{k}p_{i,j}=\sum_{n=1}^\infty\ {}_{k}p_{i,j}^{(n)},\nonumber\\
  m_{i,j}&=&\sum_{n=1}^\infty n f_{i,j}^{(n)},\ \qquad
  k,i,j\in E.  \nonumber  
\end{eqnarray}
Note that $f_{i,j}^{(n)}=\ _{j}p_{i,j}^{(n)}$. If $f_{i,j}>0$, then $m_{i,j}/f_{i,j}$ is the mean of the conditional distribution $(f_{i,j}^{(n)}/f_{i,j},\ n\ge 1)$, $i,j\in E$ of the first entrance time. To avoid trivial cases, we assume that the Markov probability $P$ is such that the state set $E$ consists of $P$-possible states, i.e., 
\begin{equation}\label{etalon}
\sum_{i\in E} \gamma(i)f_{i,j}>0 \qquad (\forall ) j\in E.
\end{equation}

The goal now is to describe the finite projections of Markov chains. For convenience, we redefine the concept of finite projection.

\subsection{Definition}
Let $S_0$ be the first entrance time into state $0$, allowing time
zero, and let $S_j$, $j\geq1$, be the successive return times:
\begin{equation}\label{markov_initial_hit}
S_0=\inf\{n\geq0:X_n=0\},\qquad
S_{j+1}=\inf\{n>S_j:X_n=0\},\qquad \inf\emptyset=\infty.
\end{equation}
We assume $0<f_{0,0}<1$. The possible-state condition
(\ref{etalon}) ensures $P\{S_0<\infty\}>0$.
The sequence $S=\{S_j\}$ is a renewal process, ordinary if
$X_0=0$ and delayed otherwise. For $i\ne0$ with $f_{i,0}>0$,
$$
P\{S_0=n\mid X_0=i\}=f_{i,0}^{(n)},\qquad
{\bf E}(S_0\mid X_0=i,S_0<\infty)=m_{i,0}/f_{i,0}.
$$
If $X_0=0$, then $S_0=0$ and $S_1$ is the first positive return.

The following proposition describes the finite projection of $P$
with respect to the renewal process $S$.
\begin{prop}\label{lm_r3}
Let $P_n(\cdot)=P(\cdot\mid S_n<\infty)$, $n\geq1$. Then:
\begin{description}
\item[a)] There is a unique probability $P^*$ on $(\Omega,{\cal F})$
such that, for every $k\geq0$ and $A\in{\cal F}_k$,
$$
\lim_{n\to\infty}P_n(A)=P^*(A).
$$
\item[b)] Under $P^*$, $X$ is a Markov chain with minimal state
space of possible states $E^*=\{i\in E:f_{i,0}>0\}$,
transition probabilities and initial distribution
\begin{equation}\label{markov_projected_rates}
\begin{array}{l}
p_{i,j}^*=\displaystyle\frac{p_{i,j}f_{j,0}^{1-\delta(0,j)}}{f_{i,0}},
\qquad i,j\in E^*,\\[3mm]
\gamma^*(i)=\displaystyle\frac{\gamma(i)f_{i,0}^{1-\delta(0,i)}}
{\gamma(0)+\sum_{j\ne0}\gamma(j)f_{j,0}},\qquad i\in E^*.
\end{array}
\end{equation}
\item[c)] State $0$ is recurrent under $P^*$ and is positive
recurrent if and only if $m_{0,0}<\infty$. Moreover,
$$
f_{i,0}^{*(n)}=f_{i,0}^{(n)}/f_{i,0},\qquad i\in E^*,\quad n\geq1.
$$
For $i=0$, this is the distribution of the first positive return.
\end{description}
\end{prop}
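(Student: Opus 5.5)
We will combine the general construction of Section~\ref{par_1_2} with an explicit computation of $P_n$ on cylinder sets. For part a), the chain is regenerative with respect to $S$ in (\ref{markov_initial_hit}). By the strong Markov property at the visits to $0$, the cycles are independent, and the post-$S_0$ cycles are identically distributed. We have $r_0=P\{S_0<\infty\}=\gamma(0)+\sum_{j\ne0}\gamma(j)f_{j,0}>0$ by (\ref{etalon}), and $r=f_{0,0}\in(0,1)$. Proposition~\ref{p1_r4} therefore gives a unique $P^*$, the finite projection. For $A\in{\cal F}_k$ (discrete time, $t=k$), the limit relation (\ref{lim}) gives $P^*(A)=\lim_n P_n(A)$. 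Uniqueness holds because ${\cal F}=\sigma(\bigcup_k{\cal F}_k)$ and the limits determine $P^*$ on this generating $\pi$-system.

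For part b), we will compute the limit directly on a cylinder $A=[a_0,\ldots,a_k]$. Let $\nu$ be the number of indices $m\le k$ with $a_m=0$, and set $\ell=a_k$. By the Markov property at time $k$, for $n\ge\nu$,
$$
P(A\cap\{S_n<\infty\})=P(A)\,f_{\ell,0}^{1-\delta(0,\ell)}\,f_{0,0}^{\,n-\nu+\delta(0,\ell)-1+\ldots},
$$
where the exponent of $f_{0,0}$ is fixed by counting the remaining returns. If $\ell=0$ the $\nu$-th visit has occurred at or before $k$, and $n-\nu+1$ further returns are needed; if $\ell\ne0$ the chain must first hit $0$ from $\ell$ and then make $n-\nu$ further returns. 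Dividing by $P(S_n<\infty)=r_0f_{0,0}^{\,n}$, the powers of $f_{0,0}$ cancel to a term independent of $n$. We then check that the resulting expression factorizes as
$$
\gamma^*(a_0)\prod_{m=0}^{k-1}p^*_{a_m,a_{m+1}},
$$
with (\ref{markov_projected_rates}). This holds because the product $\prod p_{a_m,a_{m+1}}f_{a_{m+1},0}^{1-\delta}/f_{a_m,0}$ telescopes, with the convention that factors at $0$ equal $1$. The bookkeeping needs care: at $i=0$ the denominator in $p^*_{0,j}$ must be $f_{0,0}$, so each departure from $0$ contributes $f_{0,0}^{-1}$. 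These factors match exactly the $f_{0,0}$ powers accounting for the $\nu$ visits, and this is the step I expect to be the main obstacle. Our $P^*$ then coincides with the Markov law with these transition probabilities. The row sums equal one by first-step analysis: $\sum_j p_{i,j}f_{j,0}^{1-\delta(0,j)}=f_{i,0}$. Consequently $p^*_{i,j}>0$ forces $f_{j,0}>0$, so $E^*$ is closed and is exactly the set of possible states.

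For part c), state $0$ is recurrent because, under $P^*$, $S$ is a recurrent renewal process by Proposition~\ref{p1_r4}. Alternatively, the formula $f^{*(n)}_{i,0}=f^{(n)}_{i,0}/f_{i,0}$ follows by summing the telescoped products over paths from $i$ that first hit $0$ at step $n$: every intermediate factor cancels, leaving $1/f_{i,0}$ times the original path probability, with $f_{0,0}=1$ at the endpoint. Summing over $n$ gives $f^*_{i,0}=1$. Finally, the mean return time is $\sum_n n f^{(n)}_{0,0}/f_{0,0}=m_{0,0}/f_{0,0}$, which is finite if and only if $m_{0,0}<\infty$. This yields the positive-recurrence criterion.
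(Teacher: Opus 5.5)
Your proof is correct. For b) and c) it is the paper's argument. The Markov property at time $k$ gives $P_n(C)=P(C)f_{\ell,0}^{1-\delta(0,\ell)}f_{0,0}^{-N}/r_0$ for $n>k$, with $N$ the number of zeros among indices $0,\ldots,k-1$. The factors $f_{j,0}^{1-\delta(0,j)}/f_{i,0}$ telescope to $\gamma^*(a_0)\prod_m p^*_{a_m,a_{m+1}}$, and the first-step identity gives the row sums. The same telescoping over taboo paths gives $f^{*(n)}_{i,0}=f^{(n)}_{i,0}/f_{i,0}$ and $m^*_{0,0}=m_{0,0}/f_{0,0}$.

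You differ from the paper only in a). You obtain existence and the limit from Proposition~\ref{p1_r4} and relation~(\ref{lim}). The paper reads a) off the cylinder computation itself: $P_n$ is the same on ${\cal F}_k$ for all $n>k$, and the consistent product probabilities define $P^*$ directly as a Markov law. The paper's route is self-contained and shows that the limit is actually attained once $n>k$. Yours makes explicit that this Markov law is the Chapter~1 finite projection, a fact the paper notes after the definition. However, once you have the cylinder formula, the appeal to Ionescu--Tulcea is logically redundant.

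Two small repairs are needed.

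First, your displayed exponent $n-\nu+\delta(0,\ell)-1+\ldots$ should simply be $n-\nu+\delta(0,\ell)$, which is what your case analysis gives. Since $\nu-\delta(0,\ell)=N$, this exponent is $n-N$. Moreover, $N$ is exactly the number of departures from $0$ along the cylinder, so the $f_{0,0}^{-1}$ factors from the rows $i=0$ produce $f_{0,0}^{-N}$. The step you flagged as the main obstacle is therefore immediate.

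Second, closedness of $E^*$ only shows that the projected chain stays in $E^*$. The minimality of $E^*$ as the state space of possible states needs (\ref{etalon}). For $j\in E^*$, that condition supplies a $P$-positive cylinder $C$ ending at $j$. Its $P^*$-probability $P(C)f_{j,0}^{1-\delta(0,j)}f_{0,0}^{-N}/r_0$ is then positive.
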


As before, quantities associated with $P^*$ are marked by an
asterisk, for example $p_{i,j}^*$, $m_{i,j}^*$ and $X^*$.

{\bf Proof.} The assumptions imply $P\{S_n<\infty\}>0$ for
every $n$, so $P_n$ is well defined. For a cylinder
$C=[i_0,\ldots,i_k]$, let $N(i_0,\ldots,i_k)$ count the zeros
at indices $0,\ldots,k-1$. For $n>k$, the Markov property gives
$$
P\{S_0<\infty\}=\gamma(0)+\sum_{j\ne0}\gamma(j)f_{j,0},\qquad
P\{S_n<\infty\mid X_0=i\}=f_{i,0}^{1-\delta(i,0)}f_{0,0}^n,
$$
and hence
$$
P_n(C)=\frac{P(C)f_{i_k,0}^{1-\delta(i_k,0)}
f_{0,0}^{\,n-N(i_0,\ldots,i_k)}}
{\left(\gamma(0)+\sum_{j\ne0}\gamma(j)f_{j,0}\right)f_{0,0}^n}.
$$
For $i_0,\ldots,i_k\in E^*$, multiplying and dividing by the
intermediate entrance probabilities and regrouping yields
\begin{equation}\label{lm_r2}\label{markov_cylinder_early}
P_n(C)=\gamma^*(i_0)p_{i_0,i_1}^*\cdots p_{i_{k-1},i_k}^*.
\end{equation}
A cylinder containing a state outside $E^*$ has $P_n$-probability
zero when $n>k$. Every event in ${\cal F}_k$ is a countable union
of these cylinders, so $P_n$ is constant on ${\cal F}_k$ for $n>k$.

The initial distribution $\gamma^*$ sums to one. The first-step
identity
$$
f_{i,0}=p_{i,0}+\sum_{j\ne0}p_{i,j}f_{j,0}
$$
shows that every row of $(p_{i,j}^*)$ sums to one, including the
row $i=0$. Thus the consistent cylinder probabilities define a
unique Markov probability $P^*$ on ${\cal F}$, proving a) and b).
The chain remains in $E^*$ almost surely. By (\ref{etalon}),
every state of $E^*$ can be reached with positive probability
under $P^*$. Consequently, for every proper subset
$\widetilde E\subset E^*$,
$$
P^*\{\exists n\geq0:X_n\in E^*\setminus\widetilde E\}>0,
$$
so $E^*$ is the minimal state space of possible states.

To prove c), sum the products of projected transition probabilities
along paths avoiding zero until their last step. The factors in
(\ref{markov_projected_rates}) telescope, giving
$f_{i,0}^{*(n)}=f_{i,0}^{(n)}/f_{i,0}$. For $i\ne0$ this is
$P^*\{S_0=n\mid X_0=i\}$; for $i=0$ it is
$P^*\{S_1=n\mid X_0=0\}$. Their sums equal one, so zero is
recurrent. Finally,
$$
m_{0,0}^*=m_{0,0}/f_{0,0},
$$
which is finite exactly when $m_{0,0}<\infty$. $\Box$

\begin{def1}
The Markov probability $P^*$ is called the finite projection of
$P$ with respect to state $0$.
\end{def1}
This is the finite projection with respect to the renewal process
$S$, in the sense defined for regenerative processes in
Section~\ref{par_1_2}.

The procedure of taking the finite projection induces a partition of the state space of the Markov chain $X$: $\Pi =\{E^*_0, E^*_{tr}, E\setminus E^*\}$, where:
\begin{itemize}
\item $E\setminus E^*$ is the maximal set of states that $P^*$-a.s. are not visited by the trajectories of the Markov chain $X$, i.e., $E\setminus E^*=\{i\in E:P^*\{\exists n\ge 0: \ X_n=i\}=0\}$,
\item $E_0^*$ is the set of states in $E^*$ that can be reached from state zero (under either $P$ or $P^*$), i.e., $E^*_0=\{i\in E^*:\ f_{0,i}>0\}$. $E^*_0$ is the unique class of recurrent states in the finite projection $P^*$, i.e., $f^*_{i,i}=1 \ \Leftrightarrow \ i\in E^*_0$.
\item $E^*_{tr}=E^*\setminus E^*_0$ --- the set of all transient states in the finite projection $P^*$, i.e., $E^*_{tr}=E^*\setminus E^*_0=\left \{i\in E: \ \sum\limits_{j\in E}\gamma(j)f_{j,i} f_{i,0}>0, \ f_{0,i}=0\right \}$.
\end{itemize}

Thus, from the results established, we conclude that in the finite projection $P^*$:
\begin{description}
\item[a)] All absorbing states of the chain $X$ belong to the element $E\setminus E^*$ of the partition $\Pi$.
\item[b)] If the initial distribution $\gamma$ is concentrated on $E_0^*$, all possible states of $X^*$ are recurrent, $E^*=E^*_0$, and the chain $X^*$ is irreducible.
\item[c)] The chain $X^*$ does not disappear, i.e., $P^*\{X_\cdot\in E^*\}=1$.
\end{description}

Hence, taking the finite projection is a method of transforming a Markov chain $X$, possibly with a substochastic transition matrix, absorbing, and transient states, into a Markov chain $X^*$ with a stochastic transition probability matrix where all states are recurrent (if $E^*=E^*_0$, i.e., $\gamma^*\{E^*_0\}=1)$ and form a single class.

The problem of determining the characteristics of the initial Markov chain $X$ that are invariant under finite projection is considered one of the fundamental problems in the study of finite projections of Markov chains. This problem will be partially solved in this chapter.

\subsection{Relationships between the characteristics of the probabilities $P$ and $P^*$}

Our aim is to express the main characteristics of the finite projection $P^*$ in terms of the characteristics of the initial probability $P$.

The following proposition describes the invariant measures of the projected chain $X^*$, i.e., the nonnegative solutions to the system of equations $x_j=\sum_i x_i p_{i,j}^*,\ \ i,j\in E^*$.

\begin{prop}\label{lm_r4}
\begin{description}
\item[a)] 
If the Markov chain $X^*$ is irreducible and aperiodic, then there exists a unique (up to a nonnegative constant $c$) invariant measure $u^*$ of the form:
$$
u_j^*=c {\,}_0p_{0,j}^{1-\delta(j,0)} f_{j,0},\ \ j\in E^*.
$$
\item[b)]
If the Markov chain $X^*$ is ergodic, then its stationary distribution is:
$$
\pi_j^*={}_0p_{0,j}^{1-\delta(j,0)} f_{j,0}/m_{0,0},\ \ j\in E^*.
$$
\end{description}
\end{prop}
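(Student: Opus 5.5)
We work with the projected chain of Proposition~\ref{lm_r3}, whose transition probabilities are $p_{i,j}^*=p_{i,j}f_{j,0}^{1-\delta(0,j)}/f_{i,0}$. Under the hypotheses of a), state $0$ is recurrent for $P^*$ and $X^*$ is irreducible. The standard theory of recurrent irreducible chains therefore applies: the invariant measure is unique up to a constant and is given by the mean number of visits to $j$ during an excursion from $0$,
$$
u_j^*=\sum_{n\ge1}{}_0p_{0,j}^{*(n)},\qquad j\ne0,\qquad u_0^*=1 .
$$
The plan is to compute these taboo probabilities of the projected chain in terms of the original ones. This representation needs only recurrence and irreducibility; aperiodicity is not used in the argument.

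The key step is an $h$-transform telescoping identity, obtained in the same way as in the proof of Proposition~\ref{lm_r3}~c). Take a path $0=i_0,i_1,\ldots,i_n=j$ with $i_1,\ldots,i_{n-1}\ne0$ and $j\ne0$. Its $P^*$-weight is the product of the $p^*_{i_{m-1},i_m}$. In this product the factors $f_{i_m,0}$ cancel in pairs. The first denominator is $f_{0,0}$, and the last numerator $f_{j,0}$ survives. Hence
$$
{}_0p_{0,j}^{*(n)}={}_0p_{0,j}^{(n)}\,\frac{f_{j,0}}{f_{0,0}} .
$$
Summing over $n$ gives $u_j^*={}_0p_{0,j}f_{j,0}/f_{0,0}$ for $j\ne0$. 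For $j=0$, the excursion count is $1=f_{0,0}/f_{0,0}$, which matches the formula $u_0^*=c\,f_{0,0}$ with the same constant $c=1/f_{0,0}$. Thus $u_j^*=c\,{}_0p_{0,j}^{1-\delta(j,0)}f_{j,0}$ for all $j\in E^*$.

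To make the argument self-contained, one can also check invariance directly. For $j\ne0$, write $\sum_i u_i^*p_{i,j}^*$ and use the last-exit decomposition
$$
{}_0p_{0,j}=p_{0,j}+\sum_{i\ne0}{}_0p_{0,i}\,p_{i,j} .
$$
For $j=0$, the required identity is
$$
f_{0,0}=p_{0,0}+\sum_{i\ne0}{}_0p_{0,i}\,p_{i,0} .
$$
Uniqueness follows from recurrence, by citing the standard result.

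For b), ergodicity means that the measure $u^*$ is normalizable. Its total mass equals the mean projected return time $m_{0,0}^*$, since $\sum_j u_j^*$ is the expected excursion length. By Proposition~\ref{lm_r3}~c), $m_{0,0}^*=m_{0,0}/f_{0,0}$. Normalizing therefore gives
$$
\pi_j^*=\frac{{}_0p_{0,j}^{1-\delta(j,0)}f_{j,0}/f_{0,0}}{m_{0,0}/f_{0,0}}
=\frac{{}_0p_{0,j}^{1-\delta(j,0)}f_{j,0}}{m_{0,0}},
$$
which is the stated formula.

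The main obstacle is bookkeeping. The $j=0$ term and the constant $c$ must be handled consistently, because the $j=0$ factor $f_{0,0}$ and the denominator $f_{0,0}$ in the telescoping must match. One must also ensure that $E^*$ is the irreducible class, so that states with ${}_0p_{0,j}=0$ do not arise. This is guaranteed because irreducibility forces $E^*=E_0^*$.
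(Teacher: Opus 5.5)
Your proposal is correct and follows essentially the same route as the paper. Both use Derman's representation $u_j^*=c\,{}_0p^*_{0,j}$ for the irreducible recurrent projected chain together with the telescoping identity ${}_0p^*_{0,j}={}_0p_{0,j}^{1-\delta(j,0)}f_{j,0}/f_{0,0}$, which the paper asserts and you derive, and both then normalize using $m^*_{0,0}=m_{0,0}/f_{0,0}$.

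In b), the paper writes the normalization through $\pi_j^*=1/m^*_{j,j}$ and $m^*_{0,0}/m^*_{j,j}={}_0p^*_{0,j}$, which is equivalent to your division by the total mass $m^*_{0,0}$. Your remark that aperiodicity is not needed in a) is also correct.
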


{\bf Proof.}
Part a) follows from Derman's theorem on the invariant measures of an irreducible and recurrent Markov chain, which states that $u_j^*=c {}_{0} p_{0,j}^{*}$, see \cite{c3} or \cite{lm_1}. Indeed, $X^*$ is recurrent (Proposition~\ref{lm_r3}), aperiodic, and ${}_{0}p_{0,j}^{*}=
{}_{0}p_{0,j}^{1-\delta(j,0)} f_{j,0}/f_{0,0},\ \ j\in E^*$.

Part b). It is known that $\pi_j^*=1/m_{j,j}^*,\ \ j\in E^*$. Moreover, from the recurrence of $X^*$, we have $m_{0,0}^*/m_{j,j}^*={}_0p_{0,j}^*$, see \cite{lm_1}. Using Proposition \ref{lm_r3}, the proof of part b) is complete.~$\Box$

The next proposition is an adaptation of a theorem by Chung.
\begin{prop}
If the Markov chain $X^*$ is ergodic, then:
$$
\sum_{n=1}^\infty \left (p_{i,k}^{*(n)}-p_{j,k}^{*(n)}
\right )=
$$
$$
= \ \left\{\begin{array}{ll}
(m_{j,0}/f_{j,0}-m_{i,0}/f_{i,0}) \pi_0^*,&\ \ \mbox{if}\ k=0, \\ 
({}_{0}p_{i,k} f_{k,0} - m_{i,0}\pi_k^*)/f_{i,0} - 
({}_{0}p_{j,k} f_{k,0} - m_{j,0}\pi_k^*)/f_{j,0},
& \  \mbox{if}\ k\not =0.
\end{array} \right.
$$
\end{prop}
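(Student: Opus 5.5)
We will start from Chung's theorem for an ergodic (positive recurrent, irreducible) chain, applied to $X^*$ with the reference state $0$. For $i,k\in E^*$ it states
$$
\sum_{n=1}^\infty\left(p_{i,k}^{*(n)}-p_{j,k}^{*(n)}\right)
={}_0p^*_{i,k}-{}_0p^*_{j,k}-\pi_k^*\left(m_{i,0}^*-m_{j,0}^*\right),
$$
where ${}_0p^*_{i,k}=\sum_{n\ge1}{}_0p_{i,k}^{*(n)}$ is the expected number of visits to $k$ before the first hitting of $0$. For $k=0$ the taboo sums vanish. Chung's theorem is usually stated with the reference state equal to $k$, or as a limit of Ces\`aro partial sums. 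So the first step will be to fix the precise form with the taboo state $0$. This form follows from the standard decomposition of $p^{(n)}_{i,k}$ at the first entrance to $0$,
$$
p^{(n)}_{i,k}={}_0p^{(n)}_{i,k}+\sum_{m\le n}f^{(m)}_{i,0}p^{(n-m)}_{0,k}.
$$
One then subtracts the same expression for $j$ and uses $p^{(n)}_{0,k}\to\pi_k$ together with the finiteness of the means. This requires aperiodicity, which is included in "ergodic" as used in Proposition~\ref{lm_r4}.

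The second step is to translate each starred quantity into a characteristic of $P$, using Proposition~\ref{lm_r3}.

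\textbf{First-entrance means.} From $f_{i,0}^{*(n)}=f_{i,0}^{(n)}/f_{i,0}$ we get $m_{i,0}^*=m_{i,0}/f_{i,0}$.

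\textbf{Taboo sums.} The projected transition probabilities have the $h$-transform form $p^*_{i,j}=p_{i,j}h_j/h_i$, with $h_j=f_{j,0}$ for $j\neq0$ and $h_0=1$. Along a path from $i$ to $k\neq0$ that avoids $0$ at the intermediate steps, the product telescopes to $h_k/h_i$. Summing over paths gives
$$
{}_0p^{*(n)}_{i,k}={}_0p^{(n)}_{i,k}\,\frac{f_{k,0}}{f_{i,0}},
\qquad\text{hence}\qquad
{}_0p^*_{i,k}=\frac{{}_0p_{i,k}\,f_{k,0}}{f_{i,0}}.
$$
For $i=0$ the same calculation is consistent with Proposition~\ref{lm_r4}.

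Substituting these expressions gives the following. For $k=0$, the taboo terms vanish, leaving $\pi_0^*\,(m_{j,0}/f_{j,0}-m_{i,0}/f_{i,0})$. For $k\neq0$, we obtain
$$
\bigl({}_0p_{i,k}f_{k,0}-m_{i,0}\pi^*_k\bigr)/f_{i,0}-\bigl({}_0p_{j,k}f_{k,0}-m_{j,0}\pi_k^*\bigr)/f_{j,0},
$$
which is exactly the claim.

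\textbf{Expected obstacle.} The main difficulty is the first step: establishing the convergence of the series and the correct sign conventions in Chung's identity. Pointwise convergence of the series, as opposed to Ces\`aro convergence, needs aperiodicity and a careful justification. I would argue as follows. Write
$$
\sum_{n\le N}\Bigl(\sum_{m\le n}f^{*(m)}_{i,0}p^{*(n-m)}_{0,k}-\sum_{m\le n}f^{*(m)}_{j,0}p^{*(n-m)}_{0,k}\Bigr)
$$
as $\sum_m \bigl(f^{*(m)}_{i,0}-f^{*(m)}_{j,0}\bigr)\sum_{l\le N-m}p^{*(l)}_{0,k}$. Then use $\sum_{l\le L}p^{*(l)}_{0,k}=L\pi_k^*+O(1)$, which holds when the relevant means are finite, and apply dominated convergence. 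The case $i$ or $j$ equal to $0$, where $m_{0,0}$ is a return time rather than an entrance time and $f_{0,0}$ is the return probability, will be checked separately. In that case the convention $m_{0,0}^*=m_{0,0}/f_{0,0}$ from Proposition~\ref{lm_r3} applies, and one has to verify that the taboo sum for $i=0$ is normalized consistently.
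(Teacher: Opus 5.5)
Your identity and translation step are correct, but your route differs from the paper's. The paper cites Chung's theorem only in its standard form with reference state $k$, $\sum_n(p^{*(n)}_{i,k}-p^{*(n)}_{j,k})=(m^*_{j,k}-m^*_{i,k})/m^*_{k,k}$, and only at $k=0$. For $k\neq0$ it uses the last-exit decomposition $p^{*(n)}_{i,k}={}_0p^{*(n)}_{i,k}+\sum_{\nu=1}^{n-1}p^{*(\nu)}_{i,0}\,{}_0p^{*(n-\nu)}_{0,k}$. After summation, the already established $k=0$ series reappears multiplied by the constant ${}_0p^*_{0,k}$, and Proposition~\ref{lm_r4} then converts ${}_0p^*_{0,k}\pi^*_0$ into $\pi^*_k$.

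You instead use the first-entrance decomposition at $0$ for every $k$, which amounts to re-deriving Chung's identity with taboo state $0$. This treats all $k$ uniformly and yields $\pi^*_k$ directly as $\lim_l p^{*(l)}_{0,k}$, so you need neither the cited theorem nor Proposition~\ref{lm_r4}. The paper's reduction, by contrast, needs no limit argument beyond the cited theorem: the interchange is a Cauchy product of a convergent series with a nonnegative summable one.

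One small correction: with $h_0=1$ your $h$-transform formula is wrong on row $0$, where Proposition~\ref{lm_r3} carries an extra factor $f_{0,0}^{-1}$. That factor is exactly what gives ${}_0p^*_{0,k}={}_0p_{0,k}f_{k,0}/f_{0,0}$, i.e.\ your formula at $i=0$ with $f_{i,0}$ read as $f_{0,0}$.

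The convergence argument you sketch does not work under the stated hypothesis. The expansion $\sum_{l\le L}p^{*(l)}_{0,k}=L\pi^*_k+O(1)$ does not follow from finite means. Already for $k=0$, a generating-function computation shows that $\sum_{l\le L}(p^{*(l)}_{0,0}-\pi^*_0)$ stays bounded only if the return time to $0$ has a finite second moment, and ergodicity does not provide this. Moreover, dominated convergence would need this error term to converge, not just be bounded.

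Sum by parts the other way instead. Put $d_m=f^{*(m)}_{i,0}-f^{*(m)}_{j,0}$ and $D_m=\sum_{r>m}d_r$. Then $D_0=0$, since $f^*_{i,0}=f^*_{j,0}=1$, and $\sum_{m\ge1}|D_m|\le m^*_{i,0}+m^*_{j,0}<\infty$. This gives
\[
\sum_{n=1}^N\sum_{m=1}^n d_m\,p^{*(n-m)}_{0,k}=-\sum_{m=1}^N D_m\,p^{*(N-m)}_{0,k}.
\]
Dominated convergence, using $p^{*(l)}_{0,k}\to\pi^*_k$ and $0\le p^{*(l)}_{0,k}\le1$, then yields the limit $-\pi^*_k\sum_{m\ge1}D_m=\pi^*_k(m^*_{j,0}-m^*_{i,0})$. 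The taboo part converges trivially because $m^*_{i,0},m^*_{j,0}<\infty$. With this correction your argument proves the proposition.
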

 {\bf Proof.}
In the case when $k=0$, the statement follows from Propositions~\ref{lm_r3} and \ref{lm_r4} b), according to which the equality
$$
S=\sum_{n=1}^\infty \left (p_{i,k}^{*(n)}-p_{j,k}^{*(n)}\right )= (m_{j,k}^*-m_{i,k}^*)/m_{k,k}^*,
$$
see \cite{lm_1}, transforms into
\begin{equation}\label{lm_r5}
S=(m_{j,0}/f_{j,0}-m_{i,0}/f_{i,0}) \pi_0^*.
\end{equation}

To prove the proposition for $k\not =0$, we use the decomposition
$$
p_{i,k}^{*(n)}={}_{0}p_{i,k}^{*(n)}+ \sum_{\nu=1}^{n-1}
p_{i,0}^{*(\nu)} {}_{0}p_{0,k}^{*(n-\nu)}.
$$
Form the difference
$$
p_{i,k}^{*(n)}-p_{j,k}^{*(n)}={}_{0}p_{i,k}^{*(n)}-
{}_{0}p_{j,k}^{*(n)} + \sum_{\nu=1}^{n-1}
{}_{0}p_{0,k}^{*(n-\nu)}(p_{i,0}^{*(\nu)} - p_{j,0}^{*(\nu)}).
$$
Summing over $n$, we obtain
$$
S={}_{0}p_{i,k}^{*}- {}_{0}p_{j,k}^{*} +
{}_{0}p_{0,k}^{*}\sum_{n=1}^{\infty} (p_{i,0}^{*(n)}
-p_{j,0}^{*(n)}). 
$$
Now, from equality (\ref{lm_r5}) and Proposition~\ref{lm_r3}, it follows that
$$
S={}_{0}p_{i,k}f_{k,0}/f_{i,0}- {}_{0}p_{j,k}f_{k,0}/f_{j,0}
+  {}_{0}p_{0,k}f_{k,0}(m_{j,0}/f_{j,0}-m_{i,0}/f_{i,0})/
m_{0,0}.
$$
Since $\pi_k^*= {}_0p_{0,k}^{1-\delta(k,0)}
f_{k,0}/m_{0,0}$, regrouping terms yields
$$
S=({}_{0}p_{i,k} f_{k,0} - m_{i,0}\pi_k^*)/f_{i,0} -
({}_{0}p_{j,k} f_{k,0} - m_{j,0}\pi_k^*)/f_{j,0}.\ \qquad
\Box
$$
\bigskip

Let us examine the generating functions associated with the Markov probability $P$:
$$
\overline{f}_{i,j}(z)=\sum_{n=1}^\infty f_{i,j}^{(n)}z^n,\ \
\overline{p}_{i,j}(z)=\sum_{n=0}^\infty p_{i,j}^{(n)}z^n,\ \
{}_{k}\overline{p}_{i,j}(z)=\sum_{n=0}^\infty
{}_{k}p_{i,j}^{(n)}z^n,
$$
and denote by $\overline{f}_{i,j}^{\, *},\ \overline{p}_{i,j}^{\, *},\
{}_{k}\overline{p}_{i,j}^{\, *}$
the same generating functions but associated with the Markov probability $P^*$.

\begin{prop}
For $|z|<1$, the following relations hold:
\begin{description}
\item[a)]
$\overline{p}_{i,0}^{\, *}=\overline{p}_{i,0}f_{0,0}f_{i,0}^{
\delta(i,0)-1}\left (1-\overline{p}_{0,0}(1-f_{0,0})\right )^{-1},
\ i\in E^*$;
\item[b)]
$\overline{p}_{0,j}^{\, *}={}_{0}\overline{p}_{0,j}f_{j,0}
\left (f_{0,0}-\overline{f}_{0,0}\right )^{-1},
\ \ j\not =0$;
\item[c)]
$\overline{p}_{i,j}^{\, *}=
f_{j,0}/f_{i,0}\left ({}_{0}\overline{p}_{i,j}+
\overline{f}_{i,0}{}_{0}\overline{p}_{0,j}(f_{0,0}-
\overline{f}_{0,0})^{-1}\right ),\ \ i,j\in E^*,\ i,j\not =0.$
\end{description}
\end{prop}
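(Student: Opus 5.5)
The plan is to reduce all three identities to path-by-path relations between $P^*$ and $P$, and then use the standard first-entrance and last-exit decompositions at state $0$, written in generating functions.

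\emph{Step 1: telescoping along a path.} Take a path $i=i_0,i_1,\ldots,i_n=j$ in $E^*$. By (\ref{markov_projected_rates}), the $P^*$-probability of this path is its $P$-probability multiplied by
$$
\frac{f_{j,0}^{1-\delta(0,j)}}{f_{i,0}}\,f_{0,0}^{-N},
$$
where $N$ is the number of visits to $0$ at the intermediate times $1,\ldots,n-1$. Here and below, $f_{i,0}$ means $f_{0,0}$ when $i=0$. The factor arises as follows: each intermediate state $i_k\ne0$ contributes $f_{i_k,0}$ to one numerator and one denominator, and these cancel. Each intermediate visit to $0$ contributes a numerator $1$ and a denominator $f_{0,0}$.

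\emph{Step 2: taboo and first-entrance probabilities.} For paths that avoid $0$ at the intermediate times we have $N=0$. Summing over such paths gives
$$
{}_0p^{*(n)}_{i,j}={}_0p^{(n)}_{i,j}\,f_{j,0}^{1-\delta(0,j)}/f_{i,0},
$$
and in particular $f^{*(n)}_{i,0}=f^{(n)}_{i,0}/f_{i,0}$, as in Proposition~\ref{lm_r3}\,c). In generating functions, valid absolutely for $|z|<1$:
$$
{}_0\overline p^{\,*}_{i,j}={}_0\overline p_{i,j}\,f_{j,0}/f_{i,0}\quad(j\ne0),\qquad
\overline f^{\,*}_{i,0}=\overline f_{i,0}/f_{i,0}.
$$
The only states visited are in $E^*$, so the sums are exactly over the paths counted by $P$-quantities restricted to $E^*$. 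Paths through states with $f_{\cdot,0}=0$ cannot end in a state of $E^*$ reaching $0$, so they contribute nothing.

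\emph{Step 3: renewal identities.} Under $P^*$ the renewal identity gives
$$
\overline p^{\,*}_{0,0}=\frac1{1-\overline f^{\,*}_{0,0}}=\frac{f_{0,0}}{f_{0,0}-\overline f_{0,0}}.
$$
\emph{Part a).} For $i\ne0$, first entrance gives $\overline p^{\,*}_{i,0}=\overline f^{\,*}_{i,0}\,\overline p^{\,*}_{0,0}$. I then rewrite the result in terms of $\overline p$ using
$$
\overline p_{0,0}=\frac1{1-\overline f_{0,0}},\qquad \overline p_{i,0}=\overline f_{i,0}\,\overline p_{0,0}.
$$
These imply $f_{0,0}-\overline f_{0,0}=\bigl(1-\overline p_{0,0}(1-f_{0,0})\bigr)/\overline p_{0,0}$, which yields a). For $i=0$ the same formula, with $f_{i,0}^{\delta(i,0)-1}=1$, is just the expression for $\overline p^{\,*}_{0,0}$.

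\emph{Part b).} Last-exit decomposition gives $\overline p^{\,*}_{0,j}=\overline p^{\,*}_{0,0}\;{}_0\overline p^{\,*}_{0,j}$ for $j\ne0$. Substituting Step 2 gives b); the factor $f_{0,0}$ cancels.

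\emph{Part c).} For $i,j\ne0$, decompose at the first visit to $0$:
$$
p^{*(n)}_{i,j}={}_0p^{*(n)}_{i,j}+\sum_{\nu}f^{*(\nu)}_{i,0}\,p^{*(n-\nu)}_{0,j},
$$
so that
$$
\overline p^{\,*}_{i,j}={}_0\overline p^{\,*}_{i,j}+\overline f^{\,*}_{i,0}\,\overline p^{\,*}_{0,j}.
$$
Inserting Step 2 and part b) and factoring out $f_{j,0}/f_{i,0}$ gives c).

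\emph{Main obstacle.} The delicate step is the bookkeeping in Step 1: the asymmetric convention that $f_{0,0}$ replaces $f_{i,0}$ when $i=0$, and the exponent $1-\delta(0,j)$. I also need to check the $n=0$ conventions. For $j\ne0$, ${}_0p^{(0)}_{0,j}=0$. For $i,j\ne0$, the term ${}_0p^{(0)}_{i,j}=\delta(i,j)$ must match on both sides, which holds because $f_{j,0}/f_{i,0}=1$ when $i=j$. Once Step 1 is secured, the rest is routine generating-function algebra.
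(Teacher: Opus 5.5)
Your proposal is correct and follows essentially the same route as the paper. The paper likewise transfers the first-entrance and taboo generating functions via $\overline f_{i,0}=f_{i,0}\overline f^{\,*}_{i,0}$ and ${}_0\overline p^{\,*}_{i,j}={}_0\overline p_{i,j}f_{j,0}/f_{i,0}$, which it takes from Proposition~\ref{lm_r3} where you derive them by the path telescoping of Step~1, and then applies the same renewal and first-entrance decompositions at state $0$; the only cosmetic difference is that in b) you decompose at the last exit from $0$, whereas the paper decomposes at the first return, and both give $\overline p^{\,*}_{0,j}={}_0\overline p^{\,*}_{0,j}(1-\overline f^{\,*}_{0,0})^{-1}$.
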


{\bf Proof.}
Since
$\overline{p}_{i,j}-\delta(i,j)=\overline{f}_{i,j}\overline{p}_{j,j}$,
see \cite{lm_1}, and
$\overline{f}_{i,0}=\overline{f}_{i,0}^{\, *}f_{i,0}$, we have:
for $i=0$,
\begin{eqnarray}
(\overline{p}_{0,0}-1)/\overline{p}_{0,0}&=&
(\overline{p}_{0,0}^{\, *}-1)f_{0,0}/\overline{p}_{0,0}^{\,
*},\nonumber \\ 
\overline{p}_{0,0}^{\, *}&=&\overline{p}_{0,0}f_{0,0}
\left (1-\overline{p}_{0,0}(1-f_{0,0})\right )^{-1},\nonumber
\end{eqnarray}
for $i\not = 0$,
$$
\overline{p}_{i,0}^{\, *}=\overline{f}_{i,0}/f_{i,0}
\overline{p}_{0,0}^{\, *}=\overline{p}_{i,0}
f_{0,0}/f_{i,0}\left (1-\overline{p}_{0,0}(1-f_{0,0})
\right)^{-1}.
$$
Point a) is proven.

If $j\not =0$, the equality 
$$
p_{i,j}^{(n)}={}_{0}p_{i,j}^{(n)}+\sum_{\nu=1}^n f_{i,0}^{(\nu)}
p_{0,j}^{(n-\nu)}
$$
implies
$$
\overline{p}_{i,j}={}_{0}\overline{p}_{i,j}+
\overline{f}_{i,0} \overline{p}_{0,j}.
$$
Using Proposition~\ref{lm_r3} b), for $i=0$, we have
$$
\overline{p}_{0,j}^{\, *}={}_{0}\overline{p}_{0,j}^{\, *}
(1-\overline{f}_{0,0}^{\, *})^{-1}={}_{0}\overline{p}_{0,j}
f_{j,0}(f_{0,0}-\overline{f}_{0,0})^{-1}, 
$$
and for $i\not =0$,
$$
\overline{p}_{i,j}^{\, *}=
{}_{0}\overline{p}_{i,j}f_{j,0}/f_{i,0}+
\overline{f}_{i,0}/f_{i,0} \overline{p}_{0,j}^{\, *}.
$$
Thus,
$$
\overline{p}_{i,j}^{\, *}=
f_{j,0}/f_{i,0}\left ({}_{0}\overline{p}_{i,j}+
\overline{f}_{i,0} {}_{0}\overline{p}_{0,j}(f_{0,0}-
\overline{f}_{0,0})^{-1}\right ).
$$
This completes the proof.$\Box$

\subsection{Finite projection with respect to an arbitrary state. Examples.} \label{proiect_arb}
So far, we have studied the finite projection with respect to state $0$. In the same manner, finite projections $P^{*l}$ are defined with respect to any other state $l$ for which $0<f_{l,l}<1$. It is clear that all results concerning $P^*$ also hold for $P^{*l}$, provided we replace $P^*$ and $0$ with $P^{*l}$ and $l$, respectively.

It is worth noting that if the initial Markov chain $X$ is irreducible, then its finite projections $P^{*l}, \ l\in E$ are either all positive recurrent or all null recurrent.

{\bf Example 1.}  
{\it Markov chains with simple paths.}  
Let us recall the following well-known definitions.
\begin{def1}
A path from state $i$ to $j$ is any sequence of states $i_1, i_2, \ldots, i_n$, $n\ge 2$, not necessarily distinct, such that $i_1=i$, $i_n=j$ and $p_{i_k,i_{k+1}}>0$, $k=1,2,\ldots,n-1$.
\end{def1}
\begin{def1}
A simple path from $i$ to $j$ is any path from $i$ to $j$ where all the states are distinct.
\end{def1}

Let $X$ be a Markov chain where the Markov probability $P$ is such that for any $i\in E$, $i\not =l$, $l\in E$, there exists a unique simple path from $i$ to $l$. Then the finite projection $P^{*l}$ with respect to state $l$ is determined by the one-step transition probabilities
$$
p_{i,j}^{*l}=\left\{\begin{array}{ll}
p_{i,j}f_{j,i}^{1-\delta(i,j)}, & \mbox{if}\ j\not=\phi (i,l)\ \mbox{and}\ i\not=l, \\ 
p_{i,j}/f_{i,j}, & \mbox{if}\ j=\phi (i,l)\ \mbox{and}\ i\not=l, \\ 
p_{l,j}f_{j,l}^{1-\delta(j,l)}/f_{l,l}, & \mbox{if}\ i=l, \\ 
0, & \ \mbox{otherwise.}
\end{array}\right.
$$
Here, $\phi (i,l)$ denotes the second state of the simple path from $i$ to $l$.

If $i\ne l$ and $p_{i,j}>0$, the uniqueness of the simple paths implies
$$
\begin{array}{ll}
f_{i,l}=f_{i,j} f_{j,l}^{1-\delta(j,l)}, & \mbox{when}\ j=\phi (i,l), \\ 
f_{j,l}=f_{j,i}^{1-\delta(j,i)} f_{i,l}, & \mbox{otherwise.}
\end{array}
$$
From this, we obtain both the above formula for $p^{*l}_{i,j}$ and an analogous formula for the initial distribution $\{\gamma^{*l}(i)\}$.

\bigskip

{\bf Example 2.}  
{\it Backward recurrence time Markov chain.}  
We consider the transient case in which all the projections $P^{*l}$, $l\ge0$, are defined. The Markov chain $X$ has transition probabilities
$$
p_{i-1,i}=p_i,\qquad  p_{i-1,0}=q_i=1-p_i, \ \  i=1,2,\ldots.
$$
$X_n$ is interpreted as the time elapsed since the last renewal. In this case, the entry probabilities are
$$
f_{i,j}=\left\{\begin{array}{ll}
1, & \mbox{if}\ i<j, \\ 
1-\prod_{n=i+1}^\infty p_n, &  \mbox{if}\ i\ge j,
\end{array}
\right.
\qquad
f_{0,0}^{(n)}=q_n\prod_{k=1}^{n-1}p_k, \  n=1,2,\ldots.
$$
Thus, the transition probabilities of the finite projection $P^{*l}$, $l\ge 0$, are
$$
p_{i-1,i}^{*l}=p_i^{*l}=\left\{\begin{array}{ll}
p_i(1-\prod_{n=i+1}^\infty p_n)/(1-\prod_{n=i}^\infty p_n), &\ \mbox{if}\ l<i, \\ 
p_i, &\ \mbox{otherwise,}
\end{array}
\right.
$$
$$
p_{i-1,0}^{*l}=q_i^{*l}=\left\{\begin{array}{ll}
q_i/(1-\prod_{n=i}^\infty p_n),
& \ \mbox{if}\ l<i, \\ 
q_i, & \ \mbox{otherwise.}
\end{array}
\right.
$$
Clearly, $p_i^{*l}+q_i^{*l}=1$. Put $u_j=u_0 \prod_{i=1}^j p_i$, $j=0,1,2,\ldots$, where $u_0>0$. The balance equations for the projected transitions give the invariant measures of $P^{*l}$:
$$
u_j^{*l}=\left\{\begin{array}{ll}
u_j(1-\prod_{n=j+1}^\infty p_n)/(1-\prod_{n=l+1}^\infty p_n),
& \ \mbox{if}\ j>l, \\ 
u_j, & \ \mbox{otherwise.}
\end{array}
\right.
$$

\bigskip

The proposition below provides a simple criterion for verifying the coincidence of finite projections of the same Markov probability.

\begin{prop} For any $k,l\in E$, such that $k\not=l$ and the finite projections $P^{*l}$, $P^{*k}$ are well-defined (i.e., $0<f_{l,l}<1$, $0<f_{k,k}<1$), and the initial distribution $\gamma$ is concentrated in a single state, the following statements hold.
\begin{description}
\item[a)] If the finite projections $P^{*l}$ and $P^{*k}$ coincide, then
\begin{eqnarray}
f_{i,l}=f_{i,k}^{1-\delta (i,k)} f_{k,l}, \qquad (\forall ) i\in
A^l_k=\{j\in E: \ {}_{l}p_{j,k}>0\}, \nonumber \\ 
f_{i,k}=f_{i,l}^{1-\delta (i,l)} f_{l,k}, \qquad (\forall ) i\in
A^k_l=\{j\in E: \ {}_{k}p_{j,l}>0\}. \nonumber
\end{eqnarray}
The state sets $A^k_l$ and $A^l_k$ are disjoint, and $l\in A^l_k$, while $k\in A^k_l$. In particular, $f_{l,l}=f_{k,k}= f_{l,k}f_{k,l}$.
\item[b)] If $f_{l,l}=f_{k,k}= f_{l,k}f_{k,l}>0$ and 
$B^l_k=\{j\in E: {}_{\{l,k\}} p_{k,j}>0\}$,
$B^k_l=\{j\in E: {}_{\{l,k\}} p_{l,j}>0\}$,
then $E^{*l}=E^{*k}$, and the transition matrices of $P^{*l}$ and $P^{*k}$ coincide on $(B^l_k\cup B^k_l)\cap E^{*l}$. If the sets $A^l_k$ and $A^k_l$ are disjoint, then the finite projections $P^{*l}$ and $P^{*k}$ themselves coincide.
\end{description}
\end{prop}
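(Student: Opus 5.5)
The plan is to work entirely with the explicit form of the projected transition probabilities from Proposition~\ref{lm_r3}, relabelled for state $l$: $p^{*l}_{i,j}=p_{i,j}f_{j,l}^{1-\delta(j,l)}/f_{i,l}$, and similarly for $k$. Since $\gamma$ is a point mass, both projected initial distributions equal that point mass. Hence the two projections coincide exactly when $E^{*l}=E^{*k}$ and $p^{*l}_{i,j}=p^{*k}_{i,j}$ for all $i,j$ in that set. For $p_{i,j}>0$ this says
$$
\frac{f_{j,l}^{1-\delta(j,l)}}{f_{i,l}}=\frac{f_{j,k}^{1-\delta(j,k)}}{f_{i,k}},
$$
that is, the ratio $g(i)=f_{i,l}^{1-\delta(i,l)}/f_{i,k}^{1-\delta(i,k)}$ is constant along transitions, with care taken for the exponent at $l$ and $k$.

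For a), I will propagate this constancy backwards along paths that avoid $l$ and end at $k$. Fix $i\in A^l_k$ and a path $i=i_0,\dots,i_m=k$ with intermediate states different from $l$. Multiplying the identities along consecutive transitions telescopes to $f_{i,l}/f_{i,k}^{1-\delta(i,k)}=f_{k,l}/1$, which gives the first relation. Care is needed in the first step where $i$ may equal $l$: there the row ratio uses $f_{l,l}$ in the denominator, which is why $l\in A^l_k$ yields $f_{l,l}=f_{l,k}f_{k,l}$. The symmetric argument gives the second relation. Applying both at $i=l$ and $i=k$ gives $f_{l,l}=f_{k,k}=f_{l,k}f_{k,l}$.

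For disjointness, suppose $i$ lies in both sets. Then $f_{i,l}=f_{i,k}f_{k,l}$ and $f_{i,k}=f_{i,l}f_{l,k}$, so $f_{i,l}=f_{i,l}f_{l,k}f_{k,l}=f_{i,l}f_{l,l}$. Since $f_{l,l}<1$, this forces $f_{i,l}=0$. But $i\in A^k_l$ gives $f_{i,l}>0$, a contradiction. The statements $l\in A^l_k$ and $k\in A^k_l$ follow from irreducibility in the relevant sense, via ${}_lp_{l,k}>0$ from $f_{l,k}>0$.

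For b), I will use first-passage decompositions. For $j\in B^l_k$, that is, $j$ reachable from $k$ avoiding $\{l,k\}$, decomposing the first passage to $l$ according to whether $k$ is visited first gives $f_{j,l}={}_kf_{j,l}+{}_lp_{j,k}f_{k,l}$. A similar decomposition holds for $f_{j,k}$. The aim is to show that the ratio $g$ is constant on $B^l_k\cup B^k_l$. Combining the relation at $i=k$ with $f_{k,k}=f_{k,l}f_{l,k}$, the decomposition $f_{k,k}={}_lf_{k,k}+{}_kp_{k,l}f_{l,k}$ shows that any mass returning to $k$ without passing through $l$ must vanish appropriately. I expect this to be the main obstacle: the mass balance $f_{k,k}=f_{k,l}f_{l,k}$ must force every excursion from $k$ that avoids $l$ either never to return to $k$ or to hit $l$ first, and from this the proportionality of $f_{\cdot,l}$ and $f_{\cdot,k}$ on $B$ must be extracted.

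Once that proportionality is established, the transition formula gives equality of the matrices on $(B^l_k\cup B^k_l)\cap E^{*l}$. Equality $E^{*l}=E^{*k}$ follows from $f_{i,l}>0\iff f_{i,k}>0$, which holds because $f_{l,k},f_{k,l}>0$. Finally, if $A^l_k\cap A^k_l=\emptyset$, every state with positive probability lies in the region where one of the part a) type relations can be derived. The constancy of $g$ then holds on all of $E^{*l}$, and the projections coincide.
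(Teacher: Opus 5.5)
Part a) is sound in its main line. Telescoping $p^{*l}_{i,j}=p^{*k}_{i,j}$ along a path from $i$ to its first visit to $k$ is the path-by-path form of the paper's identity ${}_{\{l,k\}}p_{i,k}/f_{i,k}={}_{\{l,k\}}p_{i,k}f_{k,l}/f_{i,l}$. At $i=l$ it even gives $f_{l,l}=f_{l,k}f_{k,l}$ without first proving ${}_{\{l,k\}}p_{l,l}=0$.

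Two repairs are needed. First, $f_{l,k}>0$, and hence $l\in A^l_k$, follows from $l\in E^{*l}=E^{*k}$; irreducibility is not assumed. Second, your disjointness argument only treats $i\notin\{l,k\}$. At $i=l$ the second relation reduces to $f_{l,k}=f_{l,k}$, so nothing excludes $l\in A^k_l$. Because $l\in A^l_k$, you must show ${}_kp_{l,l}=0$. In your framework, take a loop $l\to\cdots\to l$ avoiding $k$: telescoping gives $g(l)=f_{l,l}\,g(l)$ with $g(l)=1/f_{l,k}>0$, which contradicts $f_{l,l}<1$. This is the paper's opening step ${}_{\{l,k\}}p_{l,l}/f_{l,l}={}_{\{l,k\}}p_{l,l}$. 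The case of $k$ is symmetric.

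Part b) has a genuine gap. The decisive step is that the hypothesis forces $b={}_{\{l,k\}}p_{l,l}=0$ and $b'={}_{\{l,k\}}p_{k,k}=0$. You leave this as ``the main obstacle'', and your proposed identity cannot deliver it.
\begin{itemize}
\item In $f_{k,k}={}_lf_{k,k}+{}_kp_{k,l}f_{l,k}$ the coefficient must be the first-passage probability ${}_{\{l,k\}}p_{k,l}$, because ${}_kp_{k,l}$ counts visits. The same correction applies to ${}_lp_{j,k}$ in your decomposition of $f_{j,l}$.
\item You also need the renewal identity ${}_{\{l,k\}}p_{k,l}=(1-b')f_{k,l}$. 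The paper combines the two identities at $l$ to get $f_{l,l}=b+(1-b)f_{l,k}f_{k,l}$. Then $f_{l,l}=f_{l,k}f_{k,l}<1$ gives $(1-f_{l,l})b=0$.
\end{itemize}

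Your stated target is also false. The ratio $g$ is not constant on $(B^l_k\cup B^k_l)\cap E^{*l}$: this set contains $k\in B^k_l$ (since ${}_{\{l,k\}}p_{l,k}=(1-b)f_{l,k}>0$) and $l\in B^l_k$, and $g(k)/g(l)=f_{k,l}f_{l,k}=f_{l,l}<1$. The same objection applies to constancy of $g$ on $E^{*l}$ in your last step. As your first paragraph correctly notes, the row identity keeps $g$ constant only along transitions out of states other than $l$ and $k$. A transition out of $l$ multiplies $g$ by $f_{l,l}$, and one out of $k$ divides it by $f_{k,k}$.

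Once $b=b'=0$, the correct structure has two sides.
\begin{itemize}
\item Every path from $B^k_l\setminus\{k\}$ to $l$ passes through $k$, and every path from $B^l_k\setminus\{l\}$ to $k$ passes through $l$.
\item Hence $f_{i,l}/f_{i,k}$ equals $f_{k,l}$ on $\{l\}\cup(B^k_l\setminus\{k\})$ and $1/f_{l,k}$ on $\{k\}\cup(B^l_k\setminus\{l\})$.
\item Rows agree because transitions from the first set land in $B^k_l$, transitions from the second land in $B^l_k$, and $B^l_k\cap B^k_l\cap E^{*l}\subseteq\{l,k\}$.
\end{itemize}
For the final assertion you similarly need disjointness to show that no transition leads from $A^l_k$ into $(A^k_l\setminus\{k\})\cup\{l\}$, since its source would then lie in $A^k_l$; the symmetric statement also holds. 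It is this side-preservation, not a constant ratio, that makes the projected rows coincide.
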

{\bf Proof.}  
Let $l,k\in E$, $l\not= k$, and suppose that $P^{*l}=P^{*k}$. Consequently, $E^{*l}=E^{*k}$, ${}_{\{l,k\}}p^{*k}_{i,k}=
{}_{\{l,k\}}p^{*l}_{i,k}$ and  ${}_{\{l,k\}}p^{*k}_{i,l}=
{}_{\{l,k\}}p^{*l}_{i,l}$ \ $(\forall )\ i\in E^{*l}$. 

First consider a return to $l$ with no intermediate visit to $l$ or $k$.
Its probability under $P^{*l}$ is ${}_{\{l,k\}}p_{l,l}/f_{l,l}$,
whereas under $P^{*k}$ it is ${}_{\{l,k\}}p_{l,l}$. Equality and
$f_{l,l}<1$ imply ${}_{\{l,k\}}p_{l,l}=0$. Similarly,
${}_{\{l,k\}}p_{k,k}=0$. Thus every return to either target passes
through the other target. In particular,
$f_{l,l}=f_{k,k}=f_{l,k}f_{k,l}$.

For $i\notin\{l,k\}$, assume temporarily that the probabilities in the last two equalities are both nonzero:
${}_{\{l,k\}}p^{*k}_{i,k},{}_{\{l,k\}}p^{*k}_{i,l}\not=0$. From the definition of finite projection and the coincidence of $P^{*l}$ and $P^{*k}$, we obtain 
$$
{}_{\{l,k\}}p^{*k}_{i,k}= {}_{\{l,k\}}p_{i,k}/f_{i,k}
={}_{\{l,k\}}p^{*l}_{i,k}=
{}_{\{l,k\}}p_{i,k}f_{k,l}/f_{i,l},
$$
and hence $f_{i,l}=f_{i,k}f_{k,l}$. Similarly, we have $f_{i,k}=f_{i,l}f_{l,k}$. From these last two equalities, we obtain $f_{l,k}f_{k,l}=1$, which contradicts the assumption that the Markov chain $X$ is transient.

Thus, for a transient Markov chain $X$, the coincidence of the finite projections $P^{*l}$ and $P^{*k}$ implies that at least one of the probabilities 
${}_{\{l,k\}}p^{*k}_{i,k},{}_{\{l,k\}}p^{*k}_{i,l}$ is zero, and evidently, at least one of the probabilities 
${}_{\{l,k\}}p_{i,k},{}_{\{l,k\}}p_{i,l}$ is zero for $(\forall)\ i\in E^{*l}\setminus\{l,k\}$. This means that any state $i\in E^{*l}$ outside $\{l,k\}$ cannot simultaneously connect to both states $l$ and $k$ through simple paths that do not contain $l$ or $k$ internally.

Therefore, the state set $E$ can be partitioned into three subsets: $A^l_k$, $A^k_l$, $E\setminus E^{*l}$, such that $A_k^l$ contains all states from which state $k$ can be reached with entry into state $l$ forbidden, $A_l^k$ contains all states from which state $l$ can be reached with entry into state $k$ forbidden, and $E\setminus E^{*l}$ contains all states from which neither $l$ nor $k$ can be reached with positive probability.

From the coincidence of the finite projections $P^{*l}$ and $P^{*k}$, we also obtain ${}_{l}p_{k,k}=0$, ${}_{k}p_{l,l}=0$, meaning that $l\in A_k^l$, while $k\in A_l^k$. Thus, we have 
$$
f_{l,l}=f_{l,k}f_{k,l}, \qquad \hbox{and}\qquad 
f_{k,k}=f_{k,l}f_{l,k}.
$$
Moreover, any simple path of positive probability connecting state $i\in A^l_k$ to state $l$ passes through state $k$, and any simple path of positive probability connecting state $i\in A^k_l$ to state $k$ passes through state $l$. Hence,
\begin{eqnarray}
f_{i,l}=f_{i,k}^{1-\delta (i,k)} f_{k,l}, \qquad (\forall ) i\in
A^l_k=\{j\in E: \ {}_{l}p_{j,k}>0\}, \nonumber \\ 
f_{i,k}=f_{i,l}^{1-\delta (i,l)} f_{l,k}, \qquad (\forall ) i\in
A^k_l=\{j\in E: \ {}_{k}p_{j,l}>0\}. \nonumber
\end{eqnarray}
Point a) is proven.

Let us now prove the statement of point b). Write $b={}_{\{k,l\}}p_{l,l}$. First-return decomposition gives
$$
f_{l,l}=b+{}_{\{k,l\}}p_{l,k}f_{k,l},\qquad
{}_{\{k,l\}}p_{l,k}=(1-b)f_{l,k}.
$$
Since $f_{l,l}=f_{l,k}f_{k,l}<1$, these identities imply $b=0$.
Consequently there does not exist $i\in E\setminus\{l,k\}$ such that ${}_{\{k,l\}} p_{l,i}>0$ and ${}_{\{k,l\}} p_{i,l}>0$. Hence, if ${}_{\{k,l\}} p_{l,i}>0$, then all simple paths starting at $i$ and ending at $l$ with positive probability pass through state $k$. Consequently, $(\forall)\ i\in B^k_l=\{j\in E: {}_{\{l,k\}}
p_{l,j}>0\}$, the equality $f_{i,l}=f_{i,k}^{1-\delta
(i,k)} f_{k,l}$ holds. Similarly, $(\forall)\ i\in
B^l_k=\{j\in E: {}_{\{l,k\}} p_{k,j}>0\}$, the equality $f_{i,k}=f_{i,l}^{1-\delta (i,l)} f_{l,k}$ holds.

In the following transition calculations, restrict both $B$-sets to the common state space $E^{*l}=E^{*k}$. To verify the coincidence of the transition matrices of the finite projections $P^{*l}$ and $P^{*k}$ on the state set $B^l_k\cup
B^k_l$, it suffices to show that the one-step transition probabilities $p^{*l}_{i,j}, p^{*k}_{i,j}$ coincide for $(\forall ) i,j \in B^l_k\cup
B^k_l$.

Let $i,j\in B^l_k$. Then $p_{l,j}=0$, $j\not=k$, $f_{i,k}=f_{i,l}^{1-\delta (i,l)} f_{l,k}$. Consequently, $p^{*k}_{l,j}=p^{*l}_{l,j}=0$, and for $i\not=l$, we have
$$
p^{*k}_{i,j}=p_{i,j}\frac{f_{j,k}^{1-\delta(j,k)}}{f_{i,k}}=
p_{i,j}\frac{f_{j,k}}{f_{i,k}}=p_{i,j}\frac{f_{j,l}^{1-
\delta(j,l)}f_{l,k}}{f_{i,l}^{1-\delta(i,l)}f_{l,k}}= 
p_{i,j}\frac{f_{j,l}^{1-\delta(j,l)}}{f_{i,l}}=  p^{*l}_{i,j}.
$$
Thus, $p^{*k}_{i,j}=p^{*l}_{i,j}$ \quad $(\forall)\ i,j\in B^l_k$.

Let $i\in B^l_k$, $j\in B^k_l$. Note that $p_{i,j}^{*l}>0$ can only hold for $i=l$. In this case,  
$$
p^{*l}_{l,j}=p_{l,j}\frac{f_{j,l}}{f_{l,l}}=
p_{l,j}\frac{f_{j,k}^{1-\delta(j,k)}f_{k,l}}{f_{l,k}f_{k,l}}= 
p^{*k}_{l,j}.
$$
The cases when $i,j\in B^k_l$ or $i\in B^k_l$, $j\in B^l_k$ are analogous to those examined, and thus the first part of point b) is proven. If $A^l_k$ and $A^k_l$ are disjoint, every state in the common
projected space lies on exactly one of these two sides. The same
hitting-probability factorizations hold on the whole sides, so
substitution in the projected transition formula gives equality of
the remaining rows as well. Here the symmetric difference
$A^l_k\triangle B^k_l$, apart from $l,k$, contains only transient
states of $X^{*l}$. The initial distribution is concentrated
at the same single state under both projections. Hence the full
projected laws coincide.$\Box$

  \bigskip
From the proved proposition, it follows that the finite projections $P^{*l}$ and $P^{*k}$, with initial distribution concentrated at a single point, coincide if and only if the following conditions hold:
\begin{itemize}
\item $f_{l,l}=f_{k,k}= f_{l,k}f_{k,l}>0$,
\item the sets $A^l_k$ and $A^k_l$ are disjoint.
\end{itemize}

We introduce the notations
$$
R_{l,i}(j)=\sum_{r\ge j,\ r\ne l} {}_{l}p_{l,r} p_{r,i}+p_{l,i}
\ \ \ \mbox{and} \ \ R_{l,i}=R_{l,i}(0) \qquad \mbox{for}\ \
j<l,\ \ l\ge1,\ \ i,j,l\in E.
$$
It is evident that $R_{l,i}={}_{l}p_{l,i}$. The following result establishes the relationship between the invariant measures of the initial Markov probability $P$ and the invariant measures of the finite projections $P^{*l},\ l\in E$.

\begin{prop} \label{inv_meas1}
Let the initial Markov chain $X$ be irreducible, and let the measures \ $u^{*l}=\{u^{*l}_j\}$ \ be invariant for the finite projections \ $P^{*l}$, with \ $u^{*l}_0=f_{0,l}$ for $l\ge1$, then:
\begin{description}
\item[a)] If there exists $K\subseteq E$, $ \hbox{card}\, K=\infty$, such that
\begin{equation}\label{lm_r7}
\lim_{j,l\to \infty,\ j<l,\ l\in K} R_{l,i}(j)/R_{l,0} = 0,\qquad
\mbox{for any}\ \ i\in E,
\end{equation}
then there exists $K'\subseteq K$, $ \hbox{card}\, K' =\infty$, such that the limits exist:
$$
u_j=\lim_{l\to \infty,\ l\in K'} u^{*l}_j/f_{j,l}, \qquad j\in E,
$$
and $\{u_j\}$ is an invariant measure of the initial chain $X$;
\item[b)] If there exists $K\subseteq E$, $ \hbox{card}\, K=\infty$, such that the limits exist:
$$
u_j=\lim_{l\to \infty,\ l\in K} u^{*l}_j/f_{j,l},\qquad j\in E,
$$
and $\{u_j\}$ is an invariant measure of the initial Markov chain $X$, then
$$
\lim_{j,l\to \infty,\ j<l,\ l\in K} R_{l,i}(j)/R_{l,0} =
0,\qquad \mbox{for any}\ \ i\in E.
$$
\end{description}
\end{prop}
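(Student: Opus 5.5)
The plan is to make the invariant measures $u^{*l}$ completely explicit and so reduce both parts to statements about taboo probabilities of the original chain. Apply Proposition~\ref{lm_r4} a), with state $0$ replaced by $l$ as permitted in Section~\ref{proiect_arb}. Derman's representation holds for the recurrent chain $X^{*l}$, so this gives $u^{*l}_j=c_l\,{}_lp_{l,j}^{1-\delta(j,l)}f_{j,l}$. The normalization $u^{*l}_0=f_{0,l}$ forces $c_l=1/{}_lp_{l,0}=1/R_{l,0}$. Hence, for $j\ne l$,
$$
v^l_j:=\frac{u^{*l}_j}{f_{j,l}}=\frac{{}_lp_{l,j}}{R_{l,0}}=\frac{R_{l,j}}{R_{l,0}},\qquad v^l_0=1 .
$$
So the objects in the statement are the taboo Derman measures of $X$ normalized at $0$. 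The basic identity is the last-step decomposition ${}_lp_{l,i}=p_{l,i}+\sum_{r\ne l}{}_lp_{l,r}p_{r,i}$. Dividing it by $R_{l,0}$ and splitting the sum at $j$ gives, for $j<l$,
$$
v^l_i=\sum_{r<j}v^l_r\,p_{r,i}+\frac{R_{l,i}(j)}{R_{l,0}} .
$$
Both parts will be read off from this identity.

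For a), the first step is a bound on $v^l_j$ that does not depend on $l$. By irreducibility, for each $j$ I fix a positive-probability path from $j$ to $0$ of length $n_j$. Once $l$ exceeds all states on that path, the path avoids $l$. Then $1=v^l_0\ge v^l_j\,{}_lp^{(n_j)}_{j,0}\ge v^l_j\,c_j$ with $c_j>0$ independent of $l$. This follows from the invariance of the taboo measure, or directly from the path decomposition of ${}_lp_{l,0}$. A diagonal argument then gives an infinite $K'\subseteq K$ along which $v^l_j\to u_j$ for every $j$, and $u_0=1$. To pass to the limit in the displayed identity, I fix $\varepsilon>0$ and use (\ref{lm_r7}) to choose $j_0$ such that $R_{l,i}(j)/R_{l,0}<\varepsilon$ whenever $j_0\le j<l$ and $l\in K'$. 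Letting $l\to\infty$ in $K'$ with $j$ fixed, where only a finite sum is involved, gives $\bigl|u_i-\sum_{r<j}u_rp_{r,i}\bigr|\le\varepsilon$ for all $j\ge j_0$. Letting $j\to\infty$ and then $\varepsilon\downarrow0$ yields $u_i=\sum_ru_rp_{r,i}$, so $u$ is a nonzero invariant measure.

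For b), I rewrite the identity as $R_{l,i}(j)/R_{l,0}=v^l_i-\sum_{r<j}v^l_rp_{r,i}\ge0$. For fixed $l$ this quantity is nonincreasing in $j$. Given $\varepsilon>0$, invariance of $u$ lets me choose $j'$ with $u_i-\sum_{r<j'}u_rp_{r,i}<\varepsilon$. The convergence $v^l\to u$ along $K$, applied to finitely many coordinates, then gives $L$ such that $R_{l,i}(j')/R_{l,0}<2\varepsilon$ for $l\ge L$, $l\in K$. Monotonicity extends this bound to all $j$ with $j'\le j<l$, which is exactly the joint limit required.

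I expect the main obstacle to be the uniform bound on $v^l_j$ in part a). It needs care because the taboo state $l$ moves, so the chosen paths must eventually avoid $l$. A second delicate point is the order of the double limit $j,l\to\infty$ with $j<l$. The monotonicity in $j$ and the $\varepsilon$-choice of $j_0$ (respectively $j'$) are what make it work.
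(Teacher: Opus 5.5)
Your proposal is correct and follows the paper's proof essentially step by step. Both arguments identify $u^{*l}_j/f_{j,l}$ with ${}_lp_{l,j}/{}_lp_{l,0}$ via Proposition~\ref{lm_r4}, split the last-step identity at $j$ so that the tail is $R_{l,i}(j)/R_{l,0}$, use a diagonal selection for a), and use the exact tail identity together with monotonicity in $j$ for b).

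The only difference is in the uniform bound. The paper obtains it from the excessivity $v^lP\le v^l$, which gives $1=v^l_0\ge v^l_j\,p^{(n)}_{j,0}$ for every $l$ without requiring the path to avoid $l$, so the obstacle you anticipate does not arise. Your displayed identity should also be stated for $i\ne l$. This is harmless, since $i$ is fixed while $l\to\infty$.
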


{\bf Proof.} Since $u^{*l}$ are the invariant measures of the projections $P^{*l}$, from Proposition~\ref{lm_r3}, for $l\ge1$:
\begin{equation}\label{lm_r6}
u^{*l}_j/f_{j,l}=\sum_i u^{*l}_i/f_{i,l} p_{i,j},\qquad
j\not =l,  
\end{equation}
$$
u^{*l}_l=\sum_i u^{*l}_i/f_{i,l} p_{i,l}.
$$
Since $u^{*l}_0=f_{0,l}$, from Proposition~\ref{lm_r4} a), it follows that:
$$
u^{*l}_i/f_{i,l}={}_{l}p_{l,i}^{1-\delta(i,l)}/{}_{l}p_{l,0}.
$$
Therefore:
$$
u^{*l}_j/f_{j,l}=\sum_{i<r} u^{*l}_i/f_{i,l} p_{i,j}+
\sum_{i\ge r} {}_{l}p_{l,i}^{1-\delta(i,l)}
p_{i,j}/{}_{l}p_{l,0}.
$$
This follows from (\ref{lm_r6}). For brevity write
$v_l(i)=u_i^{*l}/f_{i,l}$. Then $v_l(0)=1$ and $v_lP\le v_l$.
Fixed positive-probability paths from $0$ to $i$ and from $i$ to $0$
therefore give positive lower and finite upper bounds for each $v_l(i)$,
uniformly in $l$. A diagonal selection gives an infinite $K'\subseteq K$
along which every coordinate has a finite positive limit $u_i$.
For fixed $j$, split (\ref{lm_r6}) at a finite index $r<l$.
The finite sum converges coordinatewise, and its remaining tail is
$R_{l,j}(r)/R_{l,0}$. Condition (\ref{lm_r7}) permits this tail to
tend to zero as $r\to\infty$. Thus $u_j=\sum_i u_i p_{i,j}$ for
every $j$, with $u_0=1$, proving a).

For b), fix $i$ and $j$. For $l>j$ and $l\ne i$, the exact tail identity is
$$
\frac{R_{l,i}(j)}{R_{l,0}}
=v_l(i)-\sum_{r<j}v_l(r)p_{r,i}.
$$
Along $K$, its limit for fixed $j$ is
$u_i-\sum_{r<j}u_rp_{r,i}$, which tends to zero as $j\to\infty$
by invariance of $u$. Since the tail decreases as $j$ increases
within $j<l$, this proves the joint limit in b). $\Box$

Condition (\ref{lm_r7}) is analogous to the necessary and sufficient condition for the existence of invariant measures for an irreducible, aperiodic, and transient Markov chain proposed by Harris. The necessity of Harris's condition was demonstrated by Veech \cite{Veech_1}.

\subsection{The space of finite projections of a Markov chain}
\label{spat_proiect}
Let $\mathcal M$ be the space of laws of possibly killed Markov chains,
using the absorbing cemetery state introduced at the beginning of this
chapter. We equip $\mathcal M$ with the $F$-topology: $Q_n\to Q$ if
and only if
\begin{equation}\label{live_prefix}
Q_n\{X_0=i_0,\ldots,X_k=i_k\}\longrightarrow
Q\{X_0=i_0,\ldots,X_k=i_k\}
\end{equation}
for every $k\ge0$ and $i_0,\ldots,i_k\in E$.
These cylinder probabilities determine the law, including the missing
mass due to initial or subsequent killing, so this topology is Hausdorff.
It need not coincide with ordinary weak convergence of the
finite-dimensional distributions on the augmented state space.

We study the finite projections as a topological subspace of
$\mathcal M$. First, we recall the required facts from boundary theory.
A finite nonnegative function $h$ is excessive if
$$
h(i)\ge\sum_jp_{ij}h(j),\qquad i\in E.
$$
For such an $h$ with $\sum_i\gamma(i)h(i)\le1$, its $h$-transform
has initial distribution and transition probabilities
\begin{eqnarray}\label{killed_transform}
\gamma^h(i)&=&\gamma(i)h(i),\quad
\gamma^h(\partial)=1-\sum_i\gamma(i)h(i),\nonumber\\
p^h_{ij}&=&p_{ij}\frac{h(j)}{h(i)},\quad
p^h_{i\partial}=1-\frac{\sum_jp_{ij}h(j)}{h(i)}
\quad(h(i)>0).
\end{eqnarray}
States with $h(i)=0$ are inaccessible and may be assigned immediate
killing; the cemetery state is absorbing. If $h\equiv0$, all the
cylinder probabilities in (\ref{live_prefix}) are zero. Thus nonnegative
excessive functions, including the zero function, are allowed.

In this section assume that the Green function
$$
G(i,j)=\sum_{n=0}^{\infty}p_{ij}^{(n)}
$$
is finite for every $i,j\in E$. Thus the chain has no recurrent states.
Its Martin kernel is
\begin{equation}\label{martin_kernel}
K(i,j)=\frac{G(i,j)}{\sum_a\gamma(a)G(a,j)}.
\end{equation}
The standing condition (\ref{etalon}) ensures that the denominator is
positive: $\gamma$ is a reference distribution. We also assume
$0<f_{ll}<1$ for every $l\in E$, so that all the finite projections
appearing below are defined.

The Martin compactification $E^+$ carries the $M^+$-topology.
A sequence $\{l_n\}$ in $E$ converges in this topology if and only if
the functions $K(i,l_n)$ converge for every $i\in E$ and either
$l_n\to\infty$ or $l_n$ is eventually constant. The finite states are
isolated, and $E$ is open in $E^+$.

For fixed $l$, $K(\cdot,l)$ is a finite excessive function. From
$G(i,l)=f_{il}^{1-\delta(i,l)}G(l,l)$ we obtain, on $E^{*l}$,
$$
p_{ij}^{*l}=p_{ij}^{K(\cdot,l)}f_{il}^{-\delta(i,l)}.
$$
Thus the finite projection and the $K(\cdot,l)$-transform have the
same transition probabilities away from the target $l$.
Boundary theory for countable-state Markov chains is presented, for
example, in \cite{lm_dyn} and \cite{lm_kem}. The following results
describe its connection with finite projections.

\Needspace{10\baselineskip}
\begin{samepage}
\begin{prop}
Let $l_n\to\infty$.\label{lm_sir} The following statements are equivalent:
\begin{description}
\item[a)] $l_n$ converges in the $M^+$-topology to a point
$l^+\in E^+\setminus E$;
\item[b)] $K(i,l_n)$ converges for every $i\in E$, with finite limit
$K(i,l^+)$;
\item[c)] $P^{*l_n}$ converges in the $F$-topology.
\end{description}
The limit in c) is the possibly killed $K(\cdot,l^+)$-transform, with
initial live distribution $\gamma(i)K(i,l^+)$.
\end{prop}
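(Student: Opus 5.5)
The equivalence a)$\Leftrightarrow$b) is essentially the definition of the $M^+$-topology recalled above. Since $l_n\to\infty$, $l_n$ is not eventually constant. Hence $l_n$ converges in $E^+$ exactly when $K(i,l_n)$ converges for every $i$, and the limit point lies in $E^+\setminus E$ because the finite states are isolated. Finiteness of the limits follows from the standard bound $K(i,l)\le 1/(\sum_a\gamma(a)\,{}_{\,}G(a,i)/G(i,i))$-type estimate: $\sum_a\gamma(a)G(a,l)\ge \sum_a \gamma(a)f_{a,i}^{1-\delta(a,i)}G(i,l)$, so $K(i,l)\le 1/\sum_a\gamma(a)f_{a,i}^{1-\delta(a,i)}$. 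This bound is finite and positive by (\ref{etalon}), uniformly in $l$. So the main work is b)$\Leftrightarrow$c), together with identifying the limit.

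I will compute the cylinder probabilities of $P^{*l}$ explicitly. By Proposition~\ref{lm_r3} b), applied with target $l$, take a cylinder $[i_0,\ldots,i_k]$ of states in $E^{*l}$ and let $l$ be larger than all $i_0,\ldots,i_k$. The transition factors telescope, and no factor $f_{ll}^{-1}$ appears because the path avoids $l$. This gives
$$
P^{*l}[i_0,\ldots,i_k]=\frac{\gamma(i_0)p_{i_0i_1}\cdots p_{i_{k-1}i_k}\,f_{i_k,l}}{\sum_a\gamma(a)f_{a,l}^{1-\delta(a,l)}} .
$$
Multiplying numerator and denominator by $G(l,l)$ and using $G(i,l)=f_{il}^{1-\delta(i,l)}G(l,l)$ turns this into
$$
\gamma(i_0)p_{i_0i_1}\cdots p_{i_{k-1}i_k}K(i_k,l).
$$
If some $i_m\notin E^{*l}$, then $f_{i_m,l}=0$, so $K(i_m,l)=0$ and both sides vanish. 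The formula therefore holds for every cylinder once $l>\max_m i_m$. It is exactly the cylinder probability of the $K(\cdot,l)$-transform in (\ref{killed_transform}). Because $l_n\to\infty$, every fixed cylinder eventually satisfies the size condition.

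Given this identity, b)$\Rightarrow$c) is immediate. Each cylinder probability converges to $\gamma(i_0)p_{i_0i_1}\cdots p_{i_{k-1}i_k}K(i_k,l^+)$. I then need to check that these limits are the cylinder probabilities of a genuine element of $\mathcal M$, namely the $K(\cdot,l^+)$-transform. Three facts suffice. First, $K(\cdot,l^+)$ is excessive, by Fatou's lemma applied to $\sum_jp_{ij}K(j,l_n)\le K(i,l_n)$. Second, it is finite, by the uniform bound above. Third, $\sum_i\gamma(i)K(i,l^+)\le1$, again by Fatou, since $\sum_i\gamma(i)K(i,l)=1$. With these, (\ref{killed_transform}) defines the limiting law, and the killing mass absorbs any loss in Fatou's inequality. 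For c)$\Rightarrow$b), use the one-step cylinder $[i]$ with $\gamma(i)>0$: $P^{*l_n}\{X_0=i\}=\gamma(i)K(i,l_n)$ converges, so $K(i,l_n)$ converges. For a general $i$, choose by (\ref{etalon}) a path $i_0,\ldots,i_k=i$ of positive $\gamma$-probability. Convergence of that cylinder probability, divided by the fixed positive factor $\gamma(i_0)p_{i_0i_1}\cdots$, gives convergence of $K(i,l_n)$.

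The main obstacle is the bookkeeping in the telescoping identity. Two points need care: the normalizing constant $\gamma(0)+\sum\gamma(j)f_{j,0}$ from Proposition~\ref{lm_r3}, rewritten with target $l$, must match the Martin denominator exactly, including the $\delta(a,l)$ convention; and the states outside $E^{*l}$ and the killing mass must be handled so that the $F$-topology statement is about live cylinders only. Nothing beyond that seems delicate. Uniqueness of the limit law is guaranteed because the $F$-topology is Hausdorff.
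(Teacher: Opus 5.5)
Your proposal is correct and follows the paper's proof essentially step for step. It uses the same uniform bound on $K(i,l)$ for a)$\Leftrightarrow$b), the same telescoped identity $P^{*l}[i_0,\ldots,i_k]=P[i_0,\ldots,i_k]K(i_k,l)$ once $l$ avoids the cylinder, Fatou's lemma for excessivity and the mass bound $\sum_i\gamma(i)K(i,l^+)\le1$, and reference paths supplied by (\ref{etalon}) for c)$\Rightarrow$b). The only point to tighten is the vanishing case: the right-hand side involves $K(i_k,l)$, not $K(i_m,l)$, so you need the one-line observation that $f_{i_m,l}=0$ and $p_{i_mi_{m+1}}\cdots p_{i_{k-1}i_k}>0$ together force $f_{i_k,l}=0$ (or, equivalently, iterate the excessivity of $K(\cdot,l)$).
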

\par
\end{samepage}
{\bf Proof.} The equivalence of a) and b) follows from the definition
of the $M^+$-topology and the bound
\begin{equation}\label{martin_coordinate_bound}
K(i,j)\le\left(\sum_a\gamma(a)f_{a,i}\right)^{-1}<\infty.
\end{equation}
Indeed, $G(a,j)\ge f_{a,i}G(i,j)$ by counting visits after the first
positive hit of $i$. The denominator in the bound is positive by
(\ref{etalon}).

For finite $l$, multiplication of the initial and transition
probabilities in Proposition~\ref{lm_r3}(b), with target $l$ in place
of $0$, gives
\begin{equation}\label{lm_*}
P^{*l}[i_0,\ldots,i_k]=P[i_0,\ldots,i_k]K(i_k,l)
\prod_{m=0}^{k-1}f_{i_m,l}^{-\delta(i_m,l)}.
\end{equation}
The product includes a departure from $l$ at time zero.
For a fixed cylinder, $l_n$ eventually differs from all its states,
so the product is one. Therefore b) implies
$$
P^{*l_n}[i_0,\ldots,i_k]\longrightarrow
P[i_0,\ldots,i_k]K(i_k,l^+).
$$
Fatou's lemma gives $PK(\cdot,l^+)\le K(\cdot,l^+)$ and
$\sum_i\gamma(i)K(i,l^+)\le1$. The displayed limits are precisely
the cylinder probabilities of the transform (\ref{killed_transform}),
proving c). Conversely, the reference-distribution condition supplies
a positive-probability finite path ending at each $i$. Dividing its
convergent cylinder probability by the fixed positive original
probability in (\ref{lm_*}) recovers convergence of $K(i,l_n)$.
This proves b). $\Box$

If $\gamma(i)>0$, the initial distribution gives the relation
$$
K(i,l)=\frac{P^{*l}\{X_0=i\}}{\gamma(i)}.
$$
The proposition interprets transforms associated with Martin boundary
kernels as limits of finite projections.

Define an equivalence relation $R$ on $E^+$ by identifying finite
states $i,j$ exactly when $P^{*i}=P^{*j}$, and leaving boundary
points in singleton classes. Give $E^+/R$ the quotient topology
$M^+/R$. Write
$$
\mathcal P=\{P^{*l}:l\in E\},
$$
and let $\overline{\mathcal P}$ be its closure in $\mathcal M$.
\begin{prop}\label{homeo}
The space $(E^+/R,M^+/R)$ is homeomorphic to
$\overline{\mathcal P}$ with the $F$-topology.
\end{prop}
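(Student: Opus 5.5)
We construct an explicit map $\Phi:E^+\to\overline{\mathcal P}$ and show that it induces a homeomorphism on the quotient. For a finite state $l$ put $\Phi(l)=P^{*l}$. For a boundary point $l^+\in E^+\setminus E$ let $\Phi(l^+)$ be the possibly killed $K(\cdot,l^+)$-transform. By Proposition~\ref{lm_sir}, this is the $F$-limit of $P^{*l_n}$ along any sequence $l_n\to l^+$ in the $M^+$-topology. Hence $\Phi(l^+)\in\overline{\mathcal P}$, and $\Phi$ is well defined. The map $\Phi$ is constant on $R$-classes by the definition of $R$, so it factors through a map $\widetilde\Phi:E^+/R\to\overline{\mathcal P}$.

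\emph{Continuity.} Since $E^+$ is compact metrizable, it suffices to check sequential continuity of $\Phi$.
\begin{itemize}
\item Finite states are isolated, so sequences converging to them are eventually constant.
\item For a sequence of finite states converging to a boundary point, continuity is Proposition~\ref{lm_sir}.
\item For a sequence of boundary points $l_k^+\to l^+$, the Martin kernels converge pointwise: $K(i,l_k^+)\to K(i,l^+)$ for every $i$, by continuity of $K(i,\cdot)$ on $E^+$. The cylinder probabilities of $\Phi(l_k^+)$ are $P[i_0,\ldots,i_k]K(i_k,l_k^+)$, so they converge.
\end{itemize}
The quotient map then gives continuity of $\widetilde\Phi$.

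\emph{Surjectivity.} Let $Q\in\overline{\mathcal P}$. The $F$-topology is defined by countably many cylinder coordinates, so we may pick a sequence $P^{*l_n}\to Q$. If $l_n$ has a constant subsequence, then $Q=P^{*l}$ for that $l$. Otherwise $l_n\to\infty$, and compactness of $E^+$ gives a subsequence converging to some $l^+$; then $Q=\Phi(l^+)$.

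\emph{Injectivity.} Suppose $\Phi(x)=\Phi(y)$. By (\ref{lm_*}), the cylinder probabilities recover $K(i,\cdot)$ on boundary points. If $x,y$ are distinct boundary points, then $K(\cdot,x)\neq K(\cdot,y)$, since the Martin boundary separates points through the kernel. Two finite states with equal images are $R$-equivalent by definition.

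The main obstacle is the mixed case, $P^{*l}=\Phi(l^+)$ with $l$ finite and $l^+$ on the boundary. Here I would use recurrence. Under $P^{*l}$ the state $l$ is recurrent, by Proposition~\ref{lm_r3}\,c) with $l$ in place of $0$. Under the $K(\cdot,l^+)$-transform, by contrast, the expected number of visits to $l$ is $\sum_n\sum_i\gamma(i)p^{(n)}_{il}K(l,l^+)$, which is bounded by $K(l,l^+)\sum_i\gamma(i)G(i,l)<\infty$. So $l$ is visited only finitely often, almost surely; even with killing, the transform cannot give recurrence at $l$. If $K(l,l^+)=0$, then $l$ is not visited at all, whereas $l\in E^{*l}$ is reached under $P^{*l}$. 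In either case the two laws differ.

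\emph{Conclusion.} $\widetilde\Phi$ is a continuous bijection from $E^+/R$ onto $\overline{\mathcal P}$. The space $E^+/R$ is compact as a quotient of a compact space. The space $\overline{\mathcal P}$ is Hausdorff, because the $F$-topology is Hausdorff, as noted in the text. A continuous bijection from a compact space onto a Hausdorff space is closed, and therefore $\widetilde\Phi$ is a homeomorphism.
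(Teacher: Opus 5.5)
Your proof is correct and follows essentially the paper's own argument. Both use the same map ($l\mapsto P^{*l}$, $l^+\mapsto$ the $K(\cdot,l^+)$-transform), continuity from Proposition~\ref{lm_sir} and the cylinder formulas, injectivity by recovering kernel coordinates along reference paths and by the finite-expected-visits versus recurrence-at-the-target dichotomy, and the compact-to-Hausdorff conclusion. The only cosmetic difference is that you identify the image with $\overline{\mathcal P}$ by extracting $M^+$-convergent subsequences. The paper instead observes that the compact image is closed and uses density of $E$ in $E^+$, which you also use implicitly when asserting $\Phi(l^+)\in\overline{\mathcal P}$.
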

{\bf Proof.} Set $\varphi(l)=P^{*l}$ for $l\in E$ and
$\varphi(l^+)=P^{K(\cdot,l^+)}$ for $l^+\in E^+\setminus E$.
Proposition~\ref{lm_sir} and the same cylinder argument give continuity
at boundary points; finite states are isolated. Its image is compact
and hence closed in the Hausdorff space $\mathcal M$. Since $E$ is
dense in $E^+$, this image is $\overline{\mathcal P}$.
Reference paths recover every kernel coordinate, so distinct boundary
points have distinct images. A boundary transform has finite expected
live visits to each fixed $j$, namely
$K(j,l^+)\sum_i\gamma(i)G(i,j)$, whereas a finite projection visits
its target infinitely often. Thus a boundary image cannot equal a
finite projection. The induced continuous bijection from the compact
quotient to its Hausdorff image is a homeomorphism. $\Box$

In particular, if $P^{*i}\ne P^{*j}$ for every pair $i\ne j$, then
$E^+$ itself is homeomorphic to $\overline{\mathcal P}$ with the
$F$-topology.

\begin{lema}\label{martin_as_limit} \mbox{\cite{lm_dyn}}
If $P\{X_n\in E\mbox{ for all }n\ge0\}=1$, then $X_n$ converges
almost surely in the $M^+$-topology to a point $X^+\in E^+\setminus E$.
In particular, for every $i\in E$,
$$
K(i,X_n)\longrightarrow K(i,X^+)\quad P\hbox{-a.s.}
$$
\end{lema}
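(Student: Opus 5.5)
The plan is to follow the classical Doob--Hunt argument as presented in \cite{lm_dyn}. The lemma then reduces to two facts. First, $X_n$ leaves every finite set. Second, for each fixed $i$, the real sequence $K(i,X_n)$ has only finitely many upcrossings of every rational interval $[a,b]$.

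The first fact is immediate from the standing assumption of this section. Since $G(j,j)<\infty$, every state is visited only finitely often $P$-a.s., so $X_n\to\infty$ in $E$. The hypothesis $P\{X_n\in E \hbox{ for all } n\}=1$ excludes the cemetery. By (\ref{martin_coordinate_bound}), each coordinate $K(i,X_n)$ is bounded, so any limit is finite. Once every coordinate converges, the definition of the $M^+$-topology yields convergence of $X_n$ to a point $X^+$. Because $X_n\to\infty$, this point lies in $E^+\setminus E$. Hence everything rests on the coordinatewise convergence.

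For the second fact, fix $i$, a finite set $F\subset E$, and the last exit time $L_F=\sup\{n:X_n\in F\}$. By transience, $L_F<\infty$ a.s. on the event that $F$ is visited. Consider the time-reversed sequence $Y_m=X_{L_F-m}$, $0\le m\le L_F$. Using the reference distribution $\gamma$, I would verify that the law of $(Y_m)$ is that of a (killed) Markov chain with transition kernel
$$
\hat p_{jk}=\frac{\gamma G(k)\,p_{kj}}{\gamma G(j)},\qquad \gamma G(j)=\sum_a\gamma(a)G(a,j).
$$
The key computation is that $j\mapsto K(i,j)=G(i,j)/\gamma G(j)$ is excessive for $\hat p$. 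This amounts to the identity $G(i,j)\ge\sum_k G(i,k)p_{kj}$, which is the relation $G=I+GP$ divided by $\gamma G(j)$. Consequently $K(i,Y_m)$ is a nonnegative supermartingale in the reversed filtration. Doob's upcrossing inequality then bounds the expected number of upcrossings of $[a,b]$ by $K(i,X_n)$ during $0\le n\le L_F$ by $b/(b-a)$, independently of $F$. I would let $F\uparrow E$ through finite sets. Then $L_F\to\infty$, and monotone convergence shows that the total number of upcrossings over all $n$ has finite expectation. It is therefore a.s. finite, so $K(i,X_n)$ converges. Intersecting over countably many $i$, $a$, $b$ finishes the proof.

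The main obstacle is making the reversal step rigorous. The path $(Y_m)$ has random length and starts from the last-exit distribution, and the sign conventions must come out as a supermartingale rather than a submartingale. I would handle this by first computing the joint law of $(X_{L_F-m})_{m\le M}$ on $\{L_F\ge M\}$ explicitly from cylinder probabilities and last-exit decompositions, in the spirit of (\ref{lm_*}). I would then check the supermartingale inequality directly on cylinders, rather than appealing to a general reversal theorem.
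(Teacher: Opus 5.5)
Your proposal is correct. It is essentially the Hunt time-reversal argument of the cited source \cite{lm_dyn}, which the paper quotes without giving a proof of its own. The steps check: reversal from the last exit from a finite set gives the kernel $\hat p_{jk}=\gamma G(k)p_{kj}/\gamma G(j)$, $K(i,\cdot)$ is $\hat p$-excessive because $G=I+GP$, and your bound $b/(b-a)=a/(b-a)+1$ correctly accounts for forward upcrossings being reversed downcrossings. Letting the finite sets increase to $E$ makes $L_F\to\infty$, since there is no killing; monotone convergence then gives finitely many upcrossings a.s., hence coordinatewise convergence and, because $X_n\to\infty$, convergence to a point of $E^+\setminus E$.
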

This lemma and the equivalence of b) and c) in
Proposition~\ref{lm_sir} immediately give the following result.
\begin{prop}\label{projection_as_limit}
If $P\{X_n\in E\mbox{ for all }n\ge0\}=1$, then
$$
P^{*X_n}\longrightarrow P^{K(\cdot,X^+)}
\quad P\hbox{-a.s. in the }F\hbox{-topology}.
$$
\end{prop}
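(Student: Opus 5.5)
The plan is to combine Lemma~\ref{martin_as_limit} with the implication b)$\Rightarrow$c) of Proposition~\ref{lm_sir}, applied pathwise. Let $\Omega_0$ be the event on which $X_n\in E$ for all $n$, the sequence $X_n$ converges in the $M^+$-topology to a point $X^+\in E^+\setminus E$, and $K(i,X_n)\to K(i,X^+)$ for every $i\in E$. Lemma~\ref{martin_as_limit} gives $P(\Omega_0)=1$. Because $E$ is countable, the countably many coordinate convergences can be intersected without losing probability. I would then fix $\omega\in\Omega_0$ and work with the deterministic sequence $l_n=X_n(\omega)$.

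The first task is to check the hypothesis $l_n\to\infty$ in Proposition~\ref{lm_sir}. Finite states are isolated and $E$ is open in $E^+$. Hence a sequence converging to a boundary point cannot return infinitely often to any fixed finite state: otherwise a subsequence would be constant at some $i\in E$, and its limit would be the isolated point $i\neq X^+$. So $l_n\to\infty$, that is, $l_n$ eventually leaves every finite set.

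Statement b) of Proposition~\ref{lm_sir} then holds along $l_n$, with the finite limits $K(i,X^+)$. These limits are finite by the coordinate bound (\ref{martin_coordinate_bound}). Proposition~\ref{lm_sir} now gives $P^{*l_n}\to P^{K(\cdot,X^+(\omega))}$ in the $F$-topology. The limit is identified there as the possibly killed $K(\cdot,X^+)$-transform.

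The one delicate point is whether the indices $l_n=X_n$ are admissible. Each $P^{*l}$ must be defined, and this is covered by the standing assumption $0<f_{ll}<1$ for all $l$ in Section~\ref{spat_proiect}. It is also worth noting that Proposition~\ref{lm_sir} is a statement about arbitrary sequences; it does not require the sequence $l_n$ to consist of distinct states or to be monotone. Since all conditions hold for each $\omega\in\Omega_0$, the convergence holds $P$-a.s. I would also record that the map $\omega\mapsto X^+(\omega)$ is measurable, as a pointwise limit, so the limiting law is a random element.
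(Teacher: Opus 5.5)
Your proposal is correct and follows the paper's own argument: the paper deduces the result directly from Lemma~\ref{martin_as_limit} together with the implication b)$\Rightarrow$c) of Proposition~\ref{lm_sir}, applied along the sample sequence $l_n=X_n(\omega)$. Your additional checks, that $l_n\to\infty$ and that every $P^{*l_n}$ is defined under the standing assumption $0<f_{ll}<1$, only make explicit what the paper leaves implicit.
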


\subsection{$\tau$-Equivalent Markov Probabilities} \label{invar_pr}

In this section, we will briefly discuss a property that is invariant for the initial Markov chain and its finite projections. Let $\tau$ be a Markov time for the chain $X$, i.e., $\tau$ is a random variable defined on the measurable space $(\Omega, {\cal F})$ with values in $[0, \infty]$ such that for $(\forall)\ n \geq 0$, $\{\tau \leq n\} \in {\cal F}_n$. We denote by $\Gamma(\tau, P)$ the generalized phase space (state space) of the chain $X$ stopped at time $\tau$:
$$
\Gamma(\tau, P) = \{(n, i): \ n \geq 0,\ i \in E,\ P\{X_n = i,\ n < \tau\} > 0\}.
$$

\begin{def1}
Markov probabilities $P$ and $Q$ are called $\tau$-equivalent if they satisfy the conditions:
\begin{description}
\item[a)] $\Gamma(\tau, P) = \Gamma(\tau, Q)$,
\item[b)] $P\{B\ |\ \tau > n,\ X_n = i\} = Q\{B\ |\ \tau > n,\ X_n = i\}$,
for $(\forall)\ B \in {\cal F}_n$ and $(\forall)\ (n, i) \in \Gamma(\tau, P)$.
\end{description}
\end{def1}

In other words, two Markov probabilities $P$ and $Q$ are $\tau$-equivalent if, in reverse time $n, n-1, \ldots, 0$, the Markov chain $X$ behaves in the same way with respect to both $P$ and $Q$, given that $\tau > n$ and the state of $X$ at time $n$ is known.

Clearly, the $\tau$-equivalence of two arbitrary processes (probabilities) can also be defined in the same way. When $\tau = \infty$, the above definition reduces to the definition of equivalence of two Markov chains, used in \cite{Kerov_O} and expressed in terms of cotransition probabilities. Equivalent Markov processes have been studied in a number of works, including \cite{Kerov_O} and \cite{Versik_K}.

Write $T_l^0=\inf\{n\geq0:X_n=l\}$, including a hit at time
zero. For an arbitrary initial distribution the following assertion
uses $T_l^0$, rather than the first positive return.
\begin{prop}\label{tau_equiv_corrected}
If $X$ is irreducible and $\tau\leq T_l^0$ is a Markov time,
then $P$ and $P^{*l}$ are $\tau$-equivalent.
\end{prop}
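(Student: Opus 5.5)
The plan is to compare the two laws directly on cylinders, using the product formula (\ref{lm_*}). Fix $(n,i)$ and a cylinder $[i_0,\ldots,i_n]$ with $i_n=i$.

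First, I will pin down which paths matter. The event $\{\tau>n\}\in{\cal F}_n$ is a union of such cylinders. Since $\tau\le T_l^0$, every path in $\{\tau>n\}$ has $T_l^0>n$, so none of $i_0,\ldots,i_n$ equals $l$.

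Next, I will evaluate $P^{*l}$ on these cylinders. For such a cylinder, (\ref{lm_*}) with target $l$ gives
$$
P^{*l}[i_0,\ldots,i_n]=P[i_0,\ldots,i_n]\,K(i_n,l).
$$
The product of the factors $f_{i_m,l}^{-\delta(i_m,l)}$ equals one because no $i_m$ equals $l$. Here I must check that (\ref{lm_*}) is valid without the standing Green-function assumption of Section~\ref{spat_proiect}. If needed, I will use instead the form coming directly from Proposition~\ref{lm_r3}(b) with target $l$:
$$
P^{*l}[i_0,\ldots,i_n]=P[i_0,\ldots,i_n]\,f_{i_n,l}/c_l ,
$$
where $c_l=\gamma(l)+\sum_{j\ne l}\gamma(j)f_{j,l}$. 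The intermediate factors $f_{i_m,l}$ telescope for $m<n$, again because no $i_m$ equals $l$. Irreducibility gives $f_{j,l}>0$ for all $j$, so $E^{*l}=E$ and the factor $h(i)=f_{i,l}/c_l$ is strictly positive.

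Summing over the cylinders composing $B\cap\{\tau>n,X_n=i\}$ then gives
$$
P^{*l}(B\cap\{\tau>n,X_n=i\})=h(i)\,P(B\cap\{\tau>n,X_n=i\}),\qquad B\in{\cal F}_n .
$$
The factor $h(i)$ depends only on the endpoint, which is what makes the argument work.

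With this identity, both conditions of the definition follow. Taking $B=\Omega$ and using $h(i)>0$ shows that $P\{X_n=i,\tau>n\}>0$ if and only if $P^{*l}\{X_n=i,\tau>n\}>0$, so $\Gamma(\tau,P)=\Gamma(\tau,P^{*l})$. Dividing the identity by its case $B=\Omega$ cancels $h(i)$ and gives condition b).

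The main obstacle is to make sure the cylinder formula applies to every relevant cylinder, including the initial time. If $i_0=l$ then $T_l^0=0$, so such a cylinder lies outside $\{\tau>n\}$ and needs no treatment. I also need $\gamma^{*l}(i_0)=\gamma(i_0)f_{i_0,l}/c_l$ for $i_0\neq l$, which is exactly (\ref{markov_projected_rates}) with target $l$. Using $T_l^0$ rather than the first positive return is what excludes the problematic factor $f_{l,l}^{-1}$.
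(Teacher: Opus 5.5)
Your proposal is correct and follows the paper's own argument. On $\{\tau>n,X_n=i\}$ the path avoids $l$ at indices $0,\ldots,n$, so the likelihood ratio of $P^{*l}$ to $P$ on each such cylinder depends only on the endpoint, cancels on conditioning, and by irreducibility is positive, which gives equal phase spaces. Your endpoint factor $f_{i,l}/c_l$ is exactly the paper's $K(i,l)$: for $i\ne l$ one has $G(i,l)=f_{i,l}G(l,l)$ and $G(l,l)=(1-f_{l,l})^{-1}<\infty$, so the Green-function assumption you worried about is not actually needed.
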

{\bf Proof.} On $\{\tau>n,X_n=i\}$ there is no visit to $l$
at any of the indices $0,\ldots,n$. The likelihood ratio for
each live prefix in (\ref{lm_*}) is therefore $K(i,l)$, which
depends only on its endpoint. Conditional on that endpoint it
cancels from the law of the past. Irreducibility gives matching
positive phase-space supports. This proves both requirements of
$\tau$-equivalence. If the initial state is fixed at $l$ and
the first positive return is used instead, the additional factor
$f_{l,l}^{-1}$ is constant on the histories before that return and
also cancels. This latter convention must not be mixed with an
arbitrary initial distribution. $\Box$

From the demonstrated proposition, it follows in particular that if the initial Markov chain is irreducible, then the finite projections $P^{*l}$ and $P^{*m}$ of the Markov probability $P$ are $\tau$-equivalent for any Markov time $\tau \leq \inf\{n \geq 0:\ X_n \in \{l, m\}\}$ $P$-a.s. As another consequence of the demonstrated proposition, we obtain:

\begin{teor}
Let $f(X) = f(X_0, \ldots, X_{n-1})$ be a functional that depends only on the first $n$ values of the Markov chain $X$. Then for any Markov time $\tau \leq T_l^0$ and for any $(n, i) \in \Gamma(\tau, P^{*l})$:
$$
\hbox{\bf E} (f(X)|\ X_n = i,\ \tau > n) = \hbox{\bf E}^{*l} (f(X)|\ X_n = i,\ \tau > n),
$$
provided at least one of the specified expectations exists.
\end{teor}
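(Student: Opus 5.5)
The theorem follows almost directly from Proposition~\ref{tau_equiv_corrected}. That proposition shows that $P$ and $P^{*l}$ are $\tau$-equivalent for every Markov time $\tau\le T_l^0$. The plan is to read condition b) of the definition of $\tau$-equivalence as equality of two probability measures on ${\cal F}_n$. Both measures are conditional on the same event $\{X_n=i,\tau>n\}$. An expectation of an ${\cal F}_n$-measurable functional depends only on such a measure, so the two conditional expectations must coincide.

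The steps are as follows.

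\begin{description}
\item[Step 1.] Fix $(n,i)\in\Gamma(\tau,P^{*l})$. By condition a) of $\tau$-equivalence, $\Gamma(\tau,P^{*l})=\Gamma(\tau,P)$. Hence both $P\{X_n=i,\tau>n\}$ and $P^{*l}\{X_n=i,\tau>n\}$ are positive, and the two conditional laws
$$
P_{n,i}(B)=P\{B\mid X_n=i,\tau>n\},\qquad
P^{*l}_{n,i}(B)=P^{*l}\{B\mid X_n=i,\tau>n\},\qquad B\in{\cal F}_n,
$$
are well defined probability measures on ${\cal F}_n$.

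\item[Step 2.] By condition b), $P_{n,i}=P^{*l}_{n,i}$ on ${\cal F}_n$. Since $f(X)=f(X_0,\ldots,X_{n-1})$ is ${\cal F}_{n-1}\subset{\cal F}_n$-measurable, write $f=f^+-f^-$. For nonnegative simple ${\cal F}_n$-measurable functions the two integrals agree by linearity. Monotone convergence then extends this equality to $f^\pm$, with values in $[0,\infty]$.

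\item[Step 3.] The two conditional expectations are therefore $\int f^+\,dP_{n,i}-\int f^-\,dP_{n,i}$ and the same expression with $P^{*l}_{n,i}$, and the corresponding terms coincide. So if one expectation exists (not both of $\int f^\pm$ infinite), the other exists and has the same value. This is exactly the proviso in the statement.
\end{description}

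One point needs checking. Proposition~\ref{tau_equiv_corrected} assumes $X$ irreducible, and the present theorem is stated as a consequence of it, so I would carry that hypothesis over implicitly. If a self-contained justification is preferred, formula (\ref{lm_*}) gives it directly. On $\{\tau>n\}$ no state $i_m$ equals $l$, so each live cylinder $[i_0,\ldots,i_{n-1},i]$ has $P^{*l}$-probability equal to its $P$-probability times $K(i,l)$. This factor is constant given $X_n=i$ and cancels in the conditioning. The real difficulty lies entirely in Proposition~\ref{tau_equiv_corrected}; the theorem itself needs only the measure-theoretic extension from events to functionals.
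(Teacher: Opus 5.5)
Your proposal is correct and is essentially the paper's argument: the paper states this theorem as an immediate consequence of the $\tau$-equivalence in Proposition~\ref{tau_equiv_corrected}, with the irreducibility hypothesis implicitly carried over, as you note. Your passage from equality of the conditional laws on ${\cal F}_n$ to equality of conditional expectations (via simple functions, monotone convergence and $f=f^+-f^-$) makes explicit the routine step the paper leaves unstated, including the existence proviso.
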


\subsection{Markov Chains Embedded in the Queueing Systems
  $ M/G_{E}/1$,\ $GI/M/1 $\ and Their Finite Projections}

      In this section, we will complete and extend some results presented in \cite{l_pnm7}. Like the results of the previous section, they concern the behavior of Markov chains in the reverse direction of time. The following is true:
  \begin{prop}\label{lm_que}
  Let $ X=\{X_n\}_{n=0}^{+\infty} $ be an irreducible, aperiodic, transient Markov chain, and its transition probabilities $ (p_{i,j},\ i,j\in E) $ satisfy the conditions:
  \begin{itemize}
  \item[a)] 
   $\sum\limits_i p_{i,j}=1$, for $j\in E\setminus\{l\}$;
\item[b)] 
  the probability $f_{l,l}$ of returning to state $l$ is \ $ f_{l,l} =
   \sum\limits_i p_{i,l} < 1$;
\end{itemize}
Then the measure $\{f_{i,l}\}_{i=0}^{+\infty}$ is 
  invariant for the Markov probability $P^{*l}$. The transition matrix of the reversed chain with respect to this invariant measure is $ (q_{i,j},\
  i,j\in E) $, \ where $q_{i,j}=p_{j,i}f_{i,l}^{-\delta(i,l)}$.
\end{prop}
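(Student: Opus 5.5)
We will verify both claims directly from the explicit form of the projected transition probabilities. By Proposition~\ref{lm_r3}(b), with target $l$ in place of $0$,
$$
p^{*l}_{i,j}=p_{i,j}\,\frac{f_{j,l}^{1-\delta(j,l)}}{f_{i,l}},\qquad i,j\in E^{*l}.
$$
Irreducibility gives $f_{i,l}>0$ for every $i$, so $E^{*l}=E$ and these formulas hold on the whole state space. Put $u_i=f_{i,l}$. The plan is to check $\sum_i u_i p^{*l}_{i,j}=u_j$ separately for $j\ne l$ and for $j=l$, and then read off the reversed kernel.

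For $j\ne l$, the factor $f_{i,l}$ cancels term by term:
$$
\sum_i f_{i,l}\,p_{i,j}\,\frac{f_{j,l}}{f_{i,l}}=f_{j,l}\sum_i p_{i,j}=f_{j,l},
$$
by the column-sum condition a). For $j=l$, the factor $f_{j,l}^{1-\delta(j,l)}$ equals one, so
$$
\sum_i f_{i,l}\,\frac{p_{i,l}}{f_{i,l}}=\sum_i p_{i,l}=f_{l,l}=u_l,
$$
by condition b). This is the only point that needs care: the convention that the exponent $1-\delta(j,l)$ removes the factor $f_{l,l}$ in the numerator is exactly what makes the $l$-th column balance with the defective column sum $f_{l,l}<1$. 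Hence $u$ is invariant for $P^{*l}$. No convergence issues arise, since every sum above is a sum of nonnegative terms of a column of $(p_{i,j})$.

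For the reversal, we use the standard time-reversed kernel with respect to an invariant measure, $q_{i,j}=u_j\,p^{*l}_{j,i}/u_i$. Substituting gives
$$
q_{i,j}=\frac{f_{j,l}}{f_{i,l}}\;p_{j,i}\,\frac{f_{i,l}^{1-\delta(i,l)}}{f_{j,l}}=p_{j,i}\,f_{i,l}^{-\delta(i,l)}.
$$
Its row sums are $\sum_j q_{i,j}=u_i^{-1}\sum_j u_j p^{*l}_{j,i}=1$ by the invariance just proved, so $(q_{i,j})$ is stochastic. Finally, we note that b) is the first-step identity $f_{l,l}=\sum_i p_{i,l}$. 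This must be read as part of the hypothesis, since it encodes the structure of the $M/G_E/1$ and $GI/M/1$ embedded chains; it is used only in the $j=l$ column, which we expect to be the one delicate step.
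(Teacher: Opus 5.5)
Your proof is correct and takes the same route as the paper, which asserts invariance ``by direct verification'' and then reads off $q_{i,j}=p^{*l}_{j,i}f_{j,l}/f_{i,l}=p_{j,i}f_{i,l}^{-\delta(i,l)}$. You have written that verification out column by column using a) and b). One correction to your closing remark: b) is not the first-step identity, which is the row relation $f_{l,l}=\sum_j p_{l,j}f_{j,l}^{1-\delta(j,l)}$; it is an independent column-sum hypothesis, which the paper checks separately for the $GI/M/1$ chain. Since your argument uses b) only as a hypothesis, it is unaffected.
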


{\bf Proof.} Let $f_{i,l} = P\{ \exists n > 0: X_n = l \ |\ X_0 = i\}$, i.e., $f_{i,l}$ is the probability of reaching state $l$ from state $i$. If conditions a) and b) are satisfied, then by direct verification, we obtain that the measure $\{f_{i,l}\}_{i=0}^{+\infty}$ is invariant for the Markov probability $P^{*l}$. Therefore, the transition probabilities of the reversed chain with respect to this invariant measure are given by
$$
q_{i,j} = p_{j,i}^{*l}f_{j,l}/f_{i,l} = p_{j,i}f_{i,l}^{-\delta(i,l)}.
$$
Thus, the proposition is proved. $\Box$

Let us consider an $M/G_E/1$ queueing system, i.e., a system where requests arrive at a single service station following a Poisson process with parameter $a > 0$. There is unlimited waiting space for requests arriving at a busy system. The service times for successive requests are independent and identically distributed random variables (except for requests entering an unoccupied system) with distribution function $B(t)$ and finite mean $b_1$. The duration of the first service in each busy period has the distribution function $E(x) = b_1^{-1} \int\limits_0^x (1 - B(u))\, du$, rather than $B(x)$.

With each $M/G_E/1$ queueing system, we associate a dual $GI/M/1$ queueing system, where the interarrival times have distribution $B(x)$, and the service times are exponentially distributed with parameter $a > 0$.

Assume that $0<b_1<\infty$ and $ab_1<1$.

Let $X = \{X_n\}_{n=0}^{+\infty}$ and $\overline{X} = \{\overline{X}_n\}_{n=0}^{+\infty}$ denote the numbers of customers in the systems observed just before the arrival times of requests in the $GI/M/1$ system and immediately after the departure times of requests from the $M/G_E/1$ system, respectively. Let $b_2 = \int\limits_0^\infty x^2\,dB(x)$. The main result of this section is:

\begin{prop}
\begin{itemize}
\item[a)]
The finite projection $X^{*0}$ with respect to state $0$ of the transient Markov chain $X$ embedded in the $GI/M/1$ system is ergodic if and only if $b_2 < +\infty$.
\item[b)]
If $b_2<\infty$, the chain $\overline{X}$ and the chain with transition matrix $(p^{*0}_{i,j})$, both started in their common stationary distribution, are time-reversals of one another.
\end{itemize}
\end{prop}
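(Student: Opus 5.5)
The plan is to reduce both parts to Proposition~\ref{lm_que} with $l=0$, and then to read off ergodicity from the $M/G_E/1$ chain, which will turn out to be the time-reversal.

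\textbf{Step 1: write down the $GI/M/1$ chain.} Put
$$
k_n=\int_0^\infty e^{-at}\frac{(at)^n}{n!}\,dB(t),
$$
the probability of $n$ potential exponential service completions during one interarrival time. The chain $X$ is upward skip-free:
$$
p_{i,j}=k_{i+1-j}\quad(1\le j\le i+1),\qquad
p_{i,0}=\sum_{n\ge i+1}k_n .
$$
Since $ab_1<1$, the arrival rate $1/b_1$ exceeds the service rate $a$. Hence $X$ is irreducible, aperiodic and transient, with $0<f_{0,0}<1$.

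\textbf{Step 2: check the hypotheses of Proposition~\ref{lm_que}.}
\begin{itemize}
\item[(i)] For $j\ge1$, the column sum is $\sum_{i\ge j-1}k_{i+1-j}=1$.
\item[(ii)] For $j=0$, the column sum is $\sum_{i\ge0}\sum_{n\ge i+1}k_n=\sum_n nk_n=ab_1$.
\item[(iii)] I still have to confirm that $f_{0,0}=\sum_i p_{i,0}$, i.e.\ $f_{0,0}=ab_1<1$. This should follow from the standard $GI/M/1$ identity for the return probability. Failing that, it follows from the fact that for such column-substochastic matrices the vector $(f_{i,0})$ is invariant, and one checks the $j=0$ equation directly.
\end{itemize}
Proposition~\ref{lm_que} then gives two conclusions. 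First, $u_i=f_{i,0}$ is an invariant measure for $P^{*0}$. Second, the reversed matrix is
$$
q_{i,j}=p_{j,i}\,f_{i,0}^{-\delta(i,0)}.
$$

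\textbf{Step 3: identify $(q_{i,j})$ as the $M/G_E/1$ departure chain $\overline X$.}
\begin{itemize}
\item For $i\ge1$, $q_{i,j}=k_{j-i+1}$. This is the departure-chain transition: one customer leaves, and $j-i+1$ Poisson arrivals occur during a $B$-service.
\item For $i=0$, $q_{0,j}=p_{j,0}/(ab_1)=(ab_1)^{-1}\sum_{n\ge j+1}k_n$.
\end{itemize}
For $i=0$, an integration by parts gives
$$
\int_0^\infty e^{-at}\frac{(at)^j}{j!}\,\frac{1-B(t)}{b_1}\,dt=\frac{1}{ab_1}\sum_{n\ge j+1}k_n .
$$
The left side is the probability of $j$ arrivals during a first service with law $E$. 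So $q_{0,j}$ is exactly the $M/G_E/1$ transition out of the empty state. Thus $(q_{i,j})$ is the transition matrix of $\overline X$, and $(f_{i,0})$ is invariant for it as well.

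\textbf{Step 4: reduce ergodicity to summability.} By Proposition~\ref{lm_r3}, $X^{*0}$ is recurrent. Since $\gamma$ can be taken concentrated on $E_0^*=E$, it is also irreducible and aperiodic. Proposition~\ref{lm_r4} makes $(f_{i,0})$ its unique invariant measure up to a constant. Hence $X^{*0}$ is ergodic if and only if $\sum_i f_{i,0}<\infty$.

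The same measure is invariant for the irreducible chain $\overline X$. So summability is equivalent to positive recurrence of $\overline X$. This uses that a positive recurrent chain has a unique invariant measure up to a constant, and that a chain with a summable invariant measure is positive recurrent.

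\textbf{Step 5: the main obstacle — compute positive recurrence of $\overline X$ directly.} Starting from state $0$, the return time counts the customers served in one busy period. This count is $1$ plus the sum, over the $N_0$ arrivals during the initial $E$-service, of the sizes of ordinary $M/G/1$ subbusy periods. Each subbusy period has mean size $1/(1-ab_1)$, because $ab_1<1$. By Wald's identity the mean return time is
$$
1+\frac{{\bf E}N_0}{1-ab_1},\qquad {\bf E}N_0=a\int_0^\infty t\,dE(t)=\frac{ab_2}{2b_1}.
$$
This is finite if and only if $b_2<\infty$, which proves a).

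For b), when $b_2<\infty$ the common stationary distribution is
$$
\pi_i=\frac{f_{i,0}}{\sum_k f_{k,0}} .
$$
The relation $\pi_i q_{i,j}=\pi_j p^{*0}_{j,i}$ from Proposition~\ref{lm_que} is then exactly the statement that the two stationary chains are time-reversals of each other.
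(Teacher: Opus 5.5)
Your route is the paper's: you verify the hypotheses of Proposition~\ref{lm_que} with $l=0$, identify the reversed matrix with the $M/G_E/1$ departure chain through the integration-by-parts identity for $p_{j,0}$, and compute the mean return time $1+ab_2/(2b_1(1-ab_1))$ of $\overline X$. Your Wald computation reproduces the paper's formula, and your summability argument in Step~4 is a more explicit version of the paper's remark that the two chains have the same recurrence type. Steps~3--5 are fine.

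Step~2(iii) is a genuine gap. The identity $f_{0,0}=ab_1$ is indispensable: without it, row $0$ of $q_{i,j}=p_{j,i}f_{i,0}^{-\delta(i,0)}$ sums to $ab_1/f_{0,0}$. Then neither Proposition~\ref{lm_que} nor your formula $q_{0,j}=p_{j,0}/(ab_1)$ is available. Your fallback is circular. Invariance of $(f_{i,0})$ under $P^{*0}$ at the coordinate $j=0$ reads $\sum_i p_{i,0}=f_{0,0}$, which is exactly the identity to be proved. The paper itself only asserts this identity, referring to the Laplace--Stieltjes transform of the $GI/M/1$ busy period.

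You can close the gap inside your own framework. The computation in your Step~3 does not use $f_{0,0}$, and it gives $p_{j,i}=c_i\overline p_{i,j}$ for all $i,j$, with $c_0=ab_1$ and $c_i=1$ for $i\ge1$. Consider a first-return loop $0\to i_1\to\cdots\to i_{n-1}\to 0$ with $i_1,\ldots,i_{n-1}\ge1$. Its $P$-probability equals $ab_1$ times the probability under $\overline X$ of the reversed loop $0\to i_{n-1}\to\cdots\to i_1\to0$. Summing over loops gives $f_{0,0}=ab_1\overline f_{0,0}$. Since $ab_1<1$, $\overline X$ is recurrent whatever $b_2$ is: $E$ is a proper law and every $M/G/1$ sub-busy period is a.s. finite. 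Hence $f_{0,0}=ab_1$.

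Alternatively, use a ladder-height argument. The free walk $W_n$ with increments $1-D_k$ is upward skip-free with drift $1-ab_1>0$. Its strict ascending ladder height is $1$, so Wald gives ${\bf E}\tau^+=1/(1-ab_1)$. With $\tau^-=\inf\{n\ge1:W_n\le0\}$, the duality ${\bf E}\tau^+=1/P\{\tau^-=\infty\}$ gives $f_{0,0}=P\{\tau^-<\infty\}=ab_1$. With either argument placed before Step~2(iii), the rest of your proof goes through.
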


{\bf Proof.}
Let $p_{i,j}$ and $\overline{p}_{i,j}$, $i, j \in E$, denote the corresponding probabilities of $X$ and $\overline{X}$, respectively. Then:
$$
p_{i,j} = \left\{\begin{array}{ll}
    \int\limits_0^\infty \left[1 - \sum\limits_{k=0}^i \frac{(ax)^k}{k!} e^{-ax} \right] \, dB(x), & \hbox{for } j = 0, \, i \geq j; \\
    \int\limits_0^\infty \frac{(ax)^{i+1-j}}{(i+1-j)!} e^{-ax} \, dB(x), & \hbox{for } j \geq 1, \, i \geq j-1; \\
    0, & \hbox{for } j > 1, \, i < j-1.
\end{array} \right.
$$

The transition probabilities $(p_{i,j},\ i, j \in E)$ satisfy conditions a) and b) of Proposition \ref{lm_que}. Indeed, condition a) is evidently satisfied, and
$$
\sum_{i=0}^\infty p_{i,0} = \sum_{i=0}^\infty \int_0^\infty \frac{a(ax)^i}{i!} e^{-ax} \left[1 - B(x)\right] \, dx = ab_1.
$$

On the other hand, it is easy to verify using the Laplace-Stieltjes transform of the busy period in the $GI/M/1$ system that for the Markov chain $X$, $f_{0,0} = ab_1$, i.e., condition b) of Proposition \ref{lm_que} is satisfied. Since:
$$
\overline{p}_{i,j} = \left\{\begin{array}{ll}
    \int\limits_0^\infty \frac{(ax)^j}{j!} e^{-ax} \, dE(x), & \hbox{for } i = 0, \, j \geq i; \\
    \int\limits_0^\infty \frac{(ax)^{j+1-i}}{(j+1-i)!} e^{-ax} \, dB(x), & \hbox{for } i \geq 1, \, j \geq i-1; \\
    0, & \hbox{for } i > 1, \, j < i-1.
\end{array} \right.,
$$
and
$$
p_{i,0} = ab_1 \int\limits_0^\infty \frac{(ax)^i}{i!} e^{-ax} \, dE(x),
$$
Proposition~\ref{lm_que} identifies $(\overline p_{i,j})$ as the reversed transition matrix of $(p^{*0}_{i,j})$ with respect to the invariant measure $\{f_{i,0}\}$. The two recurrent chains therefore have the same positive or null recurrence type. As shown in \cite{l_pnm7}, the busy period $\Pi$ in the $M/G_E/1$ system has a distribution given by the Laplace-Stieltjes transform:
$$
\pi(s) = \frac{1 - \pi_1(s)}{b_1 \left(s + a - a\pi_1(s)\right)},
$$
where $\pi_1(s)$ is the Laplace-Stieltjes transform of the busy period in the $M/G/1$ queueing system with a Poisson input process of parameter $a > 0$ and service times with distribution function $B(x)$. The mean busy period and the mean positive return time of the departure chain to zero are
$$
{\bf E}\Pi=\frac{b_2}{2b_1(1-ab_1)},\qquad
\overline m_{0,0}=1+a{\bf E}\Pi
=1+\frac{ab_2}{2b_1(1-ab_1)}.
$$
Indeed, the return time counts the initial customer and the customers arriving during the busy period. These formulas also hold with value infinity when $b_2=\infty$: the first service has mean $b_2/(2b_1)$, and the subsequent ordinary busy periods have mean $b_1/(1-ab_1)$. Thus the mean return time is finite exactly when $b_2<\infty$. Since the departure chain has a positive self-loop, this proves a). When $b_2<\infty$, normalizing the common invariant measure and starting both chains in that distribution proves b). $\Box$

\subsection{Finite projection of semi-Markov processes}\label{semi_proj}
In this section, we examine some properties of finite projections taken from semi-Markov processes with a discrete state space $E=\{0,1,2,\ldots\}$. Let $(X,Y)=\{X_{n+1},Y_n\}_{n\geq0}$ be a Markov renewal sequence on $(\Omega,{\cal F},P)$: conditional on its history through $X_n$, the law of the next state and holding time depends only on $X_n$:
$$
F_{i,j}(t) = P\{X_{n+1} = j, Y_n \leq t \ | \ X_n = i\} = P\{X_1 = j; Y_0 \leq t \ | \ X_0 = i\}
$$
Such a process $(X,Y)$ is called a Markov renewal process, and $F(t) = \{F_{i,j}(t)\}$ is called the semi-Markov kernel associated with the process $\{X_{n+1}, Y_n\}_{n \geq 0}$. We assume that the kernel $F(t)$ is substochastic, i.e., $\lim\limits_{t \to \infty} \sum\limits_j F_{i,j}(t) \leq 1$ for $\forall \ i \in E$. The missing row mass represents termination of the embedded sequence: no next state is assigned, and the holding time at the last state is infinite. On a finite transition the next state belongs to $E$ and the holding time is finite. In the kernel formula, the event includes existence of that transition. We also assume that $\forall \ i, j \in E$, $F_{i,j}(0) = 0$.

The one-step transition probabilities of the embedded chain are
$$p_{i,j}=\lim_{t\to\infty}F_{i,j}(t).$$
For $p_{i,j}>0$, the conditional holding-time distribution is $G_{i,j}(t)=F_{i,j}(t)/p_{i,j}$.

Let $T_n = Y_0 + Y_1 + \ldots + Y_n$, $n = 0,1,2,\ldots$, and $I\{T_n \leq t\}$ be the indicators of the events $\{T_n \leq t\}$, with subsequent $T_n=\infty$ after termination. Then $N(t) = \sum\limits_{n=0}^\infty I\{T_n \leq t\}$ is the Markov renewal counting process associated with $\{X_{n+1}, Y_n\}_{n \geq 0}$. Once the counting process $N(t)$ is defined, the semi-Markov process corresponding to the Markov chain $\{X_{n+1}, Y_n\}_{n \geq 0}$ is easily defined. Put $\zeta=\sum_{n\geq0}Y_n$, with $\zeta=\infty$ when the embedded sequence terminates with an infinite holding time. The minimal semi-Markov process is
$$
Z(t)=X_{N(t)},\qquad 0\leq t<\zeta.
$$
The path is considered only up to its lifetime $\zeta$; no additional state is introduced at explosion. The process is regular if $P\{\zeta<\infty\}=0$ and irregular otherwise. If $\forall \ i \in E$, $p_{i,i} = 0$, then the counting process $N(t)$ indicates the number of jumps of the trajectory of process $Z$ up to and including time $t$.

Let $S=\{S(n)\}_{n\geq0}$ be the successive renewal epochs at a fixed state $l\in E$:
$$
S(0)=\inf\{0\leq t<\zeta:Z(t)=l\},
$$
$$
S(n+1)=\inf\{T_{N(S(n))}\leq t<\zeta:Z(t)=l\},\qquad n\geq0.
$$
The second formula is used when $S(n)$ and its next renewal epoch are finite; otherwise $S(n+1)=\infty$. Here $\inf\emptyset=\infty$. These are marked renewal visits, including self-transitions. The Markov-renewal property makes $S$ a delayed renewal sequence. For its transient finite projection assume $P\{S(0)<\infty\}>0$ and $0<f_{l,l}<1$.

The process $Z$ is a regenerative process with the renewal process $S$ included, so if the renewal process $S$ is transient, the notion of finite projection of regenerative processes can also be applied to the semi-Markov process $Z$.

The finite projection $Z^{*l}$ of the semi-Markov process $Z$ with respect to the renewal process $S$ will be called the finite projection with respect to state $l$. Construct the projection on the joint space of the embedded sequence and holding times, so that renewal visits are recorded even when the state does not change. Denote its law, and the induced law of $Z$ with its lifetime, by $P^{*l}$.

\begin{prop}\label{pr_sem}
The finite projection $Z^{*l}$ of the semi-Markov process $Z$ with respect to state $l$ is a regular semi-Markov process described by the semi-Markov kernel
$$
F^{*l}_{i,j}(t) = G_{i,j}(t) p_{i,j}^{*l},
$$
where $p_{i,j}^{*l}$ are the transition probabilities in the finite projection $X^{*l}$ of the embedded Markov chain $X$. Moreover, the initial distribution of the projected process $Z^{*l}$ coincides with the initial distribution of the projection $X^{*l}$:
$$
P^{*l}\{Z(0) = i\} = P^{*l}\{X_0 = i\} = \gamma^{*l}(i).
$$
\end{prop}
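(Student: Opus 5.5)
The plan is to reduce the semi-Markov statement to Proposition~\ref{lm_r3} (with target $l$ in place of $0$) by working on the joint space of the Markov renewal sequence $\{X_{n+1},Y_n\}$. The first step is to express the projected law through cylinder events of the form
$$
C=\{X_0=i_0,\,X_1=i_1,\,Y_0\le t_0,\ldots,X_k=i_k,\,Y_{k-1}\le t_{k-1}\},
$$
and then use the limit characterization (\ref{lim}) in its cycle-counting form, $P^{*l}(C)=\lim_n P(C\mid S(n)<\infty)$. This is legitimate because each renewal at $l$ is a visit of the embedded chain to $l$, and the renewal cycles are determined by the embedded-chain excursions together with their holding times. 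By the Markov renewal property, $P(C)=\gamma(i_0)\prod_{m<k}F_{i_m,i_{m+1}}(t_m)$. Moreover, the event $\{S(n)<\infty\}$, conditional on the history up to step $k$, depends only on the current state $X_k=i_k$ and on the number of visits to $l$ among $i_0,\ldots,i_{k-1}$. This is the crucial observation: on the finite-transition event, the holding times do not affect whether future visits to $l$ occur. Hence, for $n>k$,
$$
P(S(n)<\infty\mid C)=f_{i_k,l}^{1-\delta(i_k,l)}\,f_{l,l}^{\,n-N},
$$
exactly as in the proof of Proposition~\ref{lm_r3}.

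The second step is to divide by $P(S(n)<\infty)$ and telescope as in (\ref{markov_cylinder_early}). Writing $F_{i,j}(t)=G_{i,j}(t)p_{i,j}$, the ratio becomes
$$
\gamma^{*l}(i_0)\prod_{m<k}G_{i_m,i_{m+1}}(t_m)\,p^{*l}_{i_m,i_{m+1}},
$$
which is independent of $n$. This single computation identifies the kernel $F^{*l}_{i,j}=G_{i,j}p^{*l}_{i,j}$. It also gives $P^{*l}\{Z(0)=i\}=P^{*l}\{X_0=i\}=\gamma^{*l}(i)$, by taking $k=0$ and using $Z(0)=X_0$ (recall $F_{i,j}(0)=0$, so the first holding time is positive). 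The resulting kernel is stochastic, because $\sum_j p^{*l}_{i,j}=1$ by Proposition~\ref{lm_r3}. Uniqueness of the law then follows from the Ionescu--Tulcea extension in Proposition~\ref{p1_r4}.

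The final step, which I expect to be the main obstacle, is regularity, i.e.\ $P^{*l}\{\zeta<\infty\}=0$. Here I would use Proposition~\ref{p1_r4}: under $P^{*l}$ the renewal epochs $S(n)$ are finite for all $n$ and $S(n)\to\infty$ almost surely. Each projected cycle is an original finite cycle, and so contains finitely many transitions with finite holding times. It follows that every finite time interval is covered by finitely many cycles and hence finitely many jumps, so $\zeta=\sup_n S(n)=\infty$ a.s. The delicate point is to check that $S(n)\to\infty$ holds here. This needs the projected cycle length to be nondegenerate at zero, and that follows from $F_{i,j}(0)=0$: each cycle contains at least one positive holding time, so the i.i.d.\ projected cycle lengths are a.s.\ positive and their sum diverges.
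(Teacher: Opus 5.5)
Your proposal is correct and follows essentially the same route as the paper. Both rest on the fact that, given a transition $i\to j$, the holding time is independent of the future embedded path. Conditioning on returns to $l$ therefore leaves $G_{i,j}$ unchanged and turns $p_{i,j}$ into $p^{*l}_{i,j}$, and in both regularity follows from an a.s.\ finite initial delay followed by i.i.d.\ strictly positive cycle lengths. Your multi-step cylinder computation via $P(\cdot\mid S(n)<\infty)$ and telescoping, modelled on Proposition~\ref{lm_r3}, is a more explicit version of the paper's one-step conditioning (its elementary lemma plus ``the same calculation conditional on the full history''), and it yields the Markov renewal property and the initial distribution $\gamma^{*l}$ together.
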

Before proving the proposition, we will demonstrate an elementary lemma that, in fact, we have already tacitly proven earlier.
\begin{lema}
Let $A,B,C$ be events with $P(B)>0$ for which the following hold:
\begin{description}
\item[a)] Event $B$ implies $A$: $A \supseteq B$;
\item[b)] Events $B$ and $C$ are independent conditional on the occurrence of $A$: 
$$
P\{B \cap C|A\} = P\{B|A\}P\{C|A\};
$$
then $P\{C|B\} = P\{C|A\}$.
\end{description}
\end{lema}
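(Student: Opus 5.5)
The lemma is a direct computation from the definition of conditional probability, using a) to simplify intersections and b) to factor. First I will check that every conditional probability in the statement is well defined. By a), $B\subseteq A$, so $P(A)\ge P(B)>0$; hence $P\{\cdot\mid A\}$ and $P\{\cdot\mid B\}$ both make sense. Also $P\{B\mid A\}=P(A\cap B)/P(A)=P(B)/P(A)>0$, and this will let us divide by $P\{B\mid A\}$ at the end.

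Next I will rewrite $P\{C\mid B\}$ in terms of conditioning on $A$. Since $B\subseteq A$, we have $B=A\cap B$ and $B\cap C=A\cap B\cap C$. Therefore
$$
P\{C\mid B\}=\frac{P(B\cap C)}{P(B)}
=\frac{P(A\cap B\cap C)/P(A)}{P(A\cap B)/P(A)}
=\frac{P\{B\cap C\mid A\}}{P\{B\mid A\}}.
$$
I then substitute the conditional independence hypothesis b) into the numerator, which becomes $P\{B\mid A\}P\{C\mid A\}$. Cancelling the positive factor $P\{B\mid A\}$ gives $P\{C\mid B\}=P\{C\mid A\}$.

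There is no real obstacle here. The only point that needs care is positivity: the cancellation requires $P\{B\mid A\}\neq0$, and this is exactly where the assumptions $P(B)>0$ and $B\subseteq A$ are used. In the application to Proposition~\ref{pr_sem}, I expect $A$ to be an event such as being at state $i$ at a jump, $B$ the event of a finite return to $l$, and $C$ an event about the next state and holding time. That identification belongs to the later proof and is not needed for the lemma itself.
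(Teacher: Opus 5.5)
Your proof is correct and matches the paper's argument: you use $B\subseteq A$ to rewrite $P\{C\mid B\}$ as $P\{B\cap C\mid A\}/P\{B\mid A\}$, factor the numerator by b), and cancel. You also check explicitly that $P\{B\mid A\}=P(B)/P(A)>0$, which justifies the cancellation that the paper leaves implicit.
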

{\bf Proof.} 
$$
P\{C|B\} = P\{C|A \cap B\} = \frac{P\{C \cap B|A\}}{P\{B|A\}} = 
\frac{P\{B|A\}P\{C|A\}}{P\{B|A\}} = P\{C|A\}. \ \ \Box
$$

{\bf Proof of Proposition \ref{pr_sem}.}
Conditional on $X_n=i$ and $X_{n+1}=j$, the holding time $Y_n$ is independent of the future embedded path. Conditioning on further visits to $l$ therefore leaves its law $G_{i,j}$ unchanged and gives the embedded transition probabilities $p_{i,j}^{*l}$. More explicitly, on the projected state space,
$$
F_{i,j}^{*l}(t)=F_{i,j}(t)
\frac{f_{j,l}^{1-\delta(j,l)}}{f_{i,l}}
=G_{i,j}(t)p_{i,j}^{*l}.
$$
The denominator is positive there; zero transition entries are assigned zero kernel entries. At $i=l$, $f_{l,l}$ is the positive-return probability. The same calculation holds conditional on the full history through $X_n$, so the projected sequence is Markov renewal. The preceding lemma expresses the conditional independence used here.

The projected embedded chain reaches the recurrent class of $l$ almost surely (Proposition~\ref{lm_r3}). Its initial hit uses finitely many finite holding times. Thereafter the return-cycle lengths are independent, identically distributed and strictly positive; their sum diverges almost surely. Hence $Z^{*l}$ is regular, in agreement with the standard recurrence criterion in~\cite{part1_1}.

The initial distributions coincide because $Z(0)=X_0$. $\Box$

The demonstrated proposition shows that determining the finite projection of a semi-Markov process reduces to determining the finite projection of the Markov chain embedded in it.

Suppose that the projected embedded chain admits an invariant probability distribution $u^{*l}$ with $u_l^{*l}>0$. The mean-return formula (see Proposition 2.2, page 228 in \cite{part1_1}) gives
$$
\mu^{*l}={\bf E}^{*l}[S(n)-S(n-1)]=\frac{\mu}{u_l^{*l}},\qquad n\geq1,
$$
$$
\mu=\sum_{i,j\in E}u_i^{*l}\mu_{i,j}^{*l},\qquad
\mu_{i,j}^{*l}=\int_0^\infty t\,dF_{i,j}^{*l}(t).
$$
Here $0<\mu<\infty$ is assumed for the following estimates.

Combining Theorem \ref{t_prob_dif}, proven in the first chapter, with the average return time $\mu^{*l}$ to state $l$, we obtain an asymptotic upper bound for the difference between the initial probability $P$ and its finite projection $P^{*l}$, on events occurring before time $t$, valid for all sufficiently large $t$: 
$$
\sup_{A\in {\cal F}_t} \vert P^{*l}\{A\} -P\{A\} \vert 
\le 1-r_0 + (1-f_{l,l})\big(\varepsilon (t)+1\big)
t\frac{u_l^{*l}}{\mu},
$$
where $\mu$ and $u_l^{*l}$ are specified above, ${\cal F}_t$ records the path before $t$ within its lifetime (it is generated by $\{x<\zeta,Z(x)\in A\}$ for $x<t$ and $A\subseteq E$), $r_0= P \{S(0) < \infty \}$, \
$f_{l,l}= P \{S(1) < \infty | S({0})< \infty \}$, and
$\varepsilon(t)$ is nonnegative, nonincreasing and tends to zero as $t\to\infty$. 

From the last estimation, for a discrete-time Markov chain with unit holding times and invariant probability distribution $u^{*l}$, so that $\mu=1$, we obtain the estimation of the approximation of the Markov probability $P$ by its finite projection $P^{*l}$:
$$
\sup_{A\in {\cal F}_n} \vert P^{*l}\{A\} -P\{A\} \vert 
\le 1-r_0 + (1-f_{l,l})\big(\varepsilon (n)+1\big)
n{u_l^{*l}},
$$
for all sufficiently large $n$, where $\varepsilon(n)$ is nonnegative and nonincreasing and $\lim\limits_{n\to\infty} \varepsilon (n)=0$. 

Applying the clarifications of Theorem \ref{t_prob_dif}, the difference between $P$ and $P^{*l}$ can be estimated even more accurately, naturally under more restrictive conditions.

It is noteworthy that, in accordance with the proven proposition, taking the finite projection of an irregular semi-Markov process transforms it into a regular process.   
\par
\begin{samepage}
\bigskip
  
\centerline{\large \bf Conclusions and Observations}
  
For a transient Markov chain, multiple finite projections can be defined, one projection for each self-communicating state of the chain. There are situations where the finite projections taken with respect to different states coincide. For an irreducible Markov chain, its finite projections are either all null recurrent or all positive recurrent. 
\par
\end{samepage}

Since finite projections of Markov chains have invariant measures, the notion of finite projection can serve as a means of demonstrating the criterion for the existence of invariant measures for transient Markov chains (Proposition \ref{inv_meas1}). That is, under certain conditions, the existence of invariant measures is a property common to both finite projections and the initial Markov chain. The separate connection with the Martin boundary in Proposition \ref{homeo} uses the $F$-topology on possibly killed laws. 

Determining the finite projection of semi-Markov processes reduces to determining the projection of the Markov chain embedded in the initial process. It is significant that the finite projection of a semi-Markov process is a regular process.
  
\newpage      
\section{Skip-Free Semi-Markov Processes}
\setcounter{equation}{0}
\subsection{Definition of the Process}
Many random phenomena are described by semi-Markov processes with a discrete state space, the transition graph from one state to another being a chain. Such semi-Markov processes will be called skip-free semi-Markov processes (PSSF). Skip-free semi-Markov processes with values in the set of integers $ Z $ represent a generalization of birth-and-death processes (see \cite{l_frc1}, \cite{c2}), their applications being similar to those of birth-and-death processes. 

It is well known that there is a connection between birth-and-death processes and continued fractions (see \cite{l_pnm3},
\cite{l_2:k1}, \cite{l_frac6}, \cite{l_frac7}, \cite{l_frac8}, \cite{l_frac9}).
In the current section, it is shown that this connection is caused solely by the structure of the transition matrix of the Markov chain embedded in the birth-and-death process, that is, only by the property of the process being skip-free.

The main goal of the section is to present the Laplace-Stieltjes Transforms (LST) of the sojourn time of the PSSF in a given state set and the ordinary Laplace transforms of the transition probabilities of the PSSF from one state to another, expressed through continued fractions, generalizing some results obtained in the section dedicated to birth-and-death processes. By appealing to some probabilistic reasoning, we obtain an exact estimation of the relative approximation error of continued fractions by their truncations. First, we will provide the definition of PSSF and introduce the basic notations.
      
The PSSF is defined constructively as follows. 
Consider a family of probability spaces $(\Omega,{\cal F},
P_m) $, $ m\in Z $. Let $x=\{x_0(\omega),x_1(\omega),\ldots \} $ be a random walk defined on $ (\Omega,{\cal F},P_m) $ with values in $ Z $ and transition probabilities $ p_{i,i+1}=p_i,
p_{i,i-1}=q_i, p_i+q_i=1 $, $ i\in Z $. We assume that
$ P_m(x_0(\omega)=m)=1 $. Let $ \xi_1(\omega),\xi_2(\omega),\ldots
$ be a sequence of i.i.d. random variables, each uniformly distributed on the interval $[0,1]$, entirely independent of $\{x_n(\omega), n=0,1,\ldots\}$. We assume that for each 
$i\in Z$, the distribution functions $A_i(t)$ and 
$B_i(t)$ are given.
Define the functions $\phi_{i,i-1}(t)$ and $\phi_{i,i+1}(t)$ such that $\phi_{i,i-1}(\xi)$ has the distribution function $B_i(t)$, and $\phi_{i,i+1}(\xi)$ has the distribution function $A_i(t)$, where $\xi$ is uniformly distributed on $[0,1]$. For the passage formulas below, holding times are proper and strictly positive. Now we set
$$
\tau_k=\phi_{x_{k-1},x_k}(\xi_k),
$$
$$
X(t)=x_{k-1}, \; \;  \hbox{if} \sum_{j=1}^{k-1}\tau_j \le
t<\sum_{j=1}^k\tau_j, \; \; (\sum_{j=1}^0\tau_j=0)
$$
Put $\zeta=\sum_{k\geq1}\tau_k$ and consider this path only for $0\leq t<\zeta$. The process $X(t)$ defined in this way will be called a {\it skip-free semi-Markov process.}

Certain problems in queueing theory as well as in reliability theory can be reduced to determining the distribution of the sojourn time of a semi-Markov process in a fixed set of states. In \cite{l_frac5}, this problem is reduced to solving a system of linear equations for the Laplace-Stieltjes transforms of the sojourn times. Therefore, for PSSF, the problem of determining the distribution of the random variable $W_S=\inf\{0\leq t<\zeta:X(t)\notin S\}, \  S \subset Z $, known as the sojourn time of the PSSF in the set of states $S$, is of interest. We will limit ourselves to cases $S=\{m,m+1,\ldots,
n-1,n\}, \ m<n$, $S=\{m,m+1,\ldots\} $ and $S=\{ \ldots,m-1, m\}$,
the latter two being symmetric. Other cases, evidently, reduce to the ones listed. Since PSSF is a relatively simple class, we will not rely on the systems of equations presented in \cite{l_frac5}, but instead deduce simpler relations.

Here and below $\inf\emptyset=\infty$: exit means reaching a state outside $S$ before the lifetime ends. For a half-line this is a boundary-passage time; the elapsed residence before the lifetime ends is $W_S\wedge\zeta$. Let $V_n=\inf\{0\leq t<\zeta:X(t)=n\}$, then
$$
V_{n}= \left\{
\begin{array} {ll}
             W_{\{n+1,n+2,\ldots \}} & \mbox{if $m>n$} \\
             W_{\{\ldots,n-2,n-1\}} & \mbox{if $m<n$}.
\end{array}  \right.
$$
Let $F_{m,n}(t)$ be the distribution function of the random variable $V_n$ under the condition that $X(0)=m$, i.e. $F_{m,n}(t)=P_m\{V_n\leq t\}$.
$F_{m,n}(t)$ can also be viewed as the distribution function of the absorption time in state $n$, if state $n$ is absorbing, i.e.
$P_m\{X(t_2)=n | X(t_1)=n\}=1$ for $t_2>t_1$.

\subsection{ Laplace-Stieltjes Transforms of Sojourn Times}

For the passage and continued-fraction formulas in this section, assume $0<p_i,q_i<1$. We introduce notation for the Laplace-Stieltjes transforms (LST)
$$
w_{m,n}(s)=\int^{+\infty}_{0}e^{-st}\,dF_{m,n}(t), \ \ m,n\in Z, \  m\ne n,
$$
$$
a_i(s)=\int^{+\infty}_{0}e^{-st}\,dA_i(t), \ \
b_i(s)=\int^{+\infty}_{0}e^{-st}\,dB_i(t), \ i\in Z.
$$

Passage laws may be defective: $w_{m,n}(s)={\bf E}_m[e^{-sV_n};V_n<\infty]$ for $\mathop{\rm Re}s\geq0$, and $w_{m,n}(0)$ is the hitting probability. At each finite hitting epoch, the remaining passage has the corresponding fresh law by the Markov-renewal property.

\begin{lema}\label{l1}
\begin{description}
\item[a)]
\begin{equation}
{w}_{m,m-1}(s)=\frac{q_mb_m(s)}{1-p_ma_m(s){w}_{m+1,m}(s)}
\label{frcon1}
\end{equation}
\item[b)]
$
{w}_{m,n}(s)=\prod\limits_{i=n}^{m-1}{w}_{i+1,i}(s), \quad \; n<m,
$
\end{description}
\end{lema}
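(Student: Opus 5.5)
The plan is to mimic the first-step argument used for the birth-and-death case in Proposition~\ref{mm1_r3}, replacing the exponential holding times by the general laws $A_m$, $B_m$. I will use the construction of the PSSF directly. Start from $X(0)=m$ under $P_m$ and condition on the first step of the embedded walk, $x_1=m\pm1$, together with the first holding time $\tau_1=\phi_{m,x_1}(\xi_1)$. By construction, $\xi_1$ is independent of the walk. So given $x_1=m-1$ (probability $q_m$), $\tau_1$ has law $B_m$; given $x_1=m+1$ (probability $p_m$), it has law $A_m$.

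On $\{x_1=m-1\}$ we have $V_{m-1}=\tau_1$, which contributes $q_m b_m(s)$. On $\{x_1=m+1\}$ the path must first pass from $m+1$ back to $m$ and then from $m$ to $m-1$. By skip-freeness it cannot reach $m-1$ without visiting $m$. Hence
\[
V_{m-1}=\tau_1+V'+V'',
\]
where $V'$ is the passage time from $m+1$ to $m$ and $V''$ the subsequent passage from $m$ to $m-1$. I will also adopt the convention $V=\infty$ when a passage does not occur before $\zeta$, so the transforms include only finite passages.

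The key step is independence and stationarity. Conditional on $x_1=m+1$, the shifted sequence $(x_{k+1},\xi_{k+1})_{k\ge1}$ started at $m+1$ has the law of the construction under $P_{m+1}$, and it is independent of $\xi_1$. So $\tau_1$ and the remaining path are independent, and $V'$ has transform $w_{m+1,m}$. At the finite hitting epoch of $m$ we use the strong Markov property of the pair (walk, i.i.d.\ uniforms) at the stopping index. The post-hitting sequence is then again a fresh copy under $P_m$, independent of the past, so $V''$ has transform $w_{m,m-1}$. This is exactly the Markov-renewal property stated just before the lemma.

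Taking $E_m[e^{-sV_{m-1}};V_{m-1}<\infty]$ for $\mathrm{Re}\,s\ge0$ then gives
\[
w_{m,m-1}=q_mb_m+p_ma_mw_{m+1,m}w_{m,m-1}.
\]
To solve for $w_{m,m-1}$, I need $|p_ma_m(s)w_{m+1,m}(s)|<1$. For $\mathrm{Re}\,s>0$ each factor has modulus at most $1$, and $|a_m(s)|<1$ because holding times are strictly positive. At $s=0$ the bound holds since $p_m<1$. Dividing gives (\ref{frcon1}).

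For b), I argue by induction on $m-n$, in the same way as Proposition~\ref{mmn_r3}. Skip-freeness forces successive first hits of $m-1,m-2,\dots,n$. Applying the strong Markov property at each finite hitting epoch factorizes the transform into $\prod_{i=n}^{m-1}w_{i+1,i}$. If some intermediate hit does not occur, then $V_n=\infty$, which is consistent with the defective transforms.

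I expect the main obstacle to be justifying the strong Markov/regeneration step rigorously for the constructive definition. The point to establish is that the hitting index is a stopping time for the filtration generated by $(x_k,\xi_k)$, and that the post-hitting shift of the i.i.d.\ uniforms stays independent and identically distributed. The rest is routine algebra.
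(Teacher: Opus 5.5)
Your proposal is correct and is essentially the paper's proof: the paper writes $W_{\{m,m+1,\ldots\}}=\tau_1+W'_{m+1,m}+W'_{m,m-1}$, with the primed terms zero after a downward first jump, and chains the skip-free passages for b). You additionally spell out the strong Markov step and the nonvanishing of the denominator, which the paper leaves implicit. One small slip: your denominator case split omits $s=iy$ with $y\neq0$. The bound $|p_ma_m(s)w_{m+1,m}(s)|\le p_m<1$ that you use at $s=0$ holds on all of $\mathrm{Re}\,s\ge0$, so the split is not needed.
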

{\bf Proof.} The equalities in the lemma are a simple consequence of the fact that at each jump the value of the process $X$ changes by only one unit, that is, $X(T_k)=X(T_{k-1}) \pm 1, \  k=1,2,\ldots$ where $T_k=\tau_1+\cdots+\tau_k$ and $T_0=0$.

Since $X(0)=m$, the following relations hold, implying the respective equalities of the lemma:
\begin{description}
\item[a)] $W_{\{m,m+1,\ldots\}}=\tau_1+W_{m+1,m}'+W_{m,m-1}'$,\
where the random variables $W_{m+1,m}'=W_{m,m-1}'=0$ \ if $X(\tau_1)=m-1$,\ and in the case where $X(\tau_1)=m+1$, \ $W_{m+1,m}'$, $W_{m,m-1}'$ \ have the distributions $F_{m+1,m}$\  and $F_{m,m-1}$ respectively;
\item[b)] if $m>n$, then $W_{\{n+1,n+2,\ldots\}}=W_{\{m,m+1,\ldots\}}+
W_{m-1,m-2}'+\ldots+W_{n+1,n}'$,\newline
 where \ $W_{m-1,m-2}',\allowbreak\ldots,\allowbreak W_{n+1,n}'$\hfil\break
have the distributions $F_{m-1,m-2},\allowbreak\ldots,\allowbreak F_{n+1,n}$ respectively.$\Box$
\end{description}

For approximate calculation of $w_{m,m-1}(s)$,
the following expansion is useful.
\begin{prop}\label{prp1}
For $s\in \{p\in {\cal C} : Re(p)\ge 0\}$ there is a continued fraction expansion
\begin{equation}
 w_{m,m-1}(s)=\frac{q_mb_m(s)}{1-}
\ \frac{p_ma_m(s)q_{m+1}b_{m+1}(s)}{1-}
\ \frac{p_{m+1}a_{m+1}(s)q_{m+2}b_{m+2}(s)}{1-\ldots}
\label{frcon2}
\end{equation}
\end{prop}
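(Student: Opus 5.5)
Iterating Lemma~\ref{l1}\,a) produces the formal expansion (\ref{frcon2}). The real work is to show that the continued fraction converges to $w_{m,m-1}(s)$ on $\mathop{\rm Re}s\geq0$, and not merely that it is a formal identity. I would copy the truncation argument from the proof of Proposition~\ref{mm1_r3}, replacing the exponential holding times by the general laws $A_i,B_i$.

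\textbf{Step 1: formal iteration.} Substituting (\ref{frcon1}) for $w_{m+1,m}$ inside (\ref{frcon1}) for $w_{m,m-1}$, and repeating $N$ times, gives
$$
w_{m,m-1}(s)=\frac{q_mb_m(s)}{1-}\ \frac{p_ma_m(s)q_{m+1}b_{m+1}(s)}{1-}\ \cdots\ \frac{p_{m+N-1}a_{m+N-1}(s)q_{m+N}b_{m+N}(s)}{1-p_{m+N}a_{m+N}(s)w_{m+N+1,m+N}(s)}.
$$
Each step is the algebraic identity $x=u/(1-vy)$. The denominators are nonzero because $|p_ia_i(s)w_{i+1,i}(s)|\le p_i<1$ for $\mathop{\rm Re}s\ge0$.

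\textbf{Step 2: probabilistic meaning of the convergents.} Let $w^{(N)}_{m,m-1}(s)$ be the transform of the passage from $m$ to $m-1$ that occurs before the process reaches $m+N+1$. Equivalently, make state $m+N+1$ a killing state, i.e.\ set $w_{m+N+1,m+N}\equiv0$. The same first-jump decomposition as in Lemma~\ref{l1}\,a) holds for the killed process, and the recursion then terminates. Hence $w^{(N)}_{m,m-1}(s)$ is exactly the $N$-th truncation of (\ref{frcon2}), namely the continued fraction cut after the term with $q_{m+N}b_{m+N}(s)$ and with denominator $1$ at the bottom.

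\textbf{Step 3: convergence.} This is the step I expect to carry the difficulty. For $\mathop{\rm Re}s\ge0$,
$$
|w_{m,m-1}(s)-w^{(N)}_{m,m-1}(s)|={\bf E}_m\big[e^{-sV_{m-1}};\,V_{m-1}<\infty,\ \hbox{level }m+N+1\hbox{ reached before }V_{m-1}\big]
$$
$$
\le P_m\{V_{m-1}<\infty,\ \max_{t<V_{m-1}}X(t)\ge m+N+1\}.
$$
Any finite passage (finite time, before the lifetime $\zeta$) makes only finitely many jumps, since the holding times are strictly positive, and therefore visits only finitely many states. So these events decrease to a null set as $N\to\infty$, and the error tends to zero uniformly in $s$ with $\mathop{\rm Re}s\ge0$.

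The point needing care is that "finite passage" must include the possibility of explosion. If $\zeta<\infty$ before $m-1$ is reached, the passage counts as infinite, since $V_{m-1}=\inf\emptyset=\infty$, and it is excluded from both transforms. The same is true for the truncated process, which stops before it can explode. Thus the convergents converge to $w_{m,m-1}(s)$, which proves (\ref{frcon2}), including at $s=0$, where the transforms are hitting probabilities.
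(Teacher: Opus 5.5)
Your proposal is correct and is essentially the paper's argument. The paper likewise identifies each truncation with a taboo transform $w^{[m,k]}_{m,m-1}(s)$ through the first-step recursion, and it proves uniform convergence on $\mathop{\rm Re}s\ge0$ in Proposition~\ref{pr} using the same decomposition, with the error bounded by the probability that a successful passage reaches level $k+1$ (your $m+N+1$), which vanishes because a passage completed before $\zeta$ has a finite maximum. The paper states this as a uniform bound on the relative error, which equals your absolute bound divided by $w_{m,m-1}(0)$. Two small slips: the identity should read $w_{m,m-1}-w^{(N)}_{m,m-1}={\bf E}_m[\cdots]$ before moduli are taken, and the finiteness of the number of jumps comes from $V_{m-1}<\zeta$, not from positivity of the holding times.
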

{\bf Proof.}
For classical continued-fraction convergence results, see Van Vleck's theorem in~\cite{l_pnm3}. Here a direct probabilistic argument is available. Under $0<p_i,q_i<1$, the first-step recurrence (\ref{frcon1}) identifies every finite
truncation with a taboo first-passage transform. Proposition~\ref{pr}
below proves that these transforms converge uniformly on the closed
right half-plane to $w_{m,m-1}(s)$, including at transform zeros.
This proves the expansion. $\Box$

In the next section, we will demonstrate that the continued fraction (\ref{frcon2}) converges uniformly
for $s\in \{p\in {\cal C} : Re(p)\ge 0\}$.

By symmetry considerations, analogous relations for
${w}_{m,n}(s), \ n>m$, are obtained.
\begin{lema}\label{l2}
\begin{description}
\item[a)]
$\displaystyle
{w}_{m,m+1}(s)=\frac{p_ma_m(s)}{1-q_mb_m(s){w}_{m-1,m}(s)}
$
\item[b)]
$
{w}_{m,n}(s)=\prod\limits_{i=m+1}^{n}{w}_{i-1,i}(s), \quad \; n>m,
$
\end{description}
\end{lema}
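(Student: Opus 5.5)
The plan is to derive Lemma~\ref{l2} from Lemma~\ref{l1} by reflecting the state space. Define the reflected process $\widetilde X(t)=-X(t)$. This is again a skip-free semi-Markov process on $Z$.

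\textbf{Reflected parameters.} An upward jump of $\widetilde X$ from state $-i$ to $-i+1$ is a downward jump of $X$ from $i$ to $i-1$. Hence:
\begin{itemize}
\item $\widetilde p_{-i}=q_i$ and $\widetilde q_{-i}=p_i$;
\item $\widetilde A_{-i}=B_i$ and $\widetilde B_{-i}=A_i$, so $\widetilde a_{-i}(s)=b_i(s)$ and $\widetilde b_{-i}(s)=a_i(s)$.
\end{itemize}
This relabelling is valid because the holding time $\tau_k=\phi_{x_{k-1},x_k}(\xi_k)$ depends only on the current state and the direction of the jump. The construction is therefore invariant under $i\mapsto -i$ once the roles of $A$ and $B$ are swapped.

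\textbf{Part a).} Starting $X$ at $m$ is the same as starting $\widetilde X$ at $-m$. The first passage of $X$ to $m+1$ is the first passage of $\widetilde X$ to $-m-1$, so $w_{m,m+1}(s)=\widetilde w_{-m,-m-1}(s)$, including any defect at infinity. Applying (\ref{frcon1}) to $\widetilde X$ at state $-m$ gives
$$\widetilde w_{-m,-m-1}=\frac{\widetilde q_{-m}\widetilde b_{-m}}{1-\widetilde p_{-m}\widetilde a_{-m}\widetilde w_{-m+1,-m}}.$$
Translating back, $\widetilde q_{-m}\widetilde b_{-m}=p_ma_m$ and $\widetilde p_{-m}\widetilde a_{-m}=q_mb_m$. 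Also $\widetilde w_{-m+1,-m}=w_{m-1,m}$. Substituting these yields a).

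If one prefers to avoid reflection, the same result follows by a direct first-step decomposition, mirroring the proof of Lemma~\ref{l1}a):
$$W_{\{\ldots,m\}}=\tau_1+W'_{m-1,m}+W'_{m,m+1}.$$
The primed terms vanish if the first jump is up. If the first jump is down, they are independent copies with laws $F_{m-1,m}$ and $F_{m,m+1}$, by the Markov-renewal property at hitting epochs. Taking transforms, with the first jump splitting into the contributions $p_ma_m(s)$ and $q_mb_m(s)w_{m-1,m}(s)w_{m,m+1}(s)$, and solving for $w_{m,m+1}$ gives a). The division is legitimate because the denominator has modulus at least $1-q_m>0$ for $\mathop{\rm Re}s\ge0$.

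\textbf{Part b).} To reach $n>m$, the skip-free process must successively hit $m+1,\dots,n$. The strong Markov-renewal property at each hitting epoch makes the successive passage times independent with laws $F_{i-1,i}$. Multiplying their transforms gives the product, exactly as in Proposition~\ref{mmn_r3}. Equivalently, b) is Lemma~\ref{l1}b) for $\widetilde X$.

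\textbf{Main point to check.} The only real care needed is defective passages. One should interpret each $w$ as ${\bf E}[e^{-sV};V<\infty]$. A later passage contributes only on the event that the earlier passages are finite, and this is exactly what the product formula encodes. Beyond that, the argument is routine bookkeeping.
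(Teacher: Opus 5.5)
Your proof is correct and follows the paper's route. The paper obtains Lemma~\ref{l2} from Lemma~\ref{l1} ``by symmetry considerations''. Your reflection $\widetilde X=-X$, with $p\leftrightarrow q$ and $A\leftrightarrow B$ interchanged, makes that symmetry explicit, and your direct first-step decomposition is the mirror image of the paper's proof of Lemma~\ref{l1}a).
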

\begin{prop}\label{prp2}
For $s\in \{p\in {\cal C} : Re(p)\ge 0\}$ there is a continued fraction expansion
$$
 w_{m,m+1}(s)=\frac{p_ma_m(s)}{1-}
\ \frac{q_mb_m(s)p_{m-1}a_{m-1}(s)}{1-}
\ \frac{q_{m-1}b_{m-1}(s)p_{m-2}a_{m-2}(s)}{1-\ldots}
$$
\end{prop}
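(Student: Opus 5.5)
Proposition~\ref{prp2} is the mirror image of Proposition~\ref{prp1}, and I would derive it by reflection rather than repeat the whole argument. Define the reflected walk $\tilde x_k=-x_k$ with $\tilde p_i=q_{-i}$, $\tilde q_i=p_{-i}$, $\tilde a_i=b_{-i}$ and $\tilde b_i=a_{-i}$. The reflected path $\tilde X(t)=-X(t)$ is again a skip-free semi-Markov process on $Z$, with the same lifetime $\zeta$ and the same construction from the uniform variables $\xi_k$. Its upward holding laws at $-i$ are the original downward laws at $i$, and vice versa. A passage of $X$ from $m$ to $m+1$ is exactly a passage of $\tilde X$ from $-m$ to $-m-1$. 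So $w_{m,m+1}(s)=\tilde w_{-m,-m-1}(s)$ on the whole range $\mathop{\rm Re}s\ge0$, including defective laws.

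First I check that Lemma~\ref{l2}a) is the reflected form of Lemma~\ref{l1}a), and indeed it follows directly from the first-step decomposition stated there. Then I apply Proposition~\ref{prp1} to $\tilde X$ at the state $-m$:
$$
\tilde w_{-m,-m-1}=\frac{\tilde q_{-m}\tilde b_{-m}}{1-}\ \frac{\tilde p_{-m}\tilde a_{-m}\tilde q_{-m+1}\tilde b_{-m+1}}{1-}\ \cdots .
$$
Substituting $\tilde q_{-m}\tilde b_{-m}=p_ma_m$ and $\tilde p_{-m}\tilde a_{-m}\tilde q_{-m+1}\tilde b_{-m+1}=q_mb_m\,p_{m-1}a_{m-1}$, and so on down the fraction, gives exactly the stated expansion. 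The hypothesis $0<p_i,q_i<1$ is symmetric under reflection, so Proposition~\ref{prp1} applies.

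The only genuine obstacle is that Proposition~\ref{prp1} rests on convergence of the truncations, which is deferred to Proposition~\ref{pr}. I would make sure that argument is symmetric. The $N$-th truncation of the fraction for $w_{m,m+1}$ is the transform of first passage to $m+1$ along paths that do not go below $m-N$. I would obtain this by setting the term $w_{m-N,m-N+1}$ to zero in the iterated recurrence of Lemma~\ref{l2}a).

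The difference between this truncation and $w_{m,m+1}(s)$ is then bounded in modulus, uniformly for $\mathop{\rm Re}s\ge0$, by the probability of reaching $m-N$ before a finite passage to $m+1$. Because $|e^{-sV}|\le1$, this bound holds even at zeros of the transforms. The probability tends to zero, since a finite passage visits only finitely many states. This is the reflected form of the estimate in Proposition~\ref{pr}, and it gives uniform convergence of the fraction on the closed right half-plane.
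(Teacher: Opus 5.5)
Your proposal is correct and takes the paper's route. The paper gives no separate proof of Proposition~\ref{prp2}; it obtains it ``by symmetry'' from Lemma~\ref{l1} and Proposition~\ref{prp1}, whose convergence rests on identifying the truncations with taboo first-passage transforms and on the uniform bound of Proposition~\ref{pr}. Your reflection $\tilde X=-X$, with $\tilde p_i=q_{-i}$ and $\tilde a_i=b_{-i}$ and so on, makes that symmetry explicit, and your coefficient substitution is right. The only blemish is an index slip. Setting $w_{m-N,m-N+1}$ to zero forbids visits to $m-N$ itself, so the paths stay at or above $m-N+1$; this is what your error bound, the probability of reaching $m-N$, actually uses, not ``not going below $m-N$''.
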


For PSSF, the first-passage transforms $\{w_{m,n}(s)\}$ also express the ordinary Laplace transforms of time-dependent state probabilities. Let us denote 
$$
P_{m,n}(t)=P_m\{t<\zeta,X(t)=n\},\qquad p_{m,n}(s)=\int_0^\infty 
e^{-st}\; P_{m,n}(t)\,dt,\quad
\mathop{\rm Re}s>0.
$$
  
From the total probability formula, we have
$$
p_{m,m}(s)=p_m a_m(s) w_{m+1,m}(s)p_{m,m}(s) + 
q_m b_m(s) w_{m-1,m}(s)p_{m,m}(s) +
$$
$$
+[1-p_ma_m(s)-q_mb_m(s)]/s,\ \ m\in Z
$$
The expected discounted occupation during the initial visit to $m$
is the transform of its holding-time survival function,
$[1-p_ma_m(s)-q_mb_m(s)]/s$. Summing the discounted return
excursions therefore gives
\begin{equation}\label{occupation_resolvent}
p_{m,m}(s)=\frac{[1-p_ma_m(s)-q_mb_m(s)]/s}
{1-p_m a_m(s) w_{m+1,m}(s)-q_m b_m(s) w_{m-1,m}(s)},\quad m\in Z.
\end{equation}
From the skip-free property, we have
$$
p_{m,n}(s)=w_{m,n}(s)p_{n,n}(s),\ \ \ m\not =n.
$$
The last two equalities, Lemmas \ref{l1}, \ref{l2}, and Propositions \ref{prp1}, \ref{prp2}, \ref{prp3} permit calculation with prescribed precision, provided passage-transform errors are propagated through the denominator. For real $x>0$, that denominator is bounded below by $1-p_ma_m(x)-q_mb_m(x)>0$.

In the cases examined so far, the set ${\cal S}$
was infinite. It remains to consider
the case $S=\{l,l+1,\ldots,n-1,n\}, \ l<n$.

Let $F_m^{[l,n]}(t)=P_m\{W_{\{l,\ldots,n\}}\leq t\}$,
$F_{m,k}^{[l,n]}(t)=P_m\{V_k\leq t,\ V_k\le V_{n+1},\ V_k\le V_{l-1}\}$, 
$l-1\le k\le n+1$, the latter function being a taboo probability with
states $n+1$ and $l-1$ forbidden.
Let us denote by
$w_m^{[l,n]}(s)$ and $w_{m,k}^{[l,n]}(s)$ the corresponding LSTs.
We chose these notations because, although redundant
(e.g., $w_{m,k}^{[l,n]}(s)=w_{m,k}^{[j,n]}(s)$, for $j\le l\le k<m$),
they seem clearer to us.
The upper indices indicate from which segment of integers the process takes values until absorption, and the lower ones:
the first---initial state of $X(t)$, the second---state where absorption occurs.
\begin{lema}\label{l3}
The following relations hold:
\begin{description}
\item[a)]
$\displaystyle w_{m,m-1}^{[l,n]}(s)=
\frac{q_mb_m(s)}{1-p_ma_m(s)w_{m+1,m}^{[l,n]}(s)},
\qquad m<n$,
\item[b)]
$w_{n,n-1}^{[l,n]}(s)=q_nb_n(s)$,
\item[c)]
$\displaystyle w_{m,m+1}^{[l,n]}(s)=
\frac{p_ma_m(s)}{1-q_mb_m(s)w_{m-1,m}^{[l,n]}(s)},
\qquad m>l$,
\item[d)]
$w_{l,l+1}^{[l,n]}(s)=p_la_l(s)$,
\item[e)]
$
{w}_{m,l-1}^{[l,n]}(s)=\prod\limits_{i=l}^{m}{w}_{i,i-1}^{[l,n]}(s),
$
\item[f)]
$
{w}_{m,n+1}^{[l,n]}(s)=\prod\limits_{i=m}^{n}{w}_{i,i+1}^{[l,n]}(s).
$
\end{description}
\end{lema}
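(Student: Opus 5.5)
The plan is to treat the taboo transforms $w^{[l,n]}_{m,k}(s)$ exactly as Lemma~\ref{l1} and Lemma~\ref{l2} treat the half-line transforms. We read each relation off a pathwise decomposition of the process started at $m$, stopped at the first exit from $\{l,\ldots,n\}$. The skip-free property $X(T_k)=X(T_{k-1})\pm1$ is the only structural input. The Markov-renewal property (fresh passage laws at each finite hitting epoch) turns sums of conditionally independent pieces into products of transforms.

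\textbf{Parts a) and c).} For a), condition on the first jump from $m$, where $l\le m<n$. With probability $q_m$ the process jumps to $m-1$ after a holding time with transform $b_m(s)$, and the taboo passage $m\to m-1$ is complete. With probability $p_m$ it jumps to $m+1\le n$ after a holding time with transform $a_m(s)$. It must then return to $m$ while avoiding $n+1$, which has transform $w^{[l,n]}_{m+1,m}(s)$; it then restarts afresh from $m$. This gives
$$
w^{[l,n]}_{m,m-1}=q_mb_m+p_ma_m\,w^{[l,n]}_{m+1,m}\,w^{[l,n]}_{m,m-1}.
$$
Solving yields a). The division is legitimate for $\mathop{\rm Re}s\ge0$ because $|p_ma_m w^{[l,n]}_{m+1,m}|\le p_m<1$. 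Part c) is the mirror argument, with up and down interchanged and the taboo state $l-1$.

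\textbf{Boundary parts b) and d).} From $n$, an upward jump hits the forbidden state $n+1$. So the downward passage succeeds only if the very first jump is downward, giving $q_nb_n(s)$. Part d) is symmetric.

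\textbf{Products e) and f).} By skip-freeness, reaching $l-1$ from $m$ before $n+1$ requires successive first hits of $m-1,\ldots,l-1$, each before $n+1$. Hence
$$
\{V_{l-1}\le V_{n+1}\}=\bigcap_{i}\{\hbox{passage } i\to i-1 \hbox{ occurs before } n+1\}.
$$
The strong Markov-renewal property at these hitting epochs makes the successive passage times independent with the stated laws. Multiplying transforms gives e), and f) follows by symmetry.

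One point needs care: that the taboo event for each segment $i\to i-1$ is exactly ``avoid $n+1$''. Avoiding $l-1$ is automatic before hitting $i-1\ge l-1$, so the redundancy of notation noted in the text is harmless. I expect this bookkeeping, rather than any analytic issue, to be the main obstacle. All quantities are bounded transforms of sub-probability measures, so no convergence questions arise.
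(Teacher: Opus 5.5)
Your proposal is correct and follows the paper's proof. The paper obtains a) by the same first-step decomposition as (\ref{frcon1}), reads b) off the definition, gets c)--d) by symmetry, and derives e)--f) by the skip-free factorization of Lemma~\ref{l1} b) and Lemma~\ref{l2} b); you additionally spell out that the division in a) is legitimate for $\mathop{\rm Re}s\ge0$ and that the taboo on $l-1$ is automatic on each segment.
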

{\bf Proof.}
Point a) is obtained in exactly the same way as 
(\ref{frcon1}), point b) is contained in the definition of PSSF.
Relations c) and d) are symmetric to relations a) and b) respectively,
and e) and f) are analogous to point b) of Lemma \ref{l1} and Lemma 
\ref{l2}
respectively.$\Box$

Taking into account that $w_{m}^{[l,n]}(s)=w_{m,l-1}^{[l,n]}(s)
+w_{m,n+1}^{[l,n]}(s)$, from Lemma \ref{l3} we obtain
\begin{prop}
The sojourn time of the PSSF in the set of states 
$\{l,\ldots,n\}$,
with the condition that $X(0)=m$, $l\le m\le n$, has the LST
$$
{w}_{m}^{[l,n]}(s)=\prod_{i=l}^{m}{w}_{i,i-1}^{[l,n]}(s)+
\prod_{i=m}^{n}{w}_{i,i+1}^{[l,n]}(s),
$$
where
$$
 w_{k,k-1}^{[l,n]}(s)=\frac{q_kb_k(s)}{1- }\ \
\frac{p_ka_k(s)q_{k+1}b_{k+1}(s)}{1-\ldots}\ \ \ \
\frac{p_{n-1}a_{n-1}(s)q_{n}b_{n}(s)}{1}
,$$
 and
\begin{equation}
 w_{k,k+1}^{[l,n]}(s)=\frac{p_ka_k(s)}{1-}\ \
 \frac{q_kb_k(s)p_{k-1}a_{k-1}(s)}{1-\ldots}\ \ \ \
 \frac{q_{l+1}b_{l+1}(s)p_{l}a_{l}(s)}{1}
\label{frcon3}
\end{equation}
\end{prop}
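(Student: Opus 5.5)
The statement has three parts: the decomposition of $w_m^{[l,n]}$, and the two finite continued fractions. I will prove them in that order.

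\textbf{Decomposition.} Exit from $\{l,\ldots,n\}$ occurs, on the event that it happens at all before $\zeta$, at exactly one of the two boundary states $l-1$ or $n+1$. This is the skip-free property: each jump changes the state by $\pm1$, so the first state outside the segment is adjacent to it. Hence $W_{\{l,\ldots,n\}}=V_{l-1}\wedge V_{n+1}$, and the two events $\{V_{l-1}<V_{n+1}\}$ and $\{V_{n+1}<V_{l-1}\}$ are disjoint. They cannot tie, because holding times are strictly positive and the states differ. It follows that
\[
w_m^{[l,n]}(s)={\bf E}_m\bigl[e^{-sV_{l-1}};V_{l-1}<V_{n+1}\bigr]+{\bf E}_m\bigl[e^{-sV_{n+1}};V_{n+1}<V_{l-1}\bigr]=w_{m,l-1}^{[l,n]}(s)+w_{m,n+1}^{[l,n]}(s).
\]
Substituting parts e) and f) of Lemma~\ref{l3} gives the product formula.

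\textbf{Downward fraction.} I will prove it by finite backward induction on $k$, from $k=n$ down to $k=l$. The base case is $w_{n,n-1}^{[l,n]}=q_nb_n$, which is Lemma~\ref{l3} b). For the inductive step, Lemma~\ref{l3} a) writes $w_{k,k-1}^{[l,n]}$ as $q_kb_k/(1-p_ka_k\,w_{k+1,k}^{[l,n]})$. Inserting the finite fraction already established for $w_{k+1,k}^{[l,n]}$ produces exactly the displayed terminating fraction, whose partial numerators are $p_ja_jq_{j+1}b_{j+1}$ for $j=k,\ldots,n-1$. The induction is valid for $k\ge l$, since the relations in a) hold for every $m<n$ within the segment.

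\textbf{Upward fraction.} Formula (\ref{frcon3}) follows symmetrically by forward induction from $k=l$ up to $k=n$. The base case is Lemma~\ref{l3} d), and the inductive step uses Lemma~\ref{l3} c).

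\textbf{Main obstacle.} The only delicate point is that the denominators $1-p_ka_kw_{k+1,k}^{[l,n]}$ must be nonzero, so that the algebraic manipulations are legitimate. For $\mathop{\rm Re}s\ge0$ we have $|a_k(s)|\le1$ and $|w_{k+1,k}^{[l,n]}(s)|\le w_{k+1,k}^{[l,n]}(0)\le1$, so
\[
|p_ka_kw_{k+1,k}^{[l,n]}|\le p_k<1,
\]
using $0<p_k<1$. The denominators are therefore bounded away from zero, and no convergence issue arises because all the fractions are finite. The first-step relations themselves hold probabilistically with no division involved, and division is then justified by this bound.
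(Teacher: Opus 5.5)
Your proof is correct and follows the paper's route. The paper likewise splits $w_m^{[l,n]}=w_{m,l-1}^{[l,n]}+w_{m,n+1}^{[l,n]}$ according to the side of exit, then unrolls Lemma~\ref{l3} a)--f) into the products and the terminating continued fractions. Your bound $|1-p_ka_kw_{k+1,k}^{[l,n]}|\ge1-p_k>0$ for $\mathop{\rm Re}s\ge0$ (and its counterpart with $q_k$ for the upward fraction) is a useful justification that the paper leaves implicit.
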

As can be seen, the LSTs $w_{k,k-1}^{[l,n]}(s), \ w_{k,k+1}^{[l,n]}(s)$
represent the $(n-k+1)$th and $(k-l+1)$th truncations of the continued
fractions corresponding to the LSTs $w_{k,k-1}(s)$ and $w_{k,k+1}(s)$,
respectively.
This allows us to use probabilistic arguments to estimate the relative
error of approximating the continued fraction corresponding to the
LST $w_{k,k-1}(s)$ (or $w_{k,k+1}(s)$).

{\bf Example.} The results obtained for the LSTs of the sojourn times
of the PSSF in a set of states allow us to determine the finite projection
of the PSSF. From Proposition \ref{pr_sem}, the following particular
results are obtained.

{\it
\noindent If the sequence of successive time moments $S=\{S_n\}$ at which
the PSSF $X(t)$ enters state $0$ is a transient renewal process, then the finite projection $X^*(t)$ of the PSSF $X(t)$ with respect to the renewal process $S$ is a regular PSSF with transition probabilities 
$$
p^*_i=  \left\{
\begin{array} {ll}
             p_i\rho_{i+1,i} & \mbox{if $i>0$} \\
             p_0\rho_{1,0}/(p_0\rho_{1,0}+q_0\rho_{-1,0})
                        & \mbox{if $i=0$} \\
             p_i/\rho_{i,i+1} & \mbox{if $i<0$},
\end{array}  \right.
$$
$$
q^*_i=  \left\{
\begin{array} {ll}
             q_i/\rho_{i,i-1} & \mbox{if $i>0$} \\
             q_0\rho_{-1,0}/(p_0\rho_{1,0}+q_0\rho_{-1,0})
                        & \mbox{if $i=0$} \\
             q_i\rho_{i-1,i} & \mbox{if $i<0$},
\end{array}  \right.
$$
where the probabilities $\rho_{i,j}$ are determined from the formulas:
$$
  \rho_{i,i-1}={P_i}(W_{\{i,i+1,\ldots\}}<+\infty)={w}_{i,i-1}(0)=
  \frac{q_i}{1-}
\quad \frac{p_iq_{i+1}}{1-}
\quad \frac{p_{i+1}q_{i+2}}{1-\ldots},
$$
$$
   \rho_{i,i+1}={P_i}(W_{\{i,i-1,\ldots\}}<+\infty)={w}_{i,i+1}(0)=
 \frac{p_i}{1-}\quad \frac{q_ip_{i-1}}{1-}
\quad \frac{q_{i-1}p_{i-2}}{1-\ldots}
$$
and the distribution functions of the transition durations between states in the projection remain unchanged:
$$A_i^*(t)=A_i(t), \qquad B_i^*(t)=B_i(t).$$

}

\subsection{\sloppy Relative Error of Approximation of Continued Fractions
by Their Truncations}
The truncation $w_{m,m-1}^{[m,k]}$ has order $k-m+1$, $k\geq m$. Here assume $0<p_i,q_i<1$ and proper, strictly positive holding times. We estimate the relative error using the finite-passage interpretation.
\begin{prop}\label{pr}
For $\mathop{\rm Re}s\geq0$ and $w_{m,m-1}(s)\ne0$, write the relative error as
$$
R_k(s)=\frac{w_{m,m-1}(s)-w_{m,m-1}^{[m,k]}(s)}{w_{m,m-1}(s)}.
$$
It has the product representation
$$
R_k(s)=w_{m,k+1}^{[m,k]}(s)w_{k+1,m}(s),
$$
which also defines $R_k(s)$ at transform zeros.
\begin{description}
\item[a)] The error converges uniformly to zero:
\begin{equation}\label{cf_uniform}
\sup_{\mathop{\rm Re}s\geq0}|R_k(s)|\leq
1-\frac{w_{m,m-1}^{[m,k]}(0)}{w_{m,m-1}(0)}\longrightarrow0.
\end{equation}
\item[b)] The factorization is
\begin{equation}\label{cf_factorization}
w_{m,m-1}^{[m,k]}(s)=w_{m,m-1}(s)[1-R_k(s)].
\end{equation}

\end{description}
\end{prop}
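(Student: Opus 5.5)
The plan is a probabilistic first-passage decomposition of the downward passage from $m$ to $m-1$, split according to whether the process reaches level $k+1$ before $m-1$. Start from $X(0)=m$. Because the process is skip-free, a passage from $m$ to $m-1$ either hits $m-1$ before visiting $k+1$, or it first reaches $k+1$ while staying in $\{m,\ldots,k\}$ and then passes from $k+1$ down to $m-1$. The first event has transform $w_{m,m-1}^{[m,k]}(s)$, which is exactly the taboo first-passage transform defined before Lemma~\ref{l3}. For the second event, apply the Markov-renewal property at the finite hitting epoch $V_{k+1}$. The remaining passage from $k+1$ to $m-1$ is fresh, and by Lemma~\ref{l1}(b) it factorizes as $w_{k+1,m}(s)\,w_{m,m-1}(s)$. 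Taking discounted expectations of these disjoint events, as defective transforms valid for $\mathop{\rm Re}s\ge0$, gives
$$
w_{m,m-1}(s)=w_{m,m-1}^{[m,k]}(s)+w_{m,k+1}^{[m,k]}(s)\,w_{k+1,m}(s)\,w_{m,m-1}(s).
$$
This single identity is (\ref{cf_factorization}) with $R_k(s)=w_{m,k+1}^{[m,k]}(s)w_{k+1,m}(s)$. Dividing by $w_{m,m-1}(s)$ when it is nonzero gives the stated product representation, and the product itself serves as the definition at transform zeros.

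Two facts justify the product representation. First, the truncation $w_{m,m-1}^{[m,k]}$ in (\ref{frcon3}) and the corresponding downward formula coincides with the order $k-m+1$ convergent of (\ref{frcon2}). This follows from iterating Lemma~\ref{l3}(a),(b), with the boundary condition $w^{[m,k]}_{k,k-1}=q_kb_k$. Second, all the transforms involved are expectations of $e^{-sV}$ on events where $V<\infty$. Hence they are bounded in modulus by their values at $s=0$ whenever $\mathop{\rm Re}s\ge0$.

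For part a), bound the product by its value at zero:
$$
|R_k(s)|\le w_{m,k+1}^{[m,k]}(0)\,w_{k+1,m}(0)=R_k(0).
$$
Evaluating the identity at $s=0$ gives $R_k(0)=1-w^{[m,k]}_{m,m-1}(0)/w_{m,m-1}(0)$. The division is legitimate because $w_{m,m-1}(0)\ge q_m>0$. It remains to show that $R_k(0)\to0$. Here $w^{[m,k]}_{m,m-1}(0)$ is the probability of reaching $m-1$ before $k+1$. These events increase in $k$, and their union is the event of reaching $m-1$ before explosion or lifetime end. This holds because a finite passage visits only finitely many states, so along any path that reaches $m-1$ the maximum level visited is finite. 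Monotone convergence then yields $w^{[m,k]}_{m,m-1}(0)\uparrow w_{m,m-1}(0)$, and so the bound in (\ref{cf_uniform}) tends to zero.

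The main obstacle is making the decomposition rigorous. It must respect the lifetime $\zeta$ and the defective laws, so that paths which explode after hitting $k+1$ contribute to neither side. It must also apply the strong Markov-renewal property at $V_{k+1}$ in the constructive model of the section, where the holding times are functions of fresh uniform variables. I would handle this by noting that after the finite hitting index the shifted pair of the embedded walk and the uniform variables has the law of the process under $P_{k+1}$, independently of the prefix.
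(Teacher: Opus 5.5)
Your proposal is correct and takes essentially the same route as the paper. Both decompose the passage $m\to m-1$ at the first visit to $k+1$, which gives $w_{m,m-1}=w^{[m,k]}_{m,m-1}+w^{[m,k]}_{m,k+1}w_{k+1,m}w_{m,m-1}$. Both then bound the product by its value at $s=0$ and use the monotone increase of the taboo hitting probabilities to $w_{m,m-1}(0)\ge q_m>0$. Your extra care with the lifetime $\zeta$ and the strong Markov-renewal property at $V_{k+1}$ simply spells out what the paper compresses into ``the Markov-renewal property.''
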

{\bf Proof.} A first passage from $m$ to $m-1$ either avoids $k+1$, or first reaches $k+1$ and then returns through $m$. The Markov-renewal property gives
$$
w_{m,m-1}=w_{m,m-1}^{[m,k]}+
w_{m,k+1}^{[m,k]}w_{k+1,m}w_{m,m-1},
$$
proving b) without division at a zero. For a transform of a positive measure, its modulus on the right half-plane is at most its value at zero. Consequently
$$
|R_k(s)|\leq w_{m,k+1}^{[m,k]}(0)w_{k+1,m}(0)
=1-\frac{w_{m,m-1}^{[m,k]}(0)}{w_{m,m-1}(0)}.
$$
Here $w_{m,m-1}(0)>0$ since $q_m>0$. Every successful finite-step passage has a finite maximum before hitting $m-1$, so the taboo hitting probabilities increase to $w_{m,m-1}(0)$. This proves a). The first-step recursions identify the taboo transforms with the continued-fraction truncations. $\Box$

Part a) gives more than absolute uniform convergence: the factorized relative error itself tends uniformly to zero.

To obtain a computable bound using coefficients only through $k$, introduce the finite fractions at real $x\geq0$:
\begin{equation}\label{frcon4}
h_{i,i-1}^{[i,k]}(x)=\frac{q_i b_i(x)}{1-p_i a_i(x)h_{i+1,i}^{[i+1,k]}(x)},
\qquad h_{k+1,k}^{[k+1,k]}(x)=1.
\end{equation}
Their denominators are positive and $0<h_{i,i-1}^{[i,k]}(x)\leq1$.
\begin{prop}\label{prp3}
For $\mathop{\rm Re}s\geq0$, with $x=\mathop{\rm Re}s$,
\begin{equation}\label{frcon5}
|R_k(s)|\leq\prod_{i=m}^k
w_{i,i+1}^{[m,i]}(x)h_{i+1,i}^{[i+1,k]}(x).
\end{equation}
The factors are determined by (\ref{frcon3}) and (\ref{frcon4}), using only coefficients through $k$.
\end{prop}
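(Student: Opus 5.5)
The plan is to start from the product representation in Proposition~\ref{pr}, $R_k(s)=w_{m,k+1}^{[m,k]}(s)\,w_{k+1,m}(s)$, and factor both transforms into one-level passages. First I pass to the real point $x=\mathop{\rm Re}s$. Every transform involved is the Laplace--Stieltjes transform of a positive (possibly defective) measure on $[0,\infty)$, so $|\int e^{-st}\,dF(t)|\le\int e^{-xt}\,dF(t)$. Hence
$$|R_k(s)|\le w_{m,k+1}^{[m,k]}(x)\,w_{k+1,m}(x),$$
and the rest of the argument works with nonnegative reals.

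\textbf{Upward factor.} Apply Lemma~\ref{l3} f) with $l=m$, $n=k$:
$$w_{m,k+1}^{[m,k]}(x)=\prod_{i=m}^{k}w_{i,i+1}^{[m,k]}(x).$$
By the skip-free property, a passage from $i$ to $i+1$ cannot visit any state above $i$ before it ends. So the upper taboo state $k+1$ is irrelevant, and $w_{i,i+1}^{[m,k]}=w_{i,i+1}^{[m,i]}$. This is the redundancy of notation noted before Lemma~\ref{l3}, and it can also be checked from the recursion in Lemma~\ref{l3} c), d), which never involves the upper index. These are the first factors in (\ref{frcon5}), given by the finite fraction (\ref{frcon3}).

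\textbf{Downward factor.} Lemma~\ref{l1} b) gives
$$w_{k+1,m}(x)=\prod_{i=m}^{k}w_{i+1,i}(x).$$
It therefore suffices to prove
$$w_{i+1,i}(x)\le h_{i+1,i}^{[i+1,k]}(x),\qquad m\le i\le k.$$
I will prove this by downward induction on the level $j=i+1$, from $j=k+1$ down to $j=m+1$.

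The base case $j=k+1$ holds because $w_{k+1,k}(x)\le w_{k+1,k}(0)\le1=h_{k+1,k}^{[k+1,k]}(x)$.

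For the induction step, compare the exact recursion (\ref{frcon1}),
$$w_{j,j-1}(x)=\frac{q_jb_j(x)}{1-p_ja_j(x)\,w_{j+1,j}(x)},$$
with the defining recursion (\ref{frcon4}) for $h$. The map
$$u\mapsto \frac{q_jb_j(x)}{1-p_ja_j(x)\,u}$$
is increasing on $[0,1]$, and its denominator stays positive because $p_ja_j(x)\le p_j<1$. So $w_{j+1,j}(x)\le h_{j+1,j}^{[j+1,k]}(x)\le1$ implies $w_{j,j-1}(x)\le h_{j,j-1}^{[j,k]}(x)$. The same monotonicity, together with $b_j,a_j\le1$, gives $h\le q_j/(1-p_j)=1$, which keeps the induction within $[0,1]$ and verifies the remark after (\ref{frcon4}).

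Multiplying the two factorizations level by level yields (\ref{frcon5}). Only coefficients with indices up to $k$ enter, since $h_{k+1,k}^{[k+1,k]}=1$ replaces the tail.

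\textbf{Main obstacle.} The main point needing care is the identification $w_{i,i+1}^{[m,k]}=w_{i,i+1}^{[m,i]}$, together with the convention that the fraction (\ref{frcon3}) stops at level $m$ (Lemma~\ref{l3} d)). I would justify it directly from the path construction: the event and its duration depend only on the walk below level $i+1$, so the taboo at $k+1$ never acts. The remaining steps are routine monotonicity arguments.
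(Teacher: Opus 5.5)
Your proposal is correct and follows essentially the same route as the paper. You reduce to the real point $x=\mathop{\rm Re}s$ via $|R_k(s)|\le w_{m,k+1}^{[m,k]}(x)\,w_{k+1,m}(x)$, factor the upward passage as $\prod_{i=m}^k w_{i,i+1}^{[m,i]}(x)$, and bound each downward factor $w_{i+1,i}(x)$ by $h_{i+1,i}^{[i+1,k]}(x)$ through backward monotonicity of the first-step recursion starting from $w_{k+1,k}(x)\le1$; your skip-free justification of $w_{i,i+1}^{[m,k]}=w_{i,i+1}^{[m,i]}$ and your check that $h\le1$ only fill in details the paper leaves implicit.
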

{\bf Proof.} The first-step recursion for the true downward transforms has the same form as (\ref{frcon4}). Starting with $w_{k+1,k}(x)\leq1$, backward monotonicity gives $w_{i,i-1}(x)\leq h_{i,i-1}^{[i,k]}(x)$ for $m+1\leq i\leq k$. Factor the downward passage into adjacent passages and use
$$
|R_k(s)|\leq w_{m,k+1}^{[m,k]}(x)w_{k+1,m}(x),\qquad
w_{m,k+1}^{[m,k]}(x)=\prod_{i=m}^k w_{i,i+1}^{[m,i]}(x).
$$
The terminal factor is bounded by one. This proves (\ref{frcon5}). $\Box$

At $x=0$ all the $h$-factors equal one, and the bound is $P_m\{V_{k+1}<V_{m-1}\}$. It need not tend to zero when upward escape has positive probability, although the actual error tends uniformly to zero by (\ref{cf_uniform}). If infinitely many upward passages cannot occur in finite time, the bound tends to zero for every $x>0$, uniformly on $\mathop{\rm Re}s\geq x$. For each fixed real $x\geq0$ it is the sharp supremum over admissible tails beyond $k$: return from $k+1$ can be made arbitrarily likely and fast. This real-frequency statement does not assert pointwise sharpness for complex moduli.

\bigskip
\centerline{\large \bf Conclusions and Observations}

On finite exit, the time spent in a given set is a sum with a random number of terms, each equal to the time spent in a state before the next jump. The same construction associates signed summands with the random walk: each new summand has a fresh distribution determined by the states before and after the corresponding jump, and the terms are summed until the first exit from the fixed set. For possibly infinite exit indices, the characteristic transform is restricted to finite exit. First-step factorization and bounds by unweighted hitting probabilities extend to these characteristic transforms; the positive-real Laplace comparison does not.

The relative-error estimates obtained here may be useful in determining rates of convergence for particular classes of continued fractions.

\newpage
\section*{ Conclusion}
\addcontentsline{toc}{section}{ Conclusion}

\thispagestyle{myheadings}  
\markright{\hspace{1cm} \it CONCLUSION}

The subject of study in this thesis is transient regenerative processes and, in particular, specific classes of regenerative processes, such as semi-Markov processes, homogeneous Markov chains with a countable set of states, birth-and-death processes, and queueing systems described by such processes. Additionally, certain types of continued fractions associated with the problems studied in the work are examined.

The purpose of the thesis is to develop and apply the finite projection method for the aforementioned processes. As a result of the research, using the methods of finite projection, Laplace transforms, renewal process theory, and continued fractions, new results have been obtained concerning the approximation of transient regenerative processes, skip-free semi-Markov processes, queueing systems of types $M/G/1$, $GI/M/1$, $M/M/1$, $M/M/n$. The structure of the finite projections of semi-Markov processes, Markov chains, and queueing systems of types $M/G/1$, $M/M/n$, etc., has been described.

Finite projection associates a recurrent regenerative process with a transient one and provides bounds for its approximation. Its best-approximation property concerns the metrics and comparison classes specified in Section~\ref{par_1_3}. Since the class of transient regenerative processes is very broad, it is not possible to explicitly determine the structure of their finite projection. This also applies to the finite projection of queueing systems. However, by examining narrower classes of processes, we can precisely describe the structure of finite projections, see, for example, Theorem \ref{p1_r9}.

The main results can be applied to the analysis of non-stationary queueing systems, the design of information processing systems, the study of convergence rates of continued fractions, etc.

The thesis presents new results; at the time of writing, some had no counterparts in the international literature known to the author.

\clearpage
\section*{Author's publications related to the thesis}
\addcontentsline{toc}{section}{Author's publications related to the thesis}
\markboth{Author's publications related to the thesis}{Author's publications related to the thesis}

The following list reproduces the author's publications related to the thesis, as listed in the original Romanian document.

\begin{enumerate}
\item Hlibiciuc A., Leahu A. Proiec\c{t}ia finit\u{a}, sisteme de a\c{s}teptare \c{s}i aproximarea lor [Finite projection, queueing systems and their approximation]. \textit{Buletinul \c{s}tiin\c{t}ific al Universit\u{a}\c{t}ii ``Politehnica'' din Timi\c{s}oara, matematic\u{a} fizic\u{a}}, UTT, Timi\c{s}oara, 1996, Tom 41(55), No.~1, pp.~38--43.

\item Hlibiciuc A., Leahu A. Proiec\c{t}ia finit\u{a} \c{s}i aproximarea proceselor de regenerare transiente [Finite projection and approximation of transient regenerative processes]. In: \textit{Scripta Scientiarum, Simposium septimum tiraspolense, Generalis topologiae et suae applicationum}. ``Tehnica'' UTM, Chi\c{s}in\u{a}u, 1996, pp.~120--123.

\item Hlibiciuc A. Eroarea de aproximare a proceselor de regenerare prin proiec\c{t}iile lor finite [The error in approximating regenerative processes by their finite projections]. In: \textit{Scripta Scientiarum, Simposium septimum tiraspolense, Generalis topologiae et suae applicationum}. ``Tehnica'' UTM, Chi\c{s}in\u{a}u, 1996, pp.~124--126.

\item Hlibiciuc A. Finite projection of Markov chains and M/G/1, GI/M/1 queueing systems. \textit{Scripta Scientiarum Mathematicarum}, Societas Topologiae Sistematum Subtilium et Appliationum Moldaviae, Chi\c{s}in\u{a}u, 1995, Tomus~1, fasciculus~1, pp.~43--51.

\item Hlibiciuc A., Leahu A. Markov Chains' Finite Projection. \textit{Analele \c{s}tiin\c{t}ifice ale Universit\u{a}\c{t}ii ``Ovidius'', Ser. Matematica}, Universitatea ``Ovidius'', Constan\c{t}a, 1995, Vol.~3, Fasc.~1, pp.~83--92.

\item Leahu A., Hlibiciuc A. Finite projection and dual of Birth-Death processes. \textit{Bulet. Acad. Sci. of Moldova, ser. Math.}, 1994, No.~3, pp.~5--14.

\item Leahu A.Ch., Hlibiciuc A.A. On the finite projection of nonstationary M/G/1 queueing systems. \textit{Bulet. Acad. Sci. Rep. Moldova, ser. Math.}, 1992, No.~1, pp.~81--83.

\item Hlibiciuc A., Leahu A. On the busy period of some computer science queueing systems. In: \textit{Proceedings of the International Conference on Microelectronics and Computer Science}. Chi\c{s}in\u{a}u, 1992, pp.~57--59.

\item Leahu A., Hlibiciuc A. Proiec\c{t}ia finit\u{a} a proceselor nesta\c{t}ionare de na\c{s}tere \c{s}i de moarte [Finite projection of nonstationary birth-and-death processes]. In: \textit{Lucr\u{a}rile Conferin\c{t}ei preg\u{a}titoare pentru Congresul matematicienilor rom\^ani de pretutindeni}. Bucure\c{s}ti, 19--21 November 1992, Vol.~2, pp.~171--173.

\item Hlibiciuc A. A relation between the moments of sojourn time of birth-death processes in the state set $\{N,N+1,N+2,\ldots\}$. In: \textit{XXIIIth Congress of the Rumanian American Academy of Science and Arts}\footnote{The original list prints XXIIIth; the 1993 Chi\c{s}in\u{a}u meeting was the eighteenth congress.} (July 13--16, 1993), ``Moldova: Opening of its culture and science for the West'', Section~III Mathematics and Physics. Chi\c{s}in\u{a}u, 1993, Vol.~2, p.~22.
\end{enumerate}

\clearpage
\thispagestyle{plain}
\section*{Disclaimer}
\noindent AI tools were used to assist translation from Romanian into English, language editing, simplification of the presentation, and mathematical checking. I reviewed the resulting text and checked the mathematical statements and arguments. Nevertheless, unintended changes introduced by these tools may have escaped my review, and some passages may not accurately reflect the claims of the original Romanian thesis. The Romanian text remains the authoritative record of the work presented in 1998; corrections and additions belong to this English edition. Responsibility for the present text and any remaining errors rests with me.
\par


\newpage
\begin{thebibliography}{99}
\addcontentsline{toc}{section}{Bibliography}  

\bibitem{l_pnm1} Allen A.O. Probability,  Statistics,  Queueing  Theory 
with Computer Science Applications. (New York: Academic Press, 1978)
  
\bibitem{part1_1} Asmussen S. Applied Probability and Queues.
(Wiley, New York,1987)

\bibitem{Asmussen_1} Asmussen, Soren. The heavy traffic limit of a
class of Markovian queueing models. Oper. Res. Lett. 6 (1987), no.
6, 301--306.
  
\bibitem{Asmussen_4} Asmussen, Soren. Conditioned limit theorems
relating a random walk to its associate, with applications to risk
reserve processes and the $GI/G/1$ queue. Adv. in Appl. Probab. 14
(1982), no. 1, 143--170. 

\bibitem{Asmussen_2} Asmussen, Soren. Queueing simulation in heavy
traffic. Math. Oper. Res. 17 (1992), no. 1, 84--111.

\bibitem{Asmussen_3} Asmussen, Soren. Busy period analysis, rare
events and transient behavior in fluid flow models. J. Appl. Math.
Stochastic Anal. 7 (1994), no. 3, 269--299.


\bibitem{part1_m1} Bailey N.T.J. A continuous time treatment of a
  simple queue using generating functions. J. Roy. Stat.  Soc., B16,
  p. 288--291 (1954). 


\bibitem{Banuelos_1} Banuelos, Rodrigo; Davis, Burgess.  Heat kernel,
  eigenfunctions, and conditioned Brownian motion in planar domains.
  (English) J. Funct. Anal. 84, No.1, 188-200 (1989).

\bibitem{Bass_1} Bass, Richard F. Burdzy, Krzysztof.  Conditioned
  Brownian motion in planar domains. (English) [J] Probab. Theory
  Relat. Fields 101, No.4, 479-493 (1995).

  
\bibitem{Bertoin_Doney} Bertoin, J., Doney, R.A. On conditioning a
  random walk to stay nonnegative. Ann. Probab. 22 (1994), no. 4,
  2152--2167.  

\bibitem{Bertoin_Doney2} Bertoin, J., Doney, R.A. On conditioning
  random walks in an exponential family to stay nonnegative.
  Seminaire de Probabilites, XXVIII, 116--121, Lecture Notes in
  Math., 1583, Springer, Berlin,   1994.  

\bibitem{Bertoin_1} Bertoin, Jean. Conditioning a reflected
  one-dimensional diffusion via its canonical decomposition. J.
  Theoret. Probab. 6 (1993), no. 2, 303--321. 


\bibitem{part1_2} Borovkov A.A. Stochastic Processes in Queueing Theory . (Nauka, Moscow, 1972) (in Russian) 

\bibitem{Borovkov_2} Borovkov A.A. Asymptotic Methods in Queueing Theory. (Nauka, Moscow, 1980) (in Russian) 
    
\bibitem{Chaumont_1} Chaumont, L.
  Sur certains processus de Levy conditionnes a rester positifs. (On
  certain Levy processes conditioned to stay positive). (French)
  Stochastics Stochastics Rep. 47, No.1-2, 1-20 (1994).

\bibitem{part1_14} Chernoff H. A measure of asymptotic efficiency
  for tests of a hypothesis based on the sum of observations.
  Ann. Math. Statist. v. 23 (1952), p. 493-507.

\bibitem{lm_1}  
  Chung K.L. Markov Chains with Stationary Transition
  Probabilities. (Springer-Verlag, New York, Berlin, etc., 1967)
  
\bibitem{part1_3} Cox D.R., Smith W.L. Renewal theory. ( Sov. radio, Moscow, 1967) (in Russian)

\bibitem{Cranston_1} Cranston, M., Zhao, Z.
  Conditional transformation of drift formula and potential theory
  for $\frac12\Delta +b(\cdot)\cdot \nabla$. (English)
  Commun. Math. Phys. 112, 613-625 (1987).


\bibitem{Cranston_2} Cranston, M. Conditional Brownian motion,
  Whitney squares and the conditional gauge theorem. (English)
  Stochastic processes, Proc. 8th Semin., Gainesville/Florida 
  1988, Prog. Probab. 17, 109-119 (1989).


\bibitem{Day_1} Day, Martin V.  Conditional exits for small noise
  diffusions with characteristic boundary. (English) Ann. Probab.
  20, No.3, 1385-1419 (1992).

\bibitem{Doob_1} Doob, J.L. Conditional Brownian motion and the
  boundary limits of harmonic functions. Bull. Soc. Math. France {\bf
  85 } (1957), 431--458.

\bibitem{Doney_1} Doney, R. A. A note on conditioned random walk. J.
  Appl. Probab.   20 (1983), no. 2, 409--412. 
  
\bibitem{lm_dyn} Dynkin E.B. Boundary theory of Markov processes (the
  discrete case), Russian Math. Surveys v. 24, (1969), p. 1-42.

\bibitem{risc_1} Embrechts P., Kl\"{u}ppelberg C., Some aspects
  of insurance mathematics. Theory Probab. Appl. vol. 38, N 2 (1994), pp. 262--295.

\bibitem{Fakinos_1} Fakinos D. Duality relations for certain single server
  queues. Queueing Systems, 4 (1989), p. 77-83.
  
\bibitem{c3} Feller W. An introduction to probability theory
  and its applications. vol. 1. (John Wiley \& Sons, New-York,
  1971) 
  
\bibitem{c2} Feller W. An introduction to probability theory
  and its applications. vol. 2. (John Wiley \& Sons, New-York,
  1971) 

\bibitem{Kesten_2} Ferrari P.A., Kesten H., Martinez S., Picco P.
  Existence of quasi-stationary distributions. A renewal dynamical
  approach. Ann. Probab. 23, No.2, 501-521 (1995).
  

\bibitem{Griffin_1} Griffin P.S., Verchota G.C., Vogel A.L.
  Distortion of area and conditioned Brownian motion. (English)
  Probab. Theory Relat. Fields 96, No.3, 385-413 (1993).
  
\bibitem{l_frac9} Jones W.B. Magnus A.
           Application of Stieltjes fractions to birth-death processes.
           In: Pade and rational approximation. Ed. by E.B. Saff, 
           R.S. Varga. New-York, Academic Press, 1977, pp. 173--179.
    
\bibitem{l_pnm3} Jones W.B., Thron W.J. Continued  Fractions.
          (London: Addison-Wesley Publishing Company, 1980)
  
\bibitem{hardy} Hardy G.H. Divergent series. (Oxford, 1949)

\bibitem{part1_7} Kalashnikov V.V. Qualitative Analysis of Complex Systems Behaviour by Test Functions Method.
  ( Nauka, Moscow, 1978) (in Russian)

\bibitem{kalas_2} Kalashnikov V.V., Vsekhsvyatsky S.Yu.  Estimates of the distribution of the time of the first failure for regenerating models / In the book: Mathematical theory of systems.  Nauka, Moscow, 1986, p. 59-75. (in Russian)

\bibitem{Kalashnikov_1} Kalashnikov V.V., Racev S.T. Mathematical methods for construction of queueing models. 
  ( Nauka, Moscow, 1988) (in Russian)

\bibitem{part1_13} Kantorovici L.V., Akilov G.P. Functional Analysis. (Moscow: Nauka, 1977) (in Russian)

\bibitem{l_frc1} Karlin S.
           A first course in stochastic processes.
           (Academic Press,
           New York,
           1968)

\bibitem{l_frac6} Karlin S., McGregor J.L.
           The differential equations of birth-and-death processes,
           and the Stieltjes moment problem.
           Trans. Amer. Math. Soc., 1957, v. 85, pp. 489--546.

\bibitem{l_frac7} Karlin S., McGregor J.L.
           Random walks. Illinois J. Math., 1959, vol. 3, pp. 66--81.
            
\bibitem{Karr}
{\raggedright
Karr A.F. Weak convergence of a sequence of Markov
  chains // Z.Wahrscheinlichkeitstheorie verw.  Gebiete.-1975. -
  B.33,H.1.- S. 41-48. 
\par}

\bibitem{lm_kem} Kemeny J.G., Snell J.L., Knapp A.W.
  Denumerable Markov Chains. (Springer-Verlag, New York, Berlin,
  etc., 1976)  

\bibitem{Keener_1} Keener, Robert W.
  Limit theorems for random walks conditioned to stay positive.
  Ann. Probab. 20, No.2, 801-824 (1992).

\bibitem{Kesten_1} Kesten, Harry A ratio limit theorem for (sub) Markov
  chains on $\{1,2,\cdots\}$ with bounded jumps. Adv. in Appl.
  Probab. 27 (1995), no. 3, 652--691.
  
\bibitem{Kerov_O} Kerov S.V., Orevkova O.A. Random processes with common cotransition probabilities, 
in: Studies in Mathematical Statistics IX, Zapiski Nauchnyh Seminarov LOMI 184 (1990), pp. 169--181 (in Russian).

%Random processes with common cotransition probabilities
%SV Kerov, OA Orevkova - Journal of Mathematical Sciences 68 (1994), 516-525.

\bibitem{Kolmogorov_base} Kolmogorov A.N. Grundbegriffe der
  Wahrscheinlichkeitsrechnung. (Berlin, 1933)
  
\bibitem{part1_10} Klimov G.P. Stochastic service systems.   ( Nauka, Moscow, 1966) (in Russian)

  
\bibitem{l_frac5} Korolyuk V.S., Sojourn time of a semi-Markov process in a fixed set of states.
           Ukr. Mat. Zh. 17.3 (1965): pp. 123-128. (in Russian)

\bibitem{l_pnm5} Korolyuk V.S., Turbin A.F. Semi-Markov processes and their applications. (Kiev: Naukova Dumka, 1976) (in Russian)

\bibitem{l_pnm7} Leahu A.Ch.  Finite Projection, Reversibility and Queueing Systems. Bulet. Acad. Sci. Rep. Moldova, ser. 
  Math., 2, 1992, pp. 89-93.
  
\bibitem{l_2:k1} Leahu A., Hlibiciuc A. 
        Finite projection and dual of Birth-Death processes.
        Bulet. Acad. Sci. of Moldova, ser. Math., nr. 3, 1994, pp.5-14. 
  
\bibitem{part1_5} Leahu A.Ch., Hlibiciuc A.A. On finite
  projection of  nonstationary M/G/1 queueing systems. Bulet.
  Acad. Sci. Rep. Moldova, ser. Math., 1, (1992) pp. 81-83.
  
\bibitem{part1_4} Leahu A.Ch., Matveev V.F. On finite projection of regenerative processes. Bulet.  Acad. \c{S}t. Rep.
  Moldova, ser. Mat., 3, 1991, pp. 74-77. (in Russian)

\bibitem{Liao_1} Liao, Ming, Pinsky, Mark.  Stochastic parallel
  translation for Riemannian Brownian motion conditioned to hit a
  fixed point of a sphere. (English) Geometry of random motion,
  Proc. AMS-IMS-SIAM Jt. Summer Res.  Conf., Ithaca/NY 1987, Contemp.
  Math. 73, 203-210 (1988). 
  
\bibitem{part1_11} Matveev V.F., Ushakov V.G. Queueing Systems  (Moscow: MGU, 1984) (in Russian)

\bibitem{McKean_1} McKean H.P. Excursions of a non-singular
  diffusion. Z. Wahrsch. Verw. Gebiete {\bf 1} (1963), 230--239. 

\bibitem{l_frac8} Murphy J.A., O'Donohoe M.R.
           Some properties of continued fractions  with applications
           in Markov processes.
           J. Inst. Math. Applics., 1975, v. 16, pp. 57--71.

\bibitem{Pakes_1} Pakes, Anthony G. On the age distribution of a Markov chain. 
  J. Appl. Probability 15 (1978), no. 1, 65--77. 


\bibitem{part1_8} Petrov V.V. Limit Theorems for Sums of Independent Random Variables.  (Nauka, Moscow, 1987) (in Russian)

\bibitem{part1_9} Petrov V.V. Sums of Independent Random Variables.
  (Springer--Verlag, Berlin, etc. 1975)

\bibitem{Pinsky_1} Pinsky R.G. A new approach to the Martin boundary
  via diffusions conditioned to hit a compact set. // The Annals of
  Probability, 1993, vol. 21, no 1, 453--481.


\bibitem{Poleshchuk_1} Poleshchuk, O.M.
  The convergence of a conditional random walk to a Brownian bridge.
  (English. Russian original) Russ. Math. Surv. 45, No.1, 225-226
  (1990); translation from Usp. Mat. Nauk 45, No.1(271), 187-188
  (1990). 

\bibitem{Poleshchuk_2} Poleshchuk, O.M.
  Limit theorems for a conditional random walk. (English. Russian
  original) Mosc. Univ. Math. Bull. 48, No.6, 40-43 (1993);
  translation from Vestn. Mosk. Univ., Ser. I 1993, No.6, 44-49
  (1993). 

\bibitem{Pollett_1} Pollett P. K. Reversibility, invariance and
  $\mu$-invariance. Adv. in Appl. Probab. 20 (1988), no. 3, 600--621


\bibitem{part1_m2} Prabhu N.U. Queues and Inventories. (John Wiley
  \& Sons, New-York, 1965)

\bibitem{Rachev} Rachev, S.T. Probability Metrics and Stability of
  Stochastic Models. (Wiley, New-York, 1991)

\bibitem{Renyi_1} Renyi, Alfred.  On a new axiomatic theory of
  probability. (English) Acta Math. Acad. Sci. Hung. 6, 285-335
  (1955). 

\bibitem{Renyi_2} Renyi, Alfred.  Ein neuer axiomatischer Aufbau der
  Wahrscheinlichkeitsrechnung.  (Hungarian) Magyar Tud. Akad., Mat.
  Fiz. Tud. Oszt.   Koezlemenyei 4, 369-428 (1954). 

\bibitem{Renyi_3} Renyi, Alfred.  On conditional probability spaces
  generated by a dimensionally ordered set of measures. (English.
  Russian summary)   Teor. Veroyatn. Primen. 1, 61-71 (1956). 
  
\bibitem{part1_12} Rusu Gh.I. Functional Analysis 1.  (USM, Chi\c{s}in\u{a}u, 1991) (in Romanian)

\bibitem{Vere-Jones_1} Seneta E., Vere-Jones D. On quasi-stationary
  distributions in discrete-time Markov chains with a denumerable
  infinity of states. J. Appl. Probab. {\bf 3} (1966), 403--434.

\bibitem{Seneta_1} Seneta E. Quasi-stationary behaviour in the
  random walk with continuous time.  Austral. J. Statist. {\bf 8} (1966), 92--98.

\bibitem{part1_6} Shiryayev A.N. Probability. (Springer--Verlag,
  Berlin,  etc., 1984)

\bibitem{reg_r2} Sigman K., Wolff R.W. A review of regenerative processes.
  SIAM Review 35(2) (1993), p. 269-288.

\bibitem{reg_r1} Smith W.L. Regenerative stochastic processes.
  Proc. R. Soc. London, Ser. A 232 (1955), p. 6-31.

\bibitem{Tavare_1} Tavare, S. Dual diffusions, killed diffusions, and the age
  distribution problem in population genetics. Theoret.  Population
  Biol. 16 (1979), no. 3, 253--266. 

\bibitem{Tavare_2} Tavare, S. Time reversal and age distributions. I.
  Discrete-time Markov chains. J. Appl. Probab. 17 (1980), no. 1,
  33--46. 

\bibitem{Veech_1} Veech W. The necessity of Harris' condition for the
  existence of a stationary measure.
  Proc. Amer. Math. Soc. {\bf 14} (1963), 856--860.

\bibitem{Versik_K} Ver\c{s}ik A.M., Kerov S.V. Locally semisimple algebras. Combinatorial theory and the $K_0$-functor, 
Itogi nauki i tekhn. ser. Sovrem. probl. mat. Nov. dostizh., 26, VINITI, Moscow, 1985, 3--56; J. Soviet Math., 38:2 (1987), 1701--1733.

\bibitem{Williams_1} Williams D. Path decomposition and continuity of
  local time for one-dimensional diffusions, I. Proc. London Math. Soc. (3) {\bf
  28} (1974), 738--768. 

\bibitem{Zhao_1} Zhao, Xuelei
  Conditioned Brownian motion in an angular domain. I: Lifetime.
  (English) [J] J. Beijing Norm. Univ., Nat. Sci. 29, No.4, 427-434
  (1993).
      
\end{thebibliography}
\enddocument